\theoremstyle{plain}
\declaretheorem[name=Theorem,numberwithin=section]{theorem}
\newtheorem{lemma}[theorem]{Lemma}
\newtheorem{corollary}[theorem]{Corollary}
\newtheorem{fact}[theorem]{Fact}
\theoremstyle{definition}
\newtheorem{definition}[theorem]{Definition}
\theoremstyle{remark}
\newcommand{\paren}[1]{\left(#1 \right )}
\newcommand{\Brac}[1]{\left[#1\right]}
\newcommand{\abs}[1]{\left\lvert#1\right\rvert}
\newcommand{\ceil}[1]{\lceil #1 \rceil}
\DeclarePairedDelimiter{\inceil}{\lceil}{\rceil}
\newcommand{\norm}[1]{\left\lVert#1\right\rVert}
\newcommand{\defeq}{\coloneqq}
\newcommand{\normInline}[1]{\lVert#1\rVert}
\newcommand{\Z}{{\mathbb Z}}
\newcommand{\R}{\mathbb R}
\newcommand{\cB}{\mathcal B}
\newcommand{\cK}{\mathcal K}
\newcommand{\cO}{\mathcal O}
\newcommand{\cP}{\mathcal P}
\newcommand{\cS}{\mathcal S}
\newcommand{\cT}{\mathcal T}
\newcommand{\cU}{\mathcal U}
\newcommand{\cV}{\mathcal V}
\newcommand{\cX}{\mathcal X}
\newcommand{\cY}{\mathcal Y}
\newcommand{\cZ}{\mathcal Z}
\DeclareMathOperator*{\argmin}{argmin} 
\DeclareMathOperator*{\argmax}{argmax}
\definecolor{violet}{RGB}{148, 0, 211}
\newcommand{\otilde}{\tilde{O}}
\newcommand{\Otilde}{\otilde}
\newcommand{\code}[1]{\textnormal{\texttt{#1}}}
\newcommand{\ellOneEllOne}{\ell_1\text{-}\ell_1}
\newcommand{\ellTwoEllOne}{\ell_2\text{-}\ell_1}
\newcommand{\gap}{\mathrm{gap}}
\newcommand{\prox}{\mathrm{prox}}
\newcommand{\true}{\code{True}}
\newcommand{\guilty}{\code{guilty}}
\newcommand{\smooth}{\code{smooth}}
\newcommand{\verdict}{\code{verdict}}
\newcommand{\Step}{\textsc{Step}}
\newcommand{\SUG}{\textsc{SUPGSolver}}
\newcommand{\flag}{\code{flag}}
\newcommand{\B}{\mathbb{B}}
\newcommand{\KL}{\mathrm{KL}}
\newcommand{\xset}{\cX}
\newcommand{\yset}{\cY}
\newcommand{\zset}{\cZ}
\newcommand{\simplex}{\Delta}
\DeclarePairedDelimiterXPP{\inangle}[1]{}{\langle}{\rangle}{}{#1}
\DeclarePairedDelimiterXPP{\inbraces}[1]{}{\{}{\}}{}{#1}
\DeclarePairedDelimiterXPP{\inparen}[1]{}{(}{)}{}{#1} %
\DeclarePairedDelimiterXPP{\insquare}[1]{}{[}{]}{}{#1}
\DeclarePairedDelimiter{\innorm}{\|}{\|}
\NewDocumentCommand\breg{s m m }{ %
  \IfBooleanTF{#1} %
    { V^{r}_{#2} \left( #3 \right) }
    { V^{r}_{#2} ( #3 ) }
}
\NewDocumentCommand\bregwr{s O{} m m }{ %
  \IfBooleanTF{#1} %
    { V^{#2}_{#3} \left( #4 \right) }
    { V^{#2}_{#3} ( #4 ) }
}
\newcommand{\rx}{r_\mathsf{x}}
\newcommand{\ry}{r_\mathsf{y}}
\NewDocumentCommand\xbreg{s m m }{
  \IfBooleanTF{#1} %
    { \bregwr*[\rx]{#2}{#3} }
    { \bregwr[\rx]{#2}{#3} }
}
\NewDocumentCommand\ybreg{s m m }{
  \IfBooleanTF{#1} %
    { \bregwr*[\ry]{#2}{#3} }
    { \bregwr[\ry]{#2}{#3} }
}
\newcommand{\gm}{\nabla_{\pm}}
\newcommand{\x}{_\mathsf{x}}
\newcommand{\y}{_\mathsf{y}}
\newcommand{\zopt}{z^\star}
\newcommand{\grad}{\nabla}
\DeclarePairedDelimiterXPP{\dualnorm}[1]{}{\|}{\|}{_{*}}{#1}
\newcommand{\AS}{\textsc{AS}}
\newcommand{\DB}{\textsc{DB}}
\newcommand{\DMP}{\textsc{DMP}}
\newcommand{\overeq}[1]{\overset{#1}{=}}
\newcommand{\overle}[1]{\overset{#1}{\le}}
\newcommand{\overge}[1]{\overset{#1}{\ge}}
\newcommand{\pathd}{\mathcal{P}}
\newcommand{\Holder}{H\"{o}lder}
\newcommand{\diag}{\mathrm{diag}}
\newcommand{\judge}{\textsc{Judge}}
\newcommand{\normalize}{\mathrm{unit}}
\newcommand{\xtrunc}{\xset_\nu}
\newcommand{\ytrunc}{\yset_\nu}
\newcommand{\ztrunc}{\zset_\nu}
\newcommand{\ground}[2]{{(#1)}_{#2}}
\newcommand{\unground}[2]{{(#1)}_{#2, *}}
\newcommand{\ball}{\B}
\DeclarePairedDelimiter{\inmaxnorm}{\|}{\|_{\mathrm{max}}}
\newcommand{\regret}{\mathrm{regret}}
\newcommand{\walpha}{w_\alpha}
\newcommand{\wbeta}{w_\beta}
\newcommand{\hess}{\nabla^2}
\newcommand{\inKL}[2]{\KL(#2||#1)} %
\newcommand{\bilinear}[1]{f_{#1}}
\newcommand{\zcenter}{z_\mathsf{center}}
\newcommand{\epsprim}{\epsilon'}
\newcommand{\sball}{\cB}
\newcommand{\gammav}{\gamma_{\mathrm{v}}}
\newcommand{\bestresponse}{\mathsf{map}}
\newcommand{\oracleSuccess}{\code{success}}
\newcommand{\oracleFailure}{\code{failure}}
\newcommand{\MDMP}{\textsc{MDMP}}
\newcommand{\MDMPImp}{\textsc{MDMPSearch}}
\newcommand{\oracle}{\cO_{\textsc{SEARCH}}}
\newcommand{\Validate}{\textsc{ConstrainedSolve}}
\newcommand{\geomean}[1]{{\mathsf{g}({#1})}}
\newcommand{\mean}[1]{{\mathsf{m}({#1})}}
\newcommand{\collapsed}[1]{{\mathsf{q}({#1})}} %
\newcommand{\cZint}{\cZ_{\mathrm{int}}}
\newcommand{\zbar}{\bar{z}}
\newcommand{\Range}{\Gamma}
\newcommand{\uset}{\mathcal{U}}
\newcommand{\idset}{\mathcal{I}}
\newcommand{\biset}{\mathcal{J}}
\newcommand{\size}{\mathrm{size}}
\newcommand{\dgfsetup}{\mathcal{S}}
\newcommand{\xhat}{\hat{x}}
\newcommand{\yhat}{\hat{y}}
\newcommand{\Omegatilde}{\tilde{\Omega}}
\newcommand{\cautiousSearch}{\textsc{CautiousBisectionSearch}}
\newcommand{\ysetstable}{\yset_{\mathrm{stable}}}
\newcommand{\xsetstable}{\xset_{\mathrm{stable}}}
\newcommand{\zsetstable}{\zset_{\mathrm{stable}}}
\newcommand{\binpairs}{\mathcal{K}}
\newcommand{\prodsetup}{\mathrm{prod}}
\newcommand{\ODMP}{\mathcal{O}_{\DMP}}
\newcommand{\OMDMP}{\mathcal{O}_{\MDMP}}
\newcommand{\OAPPROX}{\mathcal{O}_{\AS}}
\newcommand{\ODB}{\mathcal{O}_{\DB}}
\newcommand{\ktstar}{k_t^\star}
\newcommand{\qU}{\mathsf{q}(\uset)} %
\newenvironment{assumptions}
  {
   \paragraph{Assumptions.}
  }
  {
   \par
   \medskip
  }
\newcommand{\Mtilde}{\widetilde{M}}
\newcommand{\innerpath}{\mathcal{P}_{\mathrm{inner}}}
  \newcommand{\cFOCS}[1]{\nth{\intcalcSub{#1}{1959}}\ Annual\ IEEE\ Symposium\ on\ Foundations\ of\ Computer\ Science\ (FOCS)}
  \newcommand{\cCOLT}[1]{\nth{\intcalcSub{#1}{1987}}\ Annual\ Conference\ on\ Computational\ Learning\ Theory\ (COLT)}
  \newcommand{\cSODA}[1]{\nth{\intcalcSub{#1}{1989}}\ Annual\ ACM-SIAM\ Symposium\ on\ Discrete\ Algorithms\ (SODA)}
  \newcommand{\cNIPS}[1]{Advances\ in\ Neural\ Information\ Processing\ Systems\ \intcalcSub{#1}{1987} (NeurIPS)}
  \newcommand{\cAAAI}[1]{AAAI\ Conference\ on\ Artificial (AAAI)}
\title{Solving Matrix Games with Near-Optimal Matvec Complexity}
\author{%
    Ishani Karmarkar\thanks{Stanford University, \texttt{\string{ishanik,ocarroll,sidford\string}@stanford.edu}} 
    \and
    Liam O'Carroll\footnotemark[1] 
    \and
    Aaron Sidford\footnotemark[1] 
}
\begin{document}

\pagenumbering{gobble}

\maketitle

\begin{abstract}
We study the problem of computing an $\epsilon$-approximate Nash equilibrium of a two-player, bilinear game with a bounded payoff matrix $A \in \R^{m \times n}$, when the players' strategies are constrained to lie in simple sets. We provide algorithms which solve this problem in $\tilde{O}(\epsilon^{-2/3})$ matrix-vector multiplies (matvecs) in two well-studied cases: $\ell_1$-$\ell_1$ (or zero-sum) games, where the players' strategies are both in the probability simplex, and $\ell_2$-$\ell_1$ games (encompassing hard-margin SVMs), where the players' strategies are in the unit Euclidean ball and probability simplex respectively. These results improve upon the previous state-of-the-art complexities of $\tilde{O}(\epsilon^{-8/9})$ for $\ell_1$-$\ell_1$ and $\tilde{O}(\epsilon^{-7/9})$ for $\ell_2$-$\ell_1$ due to [KOS '25]. In both settings our results are nearly-optimal as they match lower bounds of [KS '25] up to polylogarithmic factors. 
\end{abstract}

\setcounter{tocdepth}{2}

\tableofcontents \clearpage

\pagenumbering{arabic}

\section{Introduction}
\label{sec:intro}

In this paper, we consider the fundamental problem of computing \emph{$\epsilon$-solutions of matrix games} \cite{nesterov2005smooth,nem04,carmon2019variance,karmarkar2025solvingzerosumgames,kornowski2024oracle,kornowski2024oracleupdated,carmon2024whole,carmon2020coordinate,grigoriadis1995sublinear,clarkson2012sublinear}. In a \emph{matrix game}, we must solve the following pair of minimax and maximin optimization problems for a matrix $A \in \R^{m \times n}$ and compact, convex $\xset \subset \R^n$ and $\yset \subset \R^m$:
\begin{equation}
\label{eq:intro-general-matrix-game}
\min_{x \in \xset} \max_{y \in \yset} y^\top A x
~~\text{and}~~
\max_{y \in \yset} \min_{x \in \xset} y^\top A x\,.
\end{equation}
We call $(\hat{x},\hat{y}) \in \xset \times \yset$ an \emph{$\epsilon$-solution} if it is an \emph{$\epsilon$-approximate Nash equilibrium} in the sense that
\[
\gap(\hat{x},\hat{y}) \leq \epsilon, ~~\text{where}~~
\gap(\hat{x},\hat{y}) \defeq \max_{y \in \yset} y^\top A \hat{x} - \min_{x \in \xset} \hat{y}^\top A x\,.
\]
$\epsilon$-approximate solutions for matrix games always exist \cite{freund1999adaptive,nemirovskij1983problem,beck2003mirrordescent} and are a standard approximate solution concept. In particular, if $(\xhat, \yhat)$ is an $\epsilon$-solution, then $\xhat$ is an (additive) $\epsilon$-approximate minimizer of $\min_{x \in \xset} \max_{y \in \yset} y^\top A x$,\footnote{In other words, $\max_{y \in \yset} y^\top A \xhat \le \max_{y \in \yset} y^\top A x + \epsilon$ for all $x \in \xset$.} and $\yhat$ is an $\epsilon$-approximate maximizer of $\max_{y \in \yset} \min_{x \in \xset} y^\top A x$.

We focus on this problem of solving matrix games in two foundational, well-studied special cases described below. To define these cases (throughout the paper) we let $\Delta^{k} \defeq \{u \in \R^k_{\geq 0} : \innorm{u}_1 = 1 \}$ and $\ball^k \defeq \inbraces{u \in \R^k : \innorm{u}_2 \le 1}$ denote the $k$-dimensional probability simplex and Euclidean unit ball respectively (see Section~\ref{sec:prelims} for additional notation).

\begin{itemize}
    \item \emph{$\ell_1$-$\ell_1$ games}: In this setting, $\xset = \simplex^n$, $\yset = \simplex^m$, and $|A_{ij}| \leq 1$ for all $i\in [m]$ and $j \in [n]$. Such games encompass solving normal-form zero-sum games \cite{vonNeumann1928} and linear programming \cite{Adler2013,Dantzig1953}.
    
    \item \emph{$\ell_2$-$\ell_1$ games}: In this setting, $\xset = \ball^n$, $\yset = \Delta^m$, and $\norm{A_{i, :}}_2 \leq 1$ for all $i \in [m]$. Such games encompass hard-margin support vector machines (SVMs) \cite{rosenblatt1958perceptron,shwartz2014understandingML,mcculloch1943logical,soheili2012smoothperceptron,yu2014saddlepointsacceleratedperceptron,wang2023accelerated}, namely, computing a maximum-margin linear classifier/separating hyperplane.\footnote{Formally, the $\ellTwoEllOne$ matrix game corresponds to computing a maximum-margin linear classifier through the origin.
      However, this can be extended to capture arbitrary affine hyperplanes via standard reductions.}
\end{itemize}

We study these games under the assumptions that $n$ and $m$ are known, but $A$ is unknown and only accessible via \emph{matvec (queries)}, namely, matrix-vector multiplies of the form $(A^\top y, Ax)$ for an input $(x,y)\in\xset \times \yset$. In the context of zero-sum ($\ellOneEllOne$) games, this corresponds to both players observing the expected payoffs of each individual action, when the other player's strategy is fixed. In the context of SVMs ($\ellTwoEllOne$ games) where the rows of $A$ are data points (multiplied by the corresponding labels), this corresponds to taking linear combinations of data points $(A^\top y)$ and inner products with data points $(Ax)$. 

The fundamental question we study in this paper is:
\begin{center}
    \emph{How many matvecs are necessary to compute $\epsilon$-solutions of $\ellOneEllOne$ and $\ellTwoEllOne$ games?}
\end{center}
Until recently, the state-of-the-art query complexity for these problems was $\otilde(\epsilon^{-1})$ due to seminal, independent works of Nesterov and Nemirovski two decades ago \cite{nem04,nesterov2005smooth}.\footnote{Throughout the paper, we use $\otilde(\cdot)$ and $\Omegatilde(\cdot)$ to hide multiplicative polylogarithmic factors in $n$, $m$, and $\epsilon^{-1}$.} Despite extensive research and the development of alternative algorithms (see \Cref{table:complexities}), this $\Otilde(\epsilon^{-1})$ complexity was only recently improved by \citet{karmarkar2025solvingzerosumgames} to $\Otilde(\epsilon^{-8/9})$ for $\ellOneEllOne$ games and $\Otilde(\epsilon^{-7/9})$ for $\ellTwoEllOne$ games.

Excitingly, \cite{karmarkar2025solvingzerosumgames} showed that the $\otilde(\epsilon^{-1})$ query complexity could be improved, but was unable to match the state-of-the-art lower bounds of \citet{kornowski2024oracle, kornowski2024oracleupdated}. In particular, \citep{kornowski2024oracle} showed that deterministic algorithms require $\Omegatilde(\epsilon^{-2/3})$ queries to solve $\ellTwoEllOne$ games and $\Omegatilde(\epsilon^{-2/5})$ queries to solve $\ellOneEllOne$ games. More recently, \citep{kornowski2024oracleupdated} improved the $\ellOneEllOne$ lower bound to $\tilde{\Omega}(\epsilon^{-2/3})$ queries.

The central goal of this paper is to make progress on closing the gap between upper and lower bounds for this problem. Given the fundamental and well-studied nature of this problem and recent progress of \cite{karmarkar2025solvingzerosumgames} and \cite{kornowski2024oracle, kornowski2024oracleupdated}, we defer to these works for a more comprehensive motivation of and introduction to this problem, as well as additional discussion of related work.

\paragraph{Our results.}
The main result of this paper is a general framework for solving matrix games which improves the state-of-the-art deterministic query complexity for both problems to $\Otilde(\epsilon^{-2/3})$.

\begin{restatable}{theorem}{simplesmain}\label{thm:final-result-l1-l1-aka-zero-sum}
There is a \emph{deterministic} algorithm that computes an $\epsilon$-solution of any $\ell_1$-$\ell_1$ game with $\otilde(\epsilon^{-2/3})$ matvecs to $A$.
\end{restatable}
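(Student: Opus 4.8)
The plan is to reduce the $\ell_1$-$\ell_1$ game to a short sequence of regularized (``ball-constrained'') matrix subgames via an accelerated proximal-point / ball-oracle scheme, and to implement each subgame solve recursively from the same primitives. View the game as finding a zero over $\Delta^n\times\Delta^m$ of the monotone affine operator $F(x,y)=(A^\top y,-Ax)$; with the entropy DGF on each simplex, $F$ has Lipschitz constant $\|A\|_{1\to\infty}=O(1)$ in the induced local norms and $O(\log mn)$ Bregman diameter. Since $F$ is affine its Jacobian is the \emph{constant} operator $(u,v)\mapsto(A^\top v,-Au)$, so a single matvec supplies exact second-order information about $F$; accordingly I would run a Monteiro--Svaiter-type accelerated proximal-point loop for monotone VIs, which under exact subproblem solves converges to gap $\epsilon$ in $\tilde{O}(\epsilon^{-2/3})$ outer steps --- exactly the characteristic second-order-VI rate, matching the $\tilde\Omega(\epsilon^{-2/3})$ lower bound of \cite{kornowski2024oracleupdated}. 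Each outer step reduces to approximately solving a strongly-monotone, ball-constrained subgame $\min_{x'}\max_{y'}\{(y')^\top Ax'+\langle \ell_k,(x',y')\rangle+\tfrac{\mu_k}{2}\|(x',y')-z_k\|^2\}$ over the simplices intersected with $\{\|(x',y')-z_k\|\le r_k\}$, where the linear term $\ell_k$, regularization $\mu_k$, and radius $r_k$ are produced by the outer loop (with $\mu_k$ tied to $1/r_k$ by a line search).

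For the subgame I would exploit locality and self-similarity. Substituting $(x',y')=z_k+r_k w$ with $\|w\|\le 1$, the objective becomes an affine function of $w$ --- whose coefficients are $Ax_k$ and $A^\top y_k$, obtained with two matvecs --- plus $r_k^2$ times a matrix game in $w$ with the \emph{same} matrix $A$. When the outer loop's tolerance for the subgame exceeds $r_k^2\|A\|$, the bilinear cross-term is negligible and the remaining strongly-convex--strongly-concave \emph{linear} problem is solved in closed form (a projection onto simplex $\cap$ ball, zero extra matvecs); when the tolerance is finer, the cross-term subgame is itself an $\ell_1$-$\ell_1$-type game, which I solve \emph{recursively} by the same scheme, now with a more favorable accuracy-to-diameter ratio (and the regularizer $\mu_k$ also yields geometric convergence of a warm-started inner mirror-prox, so each recursive level adds only $\mathrm{polylog}$ matvec ``correction'' rounds). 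Unrolling to depth $O(\log(1/\epsilon))$ contracts the total matvec exponent from $1$ (plain mirror-prox) toward the fixed point $2/3$, with depth one already recovering the $\tilde{O}(\epsilon^{-8/9})$ bound of \cite{karmarkar2025solvingzerosumgames}. Specializing constants to the $\ell_1$-$\ell_1$ setup ($\|A\|_{1\to\infty}\le 1$, simplex diameters $O(\log mn)$) and absorbing logarithms into $\tilde O(\cdot)$ then gives $\tilde{O}(\epsilon^{-2/3})$.

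The main obstacle is the parameter/error bookkeeping that makes this loop close: the outer step sizes $\lambda_k$ (hence $r_k$, $\mu_k$) and the per-subgame tolerances must be chosen so that (i) every subgame need only be solved to accuracy $\mathrm{poly}(\epsilon, r_k)$, never exponentially small, so that a $\mathrm{polylog}$/recursive budget suffices; (ii) the accumulated inexactness telescopes through the Monteiro--Svaiter potential so the final gap is $\le\epsilon$ after $\tilde{O}(\epsilon^{-2/3})$ outer steps; and (iii) each recursive level adds only $\mathrm{polylog}$ overhead, so the $O(\log(1/\epsilon))$-deep unrolling still costs $\tilde O(\cdot)$ matvecs. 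Two further technical points: keeping the algorithm \emph{deterministic} forces deterministic inner solvers and line searches (mirror-prox / Krylov / bisection, no sketching or stochastic gradients); and the subgames are \emph{constrained} saddle problems (simplex $\cap$ ball), not clean linear systems, so the ``closed-form'' and recursive inner solves must cope with active constraints --- which I would handle by staying in the relative interior via the entropy regularizer and truncating the domain, as in \cite{karmarkar2025solvingzerosumgames}.
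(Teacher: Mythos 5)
Your proposal takes a fundamentally different route from the paper, and unfortunately it has a genuine gap: the key mechanism that makes the $\otilde(\epsilon^{-2/3})$ rate possible---the \emph{smooth-until-proven-guilty} matrix-learning / model-maintenance idea, where matvecs that don't make progress on the subproblem must instead make measurable progress in learning $(A)_{z}$ in Frobenius norm---is entirely absent. Both the paper and the prior work \cite{karmarkar2025solvingzerosumgames} rely on it; the paper's improvement from $\otilde(\epsilon^{-8/9})$ to $\otilde(\epsilon^{-2/3})$ comes from a better amortization of that learning across outer iterations (the prox multi-point / dyadic decomposition), not from a different outer-loop acceleration. Your plan replaces this with a Monteiro--Svaiter accelerated proximal-point outer loop plus a self-similar recursion, neither of which is analyzed.

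Concretely, the recursion as sketched does not close. For the prox/ball subproblem of radius $r$ solved to additive accuracy $\delta$, a plain strongly-monotone mirror-prox inner solve costs $\otilde(L/\mu)$ matvecs with $\mu \asymp 1/r$ and $L=O(1)$, i.e.\ $\otilde(1/\alpha^{(t)})$ per outer step. Since the outer loop (whether plain prox point with adaptive step sizes, as in the paper, or accelerated) enforces $\sum_t (\alpha^{(t)})^{-1} \gtrsim \Gamma/\epsilon$ to terminate, the total inner cost is $\otilde(\epsilon^{-1})$---no improvement at all over vanilla mirror prox. Your proposed escape is recursion: treat the cross-term in the rescaled ball subproblem as another $\ellOneEllOne$-type game. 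But you never exhibit the recurrence, and the natural one $T(\epsilon) = \otilde(\epsilon^{-2/3})\cdot T(\epsilon')$ with inner accuracy $\epsilon'$ scaling like $\epsilon / r^2$ has fixed point exponent $1$, not $2/3$: the accumulated exponents $\tfrac{2}{3}(1 + \tfrac13 + \tfrac19 + \cdots)$ sum to $1$. Moreover the rescaled subdomain is a simplex intersected with a small Euclidean ball, so the geometry, Lipschitz constants, and Bregman diameter do not transform self-similarly, undermining the ``same game with matrix $A$'' claim. The observation that a single matvec gives the exact (constant) Jacobian is correct but vacuous for cost: an affine VI has zero second-order smoothness constant, yet the implicit MS step is a linear-system solve in $A$, which still requires many matvecs. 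Finally, the assertion that depth-one recursion recovers $\otilde(\epsilon^{-8/9})$ is not what \cite{karmarkar2025solvingzerosumgames} does---their $\epsilon^{-8/9}$ bound comes from the $\tau$-thresholded SUG solver with amortized model updates, setting $\tau \approx \epsilon^{2/9}$; it is not a recursive ball-oracle scheme. To rescue this approach you would need to quantify the recursion precisely, address the non-self-similarity of the constrained subdomains, and---most likely---reintroduce some form of matrix learning to make the inner solves sublinear in $1/\alpha^{(t)}$.
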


\begin{restatable}{theorem}{svmmain}
\label{thm:final-result-l2-l1-aka-SVM}
There is a \emph{deterministic} algorithm that computes an $\epsilon$-solution of any $\ell_2$-$\ell_1$ game with $\otilde(\epsilon^{-2/3})$ matvecs to $A$.
\end{restatable}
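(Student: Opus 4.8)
The plan is to obtain \Cref{thm:final-result-l2-l1-aka-SVM} by re-instantiating the same general minimax framework that yields \Cref{thm:final-result-l1-l1-aka-zero-sum}, now with the distance-generating functions (dgfs) and norms matched to the $\ell_2$-$\ell_1$ geometry; essentially no new algorithmic idea should be needed, only a re-derivation of the constants. On the $x$-block ($\xset=\ball^n$) I would take the squared-Euclidean dgf $\rx(x)=\tfrac12\innorm{x}_2^2$, which is $1$-strongly convex in $\innorm{\cdot}_2$ with range $O(1)$ over $\ball^n$; on the $y$-block ($\yset=\simplex^m$) I would take negative entropy $\ry(y)=\sum_i y_i\log y_i$, $1$-strongly convex in $\innorm{\cdot}_1$ with range $O(\log m)$. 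The hypothesis $\innorm{A_{i,:}}_2\le 1$ is exactly the normalization that makes every operator norm of $A$ entering the analysis at most $1$: $\innorm{A^\top y}_2\le\sum_i y_i\innorm{A_{i,:}}_2\le 1$ for $y\in\simplex^m$, and $\innorm{Ax}_\infty=\max_i\abs{\inprod{A_{i,:},x}}\le\innorm{x}_2\le 1$ for $x\in\ball^n$. Consequently the bilinear coupling $f(x,y)=y^\top Ax$ is $1$-smooth in the mixed $(\ell_2,\ell_1)$ pairing, so all Lipschitz and smoothness parameters fed into the framework are $\mathrm{polylog}(m,n)$ and the advertised $\tilde{O}(\epsilon^{-2/3})$ should come out cleanly.

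Granting this, the argument mirrors the $\ell_1$-$\ell_1$ case in three layers. (i) An outer accelerated proximal loop of Monteiro--Svaiter / ``ball-oracle'' type (the $\MDMP$ / $\MDMPImp$ layer, with a cautious step-size/bisection search) reduces computing an $\epsilon$-solution on $\xset\times\yset$ to approximately solving $\tilde{O}(\epsilon^{-2/3})$ strongly monotone proximal subproblems, each effectively confined to a small ball (radius $\rho$) around the current iterate in the product local norm. (ii) Each subproblem is dispatched by an inner strongly monotone mirror-prox routine ($\SUGStronglyMonotoneMirrorProx$) converging at the strongly-monotone (linear) rate. (iii) A localization step shows that on such a ball the maps $x\mapsto Ax$ and $y\mapsto A^\top y$ are pinned down up to small error --- $O(\rho)$ in $\innorm{\cdot}_\infty$ and $\innorm{\cdot}_2$ respectively, the latter passing from a Bregman ball to an $\ell_1$ ball via Pinsker's inequality --- by a single matvec at the ball's center, so that the bilinear term restricted to the ball is affine up to a negligible cross-term and the entire inner solve (and hence each outer iteration, search included) costs only $\mathrm{polylog}(m,n,\epsilon^{-1})$ matvecs. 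Composing (i)--(iii) gives $\tilde{O}(\epsilon^{-2/3})$ matvecs in total, and every component is deterministic, so the resulting algorithm is deterministic as claimed.

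The one genuinely new ingredient --- and the step I expect to be the main obstacle --- is the asymmetry of the $\ell_2$-$\ell_1$ setup together with the degeneration of the entropy dgf near $\partial\simplex^m$: the entropy Hessian (hence the local norm on the $y$-block, and any relative-smoothness bound for $\ry$) blows up toward the boundary, while the $x$-block stays globally benign. I would handle this exactly as the macros $\xtrunc,\ytrunc,\ztrunc,\gaptrunc$ anticipate: first pass to the $\nu$-interior of the simplex, noting that restricting $y$ to $\{y\in\simplex^m : y_i\ge\nu/m\}$ perturbs both the value and the gap of the game by at most $O(\nu)$ (since $\innorm{Ax}_\infty\le 1$), so $\nu=\Theta(\epsilon)$ is harmless while bounding the entropy Hessian by $O(1/\nu)=\mathrm{poly}(\epsilon^{-1})$; one then checks that $1/\nu$ only ever appears inside logarithms in the iteration counts, contributing $\mathrm{polylog}$ factors. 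The remaining work --- which should be strictly lighter than the $\ell_1$-$\ell_1$ analysis because one block is Euclidean --- is bookkeeping: verifying that the mixed $\ell_2$/entropy local norms combine correctly in the Monteiro--Svaiter potential and step-size test, that the inner mirror-prox attains the stated rate in these norms, and that the errors from truncation, from freezing matvecs on each ball, and from inexactly solving each subproblem each sum to $O(\epsilon)$.
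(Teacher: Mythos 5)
Your description of the algorithmic framework omits the paper's central new contribution and, as written, contains a step that is simply false; as a result the plan does not actually explain why $\otilde(\epsilon^{-2/3})$ matvecs suffice.

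The concrete problem is step (iii). You claim that restricting to a small stable ball around the current iterate ``pins down'' $Ax$ and $A^\top y$ via a single matvec at the ball's center, so that ``the entire inner solve (and hence each outer iteration, search included) costs only $\mathrm{polylog}(m,n,\epsilon^{-1})$ matvecs.'' That is not true, and if it were, the proof would be trivial: $\otilde(\epsilon^{-2/3})$ outer iterations times $\otilde(1)$ matvecs per iteration. In fact a \emph{single} inner solve $\SUG$ at iteration $t$ can use far more than $\otilde(1)$ matvecs — its cost is of the form $\otilde\inparen*{\tau^{-2}\insquare{\size(\pathd^{(t)})-\size(\pathd'^{(t)})}+\tau/\alpha^{(t)}}$, where the first term (model-update iterations) is not individually $\otilde(1)$. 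The correct claim is only an \emph{amortized} one: summed over all outer iterations, the model-update terms telescope to $\otilde(1)$, because the same residual matrices $\Delta_{j,j'}=(A)_{z^{(j')}}-(A)_{z^{(j)}}$ are re-used across iterations and shrunk monotonically in Frobenius norm. The fact that the bilinear coupling is $1$-smooth in the $(\ell_2,\ell_1)$ pairing, which you emphasize, only gets you $\otilde(\epsilon^{-1})$ via mirror prox; it does not by itself break that barrier.

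What is missing from your proposal is precisely the machinery the paper introduces to make that amortization possible: the \emph{prox multi-point method} (Algorithm~\ref{alg:multiprox-method-monotone-op}), which regularizes each step against a \emph{multiset} of past iterates so that $\sum_{(j,j')}\breg{z^{(j)}}{z^{(j')}}$ over dyadically-gapped pairs is $\otilde(1)$ (Lemmas~\ref{lem:multiprox-correctness} and~\ref{lem:dyadic-prox-movement-bound}); the \emph{matrix-approximation paths} (Definition~\ref{def:matrix-approx-path}) that represent $(A)_{z^{(t)}}$ as a telescoping sum of $O(\log t)$ fixed difference matrices, so that model progress in learning each piece is never lost when the local-norm basis $\zground^{(t)}$ moves; and the \emph{$\zeta$-compatibility} condition (Definition~\ref{def:zeta-compatible-mapping}, verified with $\zeta=2$ in Lemma~\ref{lemma:compatibility} for both setups), which converts the movement bound into a bound on $\sum\innorm{\Delta_{j,j'}}_F^2$. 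Without these you are describing the predecessor algorithm of \cite{karmarkar2025solvingzerosumgames}, which for $\ell_2$-$\ell_1$ games gives only $\otilde(\epsilon^{-7/9})$: the basis change per iteration costs $\otilde(\epsilon^{-1/3})$ model progress overall, not $\otilde(1)$. Your dgf choices, truncation to $\Delta_\nu^m$, and the $\cautiousSearch$ bisection layer are all correct and do match the paper; but the statement that ``essentially no new algorithmic idea should be needed, only a re-derivation of the constants'' is exactly where the argument breaks.
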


Theorems~\ref{thm:final-result-l1-l1-aka-zero-sum} and~\ref{thm:final-result-l2-l1-aka-SVM} improve upon the prior state-of-the-art query complexities due to \cite{karmarkar2025solvingzerosumgames} by a factor of $\Omegatilde(\epsilon^{-2/9})$ for $\ellOneEllOne$ games and $\Omegatilde(\epsilon^{-1/9})$ for $\ellTwoEllOne$ games. Importantly, they resolve the deterministic matvec complexity of $\ellTwoEllOne$ and $\ellOneEllOne$ games up to polylogarithmic factors in light of the aforementioned $\Omegatilde(\epsilon^{-2/3})$ lower bounds of \cite{kornowski2024oracle, kornowski2024oracleupdated}. See  Table~\ref{table:complexities} for a summary of advancements for both problems.\footnote{Independently, Arun Jambulapati has claimed improvements for this problem.}

\begin{table}[h]
   \centering
   \begin{tabular}{@{}p{8cm}p{2.5cm}p{2.5cm}@{}} %
   \toprule
    Method & $\ellOneEllOne$ & $\ellTwoEllOne$  \\ \midrule
   Accelerated gradient descent \cite{nesterov2005smooth}  & $\epsilon^{-1}$ & $\epsilon^{-1}$ \\
   Mirror prox \cite{nem04} & $\epsilon^{-1}$ & $\epsilon^{-1}$ \\
   Dual extrapolation \cite{Nesterov2007dualextrapolation} & $\epsilon^{-1}$ & $\epsilon^{-1}$ \\
   Optimistic mirror descent/FTRL \cite{Rakhlin2013online,steinhardt2014adaptivity,joulani2017modular} & $\epsilon^{-1}$ & $\epsilon^{-1}$ \\
   \citet{karmarkar2025solvingzerosumgames} & $\epsilon^{-8/9}$ & $\epsilon^{-7/9}$ \\
   \rowcolor[HTML]{EFEFEF}
   This paper & $\epsilon^{-2/3}$ & $\epsilon^{-2/3}$  \\ \midrule
    State-of-the-art lower bounds \cite{kornowski2024oracle, kornowski2024oracleupdated} & $\epsilon^{-2/3}$ & $\epsilon^{-2/3}$ \\
   \end{tabular}
   
   \caption{\label{table:complexities}
       Asymptotic matvec complexities for $\ellOneEllOne$ and $\ellTwoEllOne$ games. Constants and polylogarithmic factors in $n$, $m$, and $\epsilon^{-1}$ are omitted for brevity. 
       }
\end{table}

\paragraph{Other lower bounds.} Beyond the lower bounds listed in Table~\ref{table:complexities}, we note that \cite{kornowski2024oracle,kornowski2024oracleupdated} improved upon \cite{hadiji2024towards}, which achieved a $\Omega(\log(1 / (n \epsilon)))$ lower bound for $\ellOneEllOne$ games when $m = n$ for sufficiently small $\epsilon = \mathrm{poly}(1 / n)$. In addition, \citep{daskalakis2011near} showed that any \emph{no regret} online learning algorithm for zero-sum games must have regret that scales as $\Omega(\epsilon^{-1})$. The prior work of \citep{karmarkar2025solvingzerosumgames} and our own circumvent this by using additional structure of the game. We defer to \citep{karmarkar2025solvingzerosumgames} for further details.

\paragraph{Techniques.} 
Our techniques build directly upon the algorithmic framework of \cite{karmarkar2025solvingzerosumgames}, which consists of an \emph{outer loop}, \emph{bisection search procedure}, and \emph{inner loop}. Their outer loop is based on the 
\emph{prox(imal) point method} \cite{rockafellar1976monotone,martinet1970regularisation}, which reduces solving the original matrix game \eqref{eq:intro-general-matrix-game} to solving a sequence of \emph{regularized} matrix game subproblems. By dynamically searching (via their bisection search procedure) for a particular level of regularization at each iteration of the outer loop, they ensure that each of the regularized matrix game subproblems is \emph{stable} (see \Cref{sec:overview-of-approach}). This in turn enables their \emph{inner loop} subproblem solver to compute a high accuracy solution for the regularized matrix game subproblem with $\Otilde(\epsilon^{- c})$ matvecs for a suitable constant $c > 0$.\footnote{Informally, we say that an algorithm solves a problem to ``high accuracy'' if it can compute an $\eta$-approximate solution to the problem with a matvec complexity scaling at most poly-logarithmically in $\eta^{-1}$.} The subproblem solver consists of a \emph{smooth-until-proven-guilty} procedure which leverages the fact that matvecs which do not directly contribute to progress in solving the subproblem must contribute to progress in \emph{learning} the matrix $A$, and therefore can be bounded with careful algorithmic modifications.

Their framework ultimately yields a $\Otilde(\epsilon^{-8/9})$ query complexity for $\ellOneEllOne$ and for $\ellTwoEllOne$ games. Additionally, \cite{karmarkar2025solvingzerosumgames} obtain an improved $\Otilde(\epsilon^{-7/9})$ query complexity for $\ellTwoEllOne$ games via an amortized analysis which involves maintaining an approximation of the matrix $A$ between regularized matrix game subproblems (so that progress made in learning $A$ is not lost between subproblems). 

At a high level, our framework follows a similar approach to their algorithm for $\ellTwoEllOne$ games. In particular, we also use an amortized analysis and have an outer loop, bisection search procedure, and inner loop. However, our outer loop and amortized analysis differ substantially from \cite{karmarkar2025solvingzerosumgames}. Regarding the former, we develop what we term a \emph{prox multi-point method}, which generalizes the standard 
prox point method. We show that by carefully applying this new general method, we can achieve tighter control of the total change in the regularized matrix game subproblems that the inner loop solves. 

Beyond yielding an improved query complexity, the prox multi-point method primitive enables a simpler and perhaps more flexible amortization argument than the $\Otilde(\epsilon^{-7/9})$ algorithm of \cite{karmarkar2025solvingzerosumgames} for $\ellTwoEllOne$ games. Via this new framework and improved analysis, our framework also extends the amortized argument directly to $\ellOneEllOne$ games. Additionally, our framework arguably simplifies aspects of \cite{karmarkar2025solvingzerosumgames}, as discussed in \Cref{sec:overview-of-approach}, albeit at the expense of a more complicated outer loop. That said, we believe the prox multi-point method outer loop may be of independent interest and we hope this work provides valuable technical tools for improving the complexity of solving broader classes of structured optimization problems beyond matrix games. 

While our work builds on the recent work of \citep{karmarkar2025solvingzerosumgames, kornowski2024oracle, kornowski2024oracleupdated} to settle the deterministic matvec complexity of $\ell_2$-$\ell_1$ and $\ell_1$-$\ell_1$ games, our results do not immediately imply runtime, memory, or parallel depth improvements for $\ell_2$-$\ell_1$ or $\ell_1$-$\ell_1$ games. We leave this as an interesting direction for future work.

\paragraph{Paper organization.} We define notation and cover preliminaries in Section~\ref{sec:prelims}. With this notation, we provide a detailed technical overview in Section~\ref{sec:overview-of-approach} which reviews the framework of \cite{karmarkar2025solvingzerosumgames} in greater depth and motivates our approach. (We also give a more detailed guide to the rest of the paper at the end of Section~\ref{sec:overview-of-approach} once our algorithmic approach has been described.)
The remainder of the paper gives our outer loop (\Cref{sec:combined-outer-loop-section}), bisection search (\Cref{sec:MDMP-implementation}), and inner loop (\Cref{sec:sug-solver}), which we put together in \Cref{sec:putting-together} to obtain our results. Standard technical details are in the appendix.

\section{Preliminaries}\label{sec:prelims}

\paragraph{General notation.} For a vector $z \in \R^d$, we write $[z]_i$ for its $i$-th entry, $\innorm{z}_p$ for its $\ell_p$-norm, and $\diag(z) \in \R^{d \times d}$ for the diagonal matrix where the $(i, i)$-entry is $[z]_i$. If $z \in \zset \subseteq \R^d$ where $\zset = \xset \times \yset$ is a product space for $\xset \subseteq \R^n$ and $\yset \subseteq \R^m$, we write $z\x \in \xset$ and $z\y \in \yset$ for the first $n$ and last $m$ components of $z$, respectively. We refer to vectors in the $\ell_2$-unit ball, denoted in $d$-dimensions by $\ball^d$, as \emph{unit vectors}, and define $\normalize(z) \defeq z / \innorm{z}_2$ for vectors $z \ne 0$ and $\normalize(0) \defeq 0$. 
For $k \in \Z_{> 0}$ and $\ell \in \Z_{\ge 0}$, we use the notation $[k] = \inbraces{1, 2, \dots, k}$ and $[\ell]_0 = \inbraces{0, 1, \dots, \ell}$. We let $[0] \defeq \emptyset$ and use the convention that a summation over an empty index set is zero (e.g., $\sum_{t \in [0]} 1 = 0$). For sequences (of numbers, vectors, etc.) $\smash{u_1, u_2, \dots, u_T}$ or $\smash{u_1, w_1, u_2, w_2, \dots, u_T, w_T}$ we may use the notation $\{u_t\}_{t \in [T]} = \{u_t\}_{t = 1}^T$ and $\{u_t, w_t\}_{t \in [T]} = \{u_t, w_t\}_{t = 1}^T$ respectively. If $\uset$ is a multiset and $\zset$ is a set, we write $\uset \subseteq \zset$ to denote that $u \in \uset$ implies $u \in \zset$.

We write, e.g., $0_n$ and $0_{n \times m}$ for the zero vector in $\R^n$ and zero matrix in $\R^{n \times m}$ respectively. For any vectors $x, x' \in \R^d$ and $c > 1$, we use the shorthand $x \approx_c x'$ to denote that for every $i \in [d]$, $[x']_i/c \leq [x]_i \leq c [x']_i$. For a matrix $B$, we denote its $i$-th row and $i$-th column by $\smash{B_{i,:}}$ and $\smash{B_{:,i}}$ respectively. We further let $\innorm{B}_F \defeq \sqrt{\sum_{i, j} B_{i j}^2}$ denote its Frobenius norm, $\inmaxnorm{B} \defeq \max_{i, j} |B_{ij}|$ denote its max norm, and $\norm{B}_{p \to q} \defeq \max_{\innorm{x}_p \le 1} \innorm{Bx}_q$ denote the $p \to q$ induced norm. We let $f_B(x, y) \defeq y^\top B x$ denote the bilinear form in $B$. For symmetric matrices $A, B \in \R^{d \times d}$, we use $B \preceq A$ to denote that $(A-B)$ is positive semi-definite. 

\paragraph{Simplices, entropy, and KL divergence.} We let $\simplex^d$ denote the $d$-dimensional probability simplex, and further define, for $\nu > 0$, the sets $\Delta_\nu^d \defeq \{p \in \Delta^d: [p]_i \ge \nu, ~\forall i \in [d]\}$ and $\Delta_{>0}^d \defeq \{p \in \Delta^d : [p]_i > 0, ~\forall i \in [d]\}$. For any $d > 0$, we let $e: \R^d_{\geq0} \to \R$ denote the negative entropy function, i.e., $e(x) = \sum_{i \in [d]} [x]_i\log([x]_i)$ with $0 \log 0 \defeq 0$. We denote the KL divergence by $\KL(x || x') \defeq \sum_{i \in [d]} [x]_i \log([x]_i/[x']_i)$ for $x\in \Delta^d$ and $x' \in \Delta^d_{>0}$, where we let $\smash{0 \log 0 \defeq 0}$.

\paragraph{Problem setups.} 
Next, we introduce \emph{dgf setups}, which enable us to concisely instantiate a set equipped with a distance-generating function (dgf) and corresponding \emph{Bregman divergence}. The following definition is adapted from \cite[Definition 1.6]{karmarkar2025solvingzerosumgames} and modified to assume the dgf $r$ is twice differentiable. This assumption, while nonstandard in general, is typical when working with local norms as we frequently do throughout (see \Cref{sec:overview-of-approach}).

\begin{definition}[dgf setup]\label{def:dgf-setup}
We say $\dgfsetup = (\zset, r)$ is a \emph{dgf setup} if: (i) $\zset \subset \R^d$ is compact and convex; and (ii) $r : \zset \to \R$, referred to as the \emph{distance-generating function (dgf)}, is twice differentiable and 1-strongly convex over $\zset$ with respect to some norm $\normInline{\cdot} : \R^d \to \R$. For any $z, z' \in \zset$, $\breg{z}{z'} \defeq r(z') - r(z) - \inangle{ \grad r(z),  z' - z }$ denotes the \emph{Bregman divergence} induced by the dgf $r$.
\end{definition}

In the rest of this section, we introduce further notations and definitions associated with dgf setups which will be used in the remainder of the paper.

\paragraph{Monotone operators and proximal mappings.} First, we review notation related to monotone operators and proximal mappings. Given a dgf setup $(\zset, r)$ (as in Definition~\ref{def:dgf-setup}), an operator $g: \zset \to \R^d$ is said to be \emph{$\alpha$-strongly monotone} (with respect to $r$) if for any $z, z' \in \cZ$, we have $\inangle*{g(z') - g(z), z' - z} \geq \alpha \breg{z'}{z}$. If $g$ is $0$-strongly monotone, we may simply say it is \emph{monotone}. In particular, in Sections~\ref{sec:MDMP-implementation} through~\ref{sec:putting-together} we use the following definition and associated notation extensively.

    \begin{definition}[Proximal mappings, Definition 2.2 of \citep{karmarkar2025solvingzerosumgames}, restated]
        \label{def:proximal-mappings}
        For a given dgf setup $(\zset, r)$, continuous monotone operator $g : \zset \to \R^d$, points $z, w \in \zset$, regularization levels $\lambda > 0, \mu \ge 0$, and compact, convex $\zset' \subseteq \zset$, we let $\prox_{z, w}^{\lambda, \mu}(g; \zset')$ denote the unique $z' \in \zset'$ such that
        \begin{align*}
            \inangle{g(z'), z' - u}
             \le \lambda \insquare{\breg{z}{u} - \breg{z'}{u} - \breg{z}{z'}} + \mu \insquare{\breg{w}{u} - \breg{z'}{u} - \breg{w}{z'}} ~~\text{for all $u \in \zset'$},
        \end{align*}
        and similarly let $\prox_{z}^{\lambda}(g; \zset')$ denote $\prox_{z,z}^{\lambda,0}(g; \zset')$, i.e., the unique $z' \in \zset'$ such that
        \begin{align}
            \label{eq:prox-single}
            \inangle{g(z'), z' - u}
            \le \lambda \insquare{\breg{z}{u} - \breg{z'}{u} - \breg{z}{z'}} ~~\text{for all $u \in \zset'$}.
        \end{align}
        We drop $\zset'$ (e.g., writing $\prox_{z, w}^{\lambda, \mu}(g)$) when $\zset' = \zset$ for brevity. 
        
        Furthermore, in the context of the input to a proximal mapping, we may write a vector $v \in \R^d$ as a stand-in for the associated constant operator $z \mapsto v$.
        As an example, supposing $g : \zset \to \R^d$ is a continuous monotone operator and $v \in \R^d$, then $\prox_{z}^{\lambda}(v + g; \zset')$ denotes the unique $z' \in \zset'$ such that
        \begin{align*}
            \inangle{v + g(z'), z' - u}
            \le \lambda \insquare{\breg{z}{u} - \breg{z'}{u} - \breg{z}{z'}} ~~\text{for all $u \in \zset'$}.
        \end{align*}
\end{definition}

Note that the proximal mappings above all correspond to the solutions of continuous, strongly monotone variational inequalities, thereby guaranteeing existence and uniqueness (e.g., \cite{facchinei2003finitevariational}). Indeed, recall that Bregman divergences satisfy the following (e.g., \cite[Sec. 3.1]{carmon2019variance}),
\begin{align}
    \label{eq:Bregman-three-point-equality}
  \inangle{ - \grad \breg{z}{z'},  z' - u } = \breg{z}{u} - \breg{z'}{u} - \breg{z}{z'},\text{ for all }
  z, z', u \in \cZ
\end{align}
where in general $\grad \breg{z}{z'} = \grad r(z') - \grad r(z)$ denotes the gradient of $u \mapsto \breg{z}{u}$ evaluated at $z'$. Therefore, for example, \eqref{eq:prox-single} is equivalent to $\inangle{g(z') + \lambda \grad \breg{z}{z'}, z' - u} \le 0$. We define the proximal mappings as in \Cref{def:proximal-mappings} for more direct use in our applications.

\paragraph{Convex-concave functions.} 
We say $f : \xset \times \yset \to \R$ is a \emph{convex-concave} function if the restrictions of $f$ to the first $n$ and last $m$ inputs are convex and concave functions respectively. We recall the following notions of solutions to minimax games:

\begin{definition}[$\epsilon$-solution and gap function, Definition 1.1 of \cite{karmarkar2025solvingzerosumgames}, restated]\label{def:epsilon-solution} Let $\epsilon \geq 0$ and $f: \cX \times \cY \to \R$ be a convex-concave function. We say $z = (x, y) \in \cX \times \cY$ is an \emph{$\epsilon$-solution} of $\min_{x \in \cX} \max_{y \in \cY} f(x,y)$ if it is an \emph{$\epsilon$-saddle point}, i.e.,
    \begin{align*}
        \gap(z) \defeq \max_{y' \in \cY} f(x, y') - \min_{x' \in \cX} f(x', y) \leq \epsilon.
    \end{align*}
We say $z$ is an \emph{exact solution} if it is a $0$-solution. 
\end{definition}

For notational convenience, when $f$ is differentiable, we denote the natural \emph{monotone operator associated with} $f$ by $\gm f$ which is defined via $\gm f(z) \defeq (\nabla\x f(z), - \nabla\y f(z))$, where $\nabla\x f(z)$ and $\nabla\y f(z)$ denote the partial gradients of $f$ with respect to the first $n$ and last $m$ coordinates. Note that $\gm f$ is a monotone operator when $f$ is convex-concave. Additionally, letting $(\xset, \rx)$, $(\yset, \ry)$, and $(\zset \defeq \xset \times \yset, r \defeq \rx + \ry)$ denote dgf setups, a useful fact (e.g., \citep{karmarkar2025solvingzerosumgames}) which we leverage, for example, in Section~\ref{sec:putting-together}, is that for $\alpha > 0$ and $z \in \zset$, $\prox_z^\alpha(\gm f)$ is the exact solution of 
\begin{align*}
    \min_{x \in \xset} \max_{y \in \yset} f(x, y) + \alpha \xbreg{z\x}{x} - \alpha \ybreg{z\y}{y}.
\end{align*}

\addtocontents{toc}{\protect\setcounter{tocdepth}{1}} %

\section{Technical overview}
\label{sec:overview-of-approach}

In this section, we motivate and provide an overview of our algorithmic framework for proving Theorems~\ref{thm:final-result-l1-l1-aka-zero-sum}~and~\ref{thm:final-result-l2-l1-aka-SVM}. In Section~\ref{sec:additional-notation}, we briefly introduce notation pertaining to local norms and product-space setups which are used extensively in the overview and throughout the paper. In Section~\ref{subsec:old-approach}, we provide an overview of the approach we build upon, namely, the algorithm due to \cite{karmarkar2025solvingzerosumgames} which obtains a $\Otilde(\epsilon^{-8/9})$ matvec complexity for $\ellOneEllOne$ and $\ellTwoEllOne$ games. In Sections~\ref{subsec:technique-1-telescoping-sums},~\ref{subsec:technique-2-dyadic-decompositions},~and~\ref{subsec:technique-3-prox-multi-point}, we cover the key techniques which enable our improvements over \cite{karmarkar2025solvingzerosumgames}. We discuss how to put these techniques together and describe the remaining components of our algorithm in \Cref{subsec:overview-putting-it-all-together}. Finally, we provide a guide to the rest of the paper in \Cref{sec:paper-organization}.

\subsection{Local norms and product-space setups}\label{sec:additional-notation}

In this section, we introduce the general setup (\Cref{def:product-dgf-setups}) and associated notation we use for handling \emph{local norms,} distance-generating functions, and change of bases over product spaces $\xset \times \yset$ (including those arising in $\ellOneEllOne$ and $\ellTwoEllOne$ games in particular). We use this notation extensively throughout the paper (including in this overview), as local norms and corresponding changes of bases are key to our algorithmic developments, as well as those of \cite{karmarkar2025solvingzerosumgames} which we build upon.

We start by recalling local norms, which have been leveraged very extensively in prior work on optimization theory and matrix games \citep{karmarkar2025solvingzerosumgames, carmon2019variance, clarkson2012sublinear, anuran2015studyoflocalapproximationsininfotheory, shwartz2012onlinelearning, alvarez2004hessian}. In general, a local norm over a set $\cZ \subseteq \R^d$ is a function $\norm{\cdot}_{z}^{\mathsf{loc}}: \cZ \to \R_{\geq 0}$ which, for every $z \in \cZ$, is a norm. A key fact (discussed further in \Cref{subsec:old-approach}) which enables the analysis of \citep{karmarkar2025solvingzerosumgames} and our own, is that the KL divergence can be approximated by an appropriate local norm over certain subsets of the probability simplex. 

In order to introduce the specific local norms we consider in this paper, for convenience, we capture general dgf setups arising from product spaces in the following definition. Recall from Section~\ref{sec:prelims} that we use $z\x \in \xset$ and $z\y \in \yset$ to denote the components of $z \in \cX \times \cY = \zset$.

\begin{definition}[Product dgf setup and local-norm notation]
    \label{def:product-dgf-setups}
    For dgf setups $\dgfsetup\x = (\xset \subset \R^n, \rx)$ and $\dgfsetup\y = (\yset \subset \R^m, \ry)$, we say $\dgfsetup = (\zset \subset \R^d, r)$ is the \emph{product dgf setup induced by $\dgfsetup\x$ and $\dgfsetup\y$,} denoted $\dgfsetup = \prodsetup(\dgfsetup\x, \dgfsetup\y)$, if $\zset = \xset \times \yset$ as well as $r(z) = \rx(z\x) + \ry(z\y)$ for all $z \in \zset$. We associate the following local-norm notation with product dgf setups. For any $z, z' \in \zset$, we define the \emph{local norm} $\norm{z}_{z'}^2 \defeq \inangle*{z, \hess r(z') z}.$ Moreover, we define
\begin{align*}
    (z)_{z'} \defeq ((\hess \rx(z'\x))^{1/2} z\x , (\hess \ry(z'\y))^{1/2} z\y ) \in \R^d. 
\end{align*}
and for any $B \in \R^{m \times n}$ and $z' \in \zset$, we define
\begin{align*}
    (B)_{z'} &\defeq (\hess \ry(z'\y))^{-1/2} B (\hess \rx(z'\x))^{-1/2} \in \R^{m \times n}, ~\text{and} \\ 
    (B)_{z', *}  &\defeq (\hess \ry(z'\y))^{1/2} B (\hess \rx(z'\x))^{1/2} \in \R^{m \times n}.
\end{align*}
\end{definition}

In other words, we use local norms which scale the product space $\zset = \xset \times \yset$ using the Hessian of $r$. Note that in Definition~\ref{def:product-dgf-setups}, the transformation $(z)_{z'}$ performs the appropriate \emph{change of basis} such that $\normInline{z}_{z'}^2 = \normInline{(z)_{z'}}_2^2$. Similarly, the mapping $\ground{A}{z'}$ performs the corresponding change of basis to $A$ to maintain the invariant that $\inangle{z\y, A z\x} = \inangle{{\ground{z}{z'}}\y, \ground{A}{z'} {\ground{z}{z'}}\x}$. In turn, $\unground{A}{z'}$ inverts this change of basis. This is formalized in the following straightforward fact. 

\begin{fact}\label{lemma:ungrounding} Letting $\dgfsetup\x, \dgfsetup\y, \dgfsetup$ be as in Definition~\ref{def:product-dgf-setups}, for any $z, z' \in \cZ$, we have $\inangle{z\y, A z\x} = \inangle{{\ground{z}{z'}}\y, \ground{A}{z'} {\ground{z}{z'}}\x}$ and $\ground{\unground{A}{z'}}{z'} = \unground{\ground{A}{z'}}{z'} = A$. Moreover, $\normInline{\ground{z}{z'}}_2^2 = \norm{z}_{z'}^2$. 
\end{fact}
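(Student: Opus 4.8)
The plan is to reduce \Cref{lemma:ungrounding} to three elementary identities about the symmetric positive definite matrices $\hess \rx(z'\x)$ and $\hess \ry(z'\y)$, so I would begin by recording the structural facts that make the statement meaningful. Since $r(z) = \rx(z\x) + \ry(z\y)$, the Hessian $\hess r(z')$ is block diagonal with diagonal blocks $\hess \rx(z'\x)$ and $\hess \ry(z'\y)$; and since each of $\rx, \ry$ is twice differentiable and $1$-strongly convex with respect to a norm, each block satisfies $\inangle{v, \hess \rx(z'\x) v} > 0$ for all $v \ne 0$ (and similarly for $\ry$), hence each block is symmetric positive definite. Consequently each block has a unique symmetric positive definite square root and is invertible, the matrices $\ground{A}{z'}$, $\unground{A}{z'}$, and $\ground{z}{z'}$ are well-defined, and $M^{1/2}M^{-1/2} = M^{-1/2}M^{1/2} = I$ for $M \in \{\hess \rx(z'\x), \hess \ry(z'\y)\}$. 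I would state these observations once up front since all three parts of the Fact reuse them.

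Next I would verify the ungrounding identity $\ground{\unground{A}{z'}}{z'} = \unground{\ground{A}{z'}}{z'} = A$ by substituting the definitions and cancelling $(\hess \ry(z'\y))^{-1/2}(\hess \ry(z'\y))^{1/2} = I$ on the left of $A$ and $(\hess \rx(z'\x))^{1/2}(\hess \rx(z'\x))^{-1/2} = I$ on the right (and the analogous cancellations, with the two roots swapped, for the other composition). For the norm identity $\normInline{\ground{z}{z'}}_2^2 = \norm{z}_{z'}^2$, I would use that $\norm{M^{1/2}u}_2^2 = \inangle{M^{1/2}u, M^{1/2}u} = \inangle{u, Mu}$ for symmetric $M$, apply this separately to the $\sfx$- and $\sfy$-blocks of $\ground{z}{z'}$, add the two contributions, and recognize the sum $\inangle{z\x, \hess \rx(z'\x) z\x} + \inangle{z\y, \hess \ry(z'\y) z\y}$ as $\inangle{z, \hess r(z') z} = \norm{z}_{z'}^2$ by the block-diagonal form of $\hess r(z')$. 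Finally, for the bilinear invariance I would expand the right-hand side $\inangle{(\ground{z}{z'})\y, \ground{A}{z'} (\ground{z}{z'})\x}$ as $\inangle{(\hess \ry(z'\y))^{1/2} z\y,\ (\hess \ry(z'\y))^{-1/2} A (\hess \rx(z'\x))^{-1/2} (\hess \rx(z'\x))^{1/2} z\x}$, cancel $(\hess \rx(z'\x))^{-1/2}(\hess \rx(z'\x))^{1/2} = I$ inside the second slot, and move the symmetric factor $(\hess \ry(z'\y))^{1/2}$ across the inner product onto the second argument, where it cancels $(\hess \ry(z'\y))^{-1/2}$, leaving $\inangle{z\y, A z\x}$.

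Honestly there is no real obstacle here: the statement is labeled a fact precisely because it is bookkeeping. The only substantive point — and the only place the dgf-setup hypotheses enter — is the positive-definiteness of the two Hessian blocks, which is what licenses taking symmetric square roots and inverses, and hence all of the cancellations above; everything else is substitution. I would therefore present the argument in exactly the order listed (structural preliminaries, then the ungrounding identity, then the norm identity, then the bilinear invariance), since each later identity reuses the preliminary observations.
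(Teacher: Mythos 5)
Your verification is correct; since the paper labels this a Fact and offers no proof, there is no alternative argument to compare against, and the direct substitute-and-cancel computation you lay out (using positive definiteness of the block Hessians to justify the square roots and inverses, and symmetry to move the $(\hess \ry(z'\y))^{1/2}$ factor across the inner product) is exactly the intended verification. One small remark: the statement as typeset reads $\ground{z}{z'\y}$, but you correctly read this as $(\ground{z}{z'})\y$, i.e.\ the $\y$-component of $\ground{z}{z'}$, which is the only parsing consistent with Definition~\ref{def:product-dgf-setups}.
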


Throughout the remainder of the technical overview (Section~\ref{sec:overview-of-approach}), we instantiate the natural product dgf setups associated with the geometry of $\ellTwoEllOne$ and $\ellOneEllOne$ games. In particular, we let $\xset \defeq \ball^n$, $\rx(x) \defeq \frac{1}{2} \innorm{x}_2^2$ in the context of $\ellTwoEllOne$ games, and $\xset \defeq \simplex^n$, $\rx(x) \defeq e(x)$ in the context of $\ellOneEllOne$ games, recalling $e$ denotes the negative entropy function. We let $\yset \defeq \simplex^m$, $\ry(y) \defeq e(y)$, and fix the dgf setups $\dgfsetup\x \defeq (\xset , \rx)$ and $\dgfsetup\y \defeq (\yset , \ry)$ with $\dgfsetup = (\zset, r) \defeq \prodsetup(\dgfsetup\x, \dgfsetup\y)$ (Definition~\ref{def:product-dgf-setups}).\footnote{For technical reasons, \cite{karmarkar2025solvingzerosumgames} starts by reducing the original $\ellTwoEllOne$/$\ellOneEllOne$ games to the same games except the probability simplex domains are appropriately truncated. We truncate simplex domains in our paper for similar reasons. We omit details related to this point in our technical overview for brevity.} With these choices, the $\ellTwoEllOne$ and $\ellOneEllOne$ games are given by \eqref{eq:intro-general-matrix-game}. When we do not explicitly distinguish between the two games, we use the above notation to refer to both simultaneously.

\subsection{The approach of \citep{karmarkar2025solvingzerosumgames} for $\ellOneEllOne$ and $\ellTwoEllOne$ games}
\label{subsec:old-approach}

In this section, we provide an overview of the $\Otilde(\epsilon^{-8/9})$-matvec-complexity algorithm for $\ellOneEllOne$ and $\ellTwoEllOne$ games in \citep{karmarkar2025solvingzerosumgames}, which we build upon to prove Theorems~\ref{thm:final-result-l1-l1-aka-zero-sum}~and~\ref{thm:final-result-l2-l1-aka-SVM}. The algorithm of \citep{karmarkar2025solvingzerosumgames} uses two powerful algorithmic techniques. The first is a dynamic primal-dual prox(imal)-point \emph{outer loop} and the second is an \emph{inner loop} which implements what the authors term a ``smooth-until-proven guilty'' variant of mirror prox \citep{nem04} (SUPG Mirror Prox). In order to leverage both of these techniques, \citep{karmarkar2025solvingzerosumgames} stitch their outer and inner loops together via a \emph{bisection-search} procedure. We discuss these three components below in further detail. 

\paragraph{The outer loop.} The outer loop in \citep{karmarkar2025solvingzerosumgames} is a \textit{dynamic} variant of the \emph{prox point method} \citep{rockafellar1976monotone, martinet1970regularisation}, which reduces solving matrix games to solving
 a sequence of regularized subproblems of the form 
\begin{align}
    \label{eq:prox-point-subproblem-new}
    \min_{x \in \xset} \max_{y \in \yset} f_A(x, y) + \alpha^{(t)} \xbreg{z^{(t - 1)}\x}{x} - \alpha^{(t)} \ybreg{z^{(t - 1)}\y}{y}
    \text{ where }
    f_A(x, y) \defeq y^\top A x
\end{align}
to high-accuracy for $t = 1, 2, \dots, T$, where $\alpha^{(t)} \ge \beta > 0$ and $z^{(t - 1)} \in \zset$. In particular, the next iterate $z^{(t)}$ is set to a (high-accuracy, approximate) solution of \eqref{eq:prox-point-subproblem-new}. The algorithm then outputs a weighted average of the $z^{(t)}$, where the weighting of $z^{(t)}$ depends on the regularization levels $\alpha^{(t)}$.

Solving the sequence of subproblems \eqref{eq:prox-point-subproblem-new} with a \textit{fixed} regularization level $\alpha^{(t)} = \beta$
yields an $\epsilon$-solution of the game in $T = \Tilde{O}(\beta \epsilon^{-1})$ iterations \citep{carmon2019variance}. However, in each iteration, solving \eqref{eq:prox-point-subproblem-new} would require solving a variational inequality in a $(1 + \beta)$-(relatively) Lipschitz, $\beta$-strongly monotone 
operator, which generally requires $\Tilde{O}(\beta^{-1})$-matvecs \citep{nem04}. This would yield an overall $\Tilde{O}(\epsilon^{-1})$-matvec complexity, i.e., no improvement over simpler methods (e.g., mirror prox).

Consequently, in order to obtain their improvement, \citep{karmarkar2025solvingzerosumgames} develop a \textit{dynamic} prox point method\footnote{This approach can perhaps be seen as a primal-dual variant of techniques such as Monteiro-Svaiter acceleration and acceleration with a ball optimization oracle \citep{renato2013monteirosvaiteroriginalpaper, carmon2020acceleration, carmon2021thinking, carmon2024whole}.} with an improved iteration bound, under the additional condition that each pair $(z^{(t)}, \alpha^{(t)})$ satisfies the following \emph{kineticness} requirement (see \cite[Def. 4.1]{karmarkar2025solvingzerosumgames} with $c = 2$):
\begin{align}\label{eq:kineticness}
    \text{for each $t \in [T]$, either } \alpha^{(t)} = \beta \text{ or else } \breg{z^{(t-1)}}{z^{(t)}} \geq  (\alpha^{(t)})^2  .
\end{align}
In other words, \eqref{eq:kineticness} allows the regularization $\alpha^{(t)}$ to be larger than $\beta$, but \textit{only} if the divergence movement $z^{(t - 1)}$ to $z^{(t)}$ is at least $(\alpha^{(t)})^2$. Under \eqref{eq:kineticness}, \cite[Lemma 4.3]{karmarkar2025solvingzerosumgames} shows that the dynamic prox-point method converges to an $\epsilon$-solution after $T = \Tilde{O}(\beta \epsilon^{-1} + \epsilon^{-2/3})$-iterations. Importantly, as we discuss below, \cite{karmarkar2025solvingzerosumgames} use this added flexibility to set $\alpha^{(t)} \gg \beta$ to ensure that in each iteration, the induced subproblem \eqref{eq:prox-point-subproblem-new} can be solved with only $\Tilde{O}((\alpha^{(t)})^{-2/3}) = \Tilde{O}(\beta^{-2/3})$-matvecs, ultimately yielding their overall $\Tilde{O}(\epsilon^{-8/9})$-matvec complexity when $\beta = \epsilon^{1/3}$.

\paragraph{The inner loop.} 
The second key insight of \citep{karmarkar2025solvingzerosumgames} is to show how to solve the subproblem \eqref{eq:prox-point-subproblem-new} induced by the outer loop more efficiently than the aforementioned naive $\Tilde{O}((\alpha^{(t)})^{-1})$-matvecs.
To improve, \citep{karmarkar2025solvingzerosumgames} observe that appropriately \textit{constrained} versions of \eqref{eq:prox-point-subproblem-new} can be solved more efficiently, using their \emph{``smooth-until-proven-guilty'' (SUPG) mirror prox} inner loop \cite[Alg. 6.3]{karmarkar2025solvingzerosumgames} (a variant of composite mirror prox). More concretely, for $\alpha > 0$ and $z \in \zset$, the SUPG mirror prox inner loop of  
\citep{karmarkar2025solvingzerosumgames} solves constrained problems of the form
\begin{align}\label{eq:sub-problem-constrained}
    \min_{x \in \xsetstable} \max_{y \in \ysetstable} f_A(x, y) + \alpha \xbreg{z\x}{x} - \alpha \ybreg{z\y}{y} 
\end{align}
using only $\Tilde{O}(\alpha^{-2/3})$-matvecs, provided that the constrained regions $\xsetstable \times \ysetstable =: \zsetstable \subseteq \cZ$ satisfy the following $C$-\emph{stability condition}, 
\begin{align}\label{eq:stability}
    \normInline{\hat{z} - z'}_{\zcenter}^2 \approx_{C} \breg{\hat{z}}{z'} \text{ for any } \hat{z}, z' \in \zsetstable
\end{align}
for some absolute constant $C > 0$ and $\zcenter \in \zsetstable$. Importantly, the choice of $\zcenter$ may vary with the outer loop iteration $t$, and we discuss this in greater detail when introducing the bisection search procedure and challenges in improving \citep{karmarkar2025solvingzerosumgames}.

Note that the stability condition \eqref{eq:stability} says that within $\zsetstable$, Bregman divergences can be multiplicatively approximated by the \emph{local norm} at $\zcenter$. To make use of this property, \citep{karmarkar2025solvingzerosumgames} design SUPG mirror prox as follows. The method first initializes a \textit{model} $M \gets 0_{m \times n}$, which maintains an explicit, low-rank, approximation of $(A)_{\zcenter}$. (Correspondingly, note that $(M)_{\zcenter, *}$ is an explicit, low-rank approximation of $A$). Each iteration of SUPG mirror prox makes $O(1)$ matvecs to $A$ and either makes progress in converging to a solution of \eqref{eq:sub-problem-constrained} or else improves the approximation quality of $M$. Concretely, SUPG mirror prox runs \emph{composite} mirror prox with a step size of $1 / \tau \gg 1$ to solve 
\begin{align}\label{eq:sub-problem-constrained-composite}
    \min_{x \in \xsetstable} \max_{y \in \ysetstable} f_{A- (M)_{\zcenter, *}}(x, y) + f_{(M)_{\zcenter, *}}(x,y) + \alpha \xbreg{z\x}{x} - \alpha \ybreg{z\y}{y}. 
\end{align}
Note that \eqref{eq:sub-problem-constrained-composite} is equivalent to \eqref{eq:sub-problem-constrained}; however, while $f_{A-(M)_{\zcenter, *}}(x, y)$ can only be accessed via matvecs to $A$, the \textit{composite} term $f_{(M)_{\zcenter, *}}(x,y)$ is always explicitly maintained---and hence can be accessed without any additional matvecs to $A$. Using the stability condition \eqref{eq:stability}, they show that each iteration of their mirror prox variant is either a \textit{progress iteration}, which makes $(1 - \alpha/\tau)^{-1}$-multiplicative progress in converging to the solution of \eqref{eq:sub-problem-constrained}, or else is a \emph{model-update iteration}, which finds a pair of unit vectors $u \in \R^m, v \in \R^n$ such that $u^\top [(A)_{\zcenter} - M] v \geq \tau$. In the case of a model update iteration, they update the model as follows
\begin{align*}
    M \gets M + uv^\top \cdot {u^\top [(A)_{\zcenter} - M] v} \, ,
\end{align*}
which can be shown to reduce $\normInline{(A)_{\zcenter} - M}_F^2$ by at least $\tau^2$. It is also straightforward to show that initially $\normInline{(A)_{\zcenter}}_F^2 \leq 1$, and this allows them to bound the total number of model update iterations to $1/\tau^2$, yielding an overall matvec complexity of $\Tilde{O}(\tau/\alpha + 1/\tau^2)$ for solving \eqref{eq:sub-problem-constrained} to high-accuracy. Minimizing over $\tau$ yields the aforementioned $\Tilde{O}(\alpha^{-2/3})$ complexity.

\paragraph{Bisection search procedure.} 

Taking stock, we see that the prox point \emph{outer loop} of \citep{karmarkar2025solvingzerosumgames} requires \textit{kineticness} \eqref{eq:kineticness} while the SUPG mirror prox inner loop requires \emph{stability} \eqref{eq:stability}. In order to obtain the final $\Tilde{O}(\epsilon^{-8/9})$ complexity, \citep{karmarkar2025solvingzerosumgames} use a \emph{bisection search procedure} in each iteration of the outer loop in order to ensure that \emph{both} conditions hold.

More precisely, given the previous iterate $z^{(t-1)}$ and $\beta > 0$, \cite[Alg. 6.1]{karmarkar2025solvingzerosumgames} gives a bisection search procedure which makes $\Tilde{O}(1)$ calls to the SUPG mirror prox inner loop and finds
an $\alpha^{(t)} \in [\beta, {\Theta}(1)]$ and $z^{(t)} \in \zset$ such that either $\alpha^{(t)} = \beta$ or $ \breg{z^{(t-1)}}{z^{(t)}} \geq (\alpha^{(t)})^2$, and $z^{(t)}$ is a high-accuracy solution to \eqref{eq:prox-point-subproblem-new}. This bisection search procedure enables \cite{karmarkar2025solvingzerosumgames} to stitch together the guarantees of the inner and outer loop with at most polylogarithmic overhead, culminating in their $\Tilde{O}(\epsilon^{-8/9})$-matvec algorithm.

At a high level, this bisection search procedure uses calls to the SUPG mirror prox inner loop to find $\alpha^{(t)}, z^{(t)}$ such that the following three conditions hold. First, letting $\zopt \defeq \prox_{z^{(t-1)}}^{\alpha^{(t)}} (\nabla_\pm f_A)$ denote the exact solution of \eqref{eq:prox-point-subproblem-new}, $\alpha^{(t)}$ satisfies
\begin{align}\label{eq:upperbound}
    \breg{z^{(t-1)}}{\zopt} \leq 2.8 (\alpha^{(t)})^2, 
\end{align}
which, as \citep{karmarkar2025solvingzerosumgames} show, guarantees the existence of a simple constraint set $\xsetstable \times \ysetstable =: \zsetstable$ and a point $\zcenter \in \zsetstable$ such that the stability condition \eqref{eq:stability} holds for an appropriate absolute constant $C$. Second, $z^{(t)}$ is a high-accuracy solution to 
\begin{align*}
    \min_{x \in \xsetstable} \max_{y \in \ysetstable} f_A(x, y) + \alpha^{(t)} \xbreg{z^{(t-1)}\x}{x} - \alpha^{(t)} \ybreg{z^{(t-1)}\y}{y} 
\end{align*}
(as computed by the inner loop) which, as \citep{karmarkar2025solvingzerosumgames} show, is also a high-accuracy solution to \eqref{eq:prox-point-subproblem-new}
\begin{align*}
    \min_{x \in \cX} \max_{y \in \cY} f_A(x, y) + \alpha^{(t)} \xbreg{z^{(t-1)}\x}{x} - \alpha^{(t)} \ybreg{z^{(t-1)}\y}{y} 
\end{align*}
(as required by the outer loop). Third, $\alpha^{(t)}$ satisfies the kineticness condition \eqref{eq:kineticness} (as required by the outer loop analysis).

Thus, to recap, by combining their $\Otilde(\beta \epsilon^{-1} + \epsilon^{-2/3})$-iteration outer loop with their $\Tilde{O}((\alpha^{(t)})^{-2/3}) = \Tilde{O}(\beta^{-2/3})$-matvec inner loop with at most polylogarithmic overhead due to the bisection search procedure, \cite{karmarkar2025solvingzerosumgames} achieves an $\Otilde(\epsilon^{-8/9})$-complexity for $\ellOneEllOne$ and $\ellTwoEllOne$ games with $\beta = \epsilon^{1/3}$.

\paragraph{The challenge of improving to $\Otilde(\epsilon^{-2/3})$.} To illustrate the challenge of improving the matvec complexity of \cite{karmarkar2025solvingzerosumgames}, recall that the $T = \Tilde{O}(\epsilon^{-2/3})$-iteration complexity of the prox point outer loop in \citep{karmarkar2025solvingzerosumgames} already matches the matvec complexity lower bound of $\Tilde{\Omega}(\epsilon^{-2/3})$ \citep{kornowski2024oracleupdated}. Thus, in order to improve further, a natural starting point is to ask whether the matvecs used to build the model in the SUPG mirror prox inner loop of \citep{karmarkar2025solvingzerosumgames} can be \textit{reused} across all $\Otilde(T)$ inner loop calls, enabling a tighter amortized analysis to better bound the overall matvec complexity.

One approach towards this is to attempt to reuse the same model $M$ across all $T$ iterations of the prox point outer loop, and indeed, \citep{karmarkar2025solvingzerosumgames} consider this approach. However, because the center $\zcenter$ varies over the $\Otilde(T)$ inner loop calls, it is necessary to argue that $M$ remains a good approximation to $(A)_{\zcenter}$ as $\zcenter$ changes. \citep{karmarkar2025solvingzerosumgames} performed such an analysis, resulting in an improved $\Tilde{O}(\epsilon^{-7/9})$-matvec complexity for $\ellTwoEllOne$ games. However, their approach has two limitations. First, the analysis is specialized to $\ellTwoEllOne$ games and does not seem to readily extend to $\ellOneEllOne$ games (e.g., they use a one-sided projection to update their model), and second, their analysis seems unable to improve beyond a $\Tilde{O}(\epsilon^{-7/9})$-matvec complexity, even for $\ellTwoEllOne$ games. 

Hence, in this paper, we take an alternative approach, which we detail in the following sections. Our main algorithmic contribution is to show how to modify the outer loop of \citep{karmarkar2025solvingzerosumgames} to enable a tighter amortized analysis. Similar to \citep{karmarkar2025solvingzerosumgames}, our method consists of an outer loop, inner loop, and bisection search procedure. As in \citep{karmarkar2025solvingzerosumgames}, the outer loop runs $T = \Tilde{O}(\epsilon^{-2/3})$-iterations; however, unlike \citep{karmarkar2025solvingzerosumgames}, we carefully design and analyze the outer loop to enable our inner loop and bisection search to run in an amortized $\Tilde{O}(1)$ matvec complexity per iteration. In the next three sections, we summarize the key techniques which enable this improvement.

\subsection{Technique 1: Telescoping sums and matrix-approximation paths} 
\label{subsec:technique-1-telescoping-sums}

In this section, we discuss our first key algorithmic innovation, which is to change the \emph{representation} of the matrix $(A)_{\zcenter}$ on iteration $t$ of the outer loop. Letting $(A)_{z^{(-1)}} \defeq 0_{m \times n}$ and $\Delta_{j, j'} \defeq (A)_{z^{(j')}} - (A)_{z^{(j)}}$ for any integers $-1 \leq j \leq j' \leq t-1$, suppose we decompose $(A)_{z^{(t-1)}}$ as a telescoping sum
\begin{align}
    \label{eq:overview-z(t-1)-decomp}
    (A)_{z^{(t-1)}} &= \sum_{\ell \in [L]} \Delta_{j_{\ell-1}, j_{\ell}} \text{ where } -1 \eqqcolon j_0  < j_1 < \cdots < j_L \defeq t-1 \, .
\end{align}
This ensures that
\begin{align}
    \label{eq:overview-telescoping-sums-linear}
    (A)_{\zcenter} &= \sum_{\ell \in [L]} \Delta_{j_{\ell-1}, j_\ell} + (A)_{\zcenter} - (A)_{z^{(t - 1)}} \, .
\end{align}
Now, suppose we maintain a model $\Mtilde$ for $(A)_{\zcenter} - (A)_{z^{(t - 1)}}$ as well as a separate model $M_{j_{\ell-1}, j_{\ell}}$ for the $\Delta_{j_{\ell-1}, j_\ell}$ matrix for each $\ell \in [L]$. Using \eqref{eq:overview-telescoping-sums-linear}, we can combine these models to build a model $M$ for $(A)_{\zcenter}$, namely 
$M = \sum_{\ell \in [L]} M_{j_{\ell-1}, j_{\ell}} + \Mtilde$. In other words, to maintain a model for the target matrix $(A)_{\zcenter}$ (as required in the SUPG mirror prox analysis of \citep{karmarkar2025solvingzerosumgames}) it suffices to maintain a model for $\Mtilde$ for $(A)_{\zcenter} - (A)_{z^{(t - 1)}}$ as well a sequence of ``telescoping models'' $M_{j_{\ell - 1}, j_{\ell}}$ which sum up to a model for $(A)_{z^{(t-1)}}$. 

In order formalize the latter, we introduce the following notion of a \emph{matrix-approximation path} to $z^{(t-1)}$ in \Cref{def:matrix-approx-path}. Conceptually, a matrix-approximation path to $(A)_{z^{(t-1)}}$ is a collection of $L$ 
matrices $\Delta_{\ell}$ (which may be unknown, but for which matvec queries requires few matvecs to $A$) and corresponding models (e.g., low-rank approximations) $M_{\ell}$ for $\ell \in [L]$ such that the $\Delta_\ell$ matrices telescope to $(A)_{z^{(t-1)}}$. Correspondingly, the models $M_{\ell}$ telescope to a model of $(A)_{z^{(t-1)}}$, and we measure the quality of the models by the \emph{size} of the path. We use the terminology ``path'' as the decomposition \eqref{eq:overview-z(t-1)-decomp} can be viewed as a path to $z^{(t - 1)}$ through the prior iterates.

\begin{definition}[Matrix-approximation path]
    \label{def:matrix-approx-path}
For $z \in \zset$ and $L \in \Z_{>0}$, we call $\pathd = \inbraces{\Delta_\ell \in \R^{m \times n}, M_\ell \in \R^{m \times n}}_{\ell \in [L]}$ a \emph{matrix-approximation path to $z$} if: (i) $\sum_{\ell \in [L]} \Delta_\ell = (A)_z$, (ii) a matvec to any $\Delta_\ell$ can be computed in $O(1)$ matvecs to $A$, and (iii) the matrices $M_\ell$ are known explicitly. We refer to $L$ as the \emph{length} of $\pathd$, and additionally define
\begin{align}
    \label{eq:def-of-size-for-path}
    \size(\pathd) \defeq \sum_{\ell \in [L]} \innorm{\Delta_\ell - M_\ell}_F^2.
\end{align}
\end{definition} 

A matrix-approximation path generalizes the modeling approach used in the SUPG mirror prox inner loop of \citep{karmarkar2025solvingzerosumgames} to work with telescoping sums of models. Indeed, \citep{karmarkar2025solvingzerosumgames}'s inner loop directly builds a single model $M$ for the target $(A)_{\zcenter}$ and measures the progress of the model-update iterations by the potential $\normInline{M - (A)_{\zcenter}}_F^2$. That is, in the language of Definition~\ref{def:matrix-approx-path}, \citep{karmarkar2025solvingzerosumgames}'s inner loop always works with a matrix-approximation path to $\zcenter$ of length $L=1$. In our approach, we instead work with matrix approximation paths $\cP = \{(\Delta_{j_{\ell-1}, j_\ell}, M_{j_{\ell-1}, j_\ell})\}_{\ell \in [L]}$ to $z^{(t-1)}$ of length $L\geq 1$ and measure the progress of model-update iterations by the $\size$ of the path. 

It is not difficult to show that one can generalize the SUPG mirror prox inner loop of \citep{karmarkar2025solvingzerosumgames} so that the inner loop maintains and updates a matrix-approximation path to $(A)_{z^{(t-1)}}$ and a model of $(A)_{\zcenter} - (A)_{z^{(t-1)}}$, as opposed to maintaining a single model for $(A)_{\zcenter}$ as was done in the original inner loop of \citep{karmarkar2025solvingzerosumgames} (compare Section~\ref{sec:sug-solver} to Section 5 of \citep{karmarkar2025solvingzerosumgames}). Concretely, one can slightly modify the SUPG mirror prox inner loop of \citep{karmarkar2025solvingzerosumgames} to build a \emph{path-modified} (SUPG mirror prox) inner loop that does the following. It takes as input a matrix-approximation path $\cP$ to $z^{(t-1)}$ of length $L$, $\alpha \in [\beta, \Theta(1)]$, and $\zcenter \in \zsetstable$ such that both $\breg{z^{(t-1)}}{\zcenter} \leq C'\alpha^2$ and \eqref{eq:stability} hold for appropriate absolute constants $C, C' > 0$. It then outputs a high-accuracy solution to the constrained problem
\begin{align*}
    \min_{x \in \xsetstable} \max_{y \in \ysetstable} f_A(x,y) + \alpha \breg{z^{(t-1)}\x}{x} - \alpha \breg{z^{(t-1)}\y}{y}, 
\end{align*}
\emph{along with} an updated matrix-approximation path $\cP'$ to $z^{(t-1)}$ of length $L$, after making at most 
\begin{align}\label{eq:overview-path-modified-upshot}
     \Tilde{O}\paren { L^2 \tau^{-2} ( \size(\cP) - \size(\cP') + \alpha^2 ) + \tau \beta^{-1} }
     \text{ matvec queries to $A$.}
\end{align}

Note that in the path-modified inner loop, the number of matvecs due to progress iterations remains the same as in the original SUPG mirror prox inner loop of \cite{karmarkar2025solvingzerosumgames} at $\Otilde(\tau \beta^{-1})$ (as indeed model updates do not occur in progress iterations, and thus their execution remains the same as before). Importantly, \eqref{eq:overview-path-modified-upshot} indicates that the number of matvec queries that the path-modified inner loop makes due to model-update iterations, namely $\Otilde(L^2 \tau^{-2} ( \size(\cP) - \size(\cP') + \alpha^2 ))$, can be directly be charged to the improvement in approximation quality of $\cP'$ relative to $\cP$, along with an additive $\alpha^2$ term to bound matvecs due to updates to the model $\Mtilde$. Unfortunately, this guarantee degrades quadratically with the length $L$ of the matrix-approximation path. Fortunately, there is another standard technique for controlling $L = \Tilde{O}(1)$ using a dyadic (or binary) decomposition, which we discuss in the next section, which ensures that this degradation is at most polylogarithmic. 

\paragraph{The utility of matrix-approximation paths.} Before discussing the dyadic decomposition technique in further detail, we briefly pause to highlight some important intuition for why matrix-approximation paths are useful in our analysis. Recall that as we discussed above, a key obstacle towards improving \citep{karmarkar2025solvingzerosumgames} with a tighter amortization analysis was that $\zcenter$  changes in each invocation of their SUPG mirror prox inner loop of \citep{karmarkar2025solvingzerosumgames}. Consequently, in general, it is unclear how to argue that a model for the (moving) target $(A)_{\zcenter}$ constructed in one invocation of their inner loop can effectively reused for other invocations. (Indeed, recall that \citep{karmarkar2025solvingzerosumgames} were only able to obtain such an argument for $\ell_2$-$\ell_1$ games and their argument does not lead to the near-optimal matvec complexity.)

In contrast, consider our approach of using matrix-approximation paths to $z^{(t-1)}$ along with the path-modified inner loop. In this case, the outer loop's iterates $z^{(1)}, \dots, z^{(t-1)}$ are fixed by the time we begin the $t$-th iteration and consequently, each $\Delta_{j,j'}$ for $-1 \leq j \leq j' \leq t-1$ is a \textit{fixed} target matrix. In other words, the target matrices in a matrix approximation path $\cP$ remain \emph{fixed} over the entire algorithm, even though $\zcenter$ changes upon each call to the inner loop. Hence, the work done to update each $M_{j_{\ell-1}, j_{\ell}}$ remains potentially useful across \textit{all} outer loop iterations. This essentially allows the corresponding matvecs to be ``reused'' in future iterations of the outer loop, leading to an improved amortization argument. Importantly, this argument holds for \emph{both} $\ell_2$-$\ell_1$ and $\ell_1$-$\ell_1$ games and, as we show, enables near-optimal matvec complexities for both problems.

\subsection{Technique 2: Dyadic decompositions}
\label{subsec:technique-2-dyadic-decompositions}

In this section, we discuss how we use dyadic decompositions to control the matrix-approximation path lengths and correspondingly control the $L^2$ factor in \eqref{eq:overview-path-modified-upshot} to be at most polylogarithmic.  Dyadic decompositions are widespread throughout data structure/algorithm design (e.g., binary index trees) and optimization (e.g., \cite{bachoc2022nearoptimalalgorithmunivariatezerothorder,Axelrod2019NearoptimalAD}). Its use in the paper \cite{carmon2024whole} for a matrix-vector maintenance data structure perhaps most closely resembles our application described below, albeit the specific context (namely, the subproblem it is applied to solve) is still quite different.

To specify how we use dyadic decompositions to control matrix-approximation path lengths, we first describe the overarching set of models which our algorithm maintains at each iteration of the outer loop. Indeed, suppose that at iteration $t$ of the outer loop, we maintain models $M_{j, j'}$ for all pairs $(j, j')$ formed by adjacent partial sums (ordered by descending powers of 2) in the binary decompositions of the integers between 1 and $t - 1$, as well as a model $M_{-1, 0}$. As before, $M_{j, j'}$ is a model for $\Delta_{j, j'} \defeq (A)_{z^{(j')}} - (A)_{z^{(j)}}$ with $(A)_{z^{(-1)}} \defeq 0_{m \times n}$. Then with this particular set of models, we can build a matrix-approximation path of length $O(\log t)$ for $(A)_{z^{(t-1)}}$ using the pairs $(j, j')$ formed by adjacent partial sums in the binary decomposition of $t - 1$. (For an illustration of this path for $t - 1 = 13$, see Figure~\ref{fig:dyadic-example}.)
This ensures that for every $t \in [T]$, every matrix-approximation path $\cP$ for $(A)_{z^{(t-1)}}$ passed into the path-modified inner loop is of length $L = O(\log T)$. 

\begin{figure}
    \centering
    \includegraphics[width=0.8\linewidth]{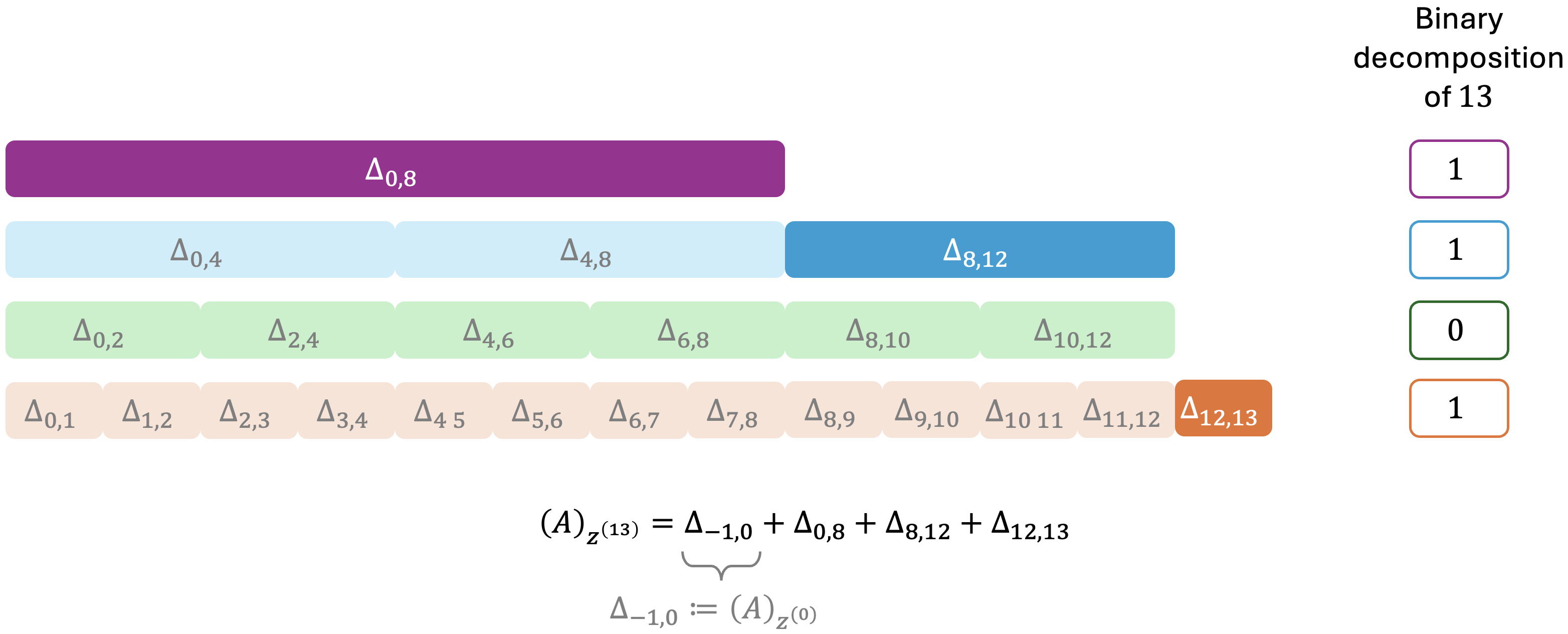}
    \caption{Dyadic decomposition. This figure illustrates how we decompose $(A)_{z^{(13)}}$ into a telescoping sum of $\Delta_{-1,0} = (A)_{z^{(0)}}$ plus at most $\ceil{\log_2(13)}$ distinct $\Delta_{j,j'}$'s for $(j,j') \in \cK$. This \textit{dyadic decomposition} leverages the binary expansion of $13=1101$.}
    \label{fig:dyadic-example}
\end{figure}

We can now compute the overall matvec complexity of the algorithm which results from combining this choice of models and matrix-approximation paths (i.e., dyadic decompositions), the path-modified SUPG mirror prox inner loop, and the outer loop and bisection search procedure of \cite{karmarkar2025solvingzerosumgames}. Let $\binpairs$ denote the set of all pairs $(j, j')$ formed by adjacent partial sums in the binary decompositions of the integers between $1$ and $T$ (ordered by descending powers of 2). Then a telescoping argument (see \Cref{thm:matrix-games-outer-loop-guarantee} and \Cref{thm:main-general-result}) using the matvec complexity \eqref{eq:overview-path-modified-upshot} of the path-modified inner-loop (and the at most polylogarithmic
 overhead of the bisection search procedure) gives a total matvec complexity of
\begin{align*}
   &\Tilde{O} \paren{ \tau^{-2}  \sum_{(j,j') \in \binpairs} \innorm{\Delta_{j, j'}}_F^2  +  \tau^{-2} \sum_{t \in [T]} (\alpha^{(t)})^2  + T \tau \beta^{-1} 
    } \\
     \overeq{(i)} \, & \Otilde \inparen*{
        \tau^{-2} \sum_{(j,j') \in \binpairs}  \breg{z^{(j)}}{z^{(j')}}  +  T (\tau^{-2} \beta^2 +  \tau \beta^{-1} ) + \tau^{-2}
     }.
\end{align*}
Here, $(i)$ uses the fact that $\innorm{\Delta_{j, j'}}_F^2 \le 2 \breg{z^{(j)}}{z^{(j')}}$ (see \Cref{lemma:compatibility}), and $\sum_{t \in [T]} (\alpha^{(t)})^2 = \Otilde(1 + T \beta^2)$ by the kineticness condition \eqref{eq:kineticness} and a standard prox point movement bound (see also \Cref{lem:multiprox-iteration-bound}). 

Then recalling $T = \Otilde(\epsilon^{-2/3})$, if we could show $\sum_{(j,j') \in \binpairs}  \breg{z^{(j)}}{z^{(j')}} = \Otilde(1)$, the choice $\tau = \beta = \epsilon^{1/3}$ would achieve an overall matvec complexity of $\Otilde(\epsilon^{-2/3})$! However, it is not clear how to obtain this movement bound via the standard prox point method. Indeed, the standard prox point method only gives strong control, to our knowledge, of the total movement between consecutive iterates, namely $\sum_{t \in [T]} \breg{z^{(t - 1)}}{z^{(t)}} \le \Otilde(1)$ (e.g., \cite[Lemma 4.2]{karmarkar2025solvingzerosumgames}; we also recap this bound in the next section). 
While it has been shown that the Bregman divergence $\breg{z}{z'}$ satisfies a type of relaxed triangle inequality \cite[Section 5.1]{carmon2024whole} (after the domain has been appropriately truncated), the multiplicative logarithmic factor it picks up in each use means it is unclear how to achieve tight control of $\breg{z^{(j)}}{z^{(j')}}$ when $j' - j$ is large.

To overcome this obstacle, we develop a new general primitive described next, termed the \emph{prox multi-point method}, which may be of independent interest. We note that to our knowledge, while there are many instances of using dyadic decompositions as a purely analytical technique, our use of it as an algorithmic intervention (as described next) is less common.

\subsection{Technique 3: The prox multi-point method}
\label{subsec:technique-3-prox-multi-point}

In this section, we describe our general \emph{prox multi-point method} primitive which enables tighter iterate movement bounds than the standard prox point method, thereby enabling our improved amortized analysis. 
While we ultimately apply our prox multi-point method to compute $\epsilon$-solutions of matrix games, our prox multi-point method (as well as the standard prox point method) solves the more general problem of achieving $\epsilon$-regret with respect to a monotone operator. (See \Cref{lem:regret-bounds-the-gap} for a formal restatement of this reduction.) Thus, we operate in the latter, more general setting in this section (\Cref{subsec:technique-3-prox-multi-point}). Formally, fix a dgf setup $\dgfsetup = (\zset, r)$ per Definition~\ref{def:dgf-setup} with $\Gamma_\dgfsetup \ge \max_{z, z' \in \zset} r(z) - r(z')$ and a continuous monotone operator $g : \zset \to \R^d$. 

Before describing our method, we first briefly recap the movement bound between consecutive iterates achieved by the standard prox point method. The standard prox point method starts with an initial point $z^{(0)} \in \zset$ and iterates $z^{(t)} \gets \prox_{z^{(t - 1)}}^{\alpha^{(t)}} (g)$ for $t = 1, 2, \dots, T$, where $\alpha^{(1)}, \alpha^{(2)}, \dots$ is a sequence of positive regularization parameters which can be chosen dynamically. By Definition~\ref{def:proximal-mappings}, this update is equivalent to $z^{(t)}$ satisfying
\begin{align*}
    \inangle{g(z^{(t)}), z^{(t)} - u} \le \alpha^{(t)} \insquare{\breg{z^{(t - 1)}}{u} - \breg{z^{(t)}}{u} - \breg{z^{(t - 1)}}{z^{(t)}}} \text{ for all $u \in \zset$}.
\end{align*}
Multiplying both sides by $(\alpha^{(t)})^{-1} / S$ for $S \defeq \sum_{t \in [T]} (\alpha^{(t)})^{-1}$ and summing gives the standard regret guarantee
\begin{align}
    \label{eq:standard-prox-point-regret-guarantee}
   \frac{1}{S} \sum_{t \in [T]} (\alpha^{(t)})^{-1} \inangle{g(z^{(t)}), z^{(t)} - u} \le \frac{\breg{z^{(0)}}{u} - \sum_{t \in [T]} \breg{z^{(t - 1)}}{z^{(t)}}}{S} \text{ for all $u \in \zset$}.
\end{align}
In the context of $\ellOneEllOne$ and $\ellTwoEllOne$ games, a standard result (\Cref{lem:regret-bounds-the-gap}) gives that instantiating this guarantee with $g \gets \gm f_A$ results in $\gap(\frac{1}{S} \sum_{t \in [T]} (\alpha^{(t)})^{-1}z^{(t)} )$ (recall \Cref{def:epsilon-solution}) being bounded by the left-hand side of \eqref{eq:standard-prox-point-regret-guarantee}. Furthermore, it is straightforward in this application to pick $z^{(0)}$ such that $\sup_{u \in \zset} \breg{z^{(0)}}{u} = \Otilde(1)$, in which case we obtain the movement bound $\sum_{t \in [T]} \breg{z^{(t - 1)}}{z^{(t)}} = \Otilde(1)$ since regret with respect to a monotone operator is nonnegative (e.g., \cite[Prop. A.1]{karmarkar2025solvingzerosumgames}).

Recall that our goal is to extend this movement bound between consecutive iterates to a movement bound over all pairs of iterates $z^{(j)}$ and $z^{(j')}$ for $(j, j') \in \binpairs$. We achieve this goal by carefully adding additional regularization to iterations $t$ of the standard prox point method about iterates \emph{before} the previous iterate $z^{(t - 1)}$ in order to increase control over gapped pairs of iterates.

Formally, our \emph{prox multi-point method} (Algorithm~\ref{alg:multiprox-method-monotone-op}) maintains $K$ sequences of regularization center points, where the $k$-th sequence for $k \in [K]$ is denoted $w_k^{(0)}, w_k^{(1)}, \dots, w_k^{(T)}$, as well as a sequence of iterates $z^{(0)}, z^{(1)}, \dots, z^{(T)}$. We also maintain a sequence of \emph{active center index sets} $\idset^{(1)}, \idset^{(2)}, \dots, \idset^{(T)}$ where each $\idset^{(t)} \subseteq [K]$. These encode which regularization centers are active at each iteration $t$, and can be chosen dynamically (though this is not strictly necessary for our application to matrix games). We initialize $w_k^{(0)} \gets z^{(0)}$ for all $k \in [K]$. 

Then, at every iteration $t = 1, 2, \dots, T$ of the prox multi-point method, we let $\uset^{(t)} \defeq \inbraces{w_k^{(t - 1)} : k \in \idset^{(t)}}$ denote the multiset containing the centers which are active at that step, and set $z^{(t)} \in \zset$ to be the unique point such that
\begin{align}
    \label{eq:overview-prox-multi-point-update}
   (\alpha^{(t)})^{-1} \inangle{g(z^{(t)}), z^{(t)} - u} \le  \sum_{k \in \idset^{(t)}} \insquare{\breg{w_k^{(t - 1)}}{u} - \breg{z^{(t)}}{u} - \breg{w_k^{(t - 1)}}{z^{(t)}}} \text{ for all $u \in \zset$} .
\end{align}
We introduce the shorthand notation $z^{(t)} \gets \prox_{\uset^{(t)}}^{\alpha^{(t)}}(g ; \zset)$ for this update in \Cref{sec:combined-outer-loop-section}; as discussed further there, $z^{(t)}$ is the solution to a strongly monotone variational inequality with respect to the operator $g + \alpha^{(t)} \sum_{w \in \uset^{(t)}} \grad \breg{w}{\cdot}$.\footnote{In Section~\ref{subsec:prox-multi-general-monotone-ops} we allow for an approximate solution, but we use exact solutions here for simplicity.} Note that for different applications, prior works also use proximal steps with multiple regularization centers  \citep{mehta2025min, mehta2024drago}. 

We then update the regularization centers via
\begin{align}
    \label{eq:w-update-in-overview}
    w^{(t)}_k \gets \begin{cases} 
      z^{(t)},  & \text{for all } k \in \idset^{(t)}, \\
      w^{(t - 1)}_k,  & \text{for all } k \in [K] \setminus \idset^{(t)}.
   \end{cases}
\end{align}
Summing \eqref{eq:overview-prox-multi-point-update} and dividing by $S$, we obtain for all $u \in \zset$:
\begin{align*}
    \frac{1}{S} \sum_{t \in [T]} (\alpha^{(t)})^{-1} \inangle{g(z^{(t)}), z^{(t)} - u} &\le \frac{1}{S} \sum_{t \in [T]} \, \,\sum_{k \in \idset^{(t)}}  \insquare{\breg{w_k^{(t - 1)}}{u} - \breg{z^{(t)}}{u} - \breg{w_k^{(t - 1)}}{z^{(t)}}} \\
    &= \frac{1}{S} \sum_{t \in [T]} \sum_{k \in [K]}  \insquare{\breg{w_k^{(t - 1)}}{u} - \breg{w_k^{(t)}}{u} - \breg{w_k^{(t - 1)}}{w_k^{(t)}}} \\
    &= \frac{1}{S}  \sum_{k \in [K]} \sum_{t \in [T]} \insquare{\breg{w_k^{(t - 1)}}{u} - \breg{w_k^{(t)}}{u} - \breg{w_k^{(t - 1)}}{w_k^{(t)}}} \\
    &\le \frac{1}{S}  \insquare*{ \sum_{k \in [K]} \breg{w_k^{(0)}}{u}  -   \sum_{k \in [K]} \sum_{t \in [T]}  \breg{w_k^{(t - 1)}}{w_k^{(t)}}           }.
\end{align*}
Therefore, in the matrix games application where we can ensure $\sup_{u \in \zset} \breg{w_k^{(0)}}{u} = \Otilde(1)$ for all $k \in [K]$ simultaneously, we obtain the movement bound $\sum_{k \in [K]} \sum_{t \in [T]}  \breg{w_k^{(t - 1)}}{w_k^{(t)}} = \Otilde(K)$. Thus, the prox multi-point method can be viewed as enabling control over $K$ subsequences of iterates at the cost of a $K$ factor in the movement bound. 

In our application to matrix games 
where we aim to control $\sum_{(j, j') \in \binpairs} \breg{z^{(j)}}{z^{(j')}}$, we choose $\idset^{(t)} \gets \inbraces{k \in [K] : \text{ $t$ is divisible by $2^{k - 1}$ }}$ for all $t \in [T]$ and $K = \Theta (\log T)$, and thereby obtain the desired control
\begin{align}
    \label{eq:overview-multiprox-movement-bound}
    \sum_{(j, j') \in \binpairs} \breg{z^{(j)}}{z^{(j')}} \overle{(i)} \sum_{k \in [K]} \sum_{t \in [T]}  \breg{w_k^{(t - 1)}}{w_k^{(t)}}  \le \Otilde(\log T).
\end{align}
Regarding $(i)$, note that for any $k \in [K]$, the update \eqref{eq:w-update-in-overview} along with the choice of $\idset^{(t)}$ implies $w_{k}^{(t)} = z^{(b_{k, t})}$ where $b_{k, t} \defeq t - (t \bmod 2^{k - 1})$; namely, $b_{k, t}$ is the largest multiple of $2^{k - 1}$ which is at most $t$. Thus, 
\begin{align*}
    \sum_{t \in [T]}  \breg{w_k^{(t - 1)}}{w_k^{(t)}} = \sum_{m \ge 0} \breg{z^{(m \cdot 2^{k - 1})}}{z^{((m + 1) \cdot 2^{k - 1})}} \, . 
\end{align*}
Namely, the $k$-th inner summation in the second term in \eqref{eq:overview-multiprox-movement-bound} bounds the divergences between iterates corresponding to consecutive powers of $2^{k - 1}$. In other words, 
\begin{align}
    \label{eq:overview-interpretation-of-movement-bound}
    \sum_{k \in [K]} \sum_{t \in [T]}  \breg{w_k^{(t - 1)}}{w_k^{(t)}} = \sum_{(j, j') \in \biset} \breg{z^{(j)}}{z^{(j')}} 
\end{align}
where $\biset \defeq \inbraces{(2^k m, 2^k (m + 1) ) : k, m \in \Z_{\ge 0} \text{ s.t. } 2^k (m + 1) \le T}$ is the set of all pairs given by consecutive multiples of the same power of two, with the restriction that they are at most $T$. (For example, for $T = 13$, $\biset$ contains precisely the pairs $(j, j')$ for all of the matrices $\Delta_{j, j'}$ shown in \Cref{fig:dyadic-example}.) Then $\binpairs \subseteq \biset$, yielding the inequality $(i)$.

We refer to the prox multi-point method with the specific choice of $\idset^{(t)}$ which we instantiate in our matrix games application (and which yields control between iterates gapped by dyadic intervals) as the \emph{dyadic prox method}. However, we ultimately define it with $\idset^{(t)} \gets \inbraces{k \in [K] : \text{ $t$ is divisible by $2^{K - k}$ }}$ as opposed to the choice $\idset^{(t)} \gets \inbraces{k \in [K] : \text{ $t$ is divisible by $2^{k - 1}$ }}$ made above since the former leads to more concise pseudocode indexing (see Section~\ref{subsec:prox-multi-matrix-games} for details). Note that this corresponds to simply relabeling the sequences $w_k^{(0)}, w_k^{(1)}, \dots, w_k^{(T)}$ (e.g., what was previously $w_1^{(0)}, \dots, w_1^{(T)}$ is now $w_K^{(0)}, \dots, w_K^{(T)}$), so the iterates $z^{(t)}$ remain unchanged.

\subsection{Putting it all together}
\label{subsec:overview-putting-it-all-together}

We have now covered all of our core innovations over the approach of \cite{karmarkar2025solvingzerosumgames}, but some more straightforward technical details remain. In this section, we give an overview of the outer loop, inner loop, and bisection search procedure of our algorithm, and how they relate to those of \cite{karmarkar2025solvingzerosumgames}.

\paragraph{Our outer loop.} Our outer loop is the dyadic prox method covered above on the monotone operator $g \gets \gm f_A$. We set $z^{(t)}$ to a high-accuracy solution of $\prox_{\uset^{(t)}}^{\alpha^{(t)}}(\gm f_A ; \zset)$ for $t = 1, 2, \dots, T$, where $\alpha^{(t)} \ge \beta > 0$ and $\uset^{(t)} \subset \zset$ is the multiset defined above. (Compare this to the subproblem \eqref{eq:prox-point-subproblem-new} in the outer loop of \cite{karmarkar2025solvingzerosumgames}, which is equivalent to $\prox_{z^{(t - 1)}}^{\alpha^{(t)}}(\gm f_A ; \zset)$.) We show that under a \emph{kineticness} requirement which is the natural extension of \eqref{eq:kineticness} to our update (see \Cref{def:DMP}), our outer loop obtains the same $T = \Otilde(\beta \epsilon^{-1} + \epsilon^{-2/3})$ iteration bound as the outer loop of \cite{karmarkar2025solvingzerosumgames}.

\paragraph{Our inner loop.} As in \citep{karmarkar2025solvingzerosumgames}, our inner loop is parameterized by a step size $\tau > 0$ and implements a similar ``smooth-until-proven-guilty'' variant of composite mirror prox. Our inner loop takes as input a matrix-approximation path $\innerpath = \{(\Delta_\ell, M_\ell)\}_{\ell \in [L]}$ to a point $\zcenter \in \cZ$ and solves constrained prox multi-point subproblems of the following form to high accuracy, for $\alpha > 0$ and appropriate stable regions $\xsetstable \times \ysetstable =: \zsetstable \subseteq \cZ$,
\begin{align}\label{eq:ultimate-subproblem}
    \min_{x \in \xsetstable} \max_{y \in \ysetstable} f_A(x,y) + \alpha \sum_{u \in \cU} \bregwr[\rx]{u\x}{x} - \alpha \sum_{u \in \cU} \bregwr[\ry]{u\y}{y} .
\end{align}
Here, $\cU \subset \cZ$ is any finite, non-empty multiset of $\cZ$ (compare this to the subproblem \eqref{eq:sub-problem-constrained} in the inner loop of \citep{karmarkar2025solvingzerosumgames} where $\cU$ is a singleton). Our inner loop is only a slight modification of \citep{karmarkar2025solvingzerosumgames}'s smooth-until-guilty mirror prox inner loop. The key differences, as discussed in Section~\ref{subsec:technique-1-telescoping-sums}, are that (i) our inner loop performs model-update iterations to update the entire matrix approximation path to $\zcenter$ rather than to update just a single fixed model of $(A)_{\zcenter}$, and (ii) our inner loop returns a final matrix-approximation path $\innerpath' = \{\Delta_{\ell}, M'_\ell\}_{\ell \in [L]}$ to $\zcenter$ along with a high-accuracy solution to \eqref{eq:ultimate-subproblem}. The overall matvec complexity of the inner loop is bounded by 
\begin{align*}
    \Tilde{O}\paren{ L^2 \tau^{-2} [\size(\innerpath) - \size(\innerpath')] + \tau \alpha^{-1} } \text{matvecs}.
\end{align*}

\paragraph{Our bisection search.} We use a straightforward extension of the bisection search procedure of \citep{karmarkar2025solvingzerosumgames} to bridge our outer and inner loop. Concretely, given a finite multiset $\cU^{(t)} \ni z^{(t-1)}$ which is a subset of $\cZ$, a matrix approximation path $\cP = \{(\Delta_\ell, M_\ell)\}_{\ell \in [L]}$ to $z^{(t-1)}$, and $\beta > 0$, our bisection search procedure makes $\Tilde{O}(1)$ calls to our inner loop. In each call to our inner loop, the bisection search procedure passes in a matrix approximation path $\innerpath$ to a (carefully selected) center $\zcenter$, along with an appropriate stable region $\zsetstable$ and $\alpha \ge \beta$. Here, $\innerpath$ is obtained from $\cP$ by concatenating $\cP$ with an $(L+1)$-th element $(\Delta_{L+1} = (A)_{\zcenter} - (A)_{z^{(t - 1)}}, M_{L+1} = 0_{m \times n})$. Importantly, in order to bound the growth in size $\size(\innerpath) - \size(\cP)$ caused by concatenating this $(L+1)$-th term, $\zcenter$ is carefully chosen to ensure that $\size(\innerpath) - \size(\cP) \leq O(\alpha^2)$. Correspondingly, each inner loop call runs in 
\begin{align*}
    \Tilde{O}\paren{ L^2 \tau^{-2} ( \size(\cP) - \size(\cP') + \alpha^2) + \tau \alpha^{-1} } \text{ matvecs}.
\end{align*}
After $\Tilde{O}(1)$ calls to the inner loop, the bisection search procedure finds $\alpha^{(t)}, z^{(t)}, \cP'$ such that (i) $\cP'$ is a matrix-approximation path to $z^{(t-1)}$, and (ii) either $\sum_{w \in \uset^{(t)}} \breg{w}{z^{(t)}} \geq C (\alpha^{(t)})^2$ for an absolute constant $C > 0$ or else $\alpha^{(t)} = \beta$ (namely, the natural extension of the kineticness condition \eqref{eq:kineticness} to our outer loop).

Finally, using the dyadic decomposition and amortization argument laid out in Section~\ref{subsec:technique-2-dyadic-decompositions} and the movement bound obtained in Section~\ref{subsec:technique-3-prox-multi-point}, we obtain an overall matvec complexity of 
\begin{align*}
    \Tilde{O}\paren{ \tau^{-2} + (\beta\epsilon^{-1} + \epsilon^{-2/3}) (\tau^{-2} \beta^2 + \tau \beta^{-1}) }, 
\end{align*}
which yields the desired $\Tilde{O}(\epsilon^{-2/3})$-matvec complexity when $\tau = \beta = \epsilon^{1/3}$. 

\subsection{Paper organization}\label{sec:paper-organization}

In this section, having described our algorithmic approach, we give a roadmap of the remaining sections and the key components of our formal proofs. To make it more clear precisely what assumptions each component uses and to hopefully enable future work, we often work in more general setups in these sections (as opposed to only $\ellOneEllOne$/$\ellTwoEllOne$ matrix games), as discussed further below. We ultimately verify that all of the more general assumptions we make in Sections~\ref{sec:combined-outer-loop-section},~\ref{sec:MDMP-implementation},~and~\ref{sec:sug-solver} hold for $\ellOneEllOne$/$\ellTwoEllOne$ matrix games in \Cref{sec:putting-together}, where we prove Theorems~\ref{thm:final-result-l1-l1-aka-zero-sum}~and~\ref{thm:final-result-l2-l1-aka-SVM}. We hope that the technical tools we introduce in this work, particularly our multi-point/dyadic prox methods and accompanying tools for controlling movement over sequences of variational inequalities, may find broader use.

In \Cref{sec:combined-outer-loop-section}, we give the formal statement and analysis of our outer loop. Specifically, in \Cref{subsec:prox-multi-general-monotone-ops} we give our general prox multi-point method for obtaining $\epsilon$-regret with respect to a monotone operator. In particular, we formally prove the movement bound discussed in detail above (\Cref{lem:multiprox-correctness}) and extend the iteration bound of \cite{karmarkar2025solvingzerosumgames} to our outer loop (\Cref{lem:multiprox-iteration-bound}). At the end of \Cref{subsec:prox-multi-general-monotone-ops}, we formally define the dyadic prox method (\Cref{def:dyadic-prox-multi-point-method}). \Cref{subsec:prox-multi-matrix-games} contains the formal statement of our outer loop for $\ellTwoEllOne$/$\ellOneEllOne$ matrix games (\Cref{alg:final-algo-outer-loop}), except stated in a slightly more general setting. In particular, \Cref{alg:final-algo-outer-loop} adds model initialization and updating to the (more general) dyadic prox method, including constructing matrix-approximation paths to pass to the bisection search procedure (although this is abstracted through an oracle). 

In Section~\ref{sec:MDMP-implementation}, we give our bisection search procedure. As mentioned, our bisection search procedure is 
a modification of that of \citep{karmarkar2025solvingzerosumgames} to handle the multiple points $\cU$ selected in the multi-prox outer-loop for regularization. Concretely, our bisection search reduces a single step of our dyadic prox method to (approximately) solving a sequence of $\Tilde{O}(1)$ \emph{constrained prox multi-point problems} (Definition~\ref{def:subproblem}) which are problems of the form \eqref{eq:ultimate-subproblem}, where $\xsetstable \times \ysetstable$ are carefully selected to be Hessian stable regions (Definition~\ref{def:stable-region-hess} and Definition~\ref{def:best-response-stability}). In Section~\ref{sec:sug-solver}, we show that under mild assumptions, a single such \emph{constrained prox multi-point problem} (Definition~\ref{def:subproblem}) can be solved to high-accuracy using our smooth-until proven guilty inner loop. (Recall that this inner loop is only a slight modification of that of \citep{karmarkar2025solvingzerosumgames} in that it handles matrix-approximation paths). 

Theorems~\ref{thm:final-result-l1-l1-aka-zero-sum}~and~\ref{thm:final-result-l2-l1-aka-SVM} are proven in Section~\ref{sec:putting-together}. Because each intermediate section (Sections~\ref{sec:combined-outer-loop-section} through~\ref{sec:sug-solver}) introduces some assumptions about the setup, in Section~\ref{sec:general-analysis}, we summarize our assumptions and results for general setups. Then, in Section~\ref{sec:applications} we verify the relevant assumptions and specify the parameter settings (e.g., of $\tau, \beta$) which yield our final results for $\ellTwoEllOne$ and $\ellOneEllOne$ games.

 \addtocontents{toc}{\protect\setcounter{tocdepth}{2}}

\section{Prox multi-point method outer loop}
\label{sec:combined-outer-loop-section}

In this section, we give the formal statement and analysis of our outer loop, albeit in more general settings which are instantiated for $\ellTwoEllOne$ and $\ellOneEllOne$ matrix games in \Cref{sec:putting-together}. In Section~\ref{subsec:prox-multi-general-monotone-ops}, we give our \emph{prox multi-point method} for general continuous monotone operators, and also provide a particularization of this general method, termed the \emph{dyadic prox method,} which is used in our application to matrix games. In the context of our ultimate outer loop for $\ellOneEllOne$ and $\ellTwoEllOne$ matrix games, \Cref{subsec:prox-multi-general-monotone-ops} can be viewed as proving correctness along with an iteration bound and iterate movement bound. \Cref{subsec:prox-multi-matrix-games} then specializes the dyadic prox method to (general) matrix games.
In particular, Section~\ref{subsec:prox-multi-matrix-games} handles model creation and clearing, as well as passing models to our bisection search procedure (although this is abstracted through an oracle). By carefully applying a movement bound between iterates given in Section~\ref{subsec:prox-multi-general-monotone-ops}, Section~\ref{subsec:prox-multi-matrix-games} forms the backbone of the amortized analysis discussed in Section~\ref{sec:overview-of-approach} (and formally computed in Section~\ref{sec:putting-together})
 to bound the total number of matvecs made over all inner loop model-update steps.

But first, we give some additional preliminaries which will be used in \Cref{sec:combined-outer-loop-section} and throughout the rest of the paper. In particular, we formally define regret with respect to an operator and restate a standard result which reduces computing an $\epsilon$-solution to achieving $\epsilon$-regret. Then, we give some additional notation pertaining to multisets and the natural extension of proximal mappings (\Cref{def:proximal-mappings}) to regularization over multisets.

\paragraph{Reducing minimax optimization to regret minimization.}
As is standard in the literature, our algorithms obtain $\epsilon$-solutions by achieving $\epsilon$-regret with respect to an appropriate monotone operator. We define regret with respect to a general operator $g$ in \Cref{def:regret}, and then we restate a standard result in \Cref{lem:regret-bounds-the-gap} which reduces obtaining an $\epsilon$-solution of a minimax optimization problem in $f$ to regret minimization with respect to $\gm f$. Note that in both cases we allow nonuniform weights $\lambda^{(t)} / \Lambda$; this will be important for our algorithms in \Cref{sec:combined-outer-loop-section}.

\begin{definition}[Regret]
    \label{def:regret}
With $\zset' \subseteq \zset \subset \R^d$, let $g : \zset \to \R^d$; $z^{(1)}, \dots, z^{(T)} \in \zset$; and $\lambda^{(1)}, \dots, \lambda^{(T)} > 0$. With $\Lambda \defeq \sum_{t \in [T]} \lambda^{(t)}$, we define
\begin{align*}
    \regret_g(\inbraces{z^{(t)}, \lambda^{(t)}}_{t \in [T]}; \zset') \defeq \sup_{u \in \zset'} \inbraces*{
        \frac{1}{\Lambda} \sum_{t \in [T]} \lambda^{(t)} \inangle{g(z^{(t)}), z^{(t)} - u},
    }
\end{align*}
where $\regret_g(\inbraces{z^{(t)}, \lambda^{(t)}}_{t \in [T]}; \zset')$ is called the \emph{regret of the sequence $z^{(1)}, \dots, z^{(T)}$ (with respect to the operator $g$, weights $\inbraces{\lambda^{(t)}}_{t \in [T]}$, and set $\zset'$).} We may drop $\zset'$, writing $\regret_g(\inbraces{z^{(t)}, \lambda^{(t)}}_{t \in [T]})$, when $\zset' = \zset$ for brevity.
\end{definition}

\begin{lemma}[Lemma 2.3 of \citep{karmarkar2025solvingzerosumgames}, restated]
    \label{lem:regret-bounds-the-gap}
    Let $f : \xset \times \yset \to \R$ be a differentiable convex-concave function over compact, convex sets $\xset \subset \R^n$ and $\yset \subset \R^m$, with $\zset \defeq \xset \times \yset$. Then for any $z^{(1)}, \dots, z^{(T)} \in \zset $ and $\lambda^{(1)}, \dots, \lambda^{(T)} > 0$, letting $\Lambda \defeq \sum_{t \in [T]} \lambda^{(t)}$ and $\zbar \defeq \frac{1}{\Lambda} \sum_{t \in [T]} \lambda^{(t)} z^{(t)}$, we have $\gap(\zbar) \le \regret_{\gm f}(\inbraces{z^{(t)}, \lambda^{(t)}}_{t \in [T]}; \zset)$.
\end{lemma}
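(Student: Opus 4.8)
The plan is to follow the standard two-step route: (i) use Jensen's inequality (convexity in $x$, concavity in $y$) to dominate $\gap(\zbar)$ by a $\lambda$-weighted average of ``linearized gaps'' evaluated at the iterates $z^{(t)}$, and then (ii) bound each linearized gap at $z^{(t)}$ by $\inangle{\gm f(z^{(t)}), z^{(t)} - u}$ via the first-order characterizations of convexity/concavity, at which point taking a supremum over $u \in \zset$ recovers exactly $\regret_{\gm f}(\inbraces{z^{(t)}, \lambda^{(t)}}_{t \in [T]}; \zset)$.

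Concretely, I would first write $\zbar = (\bar x, \bar y)$ with $\bar x = \Lambda^{-1}\sum_{t \in [T]}\lambda^{(t)} x^{(t)}$ and $\bar y = \Lambda^{-1}\sum_{t \in [T]} \lambda^{(t)} y^{(t)}$, where $z^{(t)} = (x^{(t)}, y^{(t)})$; compactness of $\xset, \yset$ and continuity of $f$ guarantee the max and min in $\gap(\zbar)$ are attained. For any fixed $y' \in \yset$, convexity of $x \mapsto f(x, y')$ gives $f(\bar x, y') \le \Lambda^{-1}\sum_{t}\lambda^{(t)} f(x^{(t)}, y')$, and symmetrically concavity of $y\mapsto f(x', y)$ gives $f(x', \bar y) \ge \Lambda^{-1}\sum_{t}\lambda^{(t)} f(x', y^{(t)})$ for any fixed $x' \in \xset$. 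Subtracting these and then optimizing over $y'$ and $x'$ separately — which decouples into a single supremum over the product set $\zset = \xset \times \yset$, since the first bound depends only on $y'$ and the second only on $x'$ — yields
\[
\gap(\zbar) \;\le\; \sup_{u = (x',y') \in \zset} \frac{1}{\Lambda}\sum_{t \in [T]} \lambda^{(t)}\paren{ f(x^{(t)}, y') - f(x', y^{(t)}) }.
\]

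Next I would fix $t$ and $u = (x', y')$ and bound the summand. Convexity of $x \mapsto f(x, y^{(t)})$ at $x^{(t)}$ gives $f(x', y^{(t)}) \ge f(z^{(t)}) + \inangle{\nabla\x f(z^{(t)}), x' - x^{(t)}}$, and concavity of $y\mapsto f(x^{(t)}, y)$ at $y^{(t)}$ gives $f(x^{(t)}, y') \le f(z^{(t)}) + \inangle{\nabla\y f(z^{(t)}), y' - y^{(t)}}$. Subtracting, the $f(z^{(t)})$ terms cancel and we obtain $f(x^{(t)}, y') - f(x', y^{(t)}) \le \inangle{\nabla\x f(z^{(t)}), x^{(t)} - x'} + \inangle{\nabla\y f(z^{(t)}), y' - y^{(t)}} = \inangle{\gm f(z^{(t)}), z^{(t)} - u}$, using $\gm f(z) = (\nabla\x f(z), -\nabla\y f(z))$. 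Substituting into the display above and invoking Definition~\ref{def:regret} gives $\gap(\zbar) \le \sup_{u \in \zset} \Lambda^{-1}\sum_{t} \lambda^{(t)} \inangle{\gm f(z^{(t)}), z^{(t)} - u} = \regret_{\gm f}(\inbraces{z^{(t)}, \lambda^{(t)}}_{t \in [T]}; \zset)$, which is the claim.

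There is no serious obstacle; the argument is entirely standard. The two points that merit a little care are the decoupling of the outer $\max_{y'}$ and $\min_{x'}$ into a single supremum over $\zset$ (valid because, post-Jensen, the two halves of the gap involve disjoint variables), and the implicit differentiability of $f$ needed for $\gm f$ and the first-order inequalities to make sense — if one only assumes $f$ convex-concave, step (ii) should instead be phrased with subgradients, but since the lemma is stated in terms of $\gm f$ this is moot. Compactness of $\xset$ and $\yset$ is used only to ensure $\gap(\zbar)$ is finite and that the extrema defining it are attained.
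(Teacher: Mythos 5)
Your proof is correct and is the canonical online-to-batch argument for convex-concave minimax (Jensen to pull the weighted average inside the gap, then the first-order convexity/concavity inequalities to pass from function values to the gradient-mapping linearization). The paper cites this lemma from \citet{karmarkar2025solvingzerosumgames} without reproducing a proof, but the argument there is the same standard one, so there is nothing to reconcile; the one subtlety you flag — that the two post-Jensen suprema decouple and merge into a single supremum over the product set $\zset$ — is exactly the point that deserves the explicit remark you give it.
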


\paragraph{Notation for finite multisets $\cU$.} Recall from the technical overview (\Cref{sec:overview-of-approach}) that a key feature of the prox multi-point method is that it extends the standard prox point method to allow for regularization over finite multisets $\cU \subset \cZ$.
Correspondingly, we frequently work with \emph{sums} of Bregman divergences over a multiset $\cU$. Consequently, for a dgf setup $\dgfsetup = (\zset \subset \R^d, r)$, a finite nonempty multiset $\uset \subset \zset$, and $z' \in \zset$, we may use the notation $\breg{\uset}{z'} \defeq \sum_{w \in \uset} \breg{w}{z'}$ for brevity. Similarly, given a multiset $\cU \subset \cZ$, we let $\cU\x \defeq \{u\x: u \in \cU\}$ and $\cU\y \defeq \{u\y : u \in \cU\}$ where $\cU\x, \cU\y$ are defined as multisets with multiplicity so that $|\cU|=|\cU\x|=|\cU\y|$. (Note that $\cU\x, \cU\y$ may have multiplicities even if $\uset$ has no repeated elements. For example, if there exist $u, v \in \cU$ such that $u\x = v\x$, then $\cU\x$ will contain both $u\x$ and $v\x$.) Additionally, departing from \citep{karmarkar2025solvingzerosumgames}, for notational convenience when describing and analyzing our prox multi-point method, we use the following additional notation. For a finite nonempty multiset $\cU \subset \cZ$ and $\zset' \subseteq \zset$, we write $\prox_{\cU}^\lambda(g; \cZ')$ to denote 
the unique $z' \in \cZ'$ such that 
\begin{align}\label{eq:set-prox-mapping} 
    \inangle{g(z'), z' - u}
\leq \lambda \sum_{z \in \cU} \insquare{\breg{z}{u} - \breg{z'}{u} - \breg{z}{z'}} \text{ for all $u \in \cZ'$}.
\end{align}

\subsection{Prox multi-point method for general monotone operators}
\label{subsec:prox-multi-general-monotone-ops}

In this section, we provide our \emph{prox multi-point method} (Algorithm~\ref{alg:multiprox-method-monotone-op}) as well as a specialization termed the \emph{dyadic prox method} (Definition~\ref{def:dyadic-prox-multi-point-method}). For $\epsilon > 0$, the prox multi-point method obtains $\epsilon$-regret with respect to a general monotone operator $g : \zset \to \R^d$; namely, it obtains sequences $z^{(1)}, \dots, z^{(T)} \in \zset$ and $\lambda^{(1)}, \dots, \lambda^{(T)} > 0$ such that $\regret_g(\inbraces{z^{(t)}, \lambda^{(t)}}_{t \in [T]}) \le \epsilon$ (recall Definition~\ref{def:regret}). In the next section (Section~\ref{subsec:prox-multi-matrix-games}), we  use this regret bound to bound the gap in our matrix games applications via Lemma~\ref{lem:regret-bounds-the-gap}.

\begin{assumptions}
In this section (Section~\ref{subsec:prox-multi-general-monotone-ops}), we fix a dgf setup $\dgfsetup = (\zset, r)$ per Definition~\ref{def:dgf-setup} with $\Gamma_\dgfsetup \ge \max_{z, z' \in \zset} r(z) - r(z')$ and a continuous monotone operator $g : \zset \to \R^d$. (We note that the results of \Cref{subsec:prox-multi-general-monotone-ops} only require $r$ to be differentiable as opposed to twice differentiable, as we do not use local norms in this section.)
\end{assumptions}

First, in the following Definition~\ref{def:DMP} we define a key oracle to which our algorithm will assume access. A $\DMP$ oracle approximately solves a strongly monotone variational inequality \eqref{eq:DMP-def-variational-property} with respect to the operator $g + \alpha \grad \breg{\uset}{\cdot}$ (sometimes referred to as an \emph{$\epsprim$-strong solution} \cite{lin2025perseus,monteiro2010complexity}); in particular, it approximates $\prox_{\cU}^\alpha(g ; \zset)$. (Note that $\grad \breg{\uset}{z'} = \sum_{w \in \uset} \grad \breg{w}{z'}$ for all $z' \in \zset$ by linearity of the gradient.)
Additionally, we say a $\DMP$ oracle is \emph{kinetic} if it either uses a default level of regularization $\beta$, or else certifies progress by lower bounding the movement of the output. Definition~\ref{def:DMP} can be viewed as the natural extension of \cite[Def. 4.1]{karmarkar2025solvingzerosumgames} to regularization about multiple points.

\begin{definition}[$\epsilon'$-$\DMP$] \label{def:DMP}
    For $\epsprim > 0$, we call $\ODMP(\cdot)$ an \emph{$\epsprim$-dynamic multiprox oracle} or $\epsprim$-$\DMP$ (with respect to the operator $g$ and setup $\dgfsetup$) if given a finite, nonempty multiset $\uset \subset \zset$ as input, it returns $(z' \in \zset, \alpha > 0)$
    such that
\begin{align}
    \label{eq:DMP-def-variational-property}
    \inangle{g(z') + \alpha \grad \breg{\uset}{z'}, z' - u} \le \epsilon' \text{~for all $u \in \zset$}.
\end{align}
For $\beta, \gamma, \rho > 0$, we say an $\epsilon'$-$\DMP$ oracle is \emph{$(\beta, \gamma, \rho)$-kinetic} if additionally the output always satisfies at least one of 
\begin{enumerate*}[series = tobecont, itemjoin =, label=(\alph*)]
    \item $\alpha = \beta$ 
    or \label{item:DMP-beta-cond}
    \item $\breg{\cU}{z'} \ge \gamma \alpha^\rho$. \label{item:DMP-movement-cond}
\end{enumerate*}
\end{definition}

Next, we present the prox multi-point method in Algorithm~\ref{alg:multiprox-method-monotone-op}. Algorithm~\ref{alg:multiprox-method-monotone-op} maintains a sequence of iterates $z^{(1)}, z^{(2)}, \dots$ outputted by the $\DMP$ oracle, as well as $K$ sequences of regularization center points, where the $k$-th sequence for $k \in [K]$ is denoted $w_k^{(0)}, w_k^{(1)}, \dots$. At each iteration $t$, a set of \emph{active centers} is dynamically chosen via the index set $\idset^{(t)}$ in Line~\ref{line:multiprox-choose-active-centers}. The corresponding centers $\inbraces{w^{(t - 1)}_k : k \in \idset^{(t)}}$ are passed to the $\DMP$ oracle in Line~\ref{line:multiprox-DMP} to obtain $z^{(t)}$ and $\alpha^{(t)}$; note that here $\inbraces{w^{(t - 1)}_k : k \in \idset^{(t)}}$ is a multiset (i.e., multiplicity is preserved). Then, the regularization centers for the next iteration $w_k^{(t)}$ are set in Line~\ref{line:multiprox-w^{(t)}_k-unified-def}. Centers which were active in the current iteration are updated to $z^{(t)}$; otherwise they retain their previous value. This ensures appropriate telescoping occurs in the analysis. 
Finally, note that the conditional in Line~\ref{line:multiprox-while-loop} evaluates to $\true$ when $t = 0$ due to the convention from Section~\ref{sec:prelims} that a summation over an empty index set is 0.

\RestyleAlgo{ruled}
\DontPrintSemicolon
\SetKwComment{Comment}{/* }{ */}
\begin{algorithm2e}[h!]
\caption{Prox multi-point method}
\label{alg:multiprox-method-monotone-op}
\KwInput{Precision $\epsilon > 0$, max centers per step $K \in \Z_{> 0}$, 
 $\epsilon$-$\DMP$ oracle $\ODMP$
}

$z^{(0)} \gets \argmin_{z \in \zset} r(z)$ ~and~ $t \gets 0$\;

$w^{(0)}_k \gets z^{(0)}$ for all $k \in [K]$ \;

\While(\tcp*[f]{Recall $\Range_\dgfsetup \ge \max_{z, z' \in \zset} r(z) - r(z')$}){
    $\sum_{j \in [t]} (\alpha^{(j)})^{-1} < K \Range_\dgfsetup \epsilon^{-1}$ 
}{ \label{line:multiprox-while-loop}

    $t \gets t + 1$ \;

    Choose nonempty $\idset^{(t)} \subseteq [K]$ \tcp*{Dynamically select the active center indices at the $t$-th step} \label{line:multiprox-choose-active-centers}

    $(z^{(t)}, \alpha^{(t)}) \gets \ODMP(\inbraces{w^{(t - 1)}_k : k \in \idset^{(t)}})$ \label{line:multiprox-DMP}

    $w^{(t)}_k \gets \begin{cases} 
      z^{(t)},  & \text{for all } k \in \idset^{(t)} \\
      w^{(t - 1)}_k,  & \text{for all } k \in [K] \setminus \idset^{(t)}
   \end{cases}$ \label{line:multiprox-w^{(t)}_k-unified-def}

}

\Return{$\inbraces{z^{(j)}, \alpha^{(j)}}_{j \in [T]}$ where $T \defeq t$ \tcp*{$T$ is used in the analysis to refer to the final iteration count}
}\label{line:multiprox-return}

\end{algorithm2e}

Next, we give our correctness guarantee as well as a movement bound over the sequences of regularization centers in Lemma~\ref{lem:multiprox-correctness}. Recall from the discussion in Section~\ref{sec:overview-of-approach} that this movement bound can be viewed as enabling control over $K$ subsequences of the iterates $z^{(0)}, z^{(1)}, \dots$ as opposed to the movement bound over the single sequence $z^{(0)}, z^{(1)}, \dots$ obtained by the standard prox point method, albeit at the cost of an additional $K$ factor in the bound. 

Note that unlike in \Cref{sec:overview-of-approach}, the movement bound in \Cref{lem:multiprox-correctness} when the algorithm terminates is over $t \in [T - 1]$ instead of $t \in [T]$. (More generally, we are always able to obtain a movement bound over all iterates other than the final iterate; this unites the two cases of \Cref{lem:multiprox-correctness}.) As is evident from the proof (see \eqref{eq:key-eq-dist-regret-bound}), this is due to the fact that in this section each step taken in \Cref{line:multiprox-DMP} involves solving an approximate variational inequality per the discussion above, whereas in \Cref{sec:overview-of-approach} each step solved the corresponding exact variational inequality for simplicity. However, as discussed further in \Cref{subsec:prox-multi-matrix-games}, the movement bound over $t \in [T - 1]$ suffices for our purposes.

\begin{lemma}
    \label{lem:multiprox-correctness}
    If \Cref{alg:multiprox-method-monotone-op} terminates, then $\regret_g(\inbraces{z^{(t)}, (\alpha^{(t)})^{-1}}_{t \in [T]})  \le 2 \epsilon$ and \\ $\sum_{t \in [T - 1]} \sum_{k \in [K]} \breg{w^{(t - 1)}_k}{w^{(t)}_k} \le 2 K \Gamma_\dgfsetup$. Otherwise, $\sum_{t \ge 1} \sum_{k \in [K]} \breg{w^{(t - 1)}_k}{w^{(t)}_k} \le 2 K \Gamma_\dgfsetup$.
\end{lemma}

\begin{proof}
The definition of $z^{(t)}$ in \Cref{line:multiprox-DMP}, along with \Cref{def:DMP} and \eqref{eq:Bregman-three-point-equality}, imply that for all $t \ge 1$ and $u \in \zset$:
\begin{align*}
    (\alpha^{(t)})^{-1} \inangle{g(z^{(t)}), z^{(t)} - u} &\le \sum_{k \in \idset^{(t)}} [\breg{w_k^{(t - 1)}}{u} - \breg{z^{(t)}}{u} - \breg{w_k^{(t - 1)}}{z^{(t)}}] + (\alpha^{(t)})^{-1} \epsilon \\
    &= \sum_{k \in [K]} [\breg{w_k^{(t - 1)}}{u} - \breg{w_k^{(t)}}{u} - \breg{w_k^{(t - 1)}}{w_k^{(t)}}] + ( \alpha^{(t)})^{-1} \epsilon,
\end{align*}
where the equality followed from the definition of $w_k^{(t)}$ in Line~\ref{line:multiprox-w^{(t)}_k-unified-def}; note in particular 
that the expression within the final summation is zero for $k \in [K] \setminus \idset^{(t)}$. Letting $t' \ge 1$, summing both sides over $t \in [t']$, dividing by $S_{t'} \defeq \sum_{t \in [t']} (\alpha^{(t)})^{-1}$ (defined as a function of $t'$), and using the nonnegativity of Bregman divergences yields
\begin{align}
    \label{eq:key-eq-dist-regret-bound}
    \frac{ K \Gamma_\dgfsetup - \sum_{t \in [t']} \sum_{k \in [K]}  \breg{w_k^{(t - 1)}}{w_k^{(t)}} }{S_{t'}} + \epsilon \ge
        \sup_{u \in \zset} \inbraces*{\frac{1}{S_{t'}} \sum_{t \in [t']} (\alpha^{(t)})^{-1} \inangle{ g(z^{(t)}), z^{(t)} - u }} \overge{(i)} 0,
\end{align}
where $(i)$ follows since regret with respect to a monotone operator is nonnegative (e.g., \cite[Proposition A.1]{karmarkar2025solvingzerosumgames}). Here, we also used the fact that $w_k^{(0)} = z^{(0)}$ for all $k \in [K]$ and $\breg{z^{(0)}}{u} \le \Gamma_\dgfsetup$ for all $u \in \zset$ since $z^{(0)} \in \argmin_{z \in \zset} r(z)$.

Then supposing \Cref{alg:multiprox-method-monotone-op} terminates, the first claim follows by instantiating $t' \gets T$ in \eqref{eq:key-eq-dist-regret-bound} and noting $S_T \ge K \Gamma_\dgfsetup \epsilon^{-1}$ due to the termination condition in Line~\ref{line:multiprox-while-loop}. As for the second claim, the case $T = 1$ is trivial. Otherwise, taking $t' = T - 1$ and using the termination condition in \Cref{line:multiprox-while-loop}, we get
\begin{align*}
    \sum_{t \in [T - 1]} \sum_{k \in [K]} \breg{w^{(t - 1)}_k}{w^{(t)}_k} \le K \Gamma_\dgfsetup + \epsilon \cdot K \Gamma_\dgfsetup \epsilon^{-1} = 2 K \Gamma_\dgfsetup .
\end{align*}

Similarly, supposing \Cref{alg:multiprox-method-monotone-op} does not terminate, \eqref{eq:key-eq-dist-regret-bound} as well as the termination condition in \Cref{line:multiprox-while-loop} imply that for all $t' \ge 1$,
\begin{align*}
    \sum_{t \in [t']} \sum_{k \in [K]} \breg{w^{(t - 1)}_k}{w^{(t)}_k} \le K \Gamma_\dgfsetup + \epsilon \cdot K \Gamma_\dgfsetup \epsilon^{-1} = 2 K \Gamma_\dgfsetup .
\end{align*}
\end{proof}

Next, when the $\DMP$ is kinetic, we bound the number of iterations $T$ as well as the sum of the regularization levels $\alpha^{(t)}$ raised to the $\rho$ power. Recall from Sections~\ref{subsec:technique-1-telescoping-sums}~and~\ref{subsec:technique-2-dyadic-decompositions} that the latter is ultimately needed since $(\alpha^{(t)})^\rho$ with our ultimate choice of $\rho \gets 2$ appears in the complexity of each inner loop call during the $t$-th iteration of our outer loop for $\ellTwoEllOne$ and $\ellOneEllOne$ matrix games. Note that \Cref{lem:multiprox-iteration-bound} does not bound $\alpha^{(T)}$; we bound $\alpha^{(T)}$ separately when applying the results of \Cref{sec:combined-outer-loop-section} in \Cref{sec:putting-together}, where it is guaranteed to be at most a polylogarithmic factor. 

\begin{lemma}
    \label{lem:multiprox-iteration-bound}
If the $\epsilon$-$\DMP$ oracle given as input to Algorithm~\ref{alg:multiprox-method-monotone-op} is $(\beta, \gamma, \rho)$-kinetic, then the algorithm terminates with
\begin{align*}
   T \leq K \Gamma_\dgfsetup (\beta \epsilon^{-1} + 2^{\frac{1}{\rho + 1}} \gamma^{- \frac{1}{\rho+1}} \epsilon^{- \frac{\rho}{\rho + 1}}) + 2 ~~\text{and}~~  \sum_{t \in [T - 1]} (\alpha^{(t)})^\rho \le 2 K \Gamma_{\dgfsetup} \gamma^{-1} + (T - 1) \beta^\rho  .
\end{align*}
\end{lemma}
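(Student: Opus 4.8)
The plan is to combine the movement bound from Lemma~\ref{lem:multiprox-correctness} with the kinetic property of the oracle (Definition~\ref{def:DMP}). For each iteration $t \ge 1$ write $\uset^{(t)} \defeq \inbraces{w^{(t-1)}_k : k \in \idset^{(t)}}$ for the multiset passed to $\ODMP$ in Line~\ref{line:multiprox-DMP}. First I would record the identity $\breg{\uset^{(t)}}{z^{(t)}} = \sum_{k \in \idset^{(t)}} \breg{w^{(t-1)}_k}{z^{(t)}} = \sum_{k \in [K]} \breg{w^{(t-1)}_k}{w^{(t)}_k}$, where the last step uses Line~\ref{line:multiprox-w^{(t)}_k-unified-def}: for $k \in \idset^{(t)}$ we have $w^{(t)}_k = z^{(t)}$, while for $k \notin \idset^{(t)}$ we have $w^{(t)}_k = w^{(t-1)}_k$ so that term vanishes. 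Hence Lemma~\ref{lem:multiprox-correctness} gives $\sum_{t \in [t']} \breg{\uset^{(t)}}{z^{(t)}} \le K \Range_\dgfsetup$ for every iteration count $t'$ the algorithm reaches. I would then split the iterations into $A \defeq \inbraces{t : \alpha^{(t)} = \beta}$ and its complement $B$; by the $(\beta, \gamma, \rho)$-kinetic property, each $t \in B$ (for which $\alpha^{(t)} \ne \beta$) must satisfy $\breg{\uset^{(t)}}{z^{(t)}} \ge \gamma (\alpha^{(t)})^\rho$.

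\textbf{The $\sum(\alpha^{(t)})^\rho$ bound.} This is the routine direction. I would split $\sum_{t \in [T]}(\alpha^{(t)})^\rho$ over $A$ and $B$: the $A$-part equals $\abs{A \cap [T]}\,\beta^\rho \le T\beta^\rho$, and for $B$ the kinetic inequality gives $(\alpha^{(t)})^\rho \le \gamma^{-1}\breg{\uset^{(t)}}{z^{(t)}}$, so the $B$-part is at most $\gamma^{-1}\sum_{t \in [T]}\breg{\uset^{(t)}}{z^{(t)}} \le \gamma^{-1} K \Range_\dgfsetup$. Adding gives exactly $K \Range_\dgfsetup \gamma^{-1} + T\beta^\rho$.

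\textbf{The iteration count and termination.} This is where the work is. Whenever the algorithm has completed $t'$ iterations, the guard in Line~\ref{line:multiprox-while-loop} was true when checked at $t = t'-1$, i.e.\ $S_{t'-1} \defeq \sum_{j \in [t'-1]}(\alpha^{(j)})^{-1} < K \Range_\dgfsetup \epsilon^{-1}$. I would lower-bound $S_{t'-1}$ by splitting $[t'-1]$ into $A' \defeq A \cap [t'-1]$ and $B' \defeq B \cap [t'-1]$. The $A'$-part contributes exactly $\abs{A'}\beta^{-1}$. For $B'$, rearranging $\breg{\uset^{(t)}}{z^{(t)}} \ge \gamma(\alpha^{(t)})^\rho$ gives $(\alpha^{(t)})^{-1} \ge \gamma^{1/\rho}\,\breg{\uset^{(t)}}{z^{(t)}}^{-1/\rho}$; since $x \mapsto x^{-1/\rho}$ is convex on $(0,\infty)$, Jensen's inequality yields
\begin{align*}
    \sum_{t \in B'} \breg{\uset^{(t)}}{z^{(t)}}^{-1/\rho} \ge \abs{B'}\inparen*{\frac{1}{\abs{B'}}\sum_{t \in B'}\breg{\uset^{(t)}}{z^{(t)}}}^{-1/\rho} = \abs{B'}^{1+1/\rho}\inparen*{\sum_{t \in B'}\breg{\uset^{(t)}}{z^{(t)}}}^{-1/\rho} \ge \frac{\abs{B'}^{1+1/\rho}}{(K\Range_\dgfsetup)^{1/\rho}},
\end{align*}
where I use $\sum_{t \in B'}\breg{\uset^{(t)}}{z^{(t)}} \le K\Range_\dgfsetup$ (this sum is strictly positive when $B' \ne \emptyset$, since each summand is $\ge \gamma(\alpha^{(t)})^\rho > 0$; if $B' = \emptyset$ the bound below on $\abs{B'}$ is vacuous). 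Combining, $K\Range_\dgfsetup\epsilon^{-1} > \abs{A'}\beta^{-1} + \gamma^{1/\rho}(K\Range_\dgfsetup)^{-1/\rho}\abs{B'}^{1+1/\rho}$; since both right-hand summands are nonnegative, each is separately $< K\Range_\dgfsetup\epsilon^{-1}$, which gives $\abs{A'} < K\Range_\dgfsetup\beta\epsilon^{-1}$ and, after raising to the power $\rho/(\rho+1)$, $\abs{B'} < K\Range_\dgfsetup\gamma^{-1/(\rho+1)}\epsilon^{-\rho/(\rho+1)}$. Hence $t'-1 = \abs{A'}+\abs{B'} < K\Range_\dgfsetup\bigl(\beta\epsilon^{-1}+\gamma^{-1/(\rho+1)}\epsilon^{-\rho/(\rho+1)}\bigr)$ for every reachable $t'$; since this is a fixed finite bound, the loop terminates, and evaluating at $t' = T$ (absorbing the $+1$ and integer rounding into an additive $+2$) yields $T \le K\Range_\dgfsetup\bigl(\beta\epsilon^{-1}+\gamma^{-1/(\rho+1)}\epsilon^{-\rho/(\rho+1)}\bigr) + 2$.

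\textbf{Main obstacle.} Nothing here is deep; the delicate part is the power-mean step together with the exponent bookkeeping — keeping straight the sign of $-1/\rho$, checking that the relevant divergence sum is strictly positive so the Jensen bound is well-defined, and verifying that raising $\abs{B'}^{1+1/\rho} < \gamma^{-1/\rho}(K\Range_\dgfsetup)^{1+1/\rho}\epsilon^{-1}$ to the power $\rho/(\rho+1)$ indeed produces the advertised exponents $\gamma^{-1/(\rho+1)}$ and $\epsilon^{-\rho/(\rho+1)}$. The other point needing care is to phrase termination as ``every reachable iteration count $t'$ is bounded'' rather than presupposing that the loop halts.
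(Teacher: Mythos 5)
Your proof is correct and follows essentially the same route as the paper's: you split iterations into $\alpha^{(t)} = \beta$ versus the rest, bound the former via the termination guard and the latter via the movement bound of Lemma~\ref{lem:multiprox-correctness} combined with a power-mean interpolation, and derive the $\sum (\alpha^{(t)})^\rho$ bound directly from the same ingredients. The only cosmetic difference is that you obtain the interpolation via Jensen's inequality applied to $d_t \mapsto d_t^{-1/\rho}$, whereas the paper applies \Holder's inequality directly to $(\alpha^{(t)})^{\rho/(\rho+1)} \cdot (\alpha^{(t)})^{-\rho/(\rho+1)}$; your termination argument (a uniform bound on every reachable $t'$) is also a cleaner phrasing than the paper's limiting argument, but both are valid.
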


\begin{proof}
Let $J_a \defeq \inbraces{t \ge 1 : \alpha^{(t)} = \beta}$ and $J_b \defeq \inbraces{t \ge 1 : \alpha^{(t)} \ne \beta}$, where we restrict to values of $t$ such that $\alpha^{(t)}$ is well-defined. (In particular, note that the $\DMP$ oracle call in Line~\ref{line:multiprox-DMP} during an iteration $t$ such that $t \in J_b$ must satisfy condition \ref{item:DMP-movement-cond} in Definition~\ref{def:DMP}.)

We first prove termination. Note $|J_a| \le \beta K \Gamma_\dgfsetup \epsilon^{-1} + 1$ due to the termination condition in Line~\ref{line:multiprox-while-loop}, and thus it suffices to show that $|J_b|$ is finite. Supposing $|J_b|$ is infinite for the sake of contradiction, we have then that for every $t' \ge 1$,
\begin{align}
    \label{eq:does-not-terminate-alpha-bound}
    \begin{split}
    \sum_{t \in J_b \cap [t']} \gamma (\alpha^{(t)})^\rho \le \sum_{t \in J_b \cap [t']}  \sum_{k \in \idset^{(t)}} \breg{w_k^{(t - 1)}}{z^{(t)}} 
    &\overeq{(i)} \sum_{t \in J_b \cap [t']}  \sum_{k \in [K]} \breg{w_k^{(t - 1)}}{w_k^{(t)}} \\ 
    &\overle{(ii)} \sum_{t \in [t']}  \sum_{k \in [K]} \breg{w_k^{(t - 1)}}{w_k^{(t)}} \overle{(iii)} 2 K \Gamma_\dgfsetup,
    \end{split}
\end{align}
where we used $(i)$ the definition of $w_k^{(t)}$ in Line~\ref{line:multiprox-w^{(t)}_k-unified-def},
$(ii)$ the nonnegativity of Bregman divergences, and $(iii)$ Lemma~\ref{lem:multiprox-correctness}. Also, since we have assumed $|J_b|$ is infinite and therefore \Cref{alg:multiprox-method-monotone-op} does not terminate, for every $t' \ge 1$, we have $\sum_{t \in [t']} (\alpha^{(t)})^{-1} < K \Gamma_\dgfsetup \epsilon^{-1}$ by the termination condition in \Cref{line:multiprox-while-loop}. But this contradicts \eqref{eq:does-not-terminate-alpha-bound}, as the latter implies $\lim_{t \to \infty, \, t \in J_b} \alpha^{(t)} = 0$.

Thus, having shown \Cref{alg:multiprox-method-monotone-op} terminates, we now focus on proving the bounds on $T$ and $\sum_{t \in [T - 1]} (\alpha^{(t)})^\rho$. 
Toward bounding $|J_b|$, let $J_b' \defeq J_b \setminus \inbraces{T}$, and note
\begin{align}
    \label{eq:J_b'-movement-bound-after-terminate}
    \sum_{t \in J_b'} \gamma (\alpha^{(t)})^\rho \le \sum_{t \in J_b'} \sum_{k \in \idset^{(t)}} \breg{w_k^{(t - 1)}}{z^{(t)}} = \sum_{t \in J_b'} \sum_{k \in [K]} \breg{w_k^{(t - 1)}}{w_k^{(t)}} \le 2 K \Gamma_\dgfsetup
\end{align}
by \Cref{lem:multiprox-correctness}. Then
\begin{align*}
    |J_b'| = \sum_{t \in J_b'} (\alpha^{(t)})^{\frac{\rho}{\rho + 1}} (\alpha^{(t)})^{- \frac{\rho}{\rho + 1}} 
    &\overle{(i)} 
    \inparen*{
        \sum_{t \in J_b'} (\alpha^{(t)})^\rho
    }^{\frac{1}{\rho + 1}}
     \inparen*{
        \sum_{t \in J_b'} (\alpha^{(t)})^{-1} 
     }^{\frac{\rho}{\rho + 1}} \\
     &=   \gamma^{- \frac{1}{\rho+1}}  \inparen*{
        \sum_{t \in J_b'} \gamma (\alpha^{(t)})^\rho
    }^{\frac{1}{\rho + 1}}
     \inparen*{
        \sum_{t \in J_b'} (\alpha^{(t)})^{-1} 
     }^{\frac{\rho}{\rho + 1}}
     \\
     &\overle{(ii)} \gamma^{- \frac{1}{\rho+1}} (2 K \Gamma_\dgfsetup)^{\frac{1}{\rho + 1}} (K \Gamma_\dgfsetup \epsilon^{-1})^{\frac{\rho}{\rho + 1}} \\
     &= 2^{\frac{1}{\rho + 1}} \gamma^{- \frac{1}{\rho+1}} K \Gamma_\dgfsetup  \epsilon^{- \frac{\rho}{\rho + 1}},
\end{align*}
by $(i)$ \Holder's inequality and $(ii)$ \eqref{eq:J_b'-movement-bound-after-terminate} as well as the fact that $\sum_{t \in J_b'} (\alpha^{(t)})^{-1} \le \sum_{t \in [T - 1]} (\alpha^{(t)})^{-1} < K \Gamma_\dgfsetup \epsilon^{-1}$ by the termination condition in Line~\ref{line:multiprox-while-loop}. To obtain the 
desired upper bound on $T$, note
\begin{align*}
    T = |J_a| + |J_b| \le |J_a| + |J_b'| + 1 \le K \Gamma_\dgfsetup (\beta \epsilon^{-1} + 2^{\frac{1}{\rho + 1}} \gamma^{- \frac{1}{\rho+1}} \epsilon^{- \frac{\rho}{\rho + 1}}) + 2.
\end{align*}
As for the bound on the sum of $(\alpha^{(t)})^\rho$, note that by \eqref{eq:J_b'-movement-bound-after-terminate},
\begin{align*}
\sum_{t \in [T - 1]} (\alpha^{(t)})^\rho \le (T - 1) \beta^\rho + \sum_{t \in J_b'} (\alpha^{(t)})^\rho  \le (T - 1) \beta^\rho + 2 K \Gamma_\dgfsetup \gamma^{-1} .
\end{align*}
\end{proof}

Next, we formally define the \emph{dyadic prox method} discussed in Section~\ref{sec:overview-of-approach}. The dyadic prox method fixes the choice $\idset^{(t)} \gets \inbraces{k \in [K] : \text{$t$ is divisible by $2^{K - k}$}}$, which results in the movement bound in Lemma~\ref{lem:multiprox-correctness} controlling pairs of iterates of the form $z^{(m \cdot 2^\ell)}, z^{((m + 1) \cdot 2^\ell)}$;
recall \eqref{eq:overview-interpretation-of-movement-bound} from Section~\ref{sec:overview-of-approach} and the surrounding discussion. We note that whenever we use Definition~\ref{def:dyadic-prox-multi-point-method} in this paper, we also ensure (either by assumption or by making an explicit choice of $K$) that $K \ge \log_2 T + 5$ (recall $T$ is the final iteration count per Line~\ref{line:multiprox-return}); this ensures that we are indeed controlling every such pair up to $T$.

\begin{definition}[Dyadic prox method]
    \label{def:dyadic-prox-multi-point-method}
We refer to Algorithm~\ref{alg:multiprox-method-monotone-op} with the choice $\idset^{(t)} \gets \inbraces{k \in [K] : \text{$t$ is divisible by $2^{K - k}$}}$ in Line~\ref{line:multiprox-choose-active-centers} for all $t \ge 1$ as the \emph{dyadic prox method}.
\end{definition}

In the following lemma, for the dyadic prox method, we connect the specific (in fact, weaker) movement bound used in the next section (Section~\ref{subsec:prox-multi-matrix-games}) for our amortized analysis to the movement bound given in Lemma~\ref{lem:multiprox-correctness}. In particular, the first inequality in \eqref{eq:dyadic-prox-multi-point-method-movement-bound} is precisely the first inequality $(i)$ in \eqref{eq:overview-multiprox-movement-bound}, except stated in a different form which will be useful in Section~\ref{subsec:prox-multi-matrix-games}.

\begin{lemma}
    \label{lem:dyadic-prox-movement-bound}
    Supposing $K \ge \log_2 T + 5$ (where $T$ is the final iteration count per Line~\ref{line:multiprox-return}), the iterates of the dyadic prox method (Definition~\ref{def:dyadic-prox-multi-point-method}) satisfy
\begin{align}
    \label{eq:dyadic-prox-multi-point-method-movement-bound}
  \sum_{t \in [T - 1]} \sum_{k \in \idset^{(t)}} \breg{w^{(t)}_{k - 1}}{w^{(t)}_{k}} \le  \sum_{t \in [T - 1]} \sum_{k \in [K]} \breg{w^{(t - 1)}_k}{w^{(t)}_k} \le 2 K \Range_\dgfsetup .
\end{align}
Furthermore, if $(w_{k - 1}^{(t - 1)}, w_{k}^{(t - 1)}) \ne (w_{k - 1}^{(t)}, w_{k}^{(t)})$ for some $t \ge 1$ and $2 \le k \le K$, then $k \in \idset^{(t)}$.
\end{lemma}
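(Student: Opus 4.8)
The plan is to prove the two assertions of Lemma~\ref{lem:dyadic-prox-movement-bound} separately, using throughout the key structural feature of the dyadic choice $\idset^{(t)} = \inbraces{k \in [K] : 2^{K-k} \mid t}$: it is \emph{upward closed} in $[K]$ (if $2^{K-k}\mid t$ and $k \le k' \le K$, then $2^{K-k'} \mid 2^{K-k} \mid t$), so $\idset^{(t)} = \inbraces{k_0(t), k_0(t)+1, \dots, K}$ with $k_0(t) \defeq \min \idset^{(t)} = K - v_2(t)$, where $v_2(\cdot)$ denotes the $2$-adic valuation; in particular $K \in \idset^{(t)}$, so $\idset^{(t)} \ne \emptyset$. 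Note that under the hypothesis $K \ge \log_2 T + 5$, every $t \in [T]$ satisfies $v_2(t) \le \log_2 t \le \log_2 T$, hence $k_0(t) \ge 5$, so each $k \in \idset^{(t)}$ has $k \ge 2$ and the term $\breg{w^{(t)}_{k-1}}{w^{(t)}_{k}}$ in \eqref{eq:dyadic-prox-multi-point-method-movement-bound} is well defined. I will also use the closed form $w^{(s)}_k = z^{(\ell_k(s))}$, where $\ell_k(s)$ is the largest multiple of $2^{K-k}$ not exceeding $s$ (interpreting $z^{(0)}$ when $\ell_k(s)=0$), which follows by a routine induction on $s$ from Line~\ref{line:multiprox-w^{(t)}_k-unified-def} and the definition of $\idset^{(s)}$.

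For the ``Furthermore'' clause I would argue the contrapositive. Fix $t \ge 1$ and $2 \le k \le K$ with $k \notin \idset^{(t)}$; then Line~\ref{line:multiprox-w^{(t)}_k-unified-def} gives $w^{(t)}_k = w^{(t-1)}_k$, and since $\idset^{(t)}$ is upward closed, $k-1 \in \idset^{(t)}$ would force $k \in \idset^{(t)}$, so also $k-1 \notin \idset^{(t)}$ and $w^{(t)}_{k-1} = w^{(t-1)}_{k-1}$. Hence $(w^{(t-1)}_{k-1}, w^{(t-1)}_k) = (w^{(t)}_{k-1}, w^{(t)}_k)$, which is the contrapositive of the claim. (This part needs neither the bound on $K$ nor Lemma~\ref{lem:multiprox-correctness}.)

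For the chain of inequalities, the right-hand inequality $\sum_{t \in [T]} \sum_{k \in [K]} \breg{w^{(t-1)}_k}{w^{(t)}_k} \le K \Range_\dgfsetup$ is exactly Lemma~\ref{lem:multiprox-correctness} applied to the dyadic prox method (a special case of Algorithm~\ref{alg:multiprox-method-monotone-op}), with $\sum_{t \ge 1}$ collapsing to $\sum_{t \in [T]}$ since $T$ is finite. For the left-hand inequality I would show that for each fixed $t \in [T]$ one has $\sum_{k \in \idset^{(t)}} \breg{w^{(t)}_{k-1}}{w^{(t)}_k} = \breg{w^{(t-1)}_{k_0(t)}}{w^{(t)}_{k_0(t)}}$; summing over $t$ and dropping the remaining nonnegative Bregman terms from $\sum_{k\in[K]} \breg{w^{(t-1)}_k}{w^{(t)}_k}$ then yields the claim. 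To prove this per-$t$ identity: for $k \in \idset^{(t)}$ with $k > k_0(t)$, upward closedness gives $k-1 \in \idset^{(t)}$ too, so $w^{(t)}_{k-1} = z^{(t)} = w^{(t)}_k$ and that summand is $\breg{z^{(t)}}{z^{(t)}} = 0$; thus only $k = k_0(t)$ contributes. For that term, $k_0(t) \in \idset^{(t)}$ gives $w^{(t)}_{k_0(t)} = z^{(t)}$ while $k_0(t)-1 \notin \idset^{(t)}$ gives $w^{(t)}_{k_0(t)-1} = w^{(t-1)}_{k_0(t)-1}$, so it suffices to show $w^{(t-1)}_{k_0(t)-1} = w^{(t-1)}_{k_0(t)}$.

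This last equality is the step I expect to require the most care. Using the closed form, $w^{(t-1)}_{k_0(t)}$ is $z^{(\ell)}$ for $\ell$ the largest multiple of $2^{K-k_0(t)} = 2^{v_2(t)}$ with $\ell \le t-1$, and $w^{(t-1)}_{k_0(t)-1}$ is $z^{(\ell')}$ for $\ell'$ the largest multiple of $2^{v_2(t)+1}$ with $\ell' \le t-1$. Writing $v \defeq v_2(t)$ and $t = 2^v q$ with $q$ odd, we have $t-1 = 2^v q - 1$, and an elementary check shows $\ell = \ell' = 2^v(q-1) = t - 2^v$: indeed $2^v(q-1) \le t-1$ is a multiple of $2^{v+1}$ (as $q-1$ is even), while the next multiple of $2^v$ above it, namely $2^v q = t$, already exceeds $t-1$. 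Hence $w^{(t-1)}_{k_0(t)-1} = w^{(t-1)}_{k_0(t)} = z^{(t-2^{v_2(t)})}$, which completes the per-$t$ identity and hence the lemma. As a consistency check, this also shows $\sum_{t\in[T]}\sum_{k\in\idset^{(t)}}\breg{w^{(t)}_{k-1}}{w^{(t)}_k} = \sum_{t\in[T]}\breg{z^{(t-2^{v_2(t)})}}{z^{(t)}}$, i.e., a sum of divergences across precisely the final pairs appearing in the binary decompositions of $1,\dots,T$, matching the discussion preceding the lemma.
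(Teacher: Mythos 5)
Your proof is correct and follows essentially the same route as the paper's: establish the closed form $w_k^{(t)} = z^{(a_{k,t})}$, observe that only the minimum index $k_0(t)$ of $\idset^{(t)}$ contributes to the inner sum, reduce the remaining equality to an arithmetic statement about the largest multiple of $2^{K-k_0(t)}$ versus $2^{K-k_0(t)+1}$ below $t$, and handle the ``Furthermore'' clause by the contrapositive via upward-closedness of $\idset^{(t)}$. The one cosmetic difference is that you phrase the arithmetic via the $2$-adic valuation $v_2(t)$ and explicitly verify $k_0(t) \ge 5$, whereas the paper states the same facts directly in terms of divisibility; your closing identity $\sum_{t\in[T]}\sum_{k\in\idset^{(t)}}\breg{w^{(t)}_{k-1}}{w^{(t)}_k} = \sum_{t\in[T]}\breg{z^{(t-2^{v_2(t)})}}{z^{(t)}}$ is a nice explicit form the paper leaves implicit, but the substance of the argument is the same.
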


\begin{proof}
Note that for any $t \in [T]_0$ and $k \in [K]$, we have that $w_{k}^{(t)} = z^{(a_{k, t})}$ where $a_{k, t} \defeq t - (t \bmod 2^{K - k})$; namely, $a_{k, t}$ is the largest multiple of $2^{K - k}$ which is at most $t$. Indeed, this follows from the fact that by the choice of $\idset^{(t)}$ in Definition~\ref{def:dyadic-prox-multi-point-method}, the subsequence of $\idset^{(1)}, \idset^{(2)}, \idset^{(3)}, \dots$ such that each set in the subsequence contains $k$ is precisely $\idset^{2^{K - k}}, \idset^{2(2^{K - k})}, \idset^{3(2^{K - k})}, \dots$. Hence, by Line~\ref{line:multiprox-w^{(t)}_k-unified-def} of Algorithm~\ref{alg:multiprox-method-monotone-op}, $w_{k}^{(t)}$ is updated to $z^{(t)}$ in iterations $t$ such that $t$ is a multiple of $2^{K - k}$, and otherwise retains its previous value $w_{k}^{(t - 1)}$.

Let us now examine the leftmost summation in \eqref{eq:dyadic-prox-multi-point-method-movement-bound}; in particular, fix an arbitrary $t \in [T - 1]$ and consider $\sum_{k \in \idset^{(t)}} \breg{w^{(t)}_{k - 1}}{w^{(t)}_{k}}$. Note that the latter is well-defined since $K \ge \log_2 T + 5$ implies $1 \notin \idset^{(t)}$. (In other words, the fact that $w_0^{(t)}$ is not defined does not pose an issue.) Then, letting $\ktstar \defeq \min_{k \in \idset^{(t)}} k$ (note $\idset^{(t)}$ is nonempty since $K \in \idset^{(t)}$), we claim
\begin{align}
    \label{eq:evaluating-the-idset-sum}
    \sum_{k \in \idset^{(t)}} \breg{w^{(t)}_{k - 1}}{w^{(t)}_{k}} = \breg{w^{(t)}_{\ktstar - 1}}{w^{(t)}_{\ktstar}}.
\end{align}
This follows since for any $k \in [K - 1]$, we have that $k \in \idset^{(t)}$ implies $k + 1 \in \idset^{(t)}$ by the choice of $\idset^{(t)}$ (if $t$ is divisible by $2^{K - k}$ then it is also divisible by $2^{K - k - 1}$). Thus, $w_{k - 1}^{(t)} = w_{k}^{(t)} = z^{(t)}$ for all $\ktstar + 1 \le k \le K$ by Line~\ref{line:multiprox-w^{(t)}_k-unified-def} of Algorithm~\ref{alg:multiprox-method-monotone-op}. 

Then to prove \eqref{eq:dyadic-prox-multi-point-method-movement-bound}, it suffices to show $w^{(t)}_{\ktstar - 1} = w^{(t - 1)}_{\ktstar}$, as combining the latter with \eqref{eq:evaluating-the-idset-sum} yields
\begin{align*}
    \sum_{k \in \idset^{(t)}} \breg{w^{(t)}_{k - 1}}{w^{(t)}_{k}} = \breg{w^{(t)}_{\ktstar - 1}}{w^{(t)}_{\ktstar}} = \breg{w^{(t - 1)}_{\ktstar}}{w^{(t)}_{\ktstar}} \le \sum_{k \in [K]} \breg{w^{(t - 1)}_k}{w^{(t)}_k}
\end{align*}
and recall $t \in [T - 1]$ was set arbitrarily. (The second inequality in \eqref{eq:dyadic-prox-multi-point-method-movement-bound} is immediate from Lemma~\ref{lem:multiprox-correctness}.) As for proving $w^{(t)}_{\ktstar - 1} = w^{(t - 1)}_{\ktstar}$, it suffices to show $a_{\ktstar - 1, t} = a_{\ktstar, t - 1}$ by the above general characterization of $w_k^{(t)}$. In other words, we need to show that the largest multiple of $2^{K - \ktstar + 1}$ which is at most $t$ is equal to the largest multiple of $2^{K - \ktstar}$ which is at most $t - 1$. This follows because $t$ is a multiple of $2^{K - \ktstar}$ due to the fact that $\ktstar \in \idset^{(t)}$, but $t$ is not a multiple of $2^{K - \ktstar + 1}$ by the definition of $\ktstar$ (in particular $\ktstar - 1 \notin \idset^{(t)}$). Therefore, the multiple of $2^{K - \ktstar}$ before $t$ must coincide with the multiple of $2^{K - \ktstar + 1}$ before $t$.

Finally, to prove that $(w_{k - 1}^{(t - 1)}, w_{k}^{(t - 1)}) \ne (w_{k - 1}^{(t)}, w_{k}^{(t)})$ for some $t \ge 1$ and $2 \le k \le K$ implies $k \in \idset^{(t)}$, we will prove the contrapositive; namely, $k \in [K] \setminus \idset^{(t)}$ implies $(w_{k - 1}^{(t - 1)}, w_{k}^{(t - 1)}) = (w_{k - 1}^{(t)}, w_{k}^{(t)})$. Note that for any $2 \le k \le K$, we have $k \in [K] \setminus \idset^{(t)}$ implies $k - 1 \in [K] \setminus \idset^{(t)}$ due to the choice of $\idset^{(t)}$ (if $t$ isn't divisible by $2^{K - k}$, it also isn't divisible by $2^{K - k + 1}$). Thus, $(w_{k - 1}^{(t - 1)}, w_{k}^{(t - 1)}) = (w_{k - 1}^{(t)}, w_{k}^{(t)})$ for all $k$ such that $k \in [K] \setminus \idset^{(t)}$ and $k \ge 2$ by Line~\ref{line:multiprox-w^{(t)}_k-unified-def}.
\end{proof}

\subsection{Prox multi-point method outer loop for matrix games}
\label{subsec:prox-multi-matrix-games}

In this section, we provide and analyze the outer loop of our ultimate algorithm for obtaining Theorems \ref{thm:final-result-l1-l1-aka-zero-sum} and \ref{thm:final-result-l2-l1-aka-SVM} (albeit, stated in a more general setting which is instantiated for $\ellOneEllOne$ and $\ellTwoEllOne$ matrix games in \Cref{sec:putting-together}). In particular, the main guarantee of this section, Theorem~\ref{thm:matrix-games-outer-loop-guarantee}, provides key bounds for our amortized analysis of the total number of matvecs made over all model-update steps within inner loop calls.

\begin{assumptions}
In this section (Section~\ref{subsec:prox-multi-matrix-games}), we fix arbitrary dgf setups $\dgfsetup\x = (\xset \subset \R^n, \rx)$ and $\dgfsetup\y = (\yset \subset \R^m, \ry)$ with $\dgfsetup = (\zset \subset \R^d, r) \defeq \prodsetup(\dgfsetup\x, \dgfsetup\y)$ (recall Definition~\ref{def:product-dgf-setups}) and $\Gamma_\dgfsetup \ge \max_{z, z' \in \zset} r(z) - r(z')$.
For a given $A \in \R^{m \times n}$, our goal in this section is to obtain an $\epsilon$-solution of the general matrix game \eqref{eq:intro-general-matrix-game}.
Moreover, we assume throughout that $\dgfsetup$ is $\zeta$-compatible with respect to $A$ in the sense of Definition~\ref{def:zeta-compatible-mapping} given below. 
\end{assumptions}

Definition~\ref{def:zeta-compatible-mapping} abstracts a property we use to bound the total number of matvecs made within all inner loop model-update steps by the divergences between iterates (to use the notation of that section, recall the key inequality $\innorm{\Delta_{j, j'}}_F^2 \le 2 \breg{z^{(j)}}{z^{(j')}}$ from \Cref{subsec:technique-2-dyadic-decompositions}). Indeed, we later show that Definition~\ref{def:zeta-compatible-mapping} is satisfied in the context of Theorems \ref{thm:final-result-l1-l1-aka-zero-sum} and \ref{thm:final-result-l2-l1-aka-SVM} with $\zeta = 2$ in Lemma~\ref{lemma:compatibility}.

\begin{definition}[$\zeta$-compatible]
    \label{def:zeta-compatible-mapping}
For $\zeta > 0$, we say the dgf setup $\dgfsetup$ is \emph{$\zeta$-compatible with respect to a matrix $B \in \R^{m \times n}$} if $\innorm{(B)_{z'} - (B)_{z}}_F^2 \le \zeta \breg{z}{z'}$ for all $z, z' \in \zset$.
\end{definition}

We now extend Definition~\ref{def:DMP} from Section~\ref{subsec:prox-multi-general-monotone-ops} in Definition~\ref{def:MDMP} below. Note in particular that an $\epsilon'$-$\MDMP$ is an $\epsprim$-$\DMP$ with respect to the dgf setup $\dgfsetup$ and monotone operator $\gm f_A$. The only difference (or rather extension) is that an $\epsilon'$-$\MDMP$ also takes in a matrix-approximation path $\pathd = \inbraces{\Delta_\ell, M_\ell}_{\ell \in [L]}$ to some $z \in \uset$ and outputs another matrix-approximation path $\pathd' = \inbraces{\Delta_\ell, M_\ell' \in \R^{m \times n}}_{\ell \in [L]}$ to $z$ where only the models may have changed. 
As will be discussed further below, we use an $\epsilon'$-$\MDMP$ to abstract our bisection search procedure (Section~\ref{sec:MDMP-implementation}) and inner loop subproblem solver (Section~\ref{sec:sug-solver}). Regarding the requirement $z \in \uset$: in Algorithm~\ref{alg:final-algo-outer-loop} (covered next), we only pass matrix-approximation paths to the previous iterate $z^{(t - 1)}$ to an $\MDMP$ as in the overview \Cref{sec:overview-of-approach}. However, when we implement an $\MDMP$ in \Cref{sec:MDMP-implementation}, we only need the weaker assumption $z \in \uset$ to obtain our guarantee, hence its presence here.

\begin{definition}[$\epsilon'$-$\MDMP$] \label{def:MDMP}
    For $\epsprim > 0$, we call $\OMDMP(\cdot, \cdot)$ an \emph{$\epsprim$-matrix-games dynamic multiprox oracle} ($\epsprim$-$\MDMP$) if on input $(\uset \subset \zset, \pathd = \inbraces{\Delta_\ell, M_\ell}_{\ell \in [L]})$ where $\uset$ is a finite nonempty multiset and $\pathd$ is a matrix-approximation path to some $z \in \uset$, it returns $(z' \in \zset, \alpha > 0, \pathd' = \inbraces{\Delta_\ell, M_\ell' \in \R^{m \times n}}_{\ell \in [L]})$ such that: (i) the outputs $z', \alpha$ satisfy the property of Definition~\ref{def:DMP} with respect to $\gm f_A$ and $\dgfsetup$, i.e., \eqref{eq:DMP-def-variational-property} holds with $g \gets \gm f_A$, and (ii) $\pathd'$ is also a matrix-approximation path to $z$. For $\beta, \gamma, \rho > 0$, we say an $\epsilon'$-$\MDMP$ is \emph{$(\beta, \gamma, \rho)$-kinetic} if additionally the output always satisfies at least one of the conditions \ref{item:DMP-beta-cond} or \ref{item:DMP-movement-cond} from Definition~\ref{def:DMP}.
\end{definition}

Next, Algorithm~\ref{alg:final-algo-outer-loop} gives the outer loop of our ultimate algorithm for obtaining Theorems \ref{thm:final-result-l1-l1-aka-zero-sum} and \ref{thm:final-result-l2-l1-aka-SVM}. In particular, Algorithm~\ref{alg:final-algo-outer-loop} can be viewed as an instantiation of Algorithm~\ref{alg:multiprox-method-monotone-op}
for the dgf setup $\dgfsetup$ and monotone operator $\gm f_A$, with extensions to handle model creation, clearing, and passing. Indeed, note that the initialization and updates to the iterates $w_k^{(t)}$ and $z^{(t)}$ in Algorithm~\ref{alg:final-algo-outer-loop} are identical to those in Algorithm~\ref{alg:multiprox-method-monotone-op}. As for differences between the two algorithms, we specifically instantiate $\idset^{(t)}$ in Line~\ref{line:final-algo-idset} of Algorithm~\ref{alg:final-algo-outer-loop} to the choice made in Definition~\ref{def:dyadic-prox-multi-point-method}, thereby making Algorithm~\ref{alg:final-algo-outer-loop} a dyadic prox method.

The other key difference is of course the model iterates: Algorithm~\ref{alg:final-algo-outer-loop} maintains $K$ sequences of said iterates, where the $k$-th sequence for $k \in [K]$ is denoted $M_k^{(0)}, M_k^{(1)}, \dots$. In particular, $M_{k}^{(t - 1)}$ (for $t \in [T + 1]$) is a model for the difference $\Delta^{(t - 1)}_{k} \defeq (A)_{w^{(t - 1)}_{k}}  -  (A)_{w^{(t - 1)}_{k - 1}}$, where we overload notation and define $(A)_{w^{(t - 1)}_{0}} \defeq 0_{m \times n}$ for brevity (note that $\Delta^{(t - 1)}_{k}$ is defined the same way in Line~\ref{line:final-algo-set-P^(t)}).
Note that Algorithm~\ref{alg:final-algo-outer-loop} maintains the invariant
\begin{align}
    \label{eq:expl-sec4_2-for-path}
    (A)_{z^{(t - 1)}} = (A)_{w_K^{(t - 1)}} = \sum_{k \in [K]} \Delta_k^{(t - 1)} 
\end{align}
for all $t \in [T + 1]$. In other words, $\pathd^{(t)}$ (defined in Line~\ref{line:final-algo-set-P^(t)}) is a matrix-approximation path to $z^{(t - 1)}$, as in the sketch given in Section~\ref{sec:overview-of-approach}. The first equality in \eqref{eq:expl-sec4_2-for-path} follows because in fact $w_K^{(t)} = z^{(t)}$ for all $t \ge 0$ due to the fact that $K \in \idset^{(t)}$ for all $t \ge 1$ and the update rule of Line~\ref{line:final-algo-wtk-unified-def}. 

More broadly, we have $w_{k}^{(t)} = z^{(a_{k, t})}$ for all $t \ge 0$ and $k \in [K]$ where $a_{k, t} \defeq t - (t \bmod 2^{K - k})$; namely, $a_{k, t}$ is the largest multiple of $2^{K - k}$ which is at most $t$.
 Again, this is due to the choice of $\idset^{(t)}$ in Line~\ref{line:final-algo-idset} and the update rule of Line~\ref{line:final-algo-wtk-unified-def}, and it implies the terms in the rightmost summation in \eqref{eq:expl-sec4_2-for-path} are in fact tracking the differences in the binary decomposition of $t - 1$ (as long as $K$ is sufficiently large, e.g., $K \ge \log_2 T + 5$ as in Lemma~\ref{lem:dyadic-prox-movement-bound}). For example, if $K = 20$ and we are on iteration $t = 10$, then one can verify $w_{20}^{(t - 1)} = z^{(9)}$, $w_{19}^{(t - 1)} = z^{(8)}$, $w_{18}^{(t - 1)} = z^{(8)}$, $w_{17}^{(t - 1)} = z^{(8)}$, and $w_j^{(t - 1)} = z^{(0)}$ for all $1 \le j \le 16$. In other words (assuming the iterates $z^{(j)}$ are unique for simplicity), if $w_{k}^{(t - 1)} = z^{(j)}$ and $w_{k - 1}^{(t - 1)} = z^{(j')}$ for some $j \ne j'$, then the jump from $j'$ to $j$ appears in the binary decomposition of $t - 1$ (which is 9 in the above example).\footnote{We note that this is the reason mentioned in Section~\ref{sec:overview-of-approach} for why we set $\idset^{(t)} \gets \inbraces{k \in [K] : \text{$t$ is divisible by $2^{K - k}$}}$ in Definition~\ref{def:dyadic-prox-multi-point-method} instead of $\idset^{(t)} \gets \inbraces{k \in [K] : \text{$t$ is divisible by $2^{k - 1}$}}$. In the latter case, $w_1^{(t)}$ would be the head of the path instead of $w_K^{(t)}$, i.e., the directions of the paths $\pathd^{(t)}$ in Line~\ref{line:final-algo-set-P^(t)} and $\pathd'^{(t)}$ in Line~\ref{line:final-algo-MDMP} would need to be reversed, resulting in perhaps less concise indexing.}

Let us now discuss the logic of the updates to the models $M_k^{(t)}$ in Algorithm~\ref{alg:final-algo-outer-loop}. All models are initialized to $0_{m \times n}$ in Line~\ref{line:final-algo-w-M-init}. As mentioned above, the $\MDMP$ oracle call in Line~\ref{line:final-algo-MDMP} abstracts our bisection search procedure (Section~\ref{sec:MDMP-implementation}) and inner loop subproblem solver $\SUG$ (Section~\ref{sec:sug-solver}). As discussed in Section~\ref{sec:overview-of-approach}, $\SUG$ will perform updates to the models $M_k^{(t - 1)}$ within the path $\pathd^{(t)}$ in model-update steps, and thus $M_k'^{(t)}$ are the results of all these updates (potentially over many calls to $\SUG$ within the bisection search procedure). In Line~\ref{line:final-algo-wtk-unified-def}, we set the new model iterates $M_k^{(t)}$. If $k \in \idset^{(t)}$, then potentially $w_k^{(t)} \ne w_k^{(t - 1)}$ (again due to the update logic for $w_k^{(t)}$ in Line~\ref{line:final-algo-wtk-unified-def}), and therefore potentially $\Delta_k^{(t - 1)} \ne \Delta_k^{(t)}$. Since $M_k^{(t - 1)}$ is a model for $\Delta_k^{(t - 1)}$ and $M_k^{(t)}$ is a model for $\Delta_k^{(t)}$, we therefore reset $M_k^{(t)} \gets 0_{m \times n}$ in Line~\ref{line:final-algo-wtk-unified-def}. If $k \in [K] \setminus \idset^{(t)}$, then we are guaranteed $\Delta_k^{(t - 1)} = \Delta_k^{(t)}$ due to the contrapositive of the final statement in Lemma~\ref{lem:dyadic-prox-movement-bound}.
Thus, the term modeled by $M_k$ has not changed, and we update $M_k^{(t)}$ in Line~\ref{line:final-algo-wtk-unified-def} to the corresponding output of the $\MDMP$ call in Line~\ref{line:final-algo-MDMP}.

\RestyleAlgo{ruled}
\DontPrintSemicolon
\SetKwComment{Comment}{/* }{ */}
\begin{algorithm2e}[h!]
\caption{Prox multi-point method for matrix games}
\label{alg:final-algo-outer-loop}
\KwInput{Precision $\epsilon > 0$, max centers per step $K$, $\epsilon$-$\MDMP$ oracle $\OMDMP$
}

$z^{(0)} \gets \argmin_{z \in \zset} r(z)$ ~and~ $t \gets 0$\;

$(w^{(0)}_k, M_{k}^{(0)} )  \gets (z^{(0)}, 0_{m \times n} )$ for all $k \in [K]$ \label{line:final-algo-w-M-init}

\While(\tcp*[f]{Recall $\Range_\dgfsetup \ge \max_{z, z' \in \zset} r(z) - r(z')$}){
    $\sum_{j \in [t]} (\alpha^{(j)})^{-1} < K \Range_\dgfsetup \epsilon^{-1}$ 
}{ \label{line:final-algo-while-loop}

    $t \gets t + 1$ \;

    \tcp{Here, we overload notation and let $(A)_{w_{0}^{(t - 1)}} \defeq 0_{m \times n}$}

    $\pathd^{(t)} \gets \inbraces{\Delta^{(t - 1)}_{k}  ,  M^{(t - 1)}_{k}    }_{k \in [K]}$ ,  where $\Delta^{(t - 1)}_{k} \defeq (A)_{w^{(t - 1)}_{k}}  -  (A)_{w^{(t - 1)}_{k - 1}}$ for $k \in [K]$ \label{line:final-algo-set-P^(t)}

    $\idset^{(t)} \gets \inbraces{k \in [K] : \text{$t$ is divisible by $2^{K - k}$}}$ \label{line:final-algo-idset}

    $\inparen{z^{(t)}, \alpha^{(t)}, \pathd'^{(t)} \defeq \inbraces{\Delta^{(t - 1)}_{k}  ,  M'^{(t)}_{k}    }_{k \in [K]  }} \gets \OMDMP(\inbraces{w^{(t - 1)}_k : k \in \idset^{(t)}}, \pathd^{(t)})$           \label{line:final-algo-MDMP}

    $(w^{(t)}_k, M_{k}^{(t)}) \gets \begin{cases} 
     ( z^{(t)},  0_{m \times n} ) \, ,  & \text{for all } k \in \idset^{(t)} \\
     ( w^{(t - 1)}_k, M'^{(t)}_{k} ) \, ,  & \text{for all } k \in [K] \setminus \idset^{(t)} 
   \end{cases}$ \label{line:final-algo-wtk-unified-def}

}

\tcp{$T$ is used in the analysis to refer to the final iteration count}

\Return{$\zbar \defeq \frac{1}{S} \sum_{j \in [T]} (\alpha^{(j)})^{-1} z^{(j)}$, where $T \defeq t$ and $S \defeq \sum_{j \in [T]} (\alpha^{(j)})^{-1}$} \label{line:final-algo-return}

\end{algorithm2e}

We give our guarantee for Algorithm~\ref{alg:final-algo-outer-loop} in Theorem~\ref{thm:matrix-games-outer-loop-guarantee}. The latter chooses $K$ so as to satisfy the lower bound requirement of Lemma~\ref{lem:dyadic-prox-movement-bound}, while also ensuring $K = \Otilde(1)$, which will be useful when we instantiate Theorem~\ref{thm:matrix-games-outer-loop-guarantee} in Section~\ref{sec:putting-together} in the context of Theorems \ref{thm:final-result-l1-l1-aka-zero-sum} and \ref{thm:final-result-l2-l1-aka-SVM} (specifically, to ultimately obtain $T = \Otilde(\epsilon^{-2/3})$ and so that all of the matrix-approximation paths passed to our implementation of the $\MDMP$ oracle have length $\Otilde(1)$). Besides guaranteeing correctness, Theorem~\ref{thm:matrix-games-outer-loop-guarantee} provides several bounds which will enable our ultimate matvec bounds in Section~\ref{sec:putting-together}. The iteration bound on $T$ and the bound on $\sum_{t \in [T - 1]} (\alpha^{(t)})^\rho$ are immediate from Lemma~\ref{lem:multiprox-iteration-bound} and repeated here for ease of reference in Section~\ref{sec:putting-together}.

The bound on $\sum_{t \in [T]} \insquare{\size(\pathd^{(t)}) - \size(\pathd'^{(t)})}$ in Theorem~\ref{thm:matrix-games-outer-loop-guarantee} is new; namely, it uses the additional machinery of this section as opposed to only that of Section~\ref{subsec:prox-multi-general-monotone-ops}.
It is used in Section~\ref{sec:putting-together} to bound the total number of matvecs made over all model-update steps within calls to the subproblem solver $\SUG$ (Section~\ref{sec:sug-solver}) within our implementation of the $\MDMP$ oracle. We note that the proof of this bound is where we use the ``alternate movement bound'' for the dyadic prox method given in Lemma~\ref{lem:dyadic-prox-movement-bound}. 
More specifically, we use that bound over the first $T - 1$ outer-loop iterations, which is sufficient since the proof telescopes against $\pathd'^{(T)}$ rather than introducing $\pathd^{(T + 1)}$; see \eqref{eq:telescoping-against-path-prime}.
Note also that the terms $\breg{w_{k - 1}^{(t)}}{w_k^{(t)}}$ for $k \in \idset^{(t)}$ in the leftmost summation in \eqref{eq:dyadic-prox-multi-point-method-movement-bound} correspond to models $M_k^{(t)}$ for $k \in \idset^{(t)}$ which are reset to $0_{m \times n}$ in Line~\ref{line:final-algo-wtk-unified-def} of Algorithm~\ref{alg:final-algo-outer-loop}. Using the assumption that $\dgfsetup$ is $\zeta$-compatible with respect to $A$, we are able to bound $\innorm{\Delta_k^{(t)} - M_k^{(t)}}_F^2 = \innorm{\Delta_k^{(t)}}_F^2 \le \zeta \breg{w_{k - 1}^{(t)}}{w_k^{(t)}}$ for $k \in \idset^{(t)}$ in the proof.

\begin{theorem}[Algorithm~\ref{alg:final-algo-outer-loop} guarantee]
    \label{thm:matrix-games-outer-loop-guarantee}
    Suppose the $\MDMP$ oracle given as input to Algorithm~\ref{alg:final-algo-outer-loop} is $(\beta, \gamma, \rho)$-kinetic (Def.~\ref{def:MDMP}) and we choose 
    \begin{align*}
        K \gets \inceil{5 \log_2 \inparen{\Gamma_\dgfsetup (\beta \epsilon^{-1} + 2^{\frac{1}{\rho + 1}} \gamma^{- \frac{1}{\rho+1}} \epsilon^{- \frac{\rho}{\rho + 1}}) + 2} } + 5
    \end{align*}
    Then Algorithm~\ref{alg:final-algo-outer-loop} terminates with 
    \begin{align}
        \label{eq:multiprox-final-iter-bound}
        T \le K \Gamma_\dgfsetup (\beta \epsilon^{-1} + 2^{\frac{1}{\rho + 1}} \gamma^{- \frac{1}{\rho+1}} \epsilon^{- \frac{\rho}{\rho + 1}}) + 2
    \end{align}
    and the output $\zbar$ is a $2 \epsilon$-solution of \eqref{eq:intro-general-matrix-game}. Furthermore, the length of $\pathd^{(t)}$ is $K$ for all $t \in [T]$,
    \begin{align}
        \sum_{t \in [T - 1]} (\alpha^{(t)})^\rho &\le 2 K \Gamma_{\dgfsetup} \gamma^{-1} + (T - 1) \beta^\rho, 
        \text{ and} \label{eq:sum-alpha-bound-takeaway} \\
         \sum_{t \in [T]} \insquare{\size(\pathd^{(t)}) - \size(\pathd'^{(t)})} &\le \innorm{(A)_{z^{(0)}}}_F^2 + 2 \zeta K \Gamma_{\dgfsetup} . \label{eq:final-paths-diff-bound-takeaway}
    \end{align}
\end{theorem}

\begin{proof}
First, we verify that the input $\pathd^{(t)}$ to the $\MDMP$ oracle in Line~\ref{line:final-algo-MDMP} is indeed a matrix-approximation path to some $z \in \inbraces{w^{(t - 1)}_k : k \in \idset^{(t)}}$ (note that the latter is the multiset passed into the $\MDMP$ oracle in Line~\ref{line:final-algo-MDMP}), thereby satisfying the stipulations of Definition~\ref{def:MDMP}. Indeed, note
\begin{align*}
    \sum_{k \in [K]} \Delta_k^{(t - 1)} = (A)_{w_K^{(t - 1)}} - (A)_{w_{0}^{(t - 1)}} = (A)_{w_K^{(t - 1)}}
\end{align*}
since $(A)_{w_{0}^{(t - 1)}} = 0$ by definition. Note $w_K^{(t - 1)} \in \inbraces{w^{(t - 1)}_k : k \in \idset^{(t)}}$ as required since $K \in \idset^{(t)}$. Furthermore, the matrices $M_{k}^{(t - 1)}$ are known explicitly for all $t \in [T]$ since the matrices $M_k'^{(t)}$ are known explicitly for all $t \in [T]$ by Definition~\ref{def:MDMP}. Finally, it is clear that matvecs with any $\Delta_k^{(t - 1)}$ can be computed with $O(1)$ matvecs to $A$ by Definition~\ref{def:product-dgf-setups}.

Note then that Algorithm~\ref{alg:final-algo-outer-loop} is an instantiation of Algorithm~\ref{alg:multiprox-method-monotone-op} for the dgf setup $\dgfsetup$ and monotone operator $\gm f_A$. Then by Lemmas \ref{lem:regret-bounds-the-gap}, \ref{lem:multiprox-correctness}, and \ref{lem:multiprox-iteration-bound}, Algorithm~\ref{alg:final-algo-outer-loop} has the iteration bound \eqref{eq:multiprox-final-iter-bound} and $\zbar$ is a $2 \epsilon$-solution for \eqref{eq:intro-general-matrix-game}. \eqref{eq:sum-alpha-bound-takeaway} is immediate from Lemma~\ref{lem:multiprox-iteration-bound}, and thus we focus on proving \eqref{eq:final-paths-diff-bound-takeaway} for the remainder of the proof.

Toward this goal, for any $t \in [T - 1]$, we have
\begin{align*}
    \size(\pathd^{(t + 1)}) &= \sum_{k \in \idset^{(t)}} \innorm{\Delta_k^{(t)} - M_k^{(t)}}_F^2  + \sum_{k \in [K] \setminus \idset^{(t)}} \innorm{\Delta_k^{(t)} - M_k^{(t)}}_F^2 \\
    &\overeq{(i)} \sum_{k \in \idset^{(t)}} \innorm{\Delta_k^{(t)} }_F^2  + \sum_{k \in [K] \setminus \idset^{(t)}} \innorm{\Delta_k^{(t - 1)} - M_k'^{(t)}}_F^2 \\
    &\le \sum_{k \in \idset^{(t)}} \innorm{\Delta_k^{(t)} }_F^2  + \sum_{k \in [K]} \innorm{\Delta_k^{(t - 1)} - M_k'^{(t)}}_F^2 \\
    &= \sum_{k \in \idset^{(t)}} \innorm{\Delta_k^{(t)} }_F^2 + \size(\pathd'^{(t)}) \, .
\end{align*}
Here, $(i)$ follows because $M_k^{(t)} = 0$ for all $k \in \idset^{(t)}$ by Line~\ref{line:final-algo-wtk-unified-def}. Moreover, we claim $\Delta_k^{(t)} = \Delta_k^{(t - 1)}$ and $M_k^{(t)} = M_k'^{(t)}$ for all $k \in [K] \setminus \idset^{(t)}$. The latter is immediate from Line~\ref{line:final-algo-wtk-unified-def} of Algorithm~\ref{alg:final-algo-outer-loop}. As for the former, the case where $2 \le k \le K$ follows from the final claim of Lemma~\ref{lem:dyadic-prox-movement-bound} (it is the contrapositive). As for the case $\Delta_1^{(t)} = \Delta_1^{(t - 1)}$, recall $(A)_{w_{0}^{(t)}} = (A)_{w_{0}^{(t - 1)}} = 0$ by definition. 

Then using the above and the fact that $\size(\pathd'^{(T)}) \ge 0$, we obtain
\begin{align}
    \label{eq:telescoping-against-path-prime}
    \begin{split}
    \sum_{t \in [T]} (\size(\pathd^{(t)}) - \size(\pathd'^{(t)})) &= \size(\pathd^{(1)}) - \size(\pathd'^{(T)}) + \sum_{t \in [T - 1]} (\size(\pathd^{(t + 1)}) - \size(\pathd'^{(t)})) \\
    &\le \size(\pathd^{(1)}) + \sum_{t \in [T - 1]} \sum_{k \in \idset^{(t)}} \innorm{\Delta_k^{(t)}}_F^2 .
    \end{split}
\end{align}
To conclude the proof of \eqref{eq:final-paths-diff-bound-takeaway}, note that by the choice of $w_k^{(0)}$ and $M_k^{(0)}$ in Line~\ref{line:final-algo-w-M-init} as well as the fact that $(A)_{w_0^{(0)}} = 0$ by definition, we have $\size(\pathd^{(1)}) = \innorm{(A)_{z^{(0)}}}_F^2$. And finally,
\begin{align*}
    \sum_{t \in [T - 1]} \sum_{k \in \idset^{(t)}} \innorm{\Delta_k^{(t)}}_F^2 \le \zeta \sum_{t \in [T - 1]} \sum_{k \in \idset^{(t)}}  \breg{w_{k - 1}^{(t)}}{w_k^{(t)}} \le 2 \zeta K \Gamma_{\dgfsetup}
\end{align*}
by Definition~\ref{def:zeta-compatible-mapping} and Lemma~\ref{lem:dyadic-prox-movement-bound}, noting Algorithm~\ref{alg:final-algo-outer-loop} is indeed an instantiation of the dyadic prox method (Definition~\ref{def:dyadic-prox-multi-point-method}) by the choice of $\idset^{(t)}$ in Line~\ref{line:final-algo-idset}, and $K \ge \log_2 T + 5$ by \eqref{eq:multiprox-final-iter-bound} and a straightforward argument.
\end{proof}

\section{$\MDMP$ implementation for matrix games}
\label{sec:MDMP-implementation}

In this section, we provide and analyze our implementation of a dynamic $\epsilon$-$\MDMP$ oracle (Definition~\ref{def:MDMP}) for matrix-games using the bisection search procedure discussed in Section~\ref{sec:overview-of-approach}. The pseudocode of our implementation is described in Algorithm~\ref{alg:DMP-implementation-matrix-games} and we derive and analyze it in several steps. In particular, we reduce implementing an $\epsilon$-$\MDMP$ to solving a sequence of what we call \emph{constrained prox multi-point problems}. In Section~\ref{sec:wrapper}, we define these problems and related solution concepts. In Section~\ref{sec:binary_search_intro} we introduce a crucial bisection search subroutine (Algorithm~\ref{alg:cautious}) which enables our method, as discussed in Section~\ref{sec:overview-of-approach}. In Section~\ref{sec:mgdamo-sub} we show how to leverage these preliminaries to implement an $\epsilon$-$\MDMP$ oracle $\MDMPImp$ (Algorithm~\ref{alg:DMP-implementation-matrix-games}). Finally, in Section~\ref{sec:analysis-of-implementation} we analyze the implementation.

As in \citep{karmarkar2025solvingzerosumgames}, our algorithm leverages the notion of Hessian stability \citep{carmon2020acceleration, karimireddy2018globallinearconvergencenewtons} in order to implement the \emph{inner loop} discussed in Sections~\ref{sec:intro} and~\ref{sec:overview-of-approach}. In order to formalize this, in the remainder of the paper, we use the following notions of a $c$-\textit{stable ball} and \textit{stability} with respect to a fixed (but arbitrary) mapping. First, the following Definition~\ref{def:stable-region-hess} defines a notion of a stable ball, generalizing Definition 5.1 of \cite{karmarkar2025solvingzerosumgames} to general dgf setups. 

\begin{definition}[$c$-stable ball]\label{def:stable-region-hess} For a dgf setup $\cS = (\zset \subset \R^d, r)$, $z \in \cZ$ and $c > 1$, we define the \emph{$c$-stable ball about $z$} as $\cB_{c, z}^{\cS} \defeq \{z' \in \cZ : c^{-1} \cdot \hess r(z) \preceq \hess r(z') \preceq c \cdot \hess r(z)\}.$
\end{definition}

Next, we define a notion of stability with respect to a mapping. The following definition generalizes the notion of stability introduced in Section 6.2 of \cite{karmarkar2025solvingzerosumgames} to arbitrary dgf setups.

\begin{definition}[Stability]\label{def:best-response-stability} We say that a dgf setup $\cS = (\zset \subset \R^d, r)$ is $(\iota, \rho)$-\emph{stable with respect to a mapping}\footnote{Here, as usual, we allow $\cU$ to be a multiset of $\cZ$ when we write $\cU \subseteq \cZ$.}  $(\alpha > 0, \cU \subseteq \cZ) \mapsto \bestresponse(\alpha, \cU) \in \R^d$ if $\iota: \R_{>1} \to \R_{>1}$ is a strictly increasing, $\rho > 0$, and $z_\alpha^\star \in \cB^{\cS}_{\iota(c), \bestresponse(\alpha, \cU)}$ for any $\alpha > 0$ and $c > 1$ with $z_\alpha^\star \defeq \prox_\cU^\alpha(\nabla_\pm f_A; \cZ)$ and $\breg{\cU}{z_\alpha^\star} \leq c\alpha^\rho$.
\end{definition}

\paragraph{Assumptions.} %
In the remainder of this section, we fix an arbitrary dgf setup, $\cS = (\zset \subset \R^d, r)$, which is $(\iota, \rho)$-stable with respect to a fixed but arbitrary mapping $(\alpha, \cU) \mapsto \bestresponse(\alpha, \cU)$ (Definition~\ref{def:best-response-stability}) for some strictly increasing function $\iota: \R_{>1} \to \R_{>1}$ and $\rho > 0$. In addition, we assume that for any $z \in \cZ$ and $c > 1$, $\cB_{c, z}^\cS$ (Definition~\ref{def:stable-region-hess}) is closed and convex. In Section~\ref{sec:applications} we verify this assumption, specify $\cS, \rho,$ and $\bestresponse$, and bound $\iota$ for our particular applications in Theorems~\ref{thm:final-result-l1-l1-aka-zero-sum} and \ref{thm:final-result-l2-l1-aka-SVM}. 

\subsection{Constrained prox multi-point problems}\label{sec:wrapper}

Here we introduce what we call a constrained prox multi-point problem, corresponding to implementing a constrained variant of the proximal step $\prox_{\cU}^\alpha(\nabla_\pm f_{A}; \cZ)$ (recall Definition~\ref{def:proximal-mappings}). We will implement our $\epsilon$-$\MDMP$ ($\MDMPImp$, Algorithm~\ref{alg:DMP-implementation-matrix-games}) by carefully iteratively solving constrained prox multi-point problems and processing their solutions.

\begin{definition}[Constrained prox multi-point problem]\label{def:subproblem} In the $(\cU, c, \alpha, z, \cS)$-\emph{constrained prox multi-point problem}, we are given a finite, non-empty multiset $\cU \subseteq \cZ$, $c > 1$, $\alpha >0$, and $z \in \cZ$ and define the \emph{solution to the problem} as $z^\star \defeq \prox_{\cU}^\alpha(\nabla_\pm f_A; \cB^{\cS}_{c, z})$ (recall the notation in \Cref{def:proximal-mappings}).
\end{definition}

More precisely we reduce implementing an $\epsilon$-DMDP to computing a type of approximate solution to constrained prox multi-point problems. The following definition introduces this notion of an \emph{$(\epsilon, \delta, \rho)$-approximate solution} to a constrained prox multi-point problem. 

\begin{definition}[Approximate solution]\label{def:approx-solution} 

For $\epsilon, \delta \geq 0$ and $\rho > 0$, letting $z^\star$ be the solution to the $(\cU, c, \alpha, z, \cS)$-constrained prox multi-point problem (Definition~\ref{def:subproblem}), we say that a point $z' \in \cB^{\cS}_{c, z}$ is an $(\epsilon, \delta, \rho)$-\emph{approximate solution} to the problem if, 
\begin{itemize}
    \item $\abs{\breg{\cU}{z'} - \breg{\cU}{z^\star}} < \alpha^\rho/10$, 
    \item $z' \in \cB^{\cS}_{1+\delta, z^\star}$, and
    \item if $\prox_\cU^\alpha(\nabla_\pm f_A; \cZ) \in \cB_{c, z}^{\cS}$ then $\inangle*{\nabla_{\pm}f_A(z') + \alpha \grad \breg{\cU}{z'}, z' -u} \leq \epsilon, \text{ for all } u \in \cZ$. 
\end{itemize}
\end{definition}

Correspondingly, we define an oracle for this approximate solution concept as follows. 

\begin{definition}\label{def:oracle} For any $\rho > 0$, a $\rho$-\emph{approximate solution oracle} $\OAPPROX$
(for $\cS$) takes in a finite non-empty multiset $\cU \subseteq \cZ$, $c > 1$, $\alpha >0$, $z \in \cZ$, a matrix approximation path $\cP = \{\Delta_\ell, M_\ell\}_{\ell \in [L]}$ to $z$ (Definition~\ref{def:matrix-approx-path}), and $\epsilon, \delta \geq 0$ and returns $(z', \cP' = \{\Delta_\ell, M'_\ell\}_{\ell \in [L]})$, where $z'$ is an $(\epsilon, \delta, \rho)$-approximate solution to the $(\cU, c, \alpha, z, \cS)$-constrained prox multi-point problem (Definition~\ref{def:approx-solution}) and $\cP'$ is a matrix approximation path to $z$. 
\end{definition}

In the next sections, we show how, for any $\beta > 0$, we can leverage a $\rho$-approximate solution oracle for $\cS$ to implement an $\epsilon$-$\MDMP$ oracle which is $(\beta, \Theta(1), \rho)$-kinetic (Definition~\ref{def:MDMP}).

\subsection{Cautious Bisection Search}\label{sec:binary_search_intro}

In this section, we introduce a general routine $\cautiousSearch$ (Algorithm~\ref{alg:cautious}) which is our main bisection search procedure to reduce implementing an $\epsilon$-MDMP to implementing a $\rho$-approximate solution oracle. $\cautiousSearch$ is a key subroutine of our $\MDMP$ oracle implementation (the pseudocode of which is in Algorithm~\ref{alg:DMP-implementation-matrix-games}). $\cautiousSearch$ takes a tolerance $\epsilon \geq 0$, a range $[\theta_\ell, \theta_r] \subset \R_{\geq 0}$, and an oracle $\oracle(\cdot)$ that when queried at any $\alpha \in [\theta_\ell,\theta_r]$ outputs a point $z \in \zset \cup \{0\}$ and either $\oracleSuccess$ or $\oracleFailure$. In our application, $\theta_\ell$ corresponds to the $\beta$ parameter of a $\MDMP$ oracle (Definition~\ref{def:MDMP}) and $\theta_r$ corresponds to a value of $\alpha$ for which we are \emph{guaranteed} that $\breg{\cU}{\prox_{\cU}^\alpha(\nabla_\pm f_A; \cZ)} \leq C \alpha^\rho$ for appropriate $C$ (motivated by Definition~\ref{def:best-response-stability}). Correspondingly, the procedure assumes that $\oracle$ outputs $\oracleSuccess$ at $\alpha = \theta_r$ and then, either the oracle outputs $\oracleSuccess$ at $\alpha = \theta_\ell$ or else finds a pair of query values that are $\epsilon$-close where for the larger the $\oracle$ outputs $\oracleSuccess$ and for the smaller the $\oracle$ outputs $\oracleFailure$. 

It is possible to compute the desired $\alpha$ with a logarithmic number of queries using bisection search. However, in our application, querying the oracle for larger $\alpha$ may require more matvecs. Consequently, $\cautiousSearch$ instead queries the oracle at geometrically increasing values starting from $\alpha = \theta_\ell$ searching for the oracle to either output $\oracleFailure$ or $\oracleSuccess$ at $\alpha$. In the case it finds an $\oracleSuccess$ at $\alpha = \theta_\ell$, the procedure returns $\theta_\ell$. Otherwise, the oracle finds $\oracleSuccess$ for some $\alpha \in (\theta_\ell, \theta_r]$. In this case, the procedure performs a bisection search for an $\alpha_u$ such that the oracle outputs $\oracleSuccess$ at $\alpha_u$ and $\oracleFailure$ at an $\alpha_l \geq \alpha_u - \epsilon$. Ultimately, this ensures that the $\cautiousSearch$ both finds the requisite value of $\alpha$ with only a logarithmic number of queries to the oracle and that the oracle does not query the oracle with a value of $\alpha$ much higher than the value of $\alpha$ it ultimately outputs. In our application, this helps ensure that the bisection search only induces polylogarithmic factors of overhead in terms of the number of matvecs made in our applications.

The main guarantees of $\cautiousSearch$ are given below in \Cref{lemma:cautious_search}. A similar procedure was used in the $\lambda$-bisection procedure in Algorithm 1 of \citep{carmon2021thinking} for similar reasons. Our algorithm uses the same general ideas as in that procedure.

 \begin{algorithm2e}[ht]
 	\caption{$\cautiousSearch(\epsilon, \theta_\ell, \theta_r, \oracle(\cdot))$}
 	\label{alg:cautious}
 	\KwInput{Range lower bound $\theta_\ell > 0$, range upper bound $\theta_r > \theta_\ell$, error threshold $\epsilon \in \R_{\geq 0}$, oracle $\oracle : (0, \theta_r] \rightarrow (\cZ, \{\oracleSuccess, \oracleFailure\})$.}
    \BlankLine
    \tcp{If $\oracleSuccess$ at $\alpha = \theta_\ell$, output $\theta_\ell$.}
    \lIf{$\flag = \oracleSuccess$ when $(z, \flag) \gets \oracle(\theta_\ell)$}{\Return{$(z, \theta_\ell)$}}

    \tcp{Repeatedly double $L^{(i)}$ looking for $\oracleSuccess$ at $L^{(i)} \geq \theta_\ell$.}
 	$i = 1$ and $L^{(1)} = \theta_\ell$\;
 	\While{$\flag = \oracleFailure$ when $(z, \flag) \gets \oracle(L^{(i)})$ \label{line:cautious:start}}{
        $L^{(i + 1)} \gets \min\{2L^{(i)}, \theta_r\}$ and then $i \gets i + 1$ \label{line:cautious:end}
    }
    \BlankLine
    \tcp{Bisection search between $\oracleSuccess$ at $L^{(i_*)}$ and $\oracleFailure$ at $L^{(i_*-1)}$ for close $\oracleSuccess$ and $\oracleFailure$ output}
    $\alpha_u^{(1)} \gets L^{(i_*)}$, and $\alpha_\ell^{(1)} \gets L^{(i_*-1)}$ where $i_* = i$\;
    \For{$j \in [j_*]$ where $j_* \defeq \max\{1,\ceil{\log_2((\alpha_u^{(1)} - \alpha_{\ell}^{(1)})/\epsilon)}\}$\label{line:for:start}}{
 			$\alpha_m^{(j)} \gets \frac{\alpha_u^{(j)} + \alpha_{\ell}^{(j)}}{2}$ and 
            $(z', \flag) \gets \oracle(\alpha_m^{(j)})$\; 
 			\lIf{$\flag = \oracleSuccess$}{
 				$(z, \alpha_{u}^{(j +1)}) \gets (z', \alpha_{m}^{(j)})$ and $\alpha_{\ell}^{(j +1)} \gets \alpha_{\ell}^{(j)}$
 				}
 			\lElse{
 					$\alpha_{u}^{(j +1)} \gets \alpha_{u}^{(j)}$ and $\alpha_{\ell}^{(j +1)} \gets \alpha_{m}^{(j)}$
                    \label{line:for:end}
 				}
	}
 	\Return{$(z, \alpha_{u}^{(j_*+1)})$}
 \end{algorithm2e}
  
 \begin{lemma}[\cautiousSearch~guarantee]\label{lemma:cautious_search} Let 
 \[(z_*, \alpha_*) = \cautiousSearch(\epsilon, \theta_\ell, \theta_r, \oracle(\cdot))\] (\Cref{alg:cautious}) where  $\epsilon, \theta_\ell, \theta_r \in \R_{> 0}$ with $0 < \theta_\ell < \theta_r$ and $\oracle : [\theta_\ell, \theta_r] \rightarrow \zset \cup \{0\} \times \{\oracleSuccess, \oracleFailure\}$ is a deterministic oracle satisfying $\oracle(\theta_r) = (\cdot,\oracleSuccess)$.\footnote{In \Cref{lemma:cautious_search} and  \Cref{alg:cautious}, $\zset$ can be any non-empty set.}\footnote{Here and throughout $(a,b) = (\cdot,c)$ denotes that $b = c$.} Then $\alpha_* \in [\theta_l, \theta_r]$, $\oracle(\alpha_*) = (z_*,\oracleSuccess)$ and either \[
 \alpha_* = \theta_\ell
 \text{ or } 
 \oracle(\varsigma) = (\cdot,\oracleFailure)\text{ for some }
 \varsigma \in [\max\{\alpha_* - \epsilon, \theta_\ell\}, \alpha_*)\,.
 \]
 Furthermore, $\cautiousSearch(\cdot)$ makes at most $O(\log(\theta_r/\min\{\epsilon,\theta_\ell\}))$ queries to $\oracle(\alpha)$ and in each query $\alpha \in [\theta_\ell, \min\{2\alpha_*, \theta_r\}]$.
 \end{lemma}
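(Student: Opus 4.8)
The plan is to prove the four assertions — ($\alpha_* \in [\theta_\ell, \theta_r]$), ($\oracle(\alpha_*) = (z_*, \oracleSuccess)$), the ``or'' dichotomy, and the query-count/range bound — by carefully tracking the invariants maintained by the two phases of \cautiousSearch: the doubling phase (Lines~\ref{line:cautious:start}--\ref{line:cautious:end}) and the bisection phase (Lines~\ref{line:for:start}--\ref{line:for:end}). First I would handle the easy early-return case: if $\oracle(\theta_\ell) = (\cdot, \oracleSuccess)$, then the algorithm returns $(z, \theta_\ell)$ with $\alpha_* = \theta_\ell$, and all claims hold trivially (the ``or'' is satisfied by its first disjunct, and only one query was made). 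So assume henceforth $\oracle(\theta_\ell) = (\cdot, \oracleFailure)$.

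\textbf{Doubling phase.} I would first argue this phase terminates. The sequence $L^{(1)} = \theta_\ell, L^{(2)}, \dots$ is nondecreasing with $L^{(i+1)} = \min\{2 L^{(i)}, \theta_r\}$, so after at most $O(\log(\theta_r/\theta_\ell))$ steps we reach $L^{(i)} = \theta_r$, at which point $\oracle(L^{(i)}) = \oracle(\theta_r) = (\cdot, \oracleSuccess)$ by hypothesis and the loop exits. Let $i_*$ be the terminal index, so $\oracle(L^{(i_*)}) = (\cdot, \oracleSuccess)$ and (since the loop body ran at least once, as $\oracle(\theta_\ell) = \oracleFailure$) $\oracle(L^{(i_* - 1)}) = (\cdot, \oracleFailure)$ with $L^{(i_* - 1)} \geq \theta_\ell$. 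Also $L^{(i_*)} = \min\{2 L^{(i_* - 1)}, \theta_r\} \leq 2 L^{(i_* - 1)} \le \theta_r$, so the interval $[\alpha_\ell^{(1)}, \alpha_u^{(1)}] = [L^{(i_* - 1)}, L^{(i_*)}] \subseteq [\theta_\ell, \theta_r]$ and has length $\alpha_u^{(1)} - \alpha_\ell^{(1)} \le L^{(i_* - 1)} \le \alpha_u^{(1)}$.

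\textbf{Bisection phase.} The key is the loop invariant: for every $j \in [j_* + 1]$, we have $\theta_\ell \le \alpha_\ell^{(j)} < \alpha_u^{(j)} \le \theta_r$, $\oracle(\alpha_u^{(j)}) = (z, \oracleSuccess)$ (with $z$ the current stored point), $\oracle(\alpha_\ell^{(j)}) = (\cdot, \oracleFailure)$, and $\alpha_u^{(j)} - \alpha_\ell^{(j)} = 2^{-(j-1)}(\alpha_u^{(1)} - \alpha_\ell^{(1)})$. This is immediate by induction from the update rules in Lines~\ref{line:for:start}--\ref{line:for:end} (each iteration replaces one endpoint by the midpoint $\alpha_m^{(j)}$, keeping the success endpoint above and the failure endpoint below, and halving the width), together with the base case established above. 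Applying the invariant at $j = j_* + 1$ gives $\oracle(\alpha_u^{(j_* + 1)}) = (z_*, \oracleSuccess)$ with $\alpha_u^{(j_* + 1)} = \alpha_* \in [\theta_\ell, \theta_r]$, which is the first two claims. For the dichotomy (in this non-early-return case), observe that by choice of $j_*$, $\alpha_u^{(j_* + 1)} - \alpha_\ell^{(j_* + 1)} = 2^{-j_*}(\alpha_u^{(1)} - \alpha_\ell^{(1)}) \le \epsilon$ whenever $\alpha_u^{(1)} - \alpha_\ell^{(1)} > \epsilon$; and when $\alpha_u^{(1)} - \alpha_\ell^{(1)} \le \epsilon$ already, then since $j_* \ge 1$ the width only shrinks, so in all cases $\alpha_* - \alpha_\ell^{(j_* + 1)} \le \epsilon$. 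Since also $\alpha_\ell^{(j_* + 1)} \ge \theta_\ell$ and $\alpha_\ell^{(j_* + 1)} < \alpha_*$, the value $\varsigma := \alpha_\ell^{(j_* + 1)}$ satisfies $\varsigma \in [\max\{\alpha_* - \epsilon, \theta_\ell\}, \alpha_*)$ and $\oracle(\varsigma) = (\cdot, \oracleFailure)$, as required.

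\textbf{Query count and range.} The doubling phase makes $O(\log(\theta_r/\theta_\ell))$ queries; the bisection phase makes $j_* = \max\{1, \ceil{\log_2((\alpha_u^{(1)} - \alpha_\ell^{(1)})/\epsilon)}\}$ queries, and since $\alpha_u^{(1)} - \alpha_\ell^{(1)} \le \theta_r$, this is $O(\log(\theta_r/\epsilon))$; adding the one initial query at $\theta_\ell$ gives the stated $O(\log(\theta_r / \min\{\epsilon, \theta_\ell\}))$ total. For the range of queried values: in the doubling phase every queried $L^{(i)} \le L^{(i_*)} = \alpha_u^{(1)}$; in the bisection phase every queried $\alpha_m^{(j)} \le \alpha_u^{(j)} \le \alpha_u^{(1)} = L^{(i_*)} \le \min\{2\alpha_\ell^{(1)}, \theta_r\} \le \min\{2\alpha_u^{(j_*+1)}, \theta_r\}$ — wait, I should be careful here; since $\alpha_* = \alpha_u^{(j_*+1)} \ge \alpha_\ell^{(1)} = L^{(i_*-1)}$ and $L^{(i_*)} \le 2 L^{(i_* - 1)} = 2 \alpha_\ell^{(1)} \le 2\alpha_*$, and also $L^{(i_*)} \le \theta_r$, every queried value lies in $[\theta_\ell, \min\{2\alpha_*, \theta_r\}]$. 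The initial query at $\theta_\ell$ clearly also lies in this range. This completes the proof. \hfill$\square$

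\textbf{Main obstacle.} I expect the fiddly part to be the dichotomy/width bound: one must treat correctly the edge case $\alpha_u^{(1)} - \alpha_\ell^{(1)} \le \epsilon$ (where the $\ceil{\cdot}$ argument is $\le 0$ but $j_*$ is forced to $1$ by the $\max$), and the degenerate case $\epsilon = 0$ (where $j_*$ as written involves $\log_2(\cdot/0)$, so one reads the $\max\{1, \ceil{\cdot}\}$ as $+\infty$ and the bisection runs until $\alpha_u = \alpha_\ell$, which cannot happen in finitely many steps unless $\alpha_u^{(1)} = \alpha_\ell^{(1)}$ — so strictly one should note that the lemma's quantitative query bound is only meaningful for $\epsilon > 0$, or restrict to that case; the structural claims about $\alpha_*$ still go through in the limit). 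Keeping the bookkeeping on which endpoint carries the stored point $z$ straight through the invariant is the other place to be careful.
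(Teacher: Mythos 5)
Your proof is correct and follows essentially the same approach as the paper's: verify the doubling phase terminates with an explicit bracketing interval $[L^{(i_*-1)},L^{(i_*)}]$, then track the bisection invariants (success at the upper endpoint, failure at the lower endpoint, geometric width decay), and finally bound queries and their range using $L^{(i_*)}\le 2L^{(i_*-1)}\le 2\alpha_*$. Your parenthetical worry about $\epsilon=0$ is moot: although Algorithm~\ref{alg:cautious} declares $\epsilon\in\R_{\ge 0}$, the lemma hypothesis restricts to $\epsilon\in\R_{>0}$, so the $\log_2(\cdot/\epsilon)$ in the definition of $j_*$ is always well-defined.
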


 \begin{proof}
Every iteration of the $\code{while}$ loop, Line~\ref{line:cautious:start} to Line~\ref{line:cautious:end} that does not terminate either increases $L^{(i)}$ by a factor of 2 or has $L^{(i)} = \theta_r$. In the latter case, the while loop terminates at the next iteration, due to the guarantee that $\oracle(\theta_r) = (\cdot, \oracleSuccess)$. Consequently, the $\code{while}$ loop terminates with $O(\log(\theta_r/\theta_\ell))$ queries and ends the loop with $i_* > 1$, $(\cdot, \oracleFailure) = \oracle(L^{(i_*-1)})$ and $(\cdot, \oracleSuccess) = \oracle(L^{(i_*)})$ for $\theta_\ell < L^{(i_*-1)} < L^{(i_*)} = \min\{2L^{(i_*-1)} \theta_r\}$. 

We now check that the algorithm has the desired properties. Note that the $\code{for}$ loop (Line~\ref{line:for:start} to Line~\ref{line:for:end}) simply performs a bisection search between $\alpha_u^{(1)} = L^{(i_*)}$ and $\alpha_\ell^{(1)} = L^{(i_* - 1)}$ maintaining the invariant that $(z,\oracleSuccess) = \oracle(\alpha_u^{(j)})$ and $(\cdot,\oracleFailure) = \oracle(\alpha_\ell^{(j)})$. In addition, note that the returned value of $z_*$ corresponds to the first argument of $\oracle(\alpha_*)$, as desired. 

Also note that $j_*$ is designed so that when the algorithm terminates $\alpha_u^{(j_*+1)} - \alpha_\ell^{(j_*+1)} \leq \epsilon$. Furthermore, all calls to $\oracle(\varsigma)$ made by the algorithm satisfy $\varsigma \in [\theta_\ell, L^{(i_*)}]$ and $\alpha^{(j_*+1)} \in [L^{(i_*-1)}, L^{(i_*)}] \subseteq [L^{(i_*)}/2, L^{(i_*)}]$. Thus, all calls to $\oracle(\varsigma)$ made by the algorithm satisfy $\varsigma \in [\theta_\ell, 2\alpha_*]$ as desired. Finally, the number of queries made is $O(\log(\theta_r/\theta_\ell) + \max\{1,O(\log((L^{(i_*)} - L^{(i_* - 1)}) / \epsilon))\}$. Since $L^{(i_*)} - L^{(i_* - 1)} \leq 2 L^{(i_*-1)}\leq 2\theta_r$, the overall query bound holds. 
 \end{proof}

\subsection{$\MDMP$ implementation}\label{sec:mgdamo-sub}

Here, we introduce our $\epsilon$-$\MDMP$ implementation $\MDMPImp$ (Algorithm~\ref{alg:DMP-implementation-matrix-games}) which is parameterized by $\epsilon > 0$ (other parameters are discussed in the subsequent paragraph). Note that $\MDMPImp$ essentially reduces the implementation of a $\epsilon$-$\MDMP$ to computing approximate solutions of a sequence of constrained prox multi-point problems (recall Definition~\ref{def:subproblem}).

$\MDMPImp$ essentially has two major components. The first component is a subroutine $\Validate$ which is designed to (approximately) identify whether an inputted value of $\alpha$ satisfies the movement guarantee $\breg{\cU}{\prox_\cU^\alpha(\nabla_\pm f_A; \cB^{\cS}_{\iota(5), \zcenter})} = \Theta(\alpha^\rho)$ (where $\zcenter$ is the point defined in Line~\ref{line:zground}) by invoking the $\rho$-approximate solution oracle $\OAPPROX$. $\Validate$ also accesses and updates the global matrix approximation path $\cP$ passed as input to $\MDMPImp$. 

The second component of $\MDMPImp$ is an invocation of $\cautiousSearch$ to compute the desired output for Definition~\ref{def:MDMP}. In particular, note that $\MDMPImp$ is parameterized by $\beta, \rho > 0$. Here, $\beta$ and $\rho$ control the \emph{kineticness} of the resulting $\MDMP$ is (recall Definition~\ref{def:MDMP}). Furthermore, $\MDMPImp$ instantiates $\cautiousSearch$, passing the subroutine $\Validate(\cdot)$ as the underlying $\oracle$ and range lower and upper bounds $\beta$ and $\theta_r$ respectively. 

In the next sections, we discuss and analyze the implementation of $\Validate$ and $\MDMPImp$ in further detail.

\RestyleAlgo{ruled}
\DontPrintSemicolon
\SetKwComment{Comment}{/* }{ */}
\begin{algorithm2e}[h!]
\caption{$\MDMP$ for matrix games implementation $\MDMPImp(\cU, \cP)$}
\label{alg:DMP-implementation-matrix-games}
\KwInput{Finite nonempty multiset $\uset \subseteq \zset$ and a matrix-approximation path $\pathd = \{\Delta_\ell, M_\ell\}_{\ell \in [L]}$ to $z \in \cU$ (Definition~\ref{def:matrix-approx-path})}
\KwParameter{ $\epsilon > 0$, $0 < \beta < \theta_r$, and a $\rho$-approximate solution (AS) oracle $\OAPPROX$ for $\cS$ (Definition~\ref{def:oracle})}
\BlankLine
Define $\epsilon'$ as in \eqref{eq:epsilonprime}\; 
$(z_*, \alpha_*) \gets \cautiousSearch(\epsilon', \beta, \theta_r, \Validate(\cdot)) \label{line:bsearch}$ 
\tcp*{Algorithm~\ref{alg:cautious}}
\Return{$(z_*, \alpha_*, \cP)$}
\BlankLine
\Function{$\Validate(\alpha)$}{
     \tcp{Such a $\delta$ always exists because $\iota$ is strictly increasing over $\R_{>1}$}
     Set $\delta > 0$ so that $(1+\delta)^2 \cdot \iota(3) < (1+\delta) \cdot \iota(4) < \iota(5)$ and either $(1+\delta) \cdot \iota(3) = (\iota(3) + \iota(4))/2$ or else $(1+\delta) \cdot \iota(4) = (\iota(4) + \iota(5))/2$ \label{line:delta2}\; 
     Compute $\tilde{z} \gets \bestresponse(\alpha, \cU) \label{line:bestresponse}$,
     $\zcenter \gets \argmin_{z \in \cB^{\cS}_{\iota(5), \tilde{z}}} \breg{\cU}{z}$ \label{line:zground}\; 
     \lIf{$\breg{\cU}{\zcenter} > 3\alpha^\rho$}{\Return{$(0, \oracleFailure) \label{line:safety-check}$}}
     \tcp{Complete $\cP$ into a matrix-approximation path to $\zcenter$ by adding an additional term with a null model}
    $\Delta_{L+1} \gets \ground{A}{\zcenter} - \ground{A}{z}$ and $M_{L+1} \gets 0_{m \times n}$ \label{line:null-model}\;  
     $(z', \{\Delta_{\ell}, M'_{\ell}\}_{\ell \in [L+1]}) \gets \OAPPROX(\cU, \iota(5)^2, \alpha, \zcenter, \{\Delta_{\ell}, M_{\ell}\}_{\ell \in [L+1]}, \epsilon, \delta)$ \label{line:wrap}\; 
     $\cP \gets \{\Delta_\ell, M'_{\ell}\}_{\ell \in [L]}$ \label{line:inplace} \tcp*{Update $\cP$ in-place}
     \lIf{$z' \notin \cB^{\cS}_{\iota(4) \cdot \iota(5), \zcenter}$}{\Return{$(0, \oracleFailure)$ \label{line:too-big-2}}} 
     \lIf{$\breg{\cU}{z'} > 2.5\alpha^\rho$}{\Return{$(0, \oracleFailure)$ \label{line:too-big-3}}}
     \lElse{\Return{$(z', \oracleSuccess)$ \label{line:too-small}}}
}
\end{algorithm2e}

\subsection{$\MDMP$ analysis}\label{sec:analysis-of-implementation}

Here, we analyze $\MDMPImp$ (Algorithm~\ref{alg:DMP-implementation-matrix-games}). First, we analyze the  $\Validate$ subroutine Algorithm~\ref{alg:DMP-implementation-matrix-games}. $\Validate$ accepts an $\alpha > 0$ (and, implicitly accesses the global variables $\cU, \cP$). As we will show, this subroutine returns $(\cdot, \oracleSuccess)$ only if $\breg{\cU}{\prox_\cU^\alpha(\nabla_\pm f_A; \cZ)} > 2.4\alpha^\rho$ and $(\cdot, \oracleFailure)$ only if $\breg{\cU}{\prox_\cU^\alpha(\nabla_\pm f_A; \cZ)} \leq 2.6\alpha^\rho$ (as we prove in Lemma~\ref{lemma:validate-correctness}.) Note that by Lemma~\ref{lemma:cautious_search}, this ensures that the $\alpha_*$ returned by $\MDMPImp$ satisfies $\breg{\cU}{\prox_\cU^{\alpha_*}(\nabla_\pm f_A; \cZ)} \leq 2.6 \alpha_*^2$, as described in Section~\ref{sec:overview-of-approach}.

In Line~\ref{line:bestresponse}, the algorithm first computes the mapping $\bestresponse(\alpha, \cU)$ and then selects a point $\zcenter \in \cB^{\cS}_{C, \tilde{z}}$ in Line~\ref{line:zground} which minimizes the sum of divergences from points in $\cU$. In the case that this sum of divergences is too large, then the algorithm returns $(0, \oracleFailure)$ in Line~\ref{line:safety-check}. This check is included for two reasons. First, if $\breg{\cU}{\zcenter} > 3\alpha^\rho$, then, as we prove in the following Lemma~\ref{lemma:validate-correctness}, this immediately implies $\breg{\cU}{\prox_\cU^{\alpha}(\nabla_\pm f_A; \cZ)} > 2.4 \alpha^\rho$ (consequently, $\Validate(\alpha)$ must return $(\cdot, \oracleFailure)$. Second, in our eventual application, invoking the approximate solution oracle $\OAPPROX$ on such a $\zcenter$ as in Line~\ref{line:wrap} might require many matvecs. To avoid needlessly exceeding the matvec budget in this case, Line~\ref{line:safety-check} returns ``early'' without ever invoking $\OAPPROX$, ensuring that the algorithm will never call $\OAPPROX$ on a point $\zcenter$ which is too far (in the sense of divergence) from $\cU$. 

On the other hand, if $\breg{\cU}{\zcenter} \leq 3\alpha^\rho$, the algorithm completes $\cP$ (which is a matrix approximation path to $z$) into a matrix-approximation path to the selected $\zcenter$ and computes an $(\epsilon, \delta, \rho)$-approximate solution of a constrained prox multi-point problem (Definition~\ref{def:subproblem}) in Line~\ref{line:wrap} using the oracle $\OAPPROX$. Next, the algorithm updates the matrix approximation path $\cP$ in Line~\ref{line:inplace} in-place. Finally, the algorithm returns depending on a variety of conditions on $z'$ in Lines~\ref{line:too-big-2},~\ref{line:too-big-3} or~\ref{line:too-small}. These return conditions are tailored to enable the following correctness guarantee. 

\begin{lemma}[$\Validate$~correctness guarantee]\label{lemma:validate-correctness}For any $\alpha, \epsilon > 0$, finite nonempty multiset $\cU \subseteq \cZ$, matrix-approximation path $\cP$ to $z \in \cU$, and $\beta > 0$, letting $z^\star_\alpha \defeq \prox_{\cU}^{\alpha}(\gm f_A; \cZ)$ and $(z', \flag) \gets \Validate(\alpha)$, we have that 
\begin{align*}
        \flag = \begin{cases}
        \oracleFailure, & \text{only if } \breg{\cU}{z^\star_\alpha} > 2.4\alpha^\rho, \\
        \oracleSuccess, & \text{only if } \breg{\cU}{z^\star_\alpha} \leq 2.6 \alpha^\rho. 
    \end{cases}
\end{align*}
Furthermore, if $\flag = \oracleSuccess$, then $z^\star_\alpha \in \cB^{\cS}_{\iota(5)^2, \zcenter}$, $z' \in \cZ$, and
\begin{align}\label{eq:dmp-closeness-guarante}
    \inangle*{ \nabla_\pm f_A(z') + \alpha \nabla \breg{\cU}{z'}, z'-u } \leq \epsilon, \text{ for all } u \in \cZ. 
\end{align}
\end{lemma}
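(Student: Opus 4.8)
\emph{Proof plan.} The argument traces the three possible $\oracleFailure$-returns and the single $\oracleSuccess$-return of $\Validate(\alpha)$ (recall these are the only outputs). Throughout I will use three elementary properties of stable balls (Definition~\ref{def:stable-region-hess}), each immediate from the definition together with $\hess r \succ 0$: (\emph{symmetry}) $a \in \cB^\cS_{c,b} \iff b \in \cB^\cS_{c,a}$; (\emph{monotonicity}) $\cB^\cS_{c_1,z} \subseteq \cB^\cS_{c_2,z}$ whenever $c_1 \le c_2$; and (\emph{composition}) $a \in \cB^\cS_{c_1,b}$ and $b \in \cB^\cS_{c_2,c}$ imply $a \in \cB^\cS_{c_1 c_2, c}$. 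I will also use that, for any closed convex $\cZ' \subseteq \cZ$, the point $\prox_\cU^\alpha(\gm f_A; \cZ')$ is the unique $z'$ satisfying $\inangle{\gm f_A(z') + \alpha\grad\breg{\cU}{z'}, z' - u} \le 0$ for all $u \in \cZ'$ (rewriting Definition~\ref{def:proximal-mappings} via \eqref{eq:Bregman-three-point-equality}). From this I extract two facts about the unconstrained point $z^\star_\alpha \defeq \prox_\cU^\alpha(\gm f_A;\cZ)$ and the constrained point $z^\star \defeq \prox_\cU^\alpha(\gm f_A;\cB^\cS_{\iota(5)^2,\zground})$ (the latter being the solution the oracle $\OAPPROX$ approximately solves in Line~\ref{line:wrap}, well-defined since stable balls are assumed closed and convex): (i) if $z^\star_\alpha \in \cB^\cS_{\iota(5)^2,\zground}$ then $z^\star = z^\star_\alpha$; and (ii) if $z^\star \in \cB^\cS_{c',\zground}$ for some $c' < \iota(5)^2$, then again $z^\star = z^\star_\alpha$, because continuity of $\hess r$ and $\hess r(\zground)\succ 0$ place $z^\star$ in the relative interior of $\cB^\cS_{\iota(5)^2,\zground}$ with respect to $\cZ$, so the variational inequality for $z^\star$ extends from $\cB^\cS_{\iota(5)^2,\zground}$ to all of $\cZ$ by moving along segments toward arbitrary points of $\cZ$.

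For the first bullet I prove the contrapositive: assuming $\breg{\cU}{z^\star_\alpha} \le 2.4\alpha^\rho$, I show $\Validate(\alpha)$ returns $\oracleSuccess$. By $(\iota,\rho)$-stability (Definition~\ref{def:best-response-stability}, with $c=3$), $z^\star_\alpha \in \cB^\cS_{\iota(3),\ztilde}\subseteq\cB^\cS_{\iota(5),\ztilde}$, so $z^\star_\alpha$ is feasible for the minimization defining $\zground$, giving $\breg{\cU}{\zground} \le \breg{\cU}{z^\star_\alpha} \le 2.4\alpha^\rho < 3\alpha^\rho$ and clearing the safety check on Line~\ref{line:safety-check}. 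Composing $z^\star_\alpha\in\cB^\cS_{\iota(3),\ztilde}$ with $\zground\in\cB^\cS_{\iota(5),\ztilde}$ gives $z^\star_\alpha\in\cB^\cS_{\iota(3)\iota(5),\zground}\subseteq\cB^\cS_{\iota(5)^2,\zground}$, so by (i), $z^\star = z^\star_\alpha$. The $\OAPPROX$ guarantees (Definition~\ref{def:approx-solution}) then yield $\breg{\cU}{z'} < \breg{\cU}{z^\star} + \alpha^\rho/10 \le 2.5\alpha^\rho$, clearing Line~\ref{line:too-big-3}; and $z'\in\cB^\cS_{1+\delta,z^\star}$ composed with $z^\star=z^\star_\alpha\in\cB^\cS_{\iota(3)\iota(5),\zground}$ gives $z'\in\cB^\cS_{(1+\delta)\iota(3)\iota(5),\zground}\subseteq\cB^\cS_{\iota(4)\iota(5),\zground}$, the last inclusion using $(1+\delta)\iota(3)<\iota(4)$ from the choice of $\delta$ in Line~\ref{line:delta2}; this clears Line~\ref{line:too-big-2}. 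Hence $\Validate$ returns $\oracleSuccess$, as desired.

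For the second bullet, assume $\flag=\oracleSuccess$, so all three checks are cleared; in particular $z'\in\cB^\cS_{\iota(4)\iota(5),\zground}$ and $\breg{\cU}{z'}\le 2.5\alpha^\rho$. By symmetry $z^\star\in\cB^\cS_{1+\delta,z'}$, and composing with $z'\in\cB^\cS_{\iota(4)\iota(5),\zground}$ gives $z^\star\in\cB^\cS_{c',\zground}$ with $c'\defeq(1+\delta)\iota(4)\iota(5)<\iota(5)^2$, the strict inequality being exactly $(1+\delta)\iota(4)<\iota(5)$ from Line~\ref{line:delta2}. Fact (ii) then gives $z^\star=z^\star_\alpha$, hence $z^\star_\alpha\in\cB^\cS_{\iota(5)^2,\zground}$, and so $\breg{\cU}{z^\star_\alpha}=\breg{\cU}{z^\star}<\breg{\cU}{z'}+\alpha^\rho/10\le 2.6\alpha^\rho$. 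For the ``furthermore'': $z'\in\cZ$ because an $(\epsilon,\delta,\rho)$-approximate solution lies in $\cB^\cS_{\iota(5)^2,\zground}\subseteq\cZ$; and since $\prox_\cU^\alpha(\gm f_A;\cZ)=z^\star_\alpha\in\cB^\cS_{\iota(5)^2,\zground}$, the third condition of Definition~\ref{def:approx-solution} supplies \eqref{eq:dmp-closeness-guarante}.

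The main obstacle is fact (ii): showing that an $\oracleSuccess$ return certifies that the \emph{constrained} prox solution $z^\star$ actually equals the \emph{unconstrained} $z^\star_\alpha$. This is exactly where the engineering of $\delta$ in Line~\ref{line:delta2} is used — it forces $z^\star$ to be certified inside a stable ball around $\zground$ of radius \emph{strictly} below $\iota(5)^2$ — after which a continuity/relative-interior argument upgrades the variational inequality for $z^\star$ from $\cB^\cS_{\iota(5)^2,\zground}$ to all of $\cZ$. Everything else is routine bookkeeping of stable-ball inclusions via the symmetry, monotonicity, and composition properties.
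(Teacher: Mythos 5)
Your proposal is correct and lands the same technical punches as the paper's proof: stability of $\bestresponse$, stable-ball symmetry and composition, the $\delta$-choice from Line~\ref{line:delta2}, the $\OAPPROX$ guarantees, and the ``constraint not binding'' step. The organization differs in a way worth noting, though. The paper proceeds line by line, deriving a contradiction at each of the potential $\oracleFailure$-returns (Lines~\ref{line:safety-check}, \ref{line:too-big-2}, \ref{line:too-big-3}) and then treating the $\oracleSuccess$-return and the containment of $z^\star_\alpha$ as separate interleaved claims. You instead dispose of all $\oracleFailure$-returns in one stroke via the contrapositive of the first bullet, showing that $\breg{\cU}{z^\star_\alpha}\le 2.4\alpha^\rho$ forces the algorithm past every check, and then handle $\oracleSuccess$ with a single backward inference. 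More substantively, you isolate as ``fact (ii)'' the step that the paper asserts informally as ``the constraint to $\cB^{\cS}_{\iota(5)^2,\zground}$ is not binding, hence $z^\star_\alpha=z_\ell$,'' and you actually supply the justification (the strict PSD inequalities at $z^\star$ together with continuity of $\hess r$ place $z^\star$ in the relative interior of the ball with respect to $\cZ$, so the variational inequality extends to all of $\cZ$ by moving along segments). This makes explicit a point the paper glosses over, which is a genuine improvement in rigor even though the underlying idea is the same. One small stylistic quibble: in your contrapositive argument you invoke stability at $c=3$ rather than the sharper $c=2.4$; both are valid since $2.4\le 3$, but $c=2.4$ would give a tighter ball and is what the paper uses for the safety-check case.
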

\begin{proof} The proof considers all of the return conditions in $\Validate(\alpha)$ and reasoning about the containment of $z^\star_\alpha \in \cB_{\iota(5)^2, \zcenter}^\cS$ in the case that $\flag=\oracleSuccess$. 
\\\\
\emph{Line~\ref{line:safety-check}:} First, suppose that $\Validate(\alpha)$ returns in Line~\ref{line:safety-check}. Then, suppose, for the sake of contradiction, that $\breg{\cU}{z^\star_\alpha} \leq 2.4\alpha^\rho$. Then, by Definition~\ref{def:best-response-stability}, we have that $z^\star_\alpha \in \cB^{\cS}_{\iota(2.4), \tilde{z}} \subseteq \cB^{\cS}_{\iota(5), \tilde{z}}$ because $\iota(5) > \iota(2.4)$ (recall that $\iota$ is strictly increasing). Consequently, we have that $3\alpha^\rho < \breg{\cU}{\zcenter} = \min_{z \in \cB^{\cS}_{\iota(5), \tilde{z}}} \breg{\cU}{z} \leq \breg{\cU}{z^\star_\alpha} \leq 2.4 \alpha^\rho$, which is a contradiction. Thus, the lemma holds. 
\\\\
\emph{Line~\ref{line:too-big-2}:} Suppose that $\Validate(\alpha)$ returns in Line~\ref{line:too-big-2}. For notational convenience, let $z_\ell \defeq \prox_{\cU}(\nabla_\pm f_A; \cB^{\cS}_{C^2, \zcenter})$ (where we follow the convention of \citep{karmarkar2025solvingzerosumgames} and use $\ell$ to denote ``local''). By the properties of $\OAPPROX$ (Definition~\ref{def:oracle} and~\ref{def:approx-solution}), we have that $z' \in \cB^{\cS}_{1+\delta, z_\ell}$. Now, by the fact that $z' \notin \cB^{\cS}_{\iota(4)\cdot \iota(5), \zcenter}$ and Definition~\ref{def:stable-region-hess}, we must have that
\begin{align*}
    \hess r(z') \prec \frac{1}{\iota(4) \iota(5)} \hess r(\zcenter) \text{~~or~~}  \hess r(z') \succ \iota(4)\iota(5) \hess r(\zcenter).
\end{align*}
However, since $z' \in \cB^{\cS}_{1 + \delta, z_\ell}$, we must also have that 
\begin{align*}
    \frac{1}{(1 + \delta)} \hess r(z_\ell) \preceq \hess r(z') \preceq (1 + \delta) \hess r(z_\ell). 
\end{align*}
Consequently, we must have that 
\begin{align*}
    \frac{1}{(1 + \delta)} \hess r(z_\ell) \prec\frac{1}{\iota(4)\iota(5)}\hess r(\zcenter) \text{~~or~~} {\iota(4)\iota(5)} \hess r(\zcenter) \prec (1 + \delta) \hess r(z_\ell). 
\end{align*}
Rearranging the above display, we must have that
\begin{align*}
    \hess r(z_\ell) \prec\frac{(1+\delta)}{\iota(4)\iota(5)}\hess r(\zcenter) \text{~~or~~} \frac{\iota(4)\iota(5)}{(1 + \delta)} \hess r(\zcenter) \prec \hess r(z_\ell). 
\end{align*}
Now, by the choice of $\delta$ in Line~\ref{line:delta2}, we have that $\iota(3) < \frac{\iota(4)}{(1 + \delta)}.$ 
Thus, $z_\ell \notin \cB^{\cS}_{\iota(3)\iota(5), \zcenter}$. We claim that this implies $\breg{\cU}{z^\star_\alpha} > 3\alpha^\rho$. 

Indeed, suppose for the sake of contradiction that $\breg{\cU}{z^\star_\alpha} \leq 3\alpha^\rho$. Then, by Definition~\ref{def:best-response-stability}, we would have that $z^\star_\alpha \in \cB^{\cS}_{\iota(3), \tilde{z}}$. However, note that by construction (Line~\ref{line:zground}) we have that $\zcenter \in \cB^\cS_{\iota(5), \tilde{z}} = \cB^\cS_{\iota(5), \tilde{z}}$ which also implies that $\tilde{z} \in\cB^\cS_{\iota(5), \zcenter}$ by Definition~\ref{def:best-response-stability}. Thus, by Definition~\ref{def:best-response-stability}, we would have that 
\begin{align*}
    z^\star_\alpha \in \cB^{\cS}_{\iota(3), \tilde{z}} \in \cB^{\cS}_{\iota(3)\iota(5), \zcenter} \subseteq \cB^{\cS}_{\iota(5)^2, \zcenter}, 
\end{align*}
where the $\subseteq$ follows because $\iota$ is strictly increasing and hence $\iota(3) < \iota(5)$. But this would imply that $z^\star_\alpha = z_\ell$, which contradicts that $z_\ell \notin  \cB^{\cS}_{\iota(3)\iota(5), \zcenter}$. Consequently, we must have that $\breg{\cU}{z^\star_\alpha} > 3\alpha^\rho$ and hence the lemma holds. 
\\\\
\emph{Containment of $z^\star_\alpha \in \cB_{\iota(5)^2, \zcenter}^{\cS}$:} Next, we prove that if the algorithm reaches Line~\ref{line:too-big-3}, then $z^\star_\alpha \in \cB^{\cS}_{\iota(5)^2, \zcenter}$. Indeed, the algorithm does not return in Line~\ref{line:too-big-2}, then it must be the case that $z' \in \cB^{\cS}_{\iota(4)\cdot \iota(5), \zcenter}$. By Definition~\ref{def:stable-region-hess}, we must have that 
\begin{align*}
    \frac{1}{\iota(4)\iota(5)} \hess r(\zcenter) \preceq \hess r(z') \preceq \iota(4)\iota(5) \hess r(\zcenter). 
\end{align*}
Next, recall that, taking $z_\ell$ as defined above, we have $z' \in \cB^{\cS}_{1 + \delta, z_\ell}$ (due to the the properties of $\OAPPROX$ from Definition~\ref{def:oracle}). Thus,
\begin{align}\label{eq:notbinding}
    \frac{1}{\iota(4)\iota(5) (1+\delta)} \hess r(\zcenter) \preceq \hess r(z_\ell) \preceq \iota(4)\iota(5) \cdot (1+ \delta) \hess r(\zcenter). 
\end{align}
Now, by the choice of $\delta$ in Line~\ref{line:delta2}, we have that $\iota(4)(1+\delta) < (\iota(4) + \iota(5))/2$. Thus, by \eqref{eq:notbinding}, the constraint to $\cB^{\cS}_{C^2, \zcenter}$ in the definition of $z_\ell$ is not binding. Hence, $z^\star_\alpha = z^\ell \in \cB_{C^2, \zcenter}^{\cS}$.
\\\\
\emph{Lines~\ref{line:too-big-3}:} Next, suppose that the algorithm returns in Line~\ref{line:too-big-3}. Then, by the properties of $\OAPPROX$ (Definition~\ref{def:oracle}) and the fact that $z^\star_\alpha = z^\ell$, we have that $\breg{\cU}{z^\star_\alpha} > 2.5 \alpha^\rho - \alpha^\rho/10 = 2.4 \alpha^\rho$ as required, and the lemma holds.
\\\\
\emph{Lines~\ref{line:too-small}:} Suppose that $\Validate$ returns in Line~\ref{line:too-small}. Then, by the properties of $\OAPPROX$ (Definition~\ref{def:oracle}) and the fact that $z^\star_\alpha = z^\ell$, we must have that $\breg{\cU}{z^\star_\alpha} \leq 2.5\alpha^\rho + \alpha^\rho/10 = 2.6\alpha^\rho$ as required, and the lemma holds.
\\\\
The final claim now follows by the properties of $\OAPPROX$ (Definition~\ref{def:oracle}).
\end{proof}

Next, we combine the analysis of $\Validate$ and $\cautiousSearch$ to analyze Algorithm~\ref{alg:DMP-implementation-matrix-games} and prove that it meets the conditions of Definition~\ref{def:MDMP} under appropriate assumptions. The following theorem shows how to instantiate Algorithm~\ref{alg:DMP-implementation-matrix-games} to implement a kinetic $\epsilon$-$\MDMP$ (Definition~\ref{def:MDMP}).

\begin{theorem}\label{thm:dmp-basic} 
For any finite nonempty multiset $\cU \subseteq \cZ$ and $\alpha > 0$, let $z_\alpha^\star \defeq \prox_\cU^\alpha(\nabla_\pm f_A; \cZ)$ and $h(\alpha) \defeq \breg{\cU}{z^\star_\alpha}$. Suppose that $h$ is $M$-Lipchitz over $[\beta, \theta_r]$, and set 
\begin{align}\label{eq:epsilonprime}
     \epsilon' = \min\left\{\left(1 - \paren{\frac{14}{15}}^{{1}/{\rho}}\right)\beta,\frac{\beta^\rho}{15M}\right\}.  
\end{align}
Suppose further that $h(R)\leq 2.4 \cdot \theta_r^\rho$. Then, $\MDMPImp$ (Algorithm~\ref{alg:DMP-implementation-matrix-games}) is a $(\beta, 2, \rho)$-kinetic $\epsilon$-$\MDMP$ (Definition~\ref{def:MDMP}). Moreover, for any finite nonempty multiset $\cU \subseteq \cZ$ and matrix approximation path $\cP$, letting $(z_*, \alpha_*, \cP') \defeq \MDMPImp(\cU, \cP)$, the algorithm makes at most $O(\log(\theta_r/\min\{\epsilon,\beta\}))$ queries to $\Validate(\alpha)$ where in each query, $\alpha \in [\beta, \min\{2\alpha_*, \theta_r\}]$.
\end{theorem}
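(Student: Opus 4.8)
The plan is to verify that $\MDMPImp$ meets each requirement of Definition~\ref{def:MDMP} by combining the $\Validate$ correctness guarantee (Lemma~\ref{lemma:validate-correctness}) with the $\cautiousSearch$ guarantee (Lemma~\ref{lemma:cautious_search}), and then reading off the kineticness and query-complexity bounds. First I would check that $\Validate$ is a valid oracle to feed to $\cautiousSearch$: it maps $[\beta, \theta_r]$ into $\zset \cup \{0\} \times \{\oracleSuccess, \oracleFailure\}$, it is deterministic, and crucially $\Validate(\theta_r) = (\cdot, \oracleSuccess)$. The last point is where the hypothesis $h(\theta_r) \le 2.4 \cdot \theta_r^\rho$ enters: by Lemma~\ref{lemma:validate-correctness}, $\Validate(\theta_r)$ can only return $\oracleFailure$ if $h(\theta_r) = \breg{\cU}{z^\star_{\theta_r}} > 2.4 \theta_r^\rho$, so the assumption forces $\oracleSuccess$. (I would double check whether the statement's ``$h(R)$'' is a typo for $h(\theta_r)$; I will assume so.) With this in hand, Lemma~\ref{lemma:cautious_search} applies to the call in Line~\ref{line:bsearch} with error threshold $\epsilon'$, giving $\alpha_* \in [\beta, \theta_r]$, $\Validate(\alpha_*) = (z_*, \oracleSuccess)$, and either $\alpha_* = \beta$ or there is some $\varsigma \in [\max\{\alpha_* - \epsilon', \beta\}, \alpha_*)$ with $\Validate(\varsigma) = (\cdot, \oracleFailure)$.

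Next I would establish the $\epsilon$-$\MDMP$ variational property \eqref{eq:DMP-def-variational-property} with $g \gets \gm f_A$. Since $\Validate(\alpha_*) = (z_*, \oracleSuccess)$, the ``furthermore'' clause of Lemma~\ref{lemma:validate-correctness} gives exactly $\inangle{\nabla_\pm f_A(z_*) + \alpha_* \nabla \breg{\cU}{z_*}, z_* - u} \le \epsilon$ for all $u \in \cZ$, which is \eqref{eq:DMP-def-variational-property}. For the matrix-approximation-path output requirement (ii) of Definition~\ref{def:MDMP}: every invocation of $\Validate$ that reaches Line~\ref{line:wrap} calls $\OAPPROX$, which by Definition~\ref{def:oracle} returns a matrix-approximation path to $\zground$; then Line~\ref{line:inplace} truncates back to a length-$L$ path, and I would argue (as in the analogous step for the outer-loop, or by direct inspection: the first $L$ entries $\{\Delta_\ell, M'_\ell\}_{\ell\in[L]}$ still sum appropriately once the head term $\Delta_{L+1}$ is dropped, since $\sum_{\ell\in[L]}\Delta_\ell = (A)_z$ by hypothesis on the input path, and matvecs and explicitness are inherited) that the updated global $\cP$ remains a matrix-approximation path to $z$. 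Since $\MDMPImp$ returns this same $\cP$, condition (ii) holds. One subtlety: if $\Validate$ returns early in Line~\ref{line:safety-check} it never touches $\cP$, which is fine, and the final returned $\cP$ is whatever state it ended in after the last $\Validate$ call that reached Line~\ref{line:inplace}.

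Then I would handle kineticness: I must show the output always satisfies condition \ref{item:DMP-beta-cond} ($\alpha_* = \beta$) or condition \ref{item:DMP-movement-cond} ($\breg{\cU}{z_*} \ge \gamma \alpha_*^\rho$) with $\gamma = 2$. If $\alpha_* = \beta$ we are done. Otherwise, by Lemma~\ref{lemma:cautious_search} there is $\varsigma \in [\max\{\alpha_* - \epsilon', \beta\}, \alpha_*)$ with $\Validate(\varsigma) = (\cdot, \oracleFailure)$, so by Lemma~\ref{lemma:validate-correctness}, $h(\varsigma) = \breg{\cU}{z^\star_\varsigma} > 2.4 \varsigma^\rho$. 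I now need to transfer this lower bound from $\varsigma$ to $\alpha_*$ and from $z^\star_\varsigma$ to $z_*$. For the first transfer I would use that $h$ is $M$-Lipschitz on $[\beta, \theta_r]$ together with $\varsigma \ge \alpha_* - \epsilon'$ and the choice of $\epsilon'$ in \eqref{eq:epsilonprime}: the term $\tfrac{\beta^\rho}{15M}$ controls $|h(\alpha_*) - h(\varsigma)| \le M|\alpha_* - \varsigma| \le M\epsilon' \le \beta^\rho/15 \le \alpha_*^\rho/15$, and the term $(1-(14/15)^{1/\rho})\beta$ controls the change in $\varsigma^\rho$ versus $\alpha_*^\rho$, giving $\varsigma^\rho \ge (14/15)\alpha_*^\rho$ (since $\varsigma \ge \alpha_* - (1-(14/15)^{1/\rho})\alpha_* = (14/15)^{1/\rho}\alpha_*$, using $\alpha_* \ge \beta$). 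Chaining, $h(\alpha_*) \ge h(\varsigma) - \alpha_*^\rho/15 > 2.4\varsigma^\rho - \alpha_*^\rho/15 \ge 2.4\cdot(14/15)\alpha_*^\rho - \alpha_*^\rho/15 = (2.24 - 1/15)\alpha_*^\rho > 2\alpha_*^\rho$. For the second transfer, since $\Validate(\alpha_*) = (z_*, \oracleSuccess)$ reached the successful return in Line~\ref{line:too-small}, the containment-of-$z^\star_\alpha$ argument in Lemma~\ref{lemma:validate-correctness} shows $z^\star_{\alpha_*} = z_\ell = \prox_\cU^{\alpha_*}(\nabla_\pm f_A; \cB^\cS_{\iota(5)^2, \zground})$, i.e.\ the local and global proxes coincide; combined with the $\OAPPROX$ guarantee $|\breg{\cU}{z_*} - \breg{\cU}{z^\star_{\alpha_*}}| < \alpha_*^\rho/10$ (Definition~\ref{def:approx-solution}), we get $\breg{\cU}{z_*} > h(\alpha_*) - \alpha_*^\rho/10 > 2\alpha_*^\rho - \alpha_*^\rho/10$. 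This comfortably exceeds $2\alpha_*^\rho$ only if we have a bit more slack; redoing the constants, $h(\alpha_*) > (2.24 - 1/15)\alpha_*^\rho \approx 2.173\alpha_*^\rho$, so $\breg{\cU}{z_*} > 2.173\alpha_*^\rho - 0.1\alpha_*^\rho > 2\alpha_*^\rho$, giving condition \ref{item:DMP-movement-cond} with $\gamma = 2$. Finally, the query bound: $\MDMPImp$ makes exactly the queries $\cautiousSearch$ makes to its oracle $\Validate$, so by Lemma~\ref{lemma:cautious_search} this is $O(\log(\theta_r/\min\{\epsilon', \beta\}))$ queries with each $\alpha \in [\beta, \min\{2\alpha_*, \theta_r\}]$; since $\epsilon'$ is a fixed polynomial combination of $\beta$, $M$, and $\rho$, absorbing it yields the claimed $O(\log(\theta_r/\min\{\epsilon, \beta\}))$ form (here I would note the statement's $\epsilon$ inside the $\log$ should likely be $\epsilon'$ or that $\epsilon'$ is polynomially related to the relevant quantities). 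The main obstacle is the bookkeeping in the two-step transfer of the movement lower bound: carefully tracking the $\Validate$ thresholds ($2.4$, $2.5$, $2.6$), the $\OAPPROX$ slack ($\alpha^\rho/10$), the Lipschitz slack, and the $\varsigma^\rho$-vs-$\alpha_*^\rho$ slack so that everything nets out above $2\alpha_*^\rho$; the constant $\gamma = 2$ is exactly what the margins afford, so the inequalities must be applied in the right order with no waste.
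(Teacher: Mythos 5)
Your proposal is correct and follows essentially the same route as the paper: invoke Lemma~\ref{lemma:cautious_search} with the deterministic oracle $\Validate(\cdot)$, use the ``furthermore'' clause of Lemma~\ref{lemma:validate-correctness} to get the variational property, and for kineticness use the $\oracleFailure$ at $\varsigma \in [\max\{\alpha_*-\epsilon',\beta\},\alpha_*)$ together with $M$-Lipschitzness of $h$, the two-part choice of $\epsilon'$, and the $\alpha^\rho/10$ slack of $\OAPPROX$ to net out above $2\alpha_*^\rho$. Your arithmetic (roughly $2.4\cdot\tfrac{14}{15}-\tfrac1{15}-\tfrac1{10}>2$) matches the paper's, and you add a couple of bookkeeping details the paper leaves implicit — that the hypothesis $h(\theta_r)\le 2.4\theta_r^\rho$ is exactly what makes $\Validate(\theta_r)=(\cdot,\oracleSuccess)$ so that $\cautiousSearch$ applies, and that the $\OAPPROX$ divergence guarantee is stated with respect to the \emph{constrained} optimum $z_\ell$, so one needs the containment argument from Lemma~\ref{lemma:validate-correctness} to identify $z_\ell$ with $z^\star_{\alpha_*}$ before the final subtraction; both observations are correct and slightly tighten the paper's presentation.
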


We prove this theorem using the following natural monotonicity property \Cref{lem:monotonicity}, which is a generalization of Lemma B.9 of \citep{karmarkar2025solvingzerosumgames}. The proof follows very similarly to the proof of Lemma B.9 of \citep{karmarkar2025solvingzerosumgames} (only mildly modified to handle sums over $\cU$ rather than divergence from a single point).

\begin{lemma}[Generalization of Lemma B.9 of \citep{karmarkar2025solvingzerosumgames}]\label{lem:monotonicity}
Let $(\zset, r)$ denote a dgf setup (Definition~\ref{def:dgf-setup}) with $g : \zset \to \R$ a continuous monotone operator, some $\alpha > \beta > 0$, and let $\cU \subseteq \zset$ be a finite nonempty multiset. Then $\walpha \defeq \prox^\alpha_{\cU}(g)$ and $\wbeta \defeq \prox^{\beta}_{\cU}(g)$ satisfy $\breg{\cU}{\walpha} \le \breg{\cU}{\wbeta}$.
\end{lemma}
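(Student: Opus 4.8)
The statement to prove is Lemma~\ref{lem:monotonicity}: with $g$ monotone, $\cU$ a finite nonempty multiset, and $\alpha > \beta > 0$, the points $\walpha = \prox^\alpha_{\cU}(g)$ and $\wbeta = \prox^\beta_{\cU}(g)$ satisfy $\breg{\cU}{\walpha} \le \breg{\cU}{\wbeta}$. The plan is to follow the standard argument that ``more regularization pulls the proximal point closer to the regularization centers,'' adapted to a sum of Bregman divergences. First I would write down the defining variational inequality \eqref{eq:set-prox-mapping} for each of $\walpha$ and $\wbeta$. For $\walpha$ we get $\inangle{g(\walpha), \walpha - u} \le \alpha \sum_{z \in \cU}[\breg{z}{u} - \breg{\walpha}{u} - \breg{z}{\walpha}]$ for all $u \in \zset$, and by the three-point identity \eqref{eq:Bregman-three-point-equality} the right-hand side equals $\alpha \inangle{-\grad \breg{\cU}{\walpha}, \walpha - u}$ where $\grad\breg{\cU}{\walpha} = \sum_{z\in\cU}(\grad r(\walpha) - \grad r(z))$. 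Similarly for $\wbeta$ with $\beta$ in place of $\alpha$.

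The key step is to test the $\walpha$-inequality at $u = \wbeta$ and the $\wbeta$-inequality at $u = \walpha$, then add them. From the $\walpha$-inequality at $u=\wbeta$: $\inangle{g(\walpha), \walpha - \wbeta} \le \alpha\inangle{-\grad\breg{\cU}{\walpha}, \walpha - \wbeta}$. From the $\wbeta$-inequality at $u=\walpha$: $\inangle{g(\wbeta), \wbeta - \walpha} \le \beta\inangle{-\grad\breg{\cU}{\wbeta}, \wbeta - \walpha}$. Adding, the left side becomes $-\inangle{g(\walpha) - g(\wbeta), \walpha - \wbeta} \le 0$ by monotonicity of $g$ (using that $g$ here is $0$-strongly monotone in the sense of the excerpt, which forces $\inangle{g(\walpha)-g(\wbeta),\walpha-\wbeta}\ge 0$). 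Hence $0 \le \alpha\inangle{-\grad\breg{\cU}{\walpha}, \walpha - \wbeta} + \beta\inangle{-\grad\breg{\cU}{\wbeta}, \wbeta - \walpha}$. Now I would expand each inner product via \eqref{eq:Bregman-three-point-equality} again: $\inangle{-\grad\breg{\cU}{\walpha},\walpha-\wbeta} = \breg{\cU}{\wbeta} - \breg{\cU}{\walpha} - \sum_{z\in\cU}\breg{z}{\wbeta}$... wait, more precisely $\inangle{-\grad\breg{w}{\walpha},\walpha-\wbeta} = \breg{w}{\wbeta} - \breg{\walpha}{\wbeta} - \breg{w}{\walpha}$ and summing over $w\in\cU$ gives $\breg{\cU}{\wbeta} - |\cU|\breg{\walpha}{\wbeta} - \breg{\cU}{\walpha}$. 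Symmetrically $\inangle{-\grad\breg{\cU}{\wbeta},\wbeta-\walpha} = \breg{\cU}{\walpha} - |\cU|\breg{\wbeta}{\walpha} - \breg{\cU}{\wbeta}$. Substituting and collecting terms, the inequality becomes $0 \le (\alpha - \beta)(\breg{\cU}{\wbeta} - \breg{\cU}{\walpha}) - |\cU|(\alpha\breg{\walpha}{\wbeta} + \beta\breg{\wbeta}{\walpha})$. Since the second group of terms is nonnegative and $\alpha - \beta > 0$, dividing by $\alpha - \beta$ yields $\breg{\cU}{\walpha} \le \breg{\cU}{\wbeta}$, as desired.

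The main obstacle — really just bookkeeping rather than a genuine difficulty — is being careful with the $|\cU|$ factors and signs when expanding the paired three-point identities, since a sign error there would collapse the argument; the multiset structure means one must track multiplicity consistently, but $\grad\breg{\cU}{\cdot}$ is genuinely the sum over $\cU$ with multiplicity so this goes through cleanly. The other point to state explicitly is that ``monotone'' in the excerpt's sense (0-strongly monotone with respect to $r$) gives exactly $\inangle{g(z')-g(z),z'-z}\ge \alpha\breg{z'}{z}\ge 0$ with $\alpha=0$, which is all we need. I would remark, as the excerpt does, that this proof is the same as that of Lemma B.9 of \cite{karmarkar2025solvingzerosumgames} with sums over $\cU$ replacing divergences from a single point, so I would keep the write-up terse.
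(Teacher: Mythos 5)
Your proposal is correct and follows essentially the same route as the paper's proof: write the variational inequality for each of $\walpha$ and $\wbeta$, test each at the other point, add, invoke monotonicity of $g$ to drop the left side, and then use nonnegativity of the Bregman terms $\breg{\walpha}{\wbeta},\breg{\wbeta}{\walpha}$ before dividing by $\alpha-\beta>0$. Your explicit tracking of the $|\cU|$ multiplicity on the $\breg{\walpha}{\wbeta}$ and $\breg{\wbeta}{\walpha}$ terms is a minor bookkeeping refinement over the paper's write-up, but the argument is identical in substance.
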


\begin{proof}
Applying Definition~\ref{def:proximal-mappings}, we have for all $u, u' \in \zset$:
\begin{align*}
    \inangle*{g(\walpha), (\walpha - u)} &\le \alpha \sum_{v \in \cU}\insquare{\breg{v}{u} - \breg{\walpha}{u} - \breg{v}{\walpha}}, \\
    \inangle*{g(\wbeta), (\wbeta - u')} &\le \beta \sum_{v \in \cU} \insquare{\breg{v}{u'} - \breg{\wbeta}{u'} - \breg{v}{\wbeta}}.
\end{align*}
Setting $u \gets \wbeta$, $u' \gets \walpha$, and using the monotonicity of $g$, yields that
\begin{align*}
    0 &\le \alpha \sum_{v \in \cU} \insquare{\breg{v}{\wbeta} - \breg{\walpha}{\wbeta} - \breg{v}{\walpha}} + \beta \sum_{q \in \cU} \insquare{\breg{v}{\walpha} - \breg{\wbeta}{\walpha} - \breg{v}{\wbeta}} \\
    &= \sum_{v \in \cU} (\alpha - \beta) \breg{v}{\wbeta} + (\beta - \alpha) \breg{v}{\walpha} - \alpha \breg{\walpha}{\wbeta} - \beta \breg{\wbeta}{\walpha}.
\end{align*}
Rearranging the above display,
\begin{align*}
    (\alpha - \beta) \sum_{v \in \cU} \breg{v}{\walpha} \le \sum_{v \in \cU} (\alpha - \beta) \breg{v}{\wbeta} - \alpha \breg{\walpha}{\wbeta} - \beta \breg{\wbeta}{\walpha} \le \sum_{v \in \cU}(\alpha - \beta) \breg{q}{\wbeta}.
\end{align*}
The result follows by dividing through by $(\alpha - \beta) > 0$. 
\end{proof}

With this lemma, we now prove the following theorem.

\begin{proof}[Proof of Theorem~\ref{thm:dmp-basic}] By Lemma~\ref{lemma:cautious_search} together with Lemma~\ref{lemma:validate-correctness}, we have that $\Validate(\alpha_*) = (z_*, \oracleSuccess)$ and $z_*$ satisfies \eqref{eq:dmp-closeness-guarante}. Next we prove that either $\alpha =\beta$ or else $\breg{\cU}{z'} > 2 \alpha^\rho.$

Without loss of generality, suppose that $\alpha > \beta$. Then, by Lemma~\ref{lemma:cautious_search}, $\Validate(\varsigma) = \oracleFailure$ for some $\varsigma \in [\max\{\alpha -\epsilon', \beta\}, \alpha)$. Consequently, by Lemma~\ref{lemma:validate-correctness}, $\breg{\cU}{z^\star_\varsigma} > 2.4 \varsigma^\rho.$ Because $h$ is $M$-Lipschitz, by Lemma~\ref{lem:monotonicity}, we have that
\begin{align*}
    h(\varsigma) - h(\alpha) \leq M (\alpha - \varsigma), 
\end{align*}
and consequently, 
\begin{align*}
    h(\alpha) &\geq h(\varsigma) - M(\alpha - \varsigma)  \geq 2.4 (\alpha - \epsilon')^\rho - M\epsilon' \\
    &\geq 2.4 \paren{\paren{\frac{14}{15}}^{1/\rho} \alpha}^\rho - M\epsilon' \\
    &\geq 2.4 \paren{\frac{14}{15} \alpha^\rho} - \frac{\alpha^\rho}{15} \geq 2.1 \alpha^\rho,
\end{align*}
where we used that that 
\begin{align*}
     \epsilon' = \min\left\{\left(1 - \paren{\frac{14}{15}}^{{1}/{\rho}}\right)\beta,\frac{\beta^\rho}{15M}\right\} \leq \min\left\{\left(1 - \paren{\frac{14}{15}}^{{1}/{\rho}}\right)\alpha,\frac{\alpha^\rho}{15M}\right\}. 
\end{align*}
Finally, by the properties of $\OAPPROX$ (Definition~\ref{def:oracle}), we have that 
\begin{align*}
    \breg{\cU}{z'} > 2.1\alpha^\rho - \alpha^\rho/10 = 2 \alpha^\rho. 
\end{align*}
Lastly, the query complexity bounds follow immediately from Lemma~\ref{lemma:cautious_search}. 
\end{proof}

\section{Smooth-until-proven-guilty solver}\label{sec:sug-solver}

Recall that the $\MDMPImp$ algorithm presented in Section~\ref{sec:MDMP-implementation} assumed access to a $\rho$-approximate solution oracle (Definition~\ref{def:oracle}) $\OAPPROX$. In this section, we adapt the framework from \citep{karmarkar2025solvingzerosumgames} towards implementing a $\rho$-approximate solution oracle for constrained prox multi-point problems (Definition~\ref{def:subproblem}) for our applications to $\ell_1$-$\ell_1$ and $\ell_2$-$\ell_2$ matrix games. 

In fact, in this section, we show a somewhat more general result. In all of our applications (due to geometric properties of our specific setups for $\ell_1$-$\ell_1$ and $\ell_2$-$\ell_1$ matrix games), in order to compute an approximate solution (Definition~\ref{def:approx-solution}) it suffices to compute a (possibly) weaker solution concept, which we term a \emph{divergence-bounded solution.}

\begin{definition}[Divergence-bounded solution]\label{def:divergence-bounded-solution}\label{def:divergence-bounded}
For $\epsilon \geq 0$, letting $z^\star$ be the solution to the $(\cU, c, \alpha, z, \cS)$-constrained prox multi-point problem (Definition~\ref{def:subproblem}), we say that a point $z' \in \cB^{\cS}_{c, z}$ is an \emph{$\epsilon$-divergence-bounded solution} to the problem if $\breg{z^\star}{z'} \leq \epsilon$. 
\end{definition}

Correspondingly, we define a divergence-bounded solution oracle. 

\begin{definition}\label{def:divergence-bounded-oracle} A \emph{divergence-bounded solution oracle} $\ODB$ (for a dgf setup $\cS = (\cZ \subset \R^d, r)$) takes in a finite non-empty multiset $\cU \subseteq \cZ$, $c > 1$, $\alpha >0$, $z \in \cZ$, a matrix approximation path $\cP = \{\Delta_\ell, M_\ell\}_{\ell \in [L]}$ to $z$ (Definition~\ref{def:matrix-approx-path}), and $\epsilon \geq 0$ and returns $(z', \cP' = \{\Delta_\ell, M'_\ell\}_{\ell \in [L]})$, where $z'$ is an $\epsilon$-divergence-bounded solution to the $(\cU, c, \alpha, z, \cS)$-constrained prox multi-point problem (Definition~\ref{def:divergence-bounded}) and $\cP'$ is a matrix approximation path to $z$.
\end{definition}

Under the following definition of \emph{robustness}, access to a divergence-bounded solution oracle is sufficient to implement an approximate-solution oracle (Definition~\ref{def:oracle}) and consequently is sufficient to instantiate an $\MDMP$ (Definition~\ref{def:MDMP}) as described in Section~\ref{sec:MDMP-implementation} (Algorithm~\ref{alg:DMP-implementation-matrix-games}).

\begin{definition}[Robustness]\label{def:robustness} For $\epsilon, \delta, \kappa \geq 0$ and $\rho > 0$, a dgf setup $\mathcal{S} = (\mathcal{Z} \subset \R^d, r)$ is $(\epsilon, \delta, \rho, \kappa)$-\emph{robust} if for every constrained prox multi-point problem $(\cU, c, \alpha, z, \cS)$ (Definition~\ref{def:subproblem}), every $\kappa$-divergence-bounded solution (Definition~\ref{def:divergence-bounded}) is also an $(\epsilon, \delta, \rho)$-approximate solution (Definition~\ref{def:approx-solution}). 
\end{definition}

In particular, the following condition is sufficient to ensure robustness. 
\begin{lemma}[Sufficient conditions for robustness]
    \label{lem:sufficient-cond-for-robustness}
    Suppose that for every $(\cU, c, \alpha, z, \cS = (\cZ \subset \R^d, r))$-constrained prox multi-point problem (Definition~\ref{def:subproblem}) letting $z^\star$ denote its solution, any $z' \in \cB_{c, z}^{\cS}$ with $\breg{z^\star}{z'} \leq \kappa$ satisfies 
\begin{itemize}
    \item $\abs{\breg{\cU}{z'} - \breg{\cU}{z^\star}} < \alpha^\rho/10$, 
    \item $z' \in \cB^{\cS}_{1+\delta, z^\star}$, and
    \item if $\prox_\cU^\alpha(\nabla_\pm f_A; \cZ) \in \cB_{c, z}^{\cS}$ then $\inangle*{\nabla_{\pm}f_A(z') + \alpha \grad \breg{\cU}{z'}, z' -u} \leq \epsilon, \text{ for all } u \in \cZ$. 
\end{itemize}
Then, $\cS$ is $(\epsilon, \delta, \rho, \kappa)$-robust.
\end{lemma}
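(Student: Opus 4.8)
The plan is to prove this purely by unfolding the relevant definitions; there is essentially no mathematical content beyond matching up the solution concepts involved. First I would fix an arbitrary $(\cU, c, \alpha, z, \cS)$-constrained prox multi-point problem (Definition~\ref{def:subproblem}) and let $z^\star \defeq \prox_\cU^\alpha(\nabla_\pm f_A; \cB^{\cS}_{c, z})$ denote its solution. By Definition~\ref{def:robustness}, the goal is to show that every $\kappa$-divergence-bounded solution of this problem is also an $(\epsilon, \delta, \rho)$-approximate solution of it.

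So I would take an arbitrary $\kappa$-divergence-bounded solution $z'$ of the problem. By Definition~\ref{def:divergence-bounded-solution} this means precisely that $z' \in \cB^{\cS}_{c, z}$ and $\breg{z^\star}{z'} \leq \kappa$. These are exactly the hypotheses under which the assumption of the lemma applies, so I would invoke that assumption with this $z'$ to conclude that $z'$ satisfies the three listed conditions: $\abs{\breg{\cU}{z'} - \breg{\cU}{z^\star}} < \alpha^\rho/10$; $z' \in \cB^{\cS}_{1+\delta, z^\star}$; and, whenever $\prox_\cU^\alpha(\nabla_\pm f_A; \cZ) \in \cB_{c, z}^{\cS}$, the bound $\inangle*{\nabla_{\pm}f_A(z') + \alpha \grad \breg{\cU}{z'}, z' - u} \leq \epsilon$ for all $u \in \cZ$. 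Comparing against Definition~\ref{def:approx-solution}, these three conditions, together with the containment $z' \in \cB^{\cS}_{c, z}$, are verbatim what it means for $z'$ to be an $(\epsilon, \delta, \rho)$-approximate solution, so $z'$ is indeed such a solution.

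Finally, since $z'$ and the constrained prox multi-point problem were arbitrary, I would conclude by Definition~\ref{def:robustness} that $\cS$ is $(\epsilon, \delta, \rho, \kappa)$-robust. I do not anticipate any obstacle here: the only point requiring the slightest care is recognizing that the lemma's hypothesis, phrased as ``any $z' \in \cB_{c,z}^{\cS}$ with $\breg{z^\star}{z'} \le \kappa$,'' is exactly the definition of a $\kappa$-divergence-bounded solution (Definition~\ref{def:divergence-bounded-solution}); once this identification is made, the lemma is immediate.
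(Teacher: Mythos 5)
Your proposal is correct and matches the paper exactly: the paper's proof simply says the result is immediate from Definitions~\ref{def:approx-solution} and~\ref{def:divergence-bounded}, and your unfolding of those definitions is precisely the intended argument.
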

\begin{proof} The proof is immediate from Definitions~\ref{def:approx-solution} and~\ref{def:divergence-bounded}. 
\end{proof}

In Section~\ref{sec:applications}, we show that for $\kappa$ scaling polynomially in $1/\epsilon, 1/\delta$ and the problem parameters (namely, $m, n$), the preconditions of Lemma~\ref{lem:sufficient-cond-for-robustness} and consequently the robustness condition (Definition~\ref{def:robustness}) is met in our applications. Thus, for our applications, the methods in this section suffice to implement an approximate solution oracle as required in Section~\ref{sec:MDMP-implementation}. Consequently, in this section, we discuss how to implement a divergence-bounded solution oracle (Definition~\ref{def:divergence-bounded-oracle}).

\paragraph{Assumptions.} %
In the remainder of this section, we fix arbitrary dgf setups $\dgfsetup\x = (\xset \subset \R^n, \rx)$ and $\dgfsetup\y = (\yset \subset \R^m, \ry)$ with $\dgfsetup = (\zset \subset \R^d, r) \defeq \prodsetup(\dgfsetup\x, \dgfsetup\y)$ (recall Definition~\ref{def:product-dgf-setups}). Moreover, we assume that $\cZ$ is $\pi$-locally bounded in the sense of the following definition.

\begin{definition}[$\pi$-locally bounded]\label{def:consistency} We say the dgf setup $(\zset \subset \R^d, r)$ is \emph{$\pi$-locally bounded} for $\pi : \R_{>1} \to \R_{>0}$ if for any $z \in \cZ$, $z', z'' \in \cB_{c, z}^{\cS}$, and $c > 1$ we have that $\breg{z'}{z''} \geq \pi(c) \normInline{z' - z''}_{z}$.
\end{definition}

Additionally, we let $\Gamma_\cS$ denote an upper bound on the range of $r$ so that $\sup_{z \in \cZ} \breg{z'}{z} \leq \Gamma_\cS$ for $z' \defeq \argmin_{z\in\cZ} r(z)$. 

In the remainder of this Section~\ref{sec:SUG-solver-prelims}, we discuss a simple linear algebraic sub-routine, which we term a $\judge$ as in \citep{karmarkar2025solvingzerosumgames}. This $\judge$ subroutine shows how we update the matrix approximation path $\cP$ and enables complexity analysis as a function of $\size(\cP) -\size(\cP')$. In Section~\ref{sec:step} we generalize the smooth until proven guilty mirror prox steps from \citep{karmarkar2025solvingzerosumgames}, which enables our implementation of a divergence-bounded solution oracle (Definition~\ref{def:divergence-bounded-oracle}). Finally, in Section~\ref{sec:solver} we describe our approximate solution oracle. This section is largely motivated by Section 5 of \citep{karmarkar2025solvingzerosumgames} and leverages similar techniques to their prior work; however, to handle our general setups and use of matrix approximation paths, we require several slight modifications.

\subsection{The smooth-guilty judge}\label{sec:SUG-solver-prelims} 

Here we describe our notion of a $\judge$ subroutine, which is inspired by the $\judge$ subroutine in \citep{karmarkar2025solvingzerosumgames} but is appropriately adapted to our setting of working with matrix approximation paths. The input to $\judge$ is a center point $\zcenter \in \cZ$, a matrix-approximation path $\cP$ to $\zcenter$, a parameter $\tau > 0$ (which we call a \emph{smoothness threshold} as in \citep{karmarkar2025solvingzerosumgames}), and two vectors $z, z' \in \cZ$. The $\judge$ subroutine ``judges'' whether the vector $z$ or $z'$ reveals a $\tau$-large singular direction along the matrix-approximation path $\cP$. This is formalized in the following pseudocode (Algorithm~\ref{alg:judge}), where we use $\normalize(z): z \mapsto z/\normInline{z}_2$ to be the mapping which takes any vector $z \in \R^d$ to a unit vector in the direction of $z$.   

\RestyleAlgo{ruled}\label{alg:judge}
\SetKwComment{Comment}{/* }{ */}
\begin{algorithm2e}[ht]
\caption{$\judge(\cP, \tau, z, z')$}
\KwInput{Matrix-approximation path $\cP = \{\Delta_\ell, M_\ell\}_{\ell \in [L]}$, smoothness threshold $\tau > 0$, $z, z' \in \cZ$.}
\For{$\bar{z} \in \{z, z'\}$}{
    \tcp{If we find a $\tau$-large singular direction, return a $\guilty$ verdict and update $\cP$}
    \If{
    $\inangle*{\bar{z}\y, \sum_{t \in [L]}(\Delta_\ell - M_\ell) \bar{z}\x} > \tau \normInline{\bar{z}\y}_2\normInline{\bar{z}\x}_2$ \label{line:guilty}
    }{
    $v \gets \normalize(\bar{z}\y)$ and  $u \gets \normalize(\bar{z}\x)$ \label{line:normalize}\; 
    \lFor(\tcp*[f]{Update the $\ell$-th model in $\cP$}){$\ell \in [L]$}{   
       $M_\ell \gets M_\ell + \inangle*{v, (\Delta_\ell - M_\ell) u} \cdot vu^\top$\label{line:update}}
    \Return{$(\guilty, \cP)$}
    }
}
\Return{$(\smooth, \cP)$}
\end{algorithm2e}

To analyze, $\judge$ (Algorithm~\ref{alg:judge}), we use the following property of the Frobenius norm.

\begin{lemma}[Lemma C.1 of 
\citep{karmarkar2025solvingzerosumgames}%
]\label{lemma:frobenius}  $\normInline{A-B}_F^2 \leq \normInline{A}_F^2 - \inangle*{v, A u}^2$ for any $A \in \R^{m \times n}$, unit vectors $u \in \R^n$ and $v \in \R^m$ (i.e., $\normInline{u}_2 = \normInline{v}_2 = 1$), and $B =  \inangle*{v, A u} \cdot vu^\top$.
\end{lemma}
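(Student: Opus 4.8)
The statement to prove is: for any $A \in \R^{m \times n}$, unit vectors $u \in \R^n$ and $v \in \R^m$, and $B = \inangle{v, Au} \cdot vu^\top$, we have $\normInline{A - B}_F^2 \le \normInline{A}_F^2 - \inangle{v, Au}^2$. The plan is a direct computation using the identity $\normInline{A - B}_F^2 = \normInline{A}_F^2 - 2\inangle{A, B}_F + \normInline{B}_F^2$, where $\inangle{\cdot,\cdot}_F$ denotes the trace/Frobenius inner product $\inangle{X, Y}_F = \tr(X^\top Y) = \sum_{ij} X_{ij} Y_{ij}$.

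First I would set $s \defeq \inangle{v, Au}$ so that $B = s\, vu^\top$. The key subcomputations are: (i) $\normInline{B}_F^2 = s^2 \normInline{vu^\top}_F^2 = s^2 \normInline{v}_2^2 \normInline{u}_2^2 = s^2$, using that $u, v$ are unit vectors and $\normInline{vu^\top}_F^2 = \tr(uv^\top v u^\top) = \normInline{v}_2^2 \normInline{u}_2^2$; and (ii) $\inangle{A, B}_F = s\, \inangle{A, vu^\top}_F = s\, \tr(A^\top v u^\top) = s\, (v^\top A u) = s \cdot s = s^2$, where I use $\tr(A^\top v u^\top) = \tr(u^\top A^\top v) = v^\top A u = \inangle{v, Au} = s$ by cyclicity of trace. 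Substituting these into the expansion gives $\normInline{A - B}_F^2 = \normInline{A}_F^2 - 2s^2 + s^2 = \normInline{A}_F^2 - s^2 = \normInline{A}_F^2 - \inangle{v, Au}^2$, which is in fact an equality (hence certainly the claimed inequality).

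There is essentially no obstacle here; the only thing to be careful about is the bookkeeping with the Frobenius inner product and the cyclic property of the trace, and matching conventions ($v u^\top$ is $m \times n$, consistent with $A$). Since this is cited as Lemma C.1 of the prior work \citep{karmarkar2025solvingzerosumgames}, one could alternatively just invoke it directly, but the self-contained two-line computation above suffices. I would present it as: expand the square, compute the two inner-product terms via the trace identities, and conclude.
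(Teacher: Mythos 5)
Your proof is correct, and in fact you observe (rightly) that the stated inequality is actually an equality: $\normInline{A-B}_F^2 = \normInline{A}_F^2 - \inangle{v,Au}^2$. The computation $\normInline{A-B}_F^2 = \normInline{A}_F^2 - 2\inangle{A,B}_F + \normInline{B}_F^2$, together with $\normInline{B}_F^2 = s^2 \normInline{v}_2^2 \normInline{u}_2^2 = s^2$ and $\inangle{A,B}_F = s \cdot \tr(A^\top v u^\top) = s \cdot v^\top A u = s^2$, is exactly the standard argument. The present paper does not reprove this lemma but simply cites it as Lemma C.1 of \citep{karmarkar2025solvingzerosumgames}, so there is no in-paper proof to compare against; your self-contained two-line derivation is the natural one and is complete.
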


With \Cref{lemma:frobenius}, we can analyze the $\judge$ subroutine (Algorithm~\ref{alg:judge}). 

\begin{lemma}\label{lemma:judge-guarantee} Let $\cP = \{\Delta_\ell, M_\ell\}_{\ell \in [L]}$ be a matrix-approximation path, $\tau >0$ be a smoothness threshold, and $z, z' \in \cZ$. Then $(\verdict, \cP') \gets \judge(\cP, \tau, z, z')$ can be implemented with $O(L)$ matvecs to $A$ and satisfies the following: \begin{itemize}
    \item If $\verdict = \smooth$ then $\cP' = \cP$, 
\begin{align*}
    \inangle*{z\y, \sum_{\ell \in [L]} (\Delta_\ell - M_\ell) z\x} \leq \tau \normInline{z\x}_2\normInline{z\y}_2, ~~\text{ and }~~ \inangle*{z'\y, \sum_{\ell \in [L]} (\Delta_\ell - M_\ell) z'\x} \leq \tau \normInline{z'\x}_2 \normInline{z'\y}_2. 
\end{align*} 
\item If $\verdict = \guilty$ then $\cP' = \{(\Delta_\ell, M'_\ell)\}_{\ell \in [L]}$ is a matrix-approximation path to $\zcenter$ such that $\size(\cP') \leq \size(\cP) - \tau^2/L$. 
\end{itemize}
\end{lemma}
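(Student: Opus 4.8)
\textbf{Proof plan for Lemma~\ref{lemma:judge-guarantee}.}

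\medskip

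The plan is to prove the three assertions of the lemma in turn, tracking the structure of Algorithm~\ref{alg:judge} directly. First, for the matvec count: the loop over $\bar z \in \{z, z'\}$ runs at most twice, and in each iteration the main cost is evaluating $\inangle{\bar z\y, \sum_{\ell \in [L]} (\Delta_\ell - M_\ell) \bar z\x}$. Since each $\Delta_\ell$ supports a matvec in $O(1)$ matvecs to $A$ by the definition of a matrix-approximation path (Definition~\ref{def:matrix-approx-path}) and each $M_\ell$ is known explicitly, computing $\sum_{\ell \in [L]} (\Delta_\ell - M_\ell) \bar z\x$ takes $O(L)$ matvecs to $A$. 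The update step in Line~\ref{line:update} similarly requires computing $\inangle{v, (\Delta_\ell - M_\ell) u}$ for each $\ell \in [L]$, which again costs $O(L)$ matvecs, and is performed only once (in the $\guilty$ branch, after which the subroutine returns). So the total is $O(L)$ matvecs, as claimed.

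\medskip

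For the $\verdict = \smooth$ case: the subroutine returns $\smooth$ only if the condition in Line~\ref{line:guilty} fails for \emph{both} $\bar z = z$ and $\bar z = z'$. Failure of the condition is exactly the negation of the strict inequality, i.e. $\inangle{\bar z\y, \sum_{\ell \in [L]}(\Delta_\ell - M_\ell) \bar z\x} \le \tau \normInline{\bar z\y}_2 \normInline{\bar z\x}_2$, which gives both displayed inequalities. Moreover, $\cP$ is modified only within the $\guilty$ branch (Line~\ref{line:update}), which is not reached, so $\cP' = \cP$. One small edge case to note: if $\bar z\x = 0$ or $\bar z\y = 0$, the right-hand side of the Line~\ref{line:guilty} condition is $0$ while the left-hand side is also $0$, so the strict inequality fails and we correctly fall through; the displayed inequality then holds with both sides equal to $0$.

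\medskip

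For the $\verdict = \guilty$ case: suppose the condition in Line~\ref{line:guilty} triggers for some $\bar z$ (the first such one encountered). We must verify (a) that $\cP'$ is still a matrix-approximation path to $\zground$, and (b) the size decrease. For (a): the differences $\Delta_\ell$ are unchanged, matvecs with them still cost $O(1)$ matvecs to $A$, the new models $M_\ell' = M_\ell + \inangle{v, (\Delta_\ell - M_\ell) u} vu^\top$ remain explicitly known, and $\sum_{\ell} \Delta_\ell = \ground{A}{\zground}$ is unchanged; hence all three conditions of Definition~\ref{def:matrix-approx-path} still hold. For (b): with $v = \normalize(\bar z\y)$ and $u = \normalize(\bar z\x)$ (both unit vectors; note $\bar z\x, \bar z\y \ne 0$ here since the strict inequality in Line~\ref{line:guilty} forces nonzero norms), Lemma~\ref{lemma:frobenius} applied to each $A \gets \Delta_\ell - M_\ell$, $B \gets \inangle{v, (\Delta_\ell - M_\ell) u} vu^\top$ gives $\normInline{\Delta_\ell - M_\ell'}_F^2 \le \normInline{\Delta_\ell - M_\ell}_F^2 - \inangle{v, (\Delta_\ell - M_\ell) u}^2$. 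Summing over $\ell \in [L]$,
\begin{align*}
    \size(\cP') \le \size(\cP) - \sum_{\ell \in [L]} \inangle{v, (\Delta_\ell - M_\ell) u}^2.
\end{align*}
It remains to lower bound $\sum_{\ell \in [L]} \inangle{v, (\Delta_\ell - M_\ell) u}^2$ by $\tau^2 / L$. The triggering condition in Line~\ref{line:guilty}, after dividing through by $\normInline{\bar z\y}_2 \normInline{\bar z\x}_2$ and using bilinearity to bring the normalizations inside, reads $\sum_{\ell \in [L]} \inangle{v, (\Delta_\ell - M_\ell) u} > \tau$. Then the Cauchy--Schwarz inequality $\big(\sum_{\ell \in [L]} a_\ell\big)^2 \le L \sum_{\ell \in [L]} a_\ell^2$ with $a_\ell = \inangle{v, (\Delta_\ell - M_\ell) u}$ yields $\sum_{\ell \in [L]} a_\ell^2 \ge \tau^2 / L$, completing the proof.

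\medskip

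\textbf{Main obstacle.} The argument is essentially bookkeeping, and the one genuinely load-bearing step is the Cauchy--Schwarz bound converting the scalar inequality $\sum_\ell a_\ell > \tau$ into the quadratic bound $\sum_\ell a_\ell^2 \ge \tau^2/L$ — this is where the factor $1/L$ (rather than a cleaner $\tau^2$) enters, and it is exactly the price of spreading the residual across $L$ path terms. Everything else (matvec accounting, the $\smooth$-case inequalities, preservation of the matrix-approximation-path structure) is routine verification against the definitions.
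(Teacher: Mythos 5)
Your proof is correct and follows essentially the same structure as the paper's: check that the path structure is preserved, apply Lemma~\ref{lemma:frobenius} to each $\Delta_\ell - M_\ell$, sum over $\ell$, and convert the triggering inequality $\sum_\ell \inangle{v,(\Delta_\ell - M_\ell)u} > \tau$ into $\sum_\ell \inangle{v,(\Delta_\ell - M_\ell)u}^2 \ge \tau^2/L$ by Cauchy--Schwarz. Minor upsides of your write-up: you state the Cauchy--Schwarz step in the clean form $(\sum_\ell a_\ell)^2 \le L\sum_\ell a_\ell^2$ (the paper's displayed chain has an intermediate term that is technically only bounded below by $\tau^2/L^2$, though its final conclusion agrees with yours), and you explicitly handle the $\bar z\x = 0$ or $\bar z\y = 0$ edge case in the $\smooth$ branch, which the paper treats as routine.
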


\begin{proof} From the pseudocode, it is easy to verify that if $\verdict = \smooth$, then the first bullet holds. Thus, it remains to prove the second bullet. 

First, observe that $\cP'$ is a matrix-approximation path to $\zcenter$ because each $M'_\ell$ is known explicitly after each update in Line~\ref{line:update}. Next, to analyze $\size(\cP')$, note that $\verdict = \guilty$ ensures that the if statement in Line~\ref{line:guilty} executes for a $\bar{z} \in \{z, z'\}$ and for this value of $\bar{z}$,
\begin{align}
\label{eq:iftrue}
    \inangle*{\bar{z}\y, \sum_{\ell \in [L]} (\Delta_\ell - M_\ell) \bar{z}\x} > \tau \normInline{\bar{z}\y}_2 \normInline{\bar{z}\x}_2\,.
\end{align}
Consequently, $\bar{z}\x \neq 0_n$ and $\bar{z}\y \neq 0_m$. Rescaling \eqref{eq:iftrue} and using the definition of $u$ and $v$ then yields
\begin{align*}
    \tau < 
    \sum_{\ell \in [L]}\inangle*{\frac{\bar{z}\y}{\normInline{\bar{z}\y}_2}, (\Delta_\ell - M_\ell) \frac{\bar{z}\x}{\normInline{\bar{z}\x}_2}} =  
    \sum_{\ell \in [L]}\inangle*{v, (\Delta_\ell - M_\ell) u}. 
\end{align*}
Applying the Cauchy-Schwarz inequality then yields that 
\begin{align}\label{eq:cauchy-schwarz}
    \frac{\tau^2}{L} <
    \left(\frac{1}{L} \sum_{\ell \in [L]}\inangle*{v, (\Delta_\ell - M_\ell) u}\right)^2
    \leq
    \sum_{\ell \in [L]}\inangle*{v, (\Delta_\ell - M_\ell) u}^2
\end{align}
Now, using Lemma~\ref{lemma:frobenius} to reason about the updates in Line~\ref{line:update}, we can conclude that for each $\ell \in [L]$, 
\begin{align*}
    \normInline{\Delta_\ell - M'_\ell}_F^2 \leq \normInline{\Delta_\ell - M_\ell}_F^2 - \inangle*{v, (\Delta_\ell - M_\ell) u}^2. 
\end{align*}
By \eqref{eq:cauchy-schwarz}, it follows that 
\begin{align*}
    \size(\cP') &= \sum_{\ell \in [L]} \normInline{\Delta_\ell - M'_\ell}_F^2 \leq \sum_{\ell \in[L]} \paren{\normInline{\Delta_\ell - M_\ell}_F^2 - \inangle*{v, (\Delta_\ell - M_\ell) u}^2} 
    \leq \size(\cP) - \frac{\tau^2}{L}. 
\end{align*}
Finally, to justify the query complexity, note that the if statement in Line~\ref{line:guilty} requires $O(L)$ matvecs to $A$ while each iteration of Line~\ref{line:update} requires $O(1)$ matvecs to $A$. 
\end{proof}

In some cases, there are alternative implementations of the $\judge$ routine which satisfy the guarantees of Lemma~\ref{lemma:judge-guarantee} (see Section 6.5.2 and Appendix C of \citep{karmarkar2025solvingzerosumgames}); however, we focus on this implementation, as it is particularly simple.

\subsection{Smooth until proven guilty composite mirror prox}\label{sec:step}

Here we adapt the smooth until proven guilty composite mirror prox algorithm of \citep{karmarkar2025solvingzerosumgames} to our framework with path approximations. This adaptation (Algorithm~\ref{alg:mirror-prox-step}) enables us to implement a divergence-bounded solution oracle (Definition~\ref{def:oracle}). The reader might also find it helpful to refer to Definition~\ref{def:product-dgf-setups} for  a reminder of the notation used in Lines~\ref{line:unground} and~\ref{line:unground2}. The following \Cref{lemma:step-guarantee} 
provides the main guarantee of $\Step$. 

\RestyleAlgo{ruled}\label{alg:mirror-prox-step}
\SetKwComment{Comment}{/* }{ */}
\begin{algorithm2e}[ht]
\caption{$\Step(\cU, \cP, c, \alpha, \zcenter, z)$}
\KwInput{$\cU, c, \alpha, \zcenter$ as in Definition~\ref{def:subproblem}, a matrix approximation path $\cP = \{\Delta_\ell, M_\ell\}_{\ell \in [L]}$ to $\zcenter$ and a $z \in \cB_{c, \zcenter}^{\cS}$.}
\KwParameter{ A smoothness threshold $\tau > 0$}
$B \gets \paren{\sum_{\ell \in [L]} (\Delta_\ell - M_\ell)}_{\zcenter, *}$ \label{line:unground} \tcp*{$B$ is the unknown portion of $A$ (the subtraction is done implicitly)}
$C \gets \paren{\sum_{\ell \in [L]} M_\ell}_{\zcenter, *}$ \label{line:unground2}\tcp*{$C$ is the explicitly known portion of $A$} 
$\psi \gets \alpha \nabla \breg{\cU}{\cdot} + \nabla_\pm f_C$ \; 
$w \gets \prox_{z}^\tau(\nabla_\pm f_{B}(z) + \nabla_\pm \psi; \cB_{c, \zcenter}^{\cS})$ \label{line:first-step-stronglymonotone}\;
$z' \gets \prox_{z, w}^{\tau, \alpha}\paren{ (\nabla_\pm f_{B}  + \psi)(w); \cB_{c, \zcenter}^{\cS}}$ \label{line:second-step-stronglymonotone}\;
$z_1 \gets (w\x - z'\x, w\y - z\y)$, $z_2 \gets (z\x -w\x, w\y - z'\y)$ \; 
$z_{(1)} \gets (z_1)_{\zcenter}$, $z_{(2)} \gets (z_2)_{\zcenter}$\; 
$(\verdict, \cP') \gets \judge(\cP, 2\pi(c) \cdot \tau, z_{(1)}, z_{(2)})$ \tcp*{$\pi(c)$ as defined in Definition~\ref{def:consistency}}
\Return{$(z', \verdict, \cP')$}
\end{algorithm2e}

\begin{lemma}\label{lemma:step-guarantee} Let $z^\star \defeq \prox_\cU^\alpha(\nabla_\pm f_A; \cB_{c, \zcenter}^{\cS})$, $\tau > 0$, $c > 1$, $z \in \cB_{c, \zcenter}^{\cS}$, and $(z', \verdict, \cP') \gets \Step(\cU, \cP, c, \alpha, \zcenter, z)$. Then, $\cP'$ is a matrix-approximation path to $\zcenter$, $\size(\cP') \leq \size(\cP)$, and either 
\begin{itemize}
    \item $\verdict = \smooth$ and $\breg{z^\star}{z'} \leq \paren{1 + \frac{\alpha}{\tau}}^{-1} \breg{z^\star}{z}$, or else 
    \item $\verdict = \guilty$ and $\size(\cP') \leq \size(\cP) - (2\pi(c) \cdot \tau)^2/L$. 
\end{itemize}
The algorithm can be implemented with $O(L)$ matvecs to $A$.  
\end{lemma}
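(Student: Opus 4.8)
The plan is to mirror the analysis of the smooth-until-proven-guilty composite strongly-monotone mirror prox step in \citep{karmarkar2025solvingzerosumgames}, tracking the matrix-approximation path bookkeeping in place of a single model matrix. The first step is to unpack the notation: by Fact~\ref{lemma:ungrounding} and Definition~\ref{def:product-dgf-setups}, writing $B \defeq \unground{\sum_\ell (\Delta_\ell - M_\ell)}{\zground}$ and $C \defeq \unground{\sum_\ell M_\ell}{\zground}$, we have $B + C = \unground{(A)_\zground}{\zground} = A$ since $\pathd$ is a matrix-approximation path to $\zground$. Hence $\nabla_\pm f_A = \nabla_\pm f_B + \nabla_\pm f_C$, and the composite operator $\nabla_\pm f_B + \psi = \nabla_\pm f_A + \alpha\nabla \breg{\cU}{\cdot}$ is exactly the operator whose constrained proximal point is $z^\star = \prox_\cU^\alpha(\nabla_\pm f_A; \cB_{c,\zground}^\cS)$. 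The two lines~\ref{line:first-step-stronglymonotone}--\ref{line:second-step-stronglymonotone} are then a standard mirror-prox (extragradient) step against this operator with step size $\tau$ and regularizer $\psi$ that is $\alpha$-strongly monotone w.r.t.\ $r$ (since $\breg{\cU}{\cdot}$ contributes $|\cU|\alpha \ge \alpha$ strong monotonicity and $f_C$ is bilinear hence monotone). Note that $C$ is explicitly known, so $\prox$ steps against $\psi = \alpha\nabla\breg{\cU}{\cdot} + \nabla_\pm f_C$ require no matvecs; only evaluating $\nabla_\pm f_B(z)$ and $\nabla_\pm f_B(w)$ costs $O(L)$ matvecs via $\Delta_\ell$.

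The second step is the error-bound argument. Following the mirror-prox strong-monotonicity analysis (e.g.\ the proof of the analogous step in \citep{karmarkar2025solvingzerosumgames}), one writes the two variational inequalities defining $w$ and $z'$, sets the test point to $z^\star$, adds them, and uses the variational inequality for $z^\star$ against the operator $\nabla_\pm f_A + \alpha\nabla\breg{\cU}{\cdot}$. The bilinear cross terms coming from $\nabla_\pm f_B$ collapse into the single inner product $\inangle{ (B)_\zground z_{(1)\sfx}, z_{(2)\sfy}}$-type expression — precisely the quantity $\judge$ inspects after the change of basis $z_{(j)} = (z_j)_\zground$ — together with a ``local-norm'' remainder. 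The key inequality is: if $\judge$ returns $\smooth$, then by Lemma~\ref{lemma:judge-guarantee} these bilinear terms are at most $2\pi(c)\cdot\tau \cdot \normInline{z_{(1)}}_2\normInline{z_{(2)}}_2$, and by the $\pi$-local-boundedness hypothesis (Definition~\ref{def:consistency}) combined with $z, w, z' \in \cB_{c,\zground}^\cS$, these norms are controlled by the relevant Bregman divergences; the $2\pi(c)$ factor is exactly calibrated so that the bilinear error is absorbed into the strong-convexity gain, yielding $\breg{z^\star}{z'} \le (1 + \alpha/\tau)^{-1}\breg{z^\star}{z}$. If instead $\judge$ returns $\guilty$, then Lemma~\ref{lemma:judge-guarantee} (applied with smoothness threshold $2\pi(c)\cdot\tau$) directly gives $\size(\cP') \le \size(\cP) - (2\pi(c)\cdot\tau)^2/L$, and in both cases $\cP'$ is a matrix-approximation path to $\zground$ with $\size(\cP') \le \size(\cP)$. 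The matvec count is $O(L)$ from the two evaluations of $\nabla_\pm f_B$ plus the $O(L)$ cost of $\judge$.

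I expect the main obstacle to be the local-norm bookkeeping in the $\smooth$ case: one must carefully verify that after the change of basis induced by $\zground$, the bilinear residual terms $\inangle{v,(\Delta_\ell - M_\ell)u}$-type quantities that $\judge$ certifies small are the \emph{same} quantities appearing in the mirror-prox error decomposition, and that the Cauchy--Schwarz-over-$[L]$ loss in Lemma~\ref{lemma:judge-guarantee} (the $\tau^2/L$ versus $\tau^2$ gap) is consistent with the threshold $2\pi(c)\cdot\tau$ passed to $\judge$ rather than $\tau$ itself. This is precisely the place where the generalization from a single model matrix to a path of length $L$ enters, and where the argument of \citep{karmarkar2025solvingzerosumgames} needs the ``slight modifications'' alluded to in the section preamble. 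The strong-monotonicity contraction constant $(1+\alpha/\tau)^{-1}$ itself should come out of the standard argument unchanged, since the regularizer $\psi$ has the same $\alpha$-strong-monotonicity as in the single-point case; I would treat that part as routine and concentrate the detailed verification on matching the two families of bilinear residuals and the norm bounds from Definition~\ref{def:consistency}.
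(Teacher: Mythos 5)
Your proposal is correct and follows essentially the same route as the paper's proof: unpack $B$, $C$ via Fact~\ref{lemma:ungrounding} so that $B+C=A$; observe that Lines~\ref{line:first-step-stronglymonotone}--\ref{line:second-step-stronglymonotone} are a strongly-monotone extragradient/mirror-prox step against $\nabla_\pm f_B + \psi$; write the variational inequalities for $w$, $z'$, and $z^\star$; collapse the bilinear cross term $\inangle{\nabla_\pm f_B(w)-\nabla_\pm f_B(z), w-z'}$ into the residual bilinear forms that $\judge$ inspects after grounding; and use $\pi$-local boundedness to absorb that term into the Bregman gain when $\verdict=\smooth$, while $\verdict=\guilty$ follows directly from Lemma~\ref{lemma:judge-guarantee}. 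The paper isolates the absorption step into a separate helper (Lemma~\ref{lemma:standalone-guarantee}), but this is the same ``local-norm bookkeeping'' you flagged as the main obstacle. One small mischaracterization: the Cauchy--Schwarz-over-$[L]$ loss (the $\tau^2/L$ versus $\tau^2$ gap) only affects the $\guilty$ branch of Lemma~\ref{lemma:judge-guarantee}; in the $\smooth$ branch the certification is directly on the \emph{sum} $\sum_\ell (\Delta_\ell - M_\ell) = (B)_\zground$, so there is no extra $L$-loss to reconcile with the threshold there --- the threshold $2\pi(c)\tau$ is calibrated purely by the $a^2+b^2\ge 2ab$ trick combined with Definition~\ref{def:consistency}, exactly as you otherwise describe.
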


Our proof of Lemma~\ref{lemma:step-guarantee} uses the following technical lemma. 

\begin{lemma}\label{lemma:standalone-guarantee} If $c, \zcenter, z, w, z'$, $B$, $z_1, z_2, z_{(1)}$, $z_{(2)}$ are as in the pseudocode of Algorithm~\ref{alg:mirror-prox-step} and 
\begin{align*}
    \inangle*{w\y -z\y, B (w\x - z'\x)} + \inangle*{w\y - z'\y, B (z\x - w\y)} > \tau (\breg{w}{z'} + \breg{z}{w})\,,
\end{align*}
then, $\verdict = \guilty$. 
\end{lemma}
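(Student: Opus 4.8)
\textbf{Proof plan for Lemma~\ref{lemma:standalone-guarantee}.} The plan is to unpack the definition of the change of basis in Definition~\ref{def:product-dgf-setups} to convert the hypothesis into a statement about the ``grounded'' vectors $z_{(1)}, z_{(2)}$ and the (unknown) residual matrix $\sum_{\ell \in [L]}(\Delta_\ell - M_\ell)$, and then show this is exactly the condition that makes $\judge(\cP, 2\pi(c)\cdot\tau, z_{(1)}, z_{(2)})$ return a $\guilty$ verdict. First I would recall from Fact~\ref{lemma:ungrounding} and Definition~\ref{def:product-dgf-setups} that $B = \paren{\sum_{\ell \in [L]}(\Delta_\ell - M_\ell)}_{\zground, *}$ satisfies $\inangle{a\y, B\, b\x} = \inangle{(a)_{\zground \ysub}, \ground{\sum_\ell(\Delta_\ell-M_\ell)}{\zground} (b)_{\zground \xsub}}$ for any $a, b$; applying this with the pairs $(a, b) = (z_1, z_1)$-type combinations appearing on the left-hand side of the hypothesis, and using that $z_{(i)} = (z_i)_{\zground}$ so that $(z_{(1)})\y = (z_1)_{\zground\ysub}$ etc., rewrites the left-hand side of the hypothesis as a sum of bilinear forms in $\sum_\ell(\Delta_\ell - M_\ell)$ evaluated at the grounded vectors $z_{(1)}, z_{(2)}$.

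Next I would handle the right-hand side. The quantity $\tau(\breg{w}{z'} + \breg{z}{w})$ needs to be lower bounded by something of the form $2\pi(c)\cdot\tau \cdot \normInline{\bar z\y}_2 \normInline{\bar z\x}_2$ for $\bar z \in \{z_{(1)}, z_{(2)}\}$, since that is the threshold condition checked in Line~\ref{line:guilty} of $\judge$. This is where the $\pi$-local-boundedness assumption (Definition~\ref{def:consistency}) enters: since $z, w, z' \in \cB_{c, \zground}^{\cS}$, we have $\breg{w}{z'} \geq \pi(c) \normInline{w - z'}_{\zground}$ and $\breg{z}{w} \geq \pi(c)\normInline{z - w}_{\zground}$, where $\normInline{\cdot}_{\zground}$ is the local norm. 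I would then relate $\normInline{w - z'}_{\zground}$ and $\normInline{z - w}_{\zground}$ to the norms of the grounded vectors $z_{(1)}, z_{(2)}$: since $z_1 = (w\x - z'\x, w\y - z\y)$ and $z_2 = (z\x - w\x, w\y - z'\y)$, and $\normInline{(v)_{\zground}}_2 = \normInline{v}_{\zground}$ (Fact~\ref{lemma:ungrounding}), I'd bound $\normInline{(z_{(i)})\x}_2 \normInline{(z_{(i)})\y}_2 \leq \tfrac12(\normInline{(z_{(i)})\x}_2^2 + \normInline{(z_{(i)})\y}_2^2) = \tfrac12\normInline{z_{(i)}}_{\zground}^2$ — or more carefully, relate the cross-term products appearing in the judge's threshold to the ``block'' local norms $\normInline{w\x - z'\x}_{\zground\xsub}$, $\normInline{w\y - z\y}_{\zground\ysub}$, etc. The key arithmetic point is that the hypothesis's left side splits across $z_1$ and $z_2$, so by an averaging/pigeonhole argument at least one of the two grounded vectors $z_{(1)}, z_{(2)}$ must individually exceed the per-vector threshold $2\pi(c)\cdot\tau$ scaled by the appropriate norm product; combined with the local-boundedness lower bounds on the Bregman divergences this forces the $\judge$ if-condition in Line~\ref{line:guilty} to fire.

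\textbf{Main obstacle.} The delicate part will be the bookkeeping that matches up the \emph{block structure} of $z_1, z_2$ (which mix the $\mathsf x$-components of one vector with the $\mathsf y$-components of another) with the product-norm threshold $\normInline{\bar z\y}_2\normInline{\bar z\x}_2$ that $\judge$ uses, and getting the constant $2\pi(c)$ exactly right. In particular one must verify that $\normInline{w\x - z'\x}_{\zground\xsub}\normInline{w\y - z\y}_{\zground\ysub} + \normInline{z\x - w\x}_{\zground\xsub}\normInline{w\y - z'\y}_{\zground\ysub}$ is controlled by $\breg{w}{z'} + \breg{z}{w}$ up to the factor $\pi(c)$ (using $ab \le \tfrac12(a^2+b^2)$ and the fact that $\normInline{v}_{\zground}^2 = \normInline{v\x}_{\zground\xsub}^2 + \normInline{v\y}_{\zground\ysub}^2$ for the product dgf setup), so that if the combined bilinear form exceeds $\tau$ times this, then one of the two individual judge-conditions at threshold $2\pi(c)\tau$ must hold. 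I would structure the proof as: (i) translate hypothesis via the ungrounding identity; (ii) lower-bound the RHS Bregman sum via $\pi$-local-boundedness in terms of grounded-vector norms; (iii) conclude by a two-term pigeonhole that Line~\ref{line:guilty} of $\judge$ triggers for $z_{(1)}$ or $z_{(2)}$, hence $\verdict = \guilty$.
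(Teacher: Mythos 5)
Your plan matches the paper's proof step for step: unground the left-hand side via Fact~\ref{lemma:ungrounding}, lower-bound the Bregman sum by $\pi(c)$ times the sum of squared local norms, apply $a^2+b^2 \ge 2ab$ pairing the coordinate-block components exactly as you flagged in your ``main obstacle'' discussion, and pigeonhole to conclude that Line~\ref{line:guilty} of $\judge$ triggers for at least one of $z_{(1)},z_{(2)}$. One small note for when you write it out: Definition~\ref{def:consistency} as stated appears to be missing a square on $\normInline{z'-z''}_z$, and both your argument (via $ab\le\tfrac12(a^2+b^2)$) and the paper's in fact rely on the squared version $\breg{z'}{z''}\ge\pi(c)\normInline{z'-z''}_z^2$.
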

\begin{proof} By Fact~\ref{lemma:ungrounding}, we have
\begin{align*}
    &\inangle*{w\y -z\y, B (w\x - z'\x)} + \inangle*{w\y - z'\y, B (z\x - w\y)} \\
    =& \inangle*{{z_{(1)}}\y, \ground{B}{\zcenter} {{z_{(1)}}}\x} + \inangle*{{z_{(2)}}\y, \ground{B}{\zcenter} {{z_{(2)}}}\x} \\
    =& \inangle*{{z_{(1)}}\y, \sum_{\ell \in [L]} (\Delta_\ell - M_\ell) {{z_{(1)}}}\x} + \inangle*{{z_{(2)}}\y, \sum_{\ell \in [L]} (\Delta_\ell - M_\ell) {{z_{(2)}}}\x}. 
\end{align*}
Now, by Definition~\ref{def:best-response-stability} and Lemma~\ref{lemma:ungrounding}, we have 
\begin{align*}
   (\breg{w}{z'} + \breg{z}{w}) &\geq \pi(c) \cdot \paren{ \normInline{w-z'}_{\zcenter}^2 + \normInline{w-z}_{\zcenter}^2 } \\
    &= \pi(c) \cdot \paren{ \norm{\ground{w-z'}{\zcenter}}_2^2 + \norm{\ground{w-z}{\zcenter}}_2^2 }. 
\end{align*}
Thus, by splitting into the components in $\cX$ and $\cY$, we have 
\begin{align*}
    &(\breg{w}{z'} + \breg{z}{w}) \\
    \geq& \pi(c) \cdot \paren{ \norm{\ground{w-z'}{\zcenter}}_2^2 + \norm{\ground{w-z}{\zcenter}}_2^2 } \\
   =& \pi(c) \cdot \paren{ \norm{ {\ground{w-z'}{\zcenter}}\x }_2^2 + \norm{ {\ground{w-z}{\zcenter}}\x}_2^2 + \norm{ {\ground{w-z'}{\zcenter}}\y }_2^2 + \norm{ {\ground{w-z}{\zcenter}}\y}_2^2} \\
   \geq& 2\pi(c) \cdot \paren{ \norm{ {\ground{w-z'}{\zcenter}}\x }_2 \norm{ {\ground{w-z}{\zcenter}}\y}_2 + \norm{ {\ground{w-z}{\zcenter}}\x}_2 \norm{ {\ground{w-z'}{\zcenter}}\y }_2 } \\
   \geq& 2\pi(c) \cdot \paren{\norm{{z_{(1)}}\x}_2 \norm{{z_{(1)}}\y}_2 + \norm{{z_{(2)}}\x}_2 \norm{{z_{(2)}}\y}_2}, 
\end{align*}
where the second-to-last step used that for any $a, b \geq 0$ we have $a^2 + b^2 \geq 2ab$. Thus, we must have that either 
\begin{align*}
    \inangle*{{z_{(1)}}\y, \sum_{\ell \in [L]} (\Delta_\ell - M_\ell) {{z_{(1)}}}\x}  > 2\pi(c) \cdot \tau \cdot \norm{{z_{(1)}}\x}_2 \norm{{z_{(1)}}\y}_2, 
\end{align*}
or else 
\begin{align*}
    \inangle*{{z_{(2)}}\y, \sum_{\ell \in [L]} (\Delta_\ell - M_\ell) {{z_{(2)}}}\x} > 2\pi(c) \cdot \tau \cdot \norm{{z_{(2)}}\x}_2 \norm{{z_{(2)}}\y}_2. 
\end{align*}
Consequently, by Lemma~\ref{lemma:judge-guarantee}, we must have that $\verdict = \guilty$. 
\end{proof}

We now prove Lemma~\ref{lemma:step-guarantee}. The proof is very similar to the proof of Lemma 5.6 in \citep{karmarkar2025solvingzerosumgames} (and perhaps other well-known proofs of strongly monotone mirror prox). The main difference relative to the proof of Lemma 5.6 of \citep{karmarkar2025solvingzerosumgames} is that our version needs to handle prox steps with regularization to each $u \in \cU$, whereas the version in \citep{karmarkar2025solvingzerosumgames} considered only regularization with respect to a single point. 

\begin{proof}[Proof of Lemma~\ref{lemma:step-guarantee}, adapted from Proof of Lemma 5.6 of \citep{karmarkar2025solvingzerosumgames}] If $\verdict = \guilty$, then the second bullet holds due to Lemma~\ref{lemma:judge-guarantee}. Thus, suppose that $\verdict = \smooth$ and observe that by Lemma~\ref{lemma:standalone-guarantee} we have that $A = B + C$. Consequently, by Lemma~\ref{lemma:standalone-guarantee}, we have that 
\begin{align}
\begin{split}\label{eq:lipschitz-condition-holds-case}
    \inangle*{\nabla_\pm \bilinear{B}(w) - \nabla_\pm \bilinear{B}(z), w - z'} &= \inangle*{w\y -z\y, B w\x - z'\x} + \inangle*{w\y - z'\y, B z\x - w\y} \\
    &\leq \tau (\breg{w}{z'} + \breg{z}{w}). 
\end{split}
\end{align}

Next, we apply the optimality conditions from Definition~\ref{def:proximal-mappings} to each of the composite proximal steps (Lines~\ref{line:first-step-stronglymonotone} and~\ref{line:second-step-stronglymonotone}). We have that for all $u, u' \in \cB_{c, \zcenter}^{\cS}$,  
\begin{align*}
\begin{split}
    \inangle*{\nabla_\pm f_B(z), w-u'} + \inangle*{\psi(w), w-u'} &\leq \tau \paren{\breg{z}{u'} - \breg{w}{u'} - \breg{z}{w}}\,\\ 
    \inangle*{\nabla_\pm f_B(w), z'-u} + \inangle*{\psi(w), z' - u} &\leq \tau \paren{\breg{z}{u} - \breg{z'}{u} - \breg{z}{z'}} + \alpha \paren{ \breg{w}{u} + \breg{z'}{u}}, 
\end{split}
\end{align*}
(the second line used non-negativity of the Bregman divergence). Setting $u' = z'$ in the above display and summing both equations, we have, for all $u \in \cB_{c, \zcenter}^{\cS}$,
\begin{align*}
    &\inangle*{\nabla_\pm f_B(z), w-z'} + \inangle*{\nabla_\pm f_B(w), z'-u} +  \inangle*{\psi(w), w-u}  \\
    &\leq \tau \paren{\breg{z}{u} - \breg{z'}{u} - ( V_{w}^r(z') + \breg{z}{w} )} + \alpha \paren{ \breg{w}{u} + \breg{z'}{u}}. 
\end{align*}
Dividing through the above display and \eqref{eq:lipschitz-condition-holds-case} by $\tau$, we find that for all $u \in \cB_{c, \zcenter}^{\cS}$,
\begin{align*}
    &\frac{1}{\tau} \Brac{ \inangle*{\nabla_\pm f_B(z), w-z'} + \inangle*{\nabla_\pm f_B(w), z'-u} + \inangle*{\psi(w), w-u}} \\
    \leq& \breg{z}{u} - \breg{z'}{u} - ( V_{w}^r{z'} + \breg{z}{w} ) + \frac{\alpha}{\tau} \paren{ \breg{w}{u} + \breg{z'}{u}}. 
    \\
    \leq& \breg{z}{u} - \breg{z'}{u} -\frac{1}{\tau} \inangle*{\nabla_\pm f_B(w) - \nabla_\pm f_B(z), w - z'} + \frac{\alpha}{\tau} \paren{ \breg{w}{u} - \breg{z'}{u}}. 
\end{align*}
Rearranging terms, we have that for all $u \in \cB_{c, \zcenter}^{\cS}$, 
\begin{align*}
       \frac{1}{\tau} \inangle*{\nabla_\pm \bilinear{B}(w) +\psi(w), w-u} &= \frac{1}{\tau} \Brac{\inangle*{\nabla_\pm f_B(z), w-z'} + \inangle*{\nabla_\pm f_B(w), z'-u} + \inangle*{\psi(w), w-u}}  \\ 
       &~~+  \frac{1}{\tau} \inangle*{\nabla_\pm f_B(w) - \nabla_\pm f_B(z), w - z'} + \frac{\alpha}{\tau} \paren{ \breg{w}{u} - \breg{z'}{u}} \\
       &\leq \breg{z}{u} - \breg{z'}{u} + \frac{\alpha}{\tau} \paren{ \breg{w}{u} - \breg{z'}{u}}. 
\end{align*}
Thus, for $u = z^\star$, we have 
\begin{align*}
   \frac{1}{\tau} {\inangle*{\nabla_\pm \bilinear{B}(w)+\psi(w), w-z^\star}} \leq  \breg{z}{z^\star} - \breg{z'}{z^\star} + \frac{\alpha}{\tau} \paren{ \breg{w}{z^\star} - \breg{z'}{z^\star}}. 
\end{align*}
Consequently, subtracting $\frac{\alpha}{\tau} \breg{w}{z^\star}$ from both sides,
\begin{align}\label{eq:right-side-boundedness}
    \frac{1}{\tau} \inangle*{\nabla_\pm \bilinear{B}(w)+\psi(w), w-z^\star} - \frac{\alpha}{\tau} \breg{w}{z^\star} \leq \breg{z}{z^\star} - \paren{1 + \frac{\alpha}{\tau}} \breg{z'}{z^\star}. 
\end{align}
To complete the proof, it suffices to lower bound the left hand side of \eqref{eq:right-side-boundedness} by 0. To this end, note that, by the definition of $z^\star$, we have that $\frac{1}{\tau} \inangle*{\nabla_\pm \bilinear{B}(z^\star)+\psi(z^\star), z^\star-w} \leq 0$. Consequently,
\begin{align*}
     &\frac{1}{\tau} \inangle*{\nabla_\pm \bilinear{B}(w)+\psi(w),  w-z^\star} - \frac{\alpha}{\tau} \breg{w}{z^\star} \\
     \geq& \frac{1}{\tau} \inangle*{(\nabla_\pm \bilinear{B}(w)+\psi(w)) - (\nabla_\pm \bilinear{B}(z^\star)+\psi(z^\star)), w-z^\star} - \frac{\alpha}{\tau} \breg{w}{z^\star}. 
\end{align*}
Now, by $\alpha$-strong monotonicity of the operator $\nabla_\pm f_B + \nabla \psi$, 
\begin{align*}
        \frac{1}{\tau} \inangle*{(\nabla_\pm \bilinear{B}(w)+\psi(w)) - (\nabla_\pm \bilinear{B}(z^\star)+\psi(z^\star)), w-z^\star} \geq \frac{\alpha}{\tau} \breg{w}{z^\star} 
\end{align*}
and hence from the preceding two displays we can conclude that 
\begin{align*}
     \frac{1}{\tau} \inangle*{\nabla_\pm \bilinear{B}(w)+\psi(w),  w-z^\star} - \frac{\alpha}{\tau} \breg{w}{z^\star} \geq 0.
\end{align*}
Thus, taking \eqref{eq:right-side-boundedness} and dividing through by $\paren{1 + \frac{\alpha}{\tau}}$ we obtain $\breg{z'}{z^\star} \leq \paren{1 + \frac{\alpha}{\tau}}^{-1} \breg{z}{z^\star}.$ Finally, the matvec complexity is evident from Lemma~\ref{lemma:judge-guarantee}. 
\end{proof}

\subsection{Implementing an divergence-bounded solution oracle}\label{sec:solver}

Here we discuss Algorithm~\ref{alg:subsolver}, which is our ultimate smooth-until-proven-guilty mirror prox algorithm for implementing a divergence-bounded solution oracle (Definition~\ref{def:oracle}). 

\RestyleAlgo{ruled}\label{alg:subsolver}
\SetKwComment{Comment}{/* }{ */}
\begin{algorithm2e}[ht]
\caption{Smooth-until-proven-guilty solver $\SUG(\cU, c, \alpha, \zcenter, \cP, \epsilon)$}
\KwInput{A constrained prox multi-point problem $(\cU, c, \alpha, \zcenter)$ (Definition~\ref{def:subproblem}), a matrix approximation path $\cP$ to $\zcenter$, and a target accuracy $\epsilon>0$.}
\KwParameter{A smoothness threshold $\tau > 0$}
$\cP^{(0)} = \{\Delta_\ell, M_\ell^{(0)}\}_{\ell \in [L]} \gets \cP$\;
$z^{(0)} \gets \argmin_{z \in \cB_{c, \zcenter}^{\cS}} r(z)$\; 
$j \gets 0, k \gets 0$\;
\While{$j \leq J$ where $J = \ceil{(1 + \tau/\alpha) \log(\Gamma_\cS/\epsilon)}$ }{
    $(z^{(j+1)}, \verdict, \cP^{(j)}) \gets \Step(\cU, \cP^{(j)}, c, \alpha, \zcenter, z^{(j)})$\;
    \lIf{$\verdict = \guilty$}{ $k \gets k+1$ \label{line:path-update-iter} }
    \lElse{$\cP^{(j+1)} \gets \cP^{(j)}$ and then $j \gets j+1$ \label{line:progress-iter}} 
}
\Return{$(z', \cP^{(J)})$}
\end{algorithm2e}

\begin{theorem}\label{thm:sug-main} For any $\tau > 0$, $\SUG$
(Algorithm~\ref{alg:subsolver}) is a divergence-bounded solution oracle for $\cS$ (Definition~\ref{def:oracle}). Moreover, for any constrained prox multi-point problem $(\cU, c, \alpha, \zcenter, \cS)$ (Definition~\ref{def:subproblem}) and matrix approximation path $\cP$ to $\zcenter$, letting $(z', \cP') \defeq \SUG(\cU, c, \alpha, \zcenter, \cP, \epsilon)$, the algorithm makes at most 
\begin{align*}
    L \left\lceil{1 + \frac{\tau}{\alpha}}  \log\paren{\frac{\Gamma_\cS}{\epsilon}} \right\rceil + \frac{L^2}{(2\pi(c) \cdot \tau)^2}[\size(\cP) - \size(\cP')] \text{ matvecs to } A. 
\end{align*}
\end{theorem}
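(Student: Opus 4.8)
\textbf{Proof proposal for Theorem~\ref{thm:sug-main}.}

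The plan is to establish the two claims separately: first that $\SUG$ is a valid divergence-bounded solution oracle, and then the matvec bound. For correctness, I would argue that whenever $\Step$ returns $\verdict = \smooth$ in Line~\ref{line:progress-iter}, Lemma~\ref{lemma:step-guarantee} gives the contraction $\breg{z^\star}{z^{(j+1)}} \le (1 + \alpha/\tau)^{-1}\breg{z^\star}{z^{(j)}}$ where $z^\star \defeq \prox_\cU^\alpha(\nabla_\pm f_A; \cB_{c, \zground}^{\cS})$. Since the progress counter $j$ only increments on $\smooth$ verdicts, after $J = \ceil{(1 + \tau/\alpha)\log(\Gamma_\cS/\epsilon)}$ progress iterations we have a geometric decay: $\breg{z^\star}{z^{(J)}} \le (1 + \alpha/\tau)^{-J}\breg{z^\star}{z^{(0)}}$. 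Using the standard bound $(1 + x)^{-1} \le e^{-x/(1+x)}$, one checks $(1 + \alpha/\tau)^{-J} \le \exp(-J \cdot \frac{\alpha/\tau}{1 + \alpha/\tau}) = \exp(-J/(1 + \tau/\alpha)) \le \exp(-\log(\Gamma_\cS/\epsilon)) = \epsilon/\Gamma_\cS$. Since $z^{(0)} = \argmin_{z \in \cB_{c,\zground}^{\cS}} r(z)$ and $z^\star \in \cB_{c,\zground}^{\cS}$, the range bound $\Gamma_\cS$ (introduced in the assumptions of Section~\ref{sec:sug-solver}, with $\sup_{z}\breg{z'}{z} \le \Gamma_\cS$ for $z' = \argmin r$) gives $\breg{z^\star}{z^{(0)}} \le \Gamma_\cS$, hence $\breg{z^\star}{z^{(J)}} \le \epsilon$, so the returned $z'$ is an $\epsilon$-divergence-bounded solution. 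I also need to verify $z^{(J)} \in \cB_{c,\zground}^{\cS}$, which is immediate since every $\Step$ call outputs a point in $\cB_{c,\zground}^{\cS}$ (its prox steps are constrained to that ball), and that $\cP^{(J)}$ remains a matrix-approximation path to $\zground$ — this follows by induction from Lemma~\ref{lemma:step-guarantee}, which guarantees each $\Step$ call preserves the matrix-approximation-path property.

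For the matvec bound, I would partition the total work into \emph{progress iterations} (where $\verdict = \smooth$, counted by $j$) and \emph{path-update iterations} (where $\verdict = \guilty$, counted by $k$). By Lemma~\ref{lemma:step-guarantee}, each call to $\Step$ costs $O(L)$ matvecs. The while loop runs until $j > J$, so there are exactly $J + 1$ progress iterations (or fewer if the loop never reaches that many — but the bound $J+1 \le \ceil{(1 + \tau/\alpha)\log(\Gamma_\cS/\epsilon)} + 1$, which I'd absorb into the ceiling in the theorem statement, noting the theorem writes $L\lceil (1 + \tau/\alpha)\log(\Gamma_\cS/\epsilon)\rceil$ so constants need slight care). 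These contribute $O(L \cdot J) = L\lceil(1 + \tau/\alpha)\log(\Gamma_\cS/\epsilon)\rceil$ matvecs. For the path-update iterations: each $\guilty$ verdict, again by Lemma~\ref{lemma:step-guarantee} (the $\guilty$ branch), decreases $\size$ by at least $(2\pi(c)\cdot\tau)^2/L$, i.e., $\size(\cP^{(j+1)}) \le \size(\cP^{(j)}) - (2\pi(c)\cdot\tau)^2/L$. Since $\size$ is always nonnegative and starts at $\size(\cP^{(0)}) = \size(\cP)$ and ends at $\size(\cP^{(J)}) = \size(\cP')$, the total number $k$ of $\guilty$ iterations is at most $\frac{L}{(2\pi(c)\cdot\tau)^2}[\size(\cP) - \size(\cP')]$. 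Each such iteration costs $O(L)$ matvecs, giving a contribution of $\frac{L^2}{(2\pi(c)\cdot\tau)^2}[\size(\cP) - \size(\cP')]$. Summing the two contributions yields the claimed bound.

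The main obstacle I anticipate is bookkeeping rather than conceptual: one must be careful that $\Step$ is never called with $z^{(j)} \notin \cB_{c,\zground}^{\cS}$ (so that its hypotheses are met), which requires the invariant that $z^{(0)} \in \cB_{c,\zground}^{\cS}$ (true since it is the $r$-minimizer over that ball, which is nonempty, closed, and convex by the standing assumptions of Section~\ref{sec:MDMP-implementation}) and that $\Step$'s output stays in the ball. A second subtlety is ensuring the telescoping of $\size$ is valid even across iterations where $j$ does not advance: on a $\guilty$ iteration, $z^{(j)}$ is \emph{not} updated (the pseudocode only increments $k$ and updates $\cP^{(j)}$ in place via $\Step$), so I must confirm that the next $\Step$ call uses the updated path $\cP^{(j)}$ but the same $z^{(j)}$ — and crucially that repeated $\guilty$ verdicts at the same $z^{(j)}$ cannot loop forever, which is exactly what the strict $\size$ decrease rules out (since $\size$ is bounded below by $0$). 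Finally, a very minor gap: the theorem's bound uses $\lceil 1 + (\tau/\alpha)\log(\Gamma_\cS/\epsilon)\rceil$-type expressions, and I should reconcile the "$J+1$ progress iterations" count with the stated $J = \ceil{(1 + \tau/\alpha)\log(\Gamma_\cS/\epsilon)}$ — this is a routine off-by-one adjustment absorbable into the ceiling, or one can simply note the loop condition is $j \le J$ so at most $J+1 \le 2J$ progress steps occur, changing only the implicit constant.
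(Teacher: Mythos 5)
Your proof is correct and follows essentially the same route as the paper's: invoke Lemma~\ref{lemma:step-guarantee} to get geometric contraction of $\breg{z^\star}{z^{(j)}}$ on $\smooth$ steps (seeded by the range bound $\Gamma_\cS$), and telescope the $\size$ decrease of $(2\pi(c)\tau)^2/L$ per $\guilty$ step, with each $\Step$ call costing $O(L)$ matvecs. The off-by-one concern you raise about the loop running for $J+1$ rather than $J$ progress iterations is a legitimate imprecision that the paper's own proof also glosses over, and as you note it affects only an additive $L$ term absorbable into the ceiling.
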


\begin{proof} First, note that by Lemma~\ref{lemma:step-guarantee}, the algorithm maintains the invariant that $\cP^{(j)}$ is always a matrix-approximation path to $\zcenter.$ Now, on every iteration of the while loop of Algorithm~\ref{alg:subsolver}, we have that either $k$ or $j$ is incremented. We refer to iterations wherein $k$ is iterated as \emph{path update steps} and refer to iterations where $j$ is updated as \emph{convergence progress steps}. 

First, we analyze the convergence progress steps. By Lemma~\ref{lemma:step-guarantee} we have that letting $z^\star \defeq \prox_{\cU}^\alpha(\nabla_\pm f_A; \cB_{c, \zcenter}^{\cS})$, for each $j \geq 0$,
\begin{align*}
    \breg{z^\star}{z^{(j+1)}} \leq \paren{1 + \frac{\alpha}{\tau}}^{-1} \breg{z^\star}{z^{(j)}}. 
\end{align*}
Consequently, by induction, 
\begin{align*}
    \breg{z^\star}{z^{(J)}} \leq  \paren{1 + \frac{\alpha}{\tau}}^{-J} \breg{z^\star}{z^{(0)}} \leq \paren{1 + \frac{\alpha}{\tau}}^{-J} \Gamma_\cS \leq \epsilon. 
\end{align*}

To bound the matvec complexity, recall from Lemma~\ref{lemma:step-guarantee} that each call to $\Step$ runs in $O(L)$ matvecs to $A$. The total number of convergence progress steps is $J$ and each convergence progress step $j \geq 0$ maintains $\size(\cP^{(j+1)}) \leq \size(\cP^{(j)})$ (by Lemma~\ref{lemma:step-guarantee}).

Meanwhile, for each path update step, Lemma~\ref{lemma:step-guarantee} guarantees that $\size(\cP') \leq \size(\cP) - (2\pi(c) \cdot \tau)^2/L$. Thus, by induction, letting $K$ denote the total number of path update iterations, we have 
\begin{align*}
    \size(\cP^{(j)}) \leq \size(\cP^{(0)}) - \frac{K (2\pi(c) \cdot \tau)^2}{L}, 
\end{align*}
Consequently, rearranging the above expression yields the result as
\begin{align*}
    K \leq \frac{L}{(2\pi(c) \cdot \tau)^2}[\size(\cP^{(0)}) - \size(\cP^{(j)})]\,. 
\end{align*}
\end{proof}

\section{Main results}\label{sec:putting-together}

In this section, we show how to apply the machinery developed in the previous sections to obtain our main results. In Section~\ref{sec:general-analysis}, we describe a general result for matrix games under  several assumptions introduced in the previous sections. Then, in
Section~\ref{sec:applications}, we verify these assumptions for the setups associated with $\ellOneEllOne$ and $\ellTwoEllOne$ matrix games and prove Theorem~\ref{thm:final-result-l1-l1-aka-zero-sum} and Theorem~\ref{thm:final-result-l2-l1-aka-SVM}.

\subsection{Complexity analysis of general framework}\label{sec:general-analysis}

Here, we discuss how to combine the results from the previous sections to obtain a general algorithm for solving matrix games (recall \eqref{eq:intro-general-matrix-game}) under appropriate assumptions on the setup and bound its matvec complexity. 

\begin{assumptions}
In the remainder of Section~\ref{sec:general-analysis}, we fix arbitrary $\tau, \beta, \epsilon, \rho > 0$, $A \in \R^{m \times n}$, and dgf setups $\dgfsetup\x = (\xset \subset \R^n, \rx)$ and $\dgfsetup\y = (\yset \subset \R^m, \ry)$ with $\dgfsetup = (\zset \subset \R^d,r) \defeq \prodsetup(\dgfsetup\x, \dgfsetup\y)$ (recall Definition~\ref{def:product-dgf-setups}). We assume that $\Gamma_\cS$ is an upper bound on the range of $r$ so that $\sup_{z \in \cZ} \breg{z'}{z} \leq \Gamma_\cS$ for $z' \defeq \argmin_{z\in\cZ} r(z)$. We also assume that for any $z \in \cZ$ and $c > 1$, $\cB^\cS_{c, z}$ is closed and convex. Moreover, we assume that $\cS$ is $\pi$-locally bounded (recall Definition~\ref{def:consistency}) and $(\iota, \rho)$-stable with respect to a mapping $(\alpha > 0, \cU \subseteq \cZ) \mapsto \bestresponse(\alpha, \cU)$ (recall Definition~\ref{def:best-response-stability}) and that for any $\alpha > 0$ and finite nonempty $\cU \subseteq \cZ$, $\bestresponse(\alpha, \cU)$ can be computed with $O(1)$ matvecs to $A$. Furthermore, we assume that $\cS$ is $\zeta$-compatible with respect to $A$ (recall Definition~\ref{def:zeta-compatible-mapping}), and that for every $\epsilon, \delta > 0$, $\cS$ is $(\epsilon, \delta, \rho, \kappa(\epsilon, \delta))$-robust for some function $\kappa: \R_{>0} \times \R_{>0} \to \R_{>0}$ (recall Definition~\ref{def:robustness}). Finally, (as in Theorem~\ref{thm:dmp-basic}) we assume that $M$ and $\theta_r > \beta$ are fixed finite values such that for any finite nonempty multiset $\cU \subseteq \cZ$ and $\alpha > 0$, letting $z_\alpha^\star \defeq \prox_\cU^\alpha(\nabla_\pm f_A; \cZ)$ and $h(\alpha) \defeq \breg{\cU}{z^\star_\alpha}$, $h$ is $M$-Lipchitz over $[\beta, \theta_r]$ with $h(\theta_r) < 2.4 \theta_r^\rho$.
\end{assumptions}

As our first general complexity guarantee, we bound the matvec complexity of the $\MDMPImp$ subroutine in Algorithm~\ref{alg:DMP-implementation-matrix-games} by using the divergence-bounded solution oracle presented in Section~\ref{sec:sug-solver} to instantiate an approximate solution oracle $\OAPPROX$ (recall Definition~\ref{def:approx-solution} and~\ref{def:oracle}).

\begin{theorem}[Complexity of $\MDMPImp$ using $\SUG$ to implement $\OAPPROX$]\label{thm:mdmp-imp-complexity} Consider $\MDMPImp$ (Algorithm~\ref{alg:DMP-implementation-matrix-games}) instantiated with
\begin{align}\label{eq:oas-instantiate}
    \OAPPROX(\cU, c, \alpha, \bar{z}, \epsilon, \bar{\cP}, \epsilon, \delta) \gets \SUG(\cU, c, \alpha, \bar{z}, \epsilon, \bar{\cP}, \kappa(\epsilon, \delta))
\end{align}
for every constrained prox multi-point problem $(\cU, c, \alpha, \bar{z})$, matrix approximation path $\bar{\cP}$ to $\bar{z} \in \cU$, and $\epsilon, \delta > 0$. Then, $\OAPPROX$ is an $\rho$-approximate solution oracle (Definition~\ref{def:approx-solution} and~\ref{def:oracle}) and $\MDMPImp$ is a $(\beta, 2, \rho)$-kinetic $\epsilon$-$\MDMP$ (Definition~\ref{def:MDMP}.) 

Moreover, letting $(z_*, \alpha_*, \cP_*) = \MDMPImp(\cU, \cP)$ for any finite nonempty multiset $\cU \subseteq \cZ$ and a matrix approximation path $\cP$ to $z \in \cU$, $\MDMPImp(\cU, \cP)$ makes
\begin{align*}
    O\paren{\log\paren{\frac{\theta_r}{\min\{\epsilon', \beta\}}} \paren{L \left\lceil{1 + \frac{\tau}{\beta}}  \log\paren{\frac{\Gamma_\cS}{\kappa(\epsilon, \delta)}} \right\rceil + \frac{L^2}{(\pi(\iota(5)) \tau)^2}\{[\size(\cP) - \size(\cP')] + \zeta (2\alpha_*)^\rho\}}}
\end{align*}
matvecs to $A$, where $\epsilon'$ is as defined in \eqref{eq:epsilonprime}. 
\end{theorem}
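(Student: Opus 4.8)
The plan is to chain together the guarantees already proved in Sections~\ref{sec:combined-outer-loop-section}--\ref{sec:sug-solver}, and then bound matvecs call-by-call, with the only real bookkeeping being the extra ``head'' term $\Delta_{L+1}$ that $\Validate$ appends to the matrix-approximation path. First I would dispatch the qualitative claims. By Theorem~\ref{thm:sug-main}, $\SUG$ is a divergence-bounded solution oracle, so $\SUG(\cU,c,\alpha,\bar z,\bar\cP,\kappa(\epsilon,\delta))$ returns $(z',\cP')$ with $z'$ a $\kappa(\epsilon,\delta)$-divergence-bounded solution of the corresponding constrained prox multi-point problem and $\cP'$ a matrix-approximation path to $\bar z$; since $\cS$ is $(\epsilon,\delta,\rho,\kappa(\epsilon,\delta))$-robust by assumption, Definition~\ref{def:robustness} upgrades $z'$ to an $(\epsilon,\delta,\rho)$-approximate solution, so the instantiation \eqref{eq:oas-instantiate} yields a genuine $\rho$-approximate solution oracle $\OAPPROX$ (Definitions~\ref{def:approx-solution} and~\ref{def:oracle}). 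The standing assumptions of this subsection are precisely the hypotheses of Theorem~\ref{thm:dmp-basic} ($h(\alpha)\defeq\breg{\cU}{\prox_\cU^\alpha(\gm f_A;\cZ)}$ is $M$-Lipschitz on $[\beta,\theta_r]$, $h(\theta_r)<2.4\,\theta_r^\rho$, and $\epsilon'$ is fixed as in \eqref{eq:epsilonprime} inside Algorithm~\ref{alg:DMP-implementation-matrix-games}), so Theorem~\ref{thm:dmp-basic} gives that $\MDMPImp$ is a $(\beta,2,\rho)$-kinetic $\epsilon$-$\MDMP$, and that it invokes $\Validate$ at most $q\defeq O(\log(\theta_r/\min\{\epsilon',\beta\}))$ times, each with an argument $\alpha\in[\beta,\min\{2\alpha_*,\theta_r\}]\subseteq[\beta,2\alpha_*]$. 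It remains to count matvecs.

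Next I would bound the cost of a single call $\Validate(\alpha)$. Choosing $\delta$ (Line~\ref{line:delta2}), computing $\tilde z=\bestresponse(\alpha,\cU)$ (Line~\ref{line:bestresponse}), computing $\zground$, and the divergence comparisons (Lines~\ref{line:zground} and~\ref{line:safety-check}) together cost $O(1)$ matvecs to $A$ --- the $\bestresponse$ evaluation by assumption, and the rest involve only $r$; forming the symbolic difference $\Delta_{L+1}=\ground{A}{\zground}-\ground{A}{z}$ (Line~\ref{line:null-model}) costs nothing, as a matvec with it is $O(1)$ matvecs with $A$. If $\Validate$ returns at the Line~\ref{line:safety-check} safety check, that is the whole cost; otherwise $\breg{\cU}{\zground}\le 3\alpha^\rho$ and $\Validate$ calls $\OAPPROX=\SUG$ on the length-$(L+1)$ path $\bar\cP=\{\Delta_\ell,M_\ell\}_{\ell\in[L+1]}$ with stability constant $c=\iota(5)^2$ and target accuracy $\kappa(\epsilon,\delta)$. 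By Theorem~\ref{thm:sug-main} and $\alpha\ge\beta$, that $\SUG$ call costs at most
\[
(L+1)\left\lceil\Bigl(1+\frac{\tau}{\beta}\Bigr)\log\frac{\Gamma_\cS}{\kappa(\epsilon,\delta)}\right\rceil + \frac{(L+1)^2}{(2\pi(c)\tau)^2}\bigl[\size(\bar\cP)-\size(\bar\cP')\bigr]
\]
matvecs to $A$, where $\bar\cP'$ is the returned length-$(L+1)$ path.

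Then I would translate the path-size decrease into the quantities in the statement. Write $\cP^{(i-1)}=\{\Delta_\ell,M_\ell\}_{\ell\in[L]}$ for the global path at the start of the $i$-th $\Validate$ call and $\cP^{(i)}=\{\Delta_\ell,M'_\ell\}_{\ell\in[L]}$ for the path after the in-place update of Line~\ref{line:inplace} (which discards the head term and keeps only the first $L$ updated models). Then $\size(\bar\cP)=\size(\cP^{(i-1)})+\normInline{\Delta_{L+1}}_F^2$ and $\size(\bar\cP')=\size(\cP^{(i)})+\normInline{\Delta_{L+1}-M'_{L+1}}_F^2\ge\size(\cP^{(i)})$, so $\size(\bar\cP)-\size(\bar\cP')\le[\size(\cP^{(i-1)})-\size(\cP^{(i)})]+\normInline{\Delta_{L+1}}_F^2$. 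Since $z\in\cU$ and Bregman divergences are nonnegative, $\breg{z}{\zground}\le\breg{\cU}{\zground}\le 3\alpha^\rho\le 3(2\alpha_*)^\rho$ in this branch, and $\zeta$-compatibility (Definition~\ref{def:zeta-compatible-mapping}) gives $\normInline{\Delta_{L+1}}_F^2=\normInline{\ground{A}{\zground}-\ground{A}{z}}_F^2\le\zeta\,\breg{z}{\zground}\le 3\zeta(2\alpha_*)^\rho$. The per-coordinate Frobenius contraction in each $\judge$ update (Lemma~\ref{lemma:frobenius}, invoked through Lemma~\ref{lemma:judge-guarantee}) also yields $\size(\cP^{(i)})\le\size(\cP^{(i-1)})$, so the bracketed terms above are nonnegative.

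Finally I would sum over the $q$ calls. The ``progress'' terms contribute at most $q\,(L+1)\lceil(1+\tau/\beta)\log(\Gamma_\cS/\kappa(\epsilon,\delta))\rceil$; the bracketed size-decreases telescope, $\sum_{i=1}^{q}[\size(\cP^{(i-1)})-\size(\cP^{(i)})]=\size(\cP)-\size(\cP')$ with $\cP$ the input path and $\cP'$ the returned one (the in-place updates at Line~\ref{line:inplace} persist the models across all $q$ calls, which is what makes this telescoping valid); the head-term corrections contribute at most $3q\zeta(2\alpha_*)^\rho$; and the remaining overhead is $O(q)$. Since $\size(\cP)-\size(\cP')\ge 0$ and $q\ge 1$ (Algorithm~\ref{alg:cautious} always makes at least one oracle query), all path-update contributions are bounded by $q\cdot(L+1)^2(2\pi(c)\tau)^{-2}\{[\size(\cP)-\size(\cP')]+3\zeta(2\alpha_*)^\rho\}$; absorbing constants and $L+1\mapsto L$ and recalling $c=\iota(5)^2$ (so $\pi(c)=\pi(\iota(5)^2)$, the $\pi$-factor in the statement) gives exactly the claimed bound. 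The step I expect to be the main obstacle is the third one: unlike $\Delta_1,\dots,\Delta_L$, the head term $\Delta_{L+1}$ is regenerated with a \emph{different} center $\zground$ on every $\Validate$ call (its value depends on the query $\alpha$), so it cannot be folded into a telescoping sum over calls; the fix is precisely that $\Validate$ never calls $\OAPPROX$ unless $\breg{\cU}{\zground}\le 3\alpha^\rho$ (the point of the Line~\ref{line:safety-check} check), which together with $\zeta$-compatibility and the query-range bound $\alpha\le 2\alpha_*$ from Lemma~\ref{lemma:cautious_search} controls $\normInline{\Delta_{L+1}}_F^2$ and produces the additive $\zeta(2\alpha_*)^\rho$ term.
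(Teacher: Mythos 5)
Your proposal is correct and follows essentially the same route as the paper: bound a single $\Validate(\alpha)$ call via Theorem~\ref{thm:sug-main}, decompose the length-$(L+1)$ path-size decrease into the length-$L$ part plus the head term, control the head term via $\zeta$-compatibility and the Line~\ref{line:safety-check} safety check ($\breg{\cU}{\zground}\le 3\alpha^\rho$ with $z\in\cU$), and then sum over the $O(\log(\theta_r/\min\{\epsilon',\beta\}))$ calls using the range $\alpha\in[\beta,2\alpha_*]$ from Theorem~\ref{thm:dmp-basic}. You make the telescoping of the length-$L$ size decreases explicit where the paper leaves that step terse, and you correctly note that the $\pi$-argument should technically be $\iota(5)^2$ rather than $\iota(5)$, though both are absolute constants absorbed by the big-$O$.
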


\begin{proof} First, recall that by the definition of robustness (Definition~\ref{def:robustness}), $\OAPPROX$ is a $\rho$-approximate solution oracle (Definitions~\ref{def:approx-solution} and~\ref{def:oracle}). Thus, by Theorem~\ref{thm:dmp-basic}, we have that $\MDMPImp$ us an $\epsilon$-$\MDMP$. This completes the proof of the first two claims. 

To prove the final claim, we first bound the matvec complexity of a single call to the subroutine $\Validate(\alpha).$ To this end, consider a single call to $\Validate(\alpha)$ for arbitrary $\alpha > 0$. Letting $\cP$ and $\cP'$ denote the matrix-approximation path to $z$ before and after (respectively) the in-place update in Line~\ref{line:inplace}, we claim that $\Validate(\alpha)$ makes at most 
\begin{align}\label{eq:validate-complexity}
    O(1) + (L+1) \left\lceil{1 + \frac{\tau}{\alpha}}  \log\paren{\frac{\Gamma_\cS}{\kappa(\epsilon, \delta)}} \right\rceil + \frac{(L+1)^2}{(\pi(\iota(5)) \tau)^2}\{[\size(\cP) - \size(\cP')] + 3\zeta \alpha^\rho\}
\end{align}
matvecs to $A$. 

To prove this, we split into two cases. First, if $\Validate(\alpha)$ returns on Line~\ref{line:safety-check}, then it runs in $O(1)$ matvecs to $A$ (which is the cost of computing $\tilde{z} = \bestresponse(\alpha, \cU)$), thus the claimed bound in \eqref{eq:validate-complexity} is trivially true. Otherwise, by Theorem~\ref{thm:sug-main}, the call to $\OAPPROX$ in Line~\ref{line:wrap} runs in 
\begin{align}\label{eq:complexity-of-oas}
    (L+1) \left\lceil{1 + \frac{\tau}{\alpha}}  \log\paren{\frac{\Gamma_\cS}{\kappa(\epsilon, \delta)}} \right\rceil + \frac{(L+1)^2}{(\pi(\iota(5)) \tau)^2}\left[\size(\{\Delta_\ell, M_\ell\})_{\ell \in [L+1]} - \size(\{\Delta_\ell, M'_\ell\})_{\ell \in [L+1]}\right],
\end{align}
matvecs to $A$, where $\{\Delta_\ell, M'_\ell\}_{\ell \in [L+1]}$ and $\{\Delta_\ell, M_\ell\}_{\ell \in [L+1]}$ are as in Line~\ref{line:wrap}. Now, by the definition of $\size(\cdot)$ (Definition~\ref{def:matrix-approx-path}) and the update in Line~\ref{line:inplace}, we have that 
\begin{align*}
    &\size(\cP) - \size(\cP') = \left[\size(\{\Delta_\ell, M_\ell\})_{\ell \in [L]} - \size(\{\Delta_\ell, M'_\ell\})_{\ell \in [L]}\right] \\
    &= \left[\size(\{\Delta_\ell, M_\ell\})_{\ell \in [L+1]} - \size(\{\Delta_\ell, M'_\ell\})_{\ell \in [L+1]}\right] - \paren{\normInline{\Delta_{L+1} - M_{L+1}}_F^2 - \normInline{\Delta_{L+1} - M'_{L+1}}_F^2}. 
\end{align*}
Further, note that Line~\ref{line:null-model} ensures $M_{L+1} = 0$ and $\normInline{\Delta_{L+1}}_F^2 = \normInline{\ground{A}{\zcenter} - \ground{A}{z}}_F^2$. Substituting this into the display above, we have that
\begin{align*}
    &\size(\cP) - \size(\cP') \\
     =& \left[\size(\{\Delta_\ell, M_\ell\})_{\ell \in [L+1]} - \size(\{\Delta_\ell, M'_\ell\})_{\ell \in [L+1]}\right] - \paren{\normInline{\Delta_{L+1}}_F^2 - \normInline{\Delta_{L+1} - M'_{L+1}}_F^2} \\
    \geq& \left[\size(\{\Delta_\ell, M_\ell\})_{\ell \in [L+1]} - \size(\{\Delta_\ell, M'_\ell\})_{\ell \in [L+1]}\right] - \normInline{\Delta_{L+1}}_F^2 \\
    =& \left[\size(\{\Delta_\ell, M_\ell\})_{\ell \in [L+1]} - \size(\{\Delta_\ell, M'_\ell\})_{\ell \in [L+1]}\right] - \normInline{\ground{A}{\zcenter} - \ground{A}{z}}_F^2.
\end{align*}
Finally, recalling that $\cS$ is $\zeta$-compatible with respect to $A$, note that $\normInline{\ground{A}{\zcenter} - \ground{A}{z}}_F^2 \leq \zeta \breg{z}{\zcenter}$. Consequently, substituting this bound into the display above and rearranging, 
\begin{align*}
     \left[\size(\{\Delta_\ell, M_\ell\})_{\ell \in [L+1]} - \size(\{\Delta_\ell, M'_\ell\})_{\ell \in [L+1]}\right]  \leq [\size(\cP) - \size(\cP')] + \zeta \breg{z}{\zcenter}, 
\end{align*}
where by the check in Line~\ref{line:safety-check}, and the fact that $z \in \cU$, we have that $\breg{z}{\zcenter} \leq \breg{\cU}{\zcenter} \leq 3\alpha^\rho$ and consequently, 
\begin{align*}
    \left[\size(\{\Delta_\ell, M_\ell\})_{\ell \in [L+1]} - \size(\{\Delta_\ell, M'_\ell\})_{\ell \in [L+1]}\right]  \leq [\size(\cP) - \size(\cP')] + 3\zeta\alpha^\rho. 
\end{align*}
Thus, the bound in \eqref{eq:validate-complexity} holds by substituting the above bound into \eqref{eq:complexity-of-oas}. 

Now, let $(z_*, \alpha_*, \cP_*) = \MDMPImp(\cU, \cP)$ for any finite nonempty multiset $\cU \subseteq \cZ$ and a matrix approximation path $\cP$ to $z \in \cU$. By Theorem~\ref{thm:dmp-basic}, $\Validate(\alpha)$ is only ever called for $\alpha \in [\beta, \min\{2\alpha_*, \theta_r\}]$. Consequently, by \eqref{eq:validate-complexity} and Theorem~\ref{thm:dmp-basic}$, \MDMPImp(\cU, \cP)$ makes at most 
\begin{align*}
    O\paren{\log\paren{\frac{\theta_r}{\min\{\epsilon', \beta\}}} \paren{L \left\lceil{1 + \frac{\tau}{\beta}}  \log\paren{\frac{\Gamma_\cS}{\kappa(\epsilon, \delta)}} \right\rceil + \frac{L^2}{(\pi(\iota(5)) \tau)^2}\{[\size(\cP) - \size(\cP')] + \zeta (2\alpha_*)^\rho\}}}
\end{align*}
matvecs to $A$. 
\end{proof}

Combining this complexity guarantee with Theorem~\ref{thm:matrix-games-outer-loop-guarantee}, we obtain the following general result. 

\begin{theorem}[General framework complexity guarantee for matrix games]\label{thm:main-general-result} Consider Algorithm~\ref{alg:final-algo-outer-loop} instantiated with $\OAPPROX$ as in \eqref{eq:oas-instantiate} and $\OMDMP(\cU, \cP) \gets \MDMPImp(\cU, \cP)$ (Algorithm~\ref{alg:DMP-implementation-matrix-games}) and $K \gets \inceil{5 \log_2 \inparen{\Gamma_\dgfsetup (\beta \epsilon^{-1} +  \epsilon^{- \frac{\rho}{\rho + 1}}) + 2} } + 5$. Then Algorithm~\ref{alg:final-algo-outer-loop} makes
\begin{align*}
    {O} \Bigg(&\paren{K \Gamma_\dgfsetup (\beta \epsilon^{-1} +  \epsilon^{- \frac{\rho}{\rho + 1}})} \cdot \log\paren{\frac{\theta_r}{\min\{\epsilon', \beta\}}} \cdot \paren{K \left\lceil{1 + \frac{\tau}{\beta}}  \log\paren{\frac{\Gamma_\cS}{\kappa(\epsilon, \delta)}}\right\rceil} \\
    &~~+ \log\paren{\frac{\theta_r}{\min\{\epsilon', \beta\}}} \cdot \frac{K^2}{(\pi(\iota(5)) \tau)^2} \left( \zeta 2^\rho \left( K\Gamma_\cS + (T - 1)\beta^\rho + \theta_r \right) + \left(\normInline{\ground{A}{z^{(0)}}}_F^2 + 2 \zeta K \Gamma_\cS\right)\right)\Bigg)
\end{align*}
matvecs to $A$ and the output $\zbar$ is a $2 \epsilon$-solution of \eqref{eq:intro-general-matrix-game}. 
\end{theorem}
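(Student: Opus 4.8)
\textbf{Proof proposal for Theorem~\ref{thm:main-general-result}.} The plan is to compose the two main guarantees we have already established---Theorem~\ref{thm:matrix-games-outer-loop-guarantee} (the outer-loop guarantee) and Theorem~\ref{thm:mdmp-imp-complexity} (the per-call complexity of $\MDMPImp$ using $\SUG$)---after checking that all the hypotheses line up. First I would verify that the standing assumptions of Section~\ref{sec:general-analysis} are precisely what is needed to invoke Theorem~\ref{thm:mdmp-imp-complexity}: namely that $\cS$ is $\pi$-locally bounded, $(\iota, \rho)$-stable with respect to $\bestresponse$, $\zeta$-compatible with respect to $A$, $(\epsilon,\delta,\rho,\kappa(\epsilon,\delta))$-robust, that $\bestresponse(\alpha,\cU)$ costs $O(1)$ matvecs, that $\cB^\cS_{c,z}$ is closed and convex, and that the Lipschitz/range hypotheses on $h$ hold over $[\beta,\theta_r]$ with $h(\theta_r) < 2.4\,\theta_r^\rho$. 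Given these, Theorem~\ref{thm:mdmp-imp-complexity} tells us that the instantiation \eqref{eq:oas-instantiate} yields a $\rho$-approximate solution oracle and that $\MDMPImp$ is a $(\beta, 2, \rho)$-kinetic $\epsilon$-$\MDMP$ (so $\gamma = 2$). This is exactly the input required by Theorem~\ref{thm:matrix-games-outer-loop-guarantee}.

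Next I would apply Theorem~\ref{thm:matrix-games-outer-loop-guarantee} with this $(\beta, 2, \rho)$-kinetic $\MDMP$ oracle and the stated choice of $K$ (which matches the choice in that theorem after substituting $\gamma = 2$). This immediately gives: (i) $\zbar$ is a $2\epsilon$-solution of \eqref{eq:intro-general-matrix-game}; (ii) the iteration bound $T \le K\Gamma_\dgfsetup(\beta\epsilon^{-1} + 2^{-1/(\rho+1)}\epsilon^{-\rho/(\rho+1)}) + 2$; (iii) each $\pathd^{(t)}$ has length exactly $K$; and (iv) the two bounds in \eqref{eq:bound-on-sum-alpha-and-path-diffs}, namely $\sum_{t\in[T]}(\alpha^{(t)})^\rho \le K\Gamma_\dgfsetup/2 + T\beta^\rho$ (using $\gamma = 2$) and $\sum_{t\in[T]}[\size(\pathd^{(t)}) - \size(\pathd'^{(t)})] \le \normInline{\ground{A}{z^{(0)}}}_F^2 + \zeta K\Gamma_\dgfsetup$. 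The total matvec count is then $\sum_{t\in[T]}$ (cost of the $\OMDMP$ call in Line~\ref{line:final-algo-MDMP} at iteration $t$), and I would bound each summand using the per-call bound from Theorem~\ref{thm:mdmp-imp-complexity} with $L = K$.

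The per-call bound has a "fixed" piece $\log(\theta_r/\min\{\epsilon',\beta\})\cdot K\lceil (1 + \tau/\beta)\log(\Gamma_\cS/\kappa(\epsilon,\delta))\rceil$ (which I multiply through by $T$, producing the first line of the claimed bound after substituting the $T$-estimate) and an "amortized" piece $\log(\theta_r/\min\{\epsilon',\beta\})\cdot \frac{K^2}{(\pi(\iota(5))\tau)^2}\{[\size(\pathd^{(t)}) - \size(\pathd'^{(t)})] + \zeta(2\alpha^{(t)})^\rho\}$. Summing the amortized piece over $t$, I would use $\sum_{t\in[T]}[\size(\pathd^{(t)}) - \size(\pathd'^{(t)})] \le \normInline{\ground{A}{z^{(0)}}}_F^2 + \zeta K\Gamma_\cS$ and $\sum_{t\in[T]}\zeta(2\alpha^{(t)})^\rho = \zeta 2^\rho \sum_{t\in[T]}(\alpha^{(t)})^\rho \le \zeta 2^\rho(K\Gamma_\cS/2 + T\beta^\rho)$, which together produce the second line of the claimed bound. (I should be mildly careful that $\Gamma_\cS$ and $\Gamma_\dgfsetup$ are being used interchangeably here; under the standing assumptions they are both upper bounds on the range of $r$, so I would either identify them or keep whichever the theorem statement uses, noting the constant $K\Gamma_\cS/2$ in the final line comes from the $\gamma = 2$ substitution into $K\Gamma_\dgfsetup\gamma^{-1}$.)

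The main obstacle I anticipate is bookkeeping rather than conceptual: making sure the per-iteration $\size$-difference terms telescope correctly when summed, which requires that the path $\pathd^{(t)}$ fed into iteration $t$'s $\MDMP$ call is exactly the path whose models were updated in iteration $t-1$ (i.e.\ that the in-place updates in $\MDMPImp$/$\Validate$/$\SUG$ are reflected in $\pathd'^{(t)}$ and carried into $\pathd^{(t+1)}$ via Line~\ref{line:final-algo-wtk-unified-def} of Algorithm~\ref{alg:final-algo-outer-loop}). This compatibility is precisely what Theorem~\ref{thm:matrix-games-outer-loop-guarantee} was designed to certify via the $\zeta$-compatibility assumption and the dyadic movement bound of Lemma~\ref{lem:dyadic-prox-movement-bound}, so once I invoke that theorem the telescoping is already handled and what remains is substitution of the estimates for $T$ and $\sum(\alpha^{(t)})^\rho$. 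A secondary subtlety is that $\alpha^{(t)} \le \min\{2\alpha_*, \theta_r\}$ inside the $\MDMPImp$ call (per Theorem~\ref{thm:dmp-basic}), but since the cost bound in Theorem~\ref{thm:mdmp-imp-complexity} is already stated in terms of the returned $\alpha_* = \alpha^{(t)}$ and $\tau/\beta$ (not $\tau/\alpha^{(t)}$), no further work is needed there.
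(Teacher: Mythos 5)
Your proposal is correct and follows essentially the same route as the paper's proof: invoke Theorem~\ref{thm:mdmp-imp-complexity} to certify that the instantiated $\MDMPImp$ is a $(\beta, 2, \rho)$-kinetic $\epsilon$-$\MDMP$, apply Theorem~\ref{thm:matrix-games-outer-loop-guarantee} for correctness and the iteration/movement bounds, then sum the per-call cost over $t \in [T]$ and substitute the two bounds from \eqref{eq:bound-on-sum-alpha-and-path-diffs}. Your note about the $\Gamma_\cS$ vs.\ $\Gamma_\dgfsetup$ notation is a fair observation of a minor inconsistency in the paper's bookkeeping (both denote upper bounds on the range of $r$), and your handling of it and of the secondary $\alpha^{(t)} \le 2\alpha_*$ point are consistent with what the stated theorems already guarantee.
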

\begin{proof} This result follows from summing the complexity bound from Theorem~\ref{thm:mdmp-imp-complexity} and applying the bounds \eqref{eq:sum-alpha-bound-takeaway} and \eqref{eq:final-paths-diff-bound-takeaway} from Theorem~\ref{thm:matrix-games-outer-loop-guarantee}. Indeed, by Theorem~\ref{thm:mdmp-imp-complexity}, the overall query complexity can be bounded (up to big-$O$) as 
\begin{align*}
    &\log\paren{\frac{\theta_r}{\min\{\epsilon', \beta\}}} \sum_{t \in [T]} \insquare*{ K \left\lceil{1 + \frac{\tau}{\beta}}  \log\paren{\frac{\Gamma_\cS}{\kappa(\epsilon, \delta)}} \right\rceil + \frac{K^2}{(\pi(\iota(5)) \tau)^2}\left([\size({\cP}^{(t)}) - \size({\cP'}^{(t)})] + \zeta (2\alpha^{(t)})^\rho\right) } \\
    &= \log\paren{\frac{\theta_r}{\min\{\epsilon', \beta\}}} \cdot TK \left\lceil{1 + \frac{\tau}{\beta}}  \log\paren{\frac{\Gamma_\cS}{\kappa(\epsilon, \delta)}}\right\rceil \\
    &~~+ \log\paren{\frac{\theta_r}{\min\{\epsilon', \beta\}}} \cdot \frac{K^2}{(\pi(\iota(5)) \tau)^2} \paren{ \sum_{t \in [T]} [\size(\cP^{(t)}) - \size({\cP'}^{(t)})] + \sum_{t \in [T]} \zeta 2^\rho (\alpha^{(t)})^\rho }. 
\end{align*}
The theorem now follows immediately from the bounds on $\sum_{t \in [T]} [\size(\cP^{(t)}) - \size({\cP'}^{(t)})]$ and $\sum_{t \in [T - 1]}  (\alpha^{(t)})^\rho$ from \eqref{eq:final-paths-diff-bound-takeaway} and \eqref{eq:sum-alpha-bound-takeaway} respectively in Theorem~\ref{thm:matrix-games-outer-loop-guarantee}, along with the fact that $\alpha^{(T)} \le \theta_r$ since the output $\alpha_*$ of \Cref{alg:DMP-implementation-matrix-games} is always at most $\theta_r$, since the latter is an upper bound on the bisection search interval (see \Cref{lemma:cautious_search}).
\end{proof}

\subsection{Applications to $\ellOneEllOne$ and $\ellTwoEllOne$ Matrix Games}\label{sec:applications}

In order to prove our main results Theorem~\ref{thm:final-result-l1-l1-aka-zero-sum} and \ref{thm:final-result-l2-l1-aka-SVM}, we first define the canonical setups that we consider in this paper. 

\begin{definition}[$\ellOneEllOne$ and $\ellTwoEllOne$ setups] \label{def:matrix-games-setups}
With $d \defeq n + m$, we refer to the tuples  $(\xset, \yset, \xtrunc, \ytrunc, \cZint, \rx : \xset \to \R, \ry : \yset \to \R, \Gamma_{\dgfsetup})$ defined in Table \ref{table:matrix-games-setups} as the \emph{$\ellOneEllOne$ and $\ellTwoEllOne$ setups} respectively. In the context of these setups, we further define $\zset \defeq \xset \times \yset$ and $r : \zset \to \R$ via $r(z) \defeq \rx(z\x) + \ry(z\y)$. Furthermore, we define what we call \emph{truncated} domains, which restrict simplex-constrained coordinates to be at least $\nu$, with $\ztrunc \defeq \xtrunc \times \ytrunc$. Finally, we make the standard (see, e.g., \cite{kornowski2024oracle,kornowski2024oracleupdated,carmon2019variance, carmon2024whole, karmarkar2025solvingzerosumgames}) normalization assumptions $\inmaxnorm{A} \le 1$ in the $\ellOneEllOne$ setup and $\innorm{A}_{2 \to \infty} \le 1$ in the $\ellTwoEllOne$ setup.
\end{definition}

\begin{table*}[ht]
\centering
\begin{tabular}{ c c c }
\hline
 & \textbf{$\ell_1$-$\ell_1$} & \textbf{$\ell_2$-$\ell_1$}   \\ \hline
$\cX$     & $\Delta^n$     & $\mathbb{B}^n$       \\ 
$\cY$     & $\Delta^m$     & $\Delta^m$     \\ 
$\cX_\nu$ & $\Delta_\nu^n$     & $\mathbb{B}^n$       \\ 
$\cY_\nu$ & $\Delta_\nu^m$     & $\Delta_\nu^m$     \\
$\cZint$ & $\Delta_{>0}^n \times \Delta_{>0}^m$ & $\mathbb{B}^n \times \Delta_{>0}^n $      \\ 
$\rx(x)$     & $\frac{1}{2}\norm{x}_2^2$    & $\frac{1}{2}\norm{x}_2^2$    \\
$\ry(y)$     & $\frac{1}{2}\norm{y}_2^2$    & $\sum_{i \in [m]} [y]_i \log([y]_i)$    \\
$\Range_{\cS}$     & $\log(mn)$    & $\frac{1}{2} + \log(m)$   \\ 
$V^r_{z'}(z)$ & $\KL(z || z')$ & $\frac{1}{2} \norm{z\x - z'\x}^2_2 + \KL(z\y||z'\y)$\\
\hline
\end{tabular}
\caption{$\ellOneEllOne$ and $\ellTwoEllOne$ setups (Definition~\ref{def:matrix-games-setups}) and associated notation.}
\label{table:matrix-games-setups}
\end{table*}

Note that for both the $\ellOneEllOne$ and $\ellTwoEllOne$ setups, we have that ${\dgfsetup\x}_\nu \defeq (\xset_\nu, \rx)$ and ${\dgfsetup\y}_\nu \defeq (\yset, \ry)$ are dgf setups (\Cref{def:dgf-setup}) with $\dgfsetup_\nu \defeq (\zset_\nu, r) = \prodsetup({\dgfsetup\x}_\nu, {\dgfsetup\y}_\nu)$ (\Cref{def:product-dgf-setups}). Furthermore, $\Range_\dgfsetup = \max_{z, z' \in \zset} r(z) - r(z') \ge \max_{z, z' \in \ztrunc} r(z) - r(z')$.

In the remainder of this section, we verify the assumptions outlined in Section~\ref{sec:general-analysis} for our application to (appropriately truncated) $\ellOneEllOne$ or $\ellTwoEllOne$ setups (Definition~\ref{def:matrix-games-setups}). First, we show that it suffices to solve the problem constrained the \emph{truncated} setup $\cS_\nu$ for appropriate $\nu > 0$. Then, we show how to instantiate the best-response mapping $\bestresponse$ for these setups for use in the the $\MDMP$ implementation from Section~\ref{sec:MDMP-implementation} and prove stability (Definition~\ref{def:best-response-stability}). Next we show that the mapping $z \mapsto \ground{A}{z}$ defined in Definition~\ref{def:product-dgf-setups} is $O(1)$-compatible in these setups. We then show that these setups are also appropriately locally-bounded (in the sense of Definition~\ref{def:consistency}), allowing use to invoke the inner subproblem solver $\SUG$ from Section~\ref{sec:sug-solver}. Finally, we discuss how, for these truncated setups, we can prove a robustness condition (Definition~\ref{def:robustness}). Combining these results, we prove Theorem~\ref{thm:final-result-l1-l1-aka-zero-sum} and Theorem~\ref{thm:final-result-l2-l1-aka-SVM}. For notational convenience, and to avoid redundancy, we handle the $\ellOneEllOne$ and $\ellTwoEllOne$ setups \emph{jointly} in our analysis, with distinctions between the two setups being deferred to the proofs of intermediate lemmas.  

\paragraph{Assumptions.}
In the remainder of this section, we assume that is  $\cS = (\xset, \yset, \xtrunc, \ytrunc, \cZint, \rx : \xset \to \R, \ry : \yset \to \R, \norm{\cdot} : \R^d \to \R, \Gamma_{\dgfsetup})$ is fixed to be any of the setups defined in Definition~\ref{def:matrix-games-setups} for a fixed but arbitrary $\nu \in (0, 1/d)$. In particular, we may also use the notation ${\dgfsetup\x}_\nu$, ${\dgfsetup\y}_\nu$, and ${\dgfsetup}_\nu$ defined above. Furthermore, we fix $A \in \R^{m\times n}$ to be any matrix satisfying the normalization assumptions in Definition~\ref{def:matrix-games-setups}. 

\paragraph{Truncation.}
First, we restate the following reduction from \citep{karmarkar2025solvingzerosumgames} shows that in order to compute an approximate solution to \eqref{eq:intro-general-matrix-game} 
it suffices to compute an approximate solution to the same problem over the truncated domain $\cZ_\nu = \cX_\nu \times \cY_\nu$ for an appropriate $\nu$ which scales inverse polynomially in the problem parameters ($m, n, \epsilon$). 

\begin{lemma}[Lemma 6.2 of \citep{karmarkar2025solvingzerosumgames}, restated] \label{lem:truncation-for-ell2ell1-ell1ell1}
For $\epsilon > 0$ and $0 < \nu \le \frac{\min \inbraces{\epsilon, 1}}{8 \max \inbraces{m, n}}$, any $\epsilon / 2$-solution $z' \in \ztrunc$ of 
\begin{align}
    \min_{x \in \xset_\nu} \max_{y \in \yset_\nu} \inangle*{y, Ax}. \label{eq:truncated-minimax-prob}
\end{align}
is an $\epsilon$-solution of \eqref{eq:intro-general-matrix-game}.
\end{lemma}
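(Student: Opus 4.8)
\textbf{Proof proposal for Lemma~\ref{lem:truncation-for-ell2ell1-ell1ell1}.}

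The plan is to bound the difference between the gap of $z'$ measured over the truncated domain $\ztrunc$ and its gap over the full domain $\zset$. Write $\gap_\nu(z') \defeq \max_{y \in \ytrunc} \inangle{y, A z'\x} - \min_{x \in \xtrunc} \inangle{z'\y, A x}$ and $\gap(z') \defeq \max_{y \in \yset} \inangle{y, A z'\x} - \min_{x \in \xset} \inangle{z'\y, A x}$ for the bilinear payoff $f_A(x,y) = \inangle{y, Ax}$. Since $z' \in \ztrunc \subseteq \zset$, it is a valid candidate for the full game, so it suffices to show $\gap(z') \le \gap_\nu(z') + \epsilon/2$, after which the hypothesis $\gap_\nu(z') \le \epsilon/2$ finishes the proof. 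The difference $\gap(z') - \gap_\nu(z')$ decomposes into two nonnegative pieces: the amount by which enlarging the maximization domain from $\ytrunc$ to $\yset$ increases $\max_y \inangle{y, Az'\x}$, and the amount by which enlarging the minimization domain from $\xtrunc$ to $\xset$ decreases $\min_x \inangle{z'\y, A x}$. I would bound each by $\epsilon/4$.

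First I would handle the simplex side, which is the common case ($\yset = \simplex^m$ in both setups, and $\xset = \simplex^n$ in the $\ellOneEllOne$ setup). The key observation is that any $y \in \simplex^m$ can be approximated by a truncated point $\tilde y \in \simplex_\nu^m$ with $\innorm{y - \tilde y}_1 \le 2 m \nu$: define $\tilde y \defeq (1 - m\nu) y + \nu \vones_m$, which lies in $\simplex^m$, has every coordinate at least $\nu$, and satisfies $\innorm{y - \tilde y}_1 = m\nu \innorm{\vones_m / m - y}_1 \le 2 m \nu$ (or one can simply note $\innorm{y - \tilde y}_1 \le m\nu \cdot 1 + m \nu \le 2m\nu$ by triangle inequality, depending on the exact normalization of $\tilde y$). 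Then by H\"older's inequality and the normalization $\inmaxnorm{A} \le 1$ (resp.\ $\innorm{A}_{2\to\infty} \le 1$, which controls $\innorm{A z'\x}_\infty \le \innorm{z'\x}_2 \le 1$ on the $\ellTwoEllOne$ side since $z'\x \in \ball^n$), we get $\abs{\inangle{y - \tilde y, A z'\x}} \le \innorm{y - \tilde y}_1 \innorm{A z'\x}_\infty \le 2m\nu$. Since $2 m\nu \le \epsilon/4$ by the hypothesis $\nu \le \frac{\min\{\epsilon,1\}}{8\max\{m,n\}}$, the maximization side contributes at most $\epsilon/4$. The minimization side over $\xset = \simplex^n$ in the $\ellOneEllOne$ setup is entirely symmetric, using $\innorm{A^\top z'\y}_\infty \le \inmaxnorm{A} \innorm{z'\y}_1 = \inmaxnorm{A} \le 1$ and $2n\nu \le \epsilon/4$.

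For the $\ellTwoEllOne$ setup, the minimization side is over $\xset = \ball^n = \xtrunc$, so there is no truncation at all there and that term is exactly zero; only the $\epsilon/4$ from the simplex $\yset$ side remains, which is even better. Assembling: $\gap(z') \le \gap_\nu(z') + \epsilon/4 + \epsilon/4 = \gap_\nu(z') + \epsilon/2 \le \epsilon$, as desired. I would also double-check the one subtlety: Definition~\ref{def:epsilon-solution}'s gap for the truncated problem is taken over $\ytrunc$ and $\xtrunc$, so $z'$ being an $\epsilon/2$-solution of \eqref{eq:truncated-minimax-prob} means precisely $\gap_\nu(z') \le \epsilon/2$, matching what is used above. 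The main (very mild) obstacle is simply getting the H\"older pairing and the two normalization conventions ($\inmaxnorm{A}\le 1$ vs.\ $\innorm{A}_{2\to\infty}\le 1$) lined up correctly in each of the two setups, and confirming the explicit constant $8$ in the bound on $\nu$ is enough to absorb the factor $2\max\{m,n\}$ with room for both sides; no deep idea is needed, and indeed this is quoted verbatim as Lemma~6.2 of \citep{karmarkar2025solvingzerosumgames}.
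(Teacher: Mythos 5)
The paper does not reprove this lemma; it is cited verbatim as Lemma~6.2 of \citep{karmarkar2025solvingzerosumgames}, so there is no in-paper proof to compare against. Your argument is correct and is the natural self-contained one: split $\gap(z') - \gap_\nu(z')$ into the $y$-truncation and $x$-truncation errors, approximate $y \in \simplex^m$ by the interior mixture $\tilde y = (1-m\nu)y + \nu\,\mathbf{1}_m$ with $\innorm{y - \tilde y}_1 \le 2m\nu$, pair this with the H\"older bound $\innorm{Az'\x}_\infty \le 1$ (and $\innorm{A^\top z'\y}_\infty \le 1$ in the $\ell_1$-$\ell_1$ setup, or $0$ on the $\ell_2$-$\ell_1$ ball side where $\xtrunc = \xset = \ball^n$), and absorb $2\max\{m,n\}\nu \le \epsilon/4$. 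One microscopic nit worth flagging: the paper defines $\simplex_\nu^d$ with strict inequalities $[p]_i > \nu$, so if $[y]_i = 0$ then $[\tilde y]_i = \nu$ exactly and $\tilde y \notin \simplex_\nu^m$ as written. This is harmless---use $\nu' > \nu$ arbitrarily close to $\nu$ in the construction, or note the supremum over the open truncated set is unchanged---but you should be aware the convex-combination trick requires this small patch under the paper's strict definition.
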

\paragraph{Stability.} Here, we show that the dgf stables considered in the section are $\iota$-stable with respect to the following mapping $(\alpha, \cU) \mapsto \bestresponse(\alpha, \cU)$. The following definition builds upon Definition 6.3 of \citep{karmarkar2025solvingzerosumgames}. 

\begin{definition}[$(\alpha, \cU)$-best-response]\label{def:best-response} Let $\alpha > 0$, $\cU \subseteq \cZ_\nu$ be a finite and nonempty multiset, and $\mean{\cU} \defeq \frac{1}{\abs{\cU}} \sum_{u \in \cU} u$. We define $\bestresponse(\alpha, \cU) = \prox_{\cU}^\alpha(\nabla_\pm f_A(\mean{\cU}; \cZ_\nu)$ (recall Definition~\ref{def:proximal-mappings}). That is, letting $\tilde{z} = (\tilde{x}, \tilde{y}) = \bestresponse(\alpha, \cU)$,
\begin{align*}
    \Tilde{x} = \argmin_{x \in \cX_\nu} \inangle*{{\mean{\cU}}\y,  Ax} + \alpha \xbreg{\cU\x}{x} \text{~~~and~~~}  \Tilde{y} = \argmax_{y \in \cY_\nu} \inangle*{y, A~{\mean{\cU}\x}} - \alpha \ybreg{\cU\y}{y}. 
\end{align*}
In particular, note that for any $\alpha > 0$ and finite nonempty multiset $\cU \subseteq \cZ$, $\bestresponse(\alpha, \cU)$ can be computed with $O(1)$ matvecs to $A$. 
\end{definition}

In the case of zero-sum games (the $\ellOneEllOne$ setup) the $\bestresponse(\alpha, \cU)$ can be interpreted as follows. Each player calculates each player's \emph{best response} (over $\cX_\nu$ and $\cY_\nu$) to their opponent, holding the opponents' strategy to be \emph{fixed} to be the \emph{average} of the strategies in $\cU$, subject to an $\alpha$-regularization penalty for each $u \in \cU$. We now prove the following analog of Lemma 6.4 of \citep{karmarkar2025solvingzerosumgames}.

\begin{lemma}\label{lemma:new-stability} $\cS_\nu$ is $(\iota, 2)$-stable (Definition~\ref{def:best-response-stability}) with respect to the mapping $\bestresponse(\alpha, \cU)$ defined in Definition~\ref{def:best-response} for $\iota: c \mapsto \exp(2 \sqrt{2c})$. 
\end{lemma}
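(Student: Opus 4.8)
The plan is to mirror (the multi-point generalization of) the proof of Lemma~6.4 of \citep{karmarkar2025solvingzerosumgames}, reducing the Hessian comparison to an elementary statement about entropy-regularized linear minimization over a truncated simplex. Throughout write $\tilde z := \bestresponse(\alpha, \cU)$ and $\zopt_\alpha := \prox_\cU^\alpha(\gm f_A; \cZ_\nu)$, and recall we must show that $\breg{\cU}{\zopt_\alpha} \le c\alpha^2$ forces $\zopt_\alpha \in \cB^{\cS_\nu}_{\iota(c), \tilde z}$ with $\iota(c) = \exp(2\sqrt{2c})$.

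\textbf{Step 1 (reduce to the simplex coordinates).} Since $r(z) = \rx(z\x) + \ry(z\y)$ with each of $\rx, \ry$ equal either to $\tfrac12\norm{\cdot}_2^2$ on a Euclidean-ball factor or to the negative entropy $e$ on a simplex factor, $\hess r$ is block diagonal: on a ball factor $\hess r = I$ is constant, so the stable-ball condition of Definition~\ref{def:stable-region-hess} is automatically satisfied there; on a simplex factor $\hess r(y) = \diag(1/[y]_i)$, so membership in $\cB^{\cS_\nu}_{c', z}$ is \emph{equivalent} to $[z\y]_i / c' \le [z'\y]_i \le c' [z\y]_i$ for every coordinate of that factor. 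Hence it suffices to show that, on each simplex factor, $\zopt_\alpha$ and $\tilde z$ have coordinate-wise entries that agree up to a multiplicative factor $\iota(c)$.

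\textbf{Step 2 (both points are clipped softmaxes with a common ``prior'').} Because $\gm f_A$ is affine, $\zopt_\alpha$ coincides with the prox of the \emph{constant} operator $v' := \gm f_A(\zopt_\alpha)$ about $\cU$ with strength $\alpha$: the variational inequality in Definition~\ref{def:proximal-mappings} only evaluates the operator at the solution, and the prox of a constant operator is unique by strong convexity. Similarly $\tilde z = \prox_\cU^\alpha(v; \cZ_\nu)$ for the constant $v := \gm f_A(\mean{\cU})$ by Definition~\ref{def:best-response}. Rewriting $\alpha \grad\breg{\cU}{\cdot}$ via the three-point identity \eqref{eq:Bregman-three-point-equality}, each of these prox points is the unique minimizer over $\cZ_\nu$ of $z \mapsto \alpha|\cU|\, r(z) + \inangle*{v_\bullet - \alpha \sum_{w \in \cU} \grad r(w),\, z}$ (with $v_\bullet \in \{v, v'\}$). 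On each simplex factor a Lagrangian computation gives the closed form $[y]_i = \max\{\nu,\ \rho \cdot (\prod_{w \in \cU} [w\y]_i)^{1/|\cU|} \exp(-[v_\bullet]\y_i / (\alpha|\cU|))\}$ for a normalizing constant $\rho > 0$, and on a ball factor the minimizer has $\hess = \alpha|\cU|\, I$ (consistent with Step~1). Crucially the geometric-mean ``prior'' factor is identical for $\zopt_\alpha$ and $\tilde z$; only $v'$ versus $v$ differ.

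\textbf{Step 3 (compare and bound the perturbation).} A short clipped-softmax stability sub-lemma (see the obstacle below) shows that two minimizers of the above form differing only in the linear coefficient, by $\Delta := \norm{v' - v}_\infty$, satisfy $[\zopt_\alpha\y]_i / [\tilde z\y]_i \in [\exp(-2\Delta/(\alpha|\cU|)), \exp(2\Delta/(\alpha|\cU|))]$ on every simplex factor. To bound $\Delta$: by bilinearity $v' - v = \gm f_A(\zopt_\alpha - \mean{\cU})$, so the normalization assumption ($\inmaxnorm{A} \le 1$ in the $\ellOneEllOne$ setup, $\norm{A}_{2 \to \infty} \le 1$ in the $\ellTwoEllOne$ setup) and the matching operator-norm inequality bound $\Delta$ by $\norm{(\zopt_\alpha - \mean{\cU})\x}$ plus $\norm{(\zopt_\alpha - \mean{\cU})\y}$ measured in the norm against which $r$ is $1$-strongly convex on the relevant factor. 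Then convexity of each $u \mapsto \breg{w}{u}$, Jensen/Cauchy--Schwarz over $w \in \cU$, and $1$-strong convexity (Pinsker on the entropy factors, exactness on the Euclidean factors) give $\norm{\zopt_\alpha - \mean{\cU}} \le (\tfrac{2}{|\cU|} \sum_{w \in \cU} \breg{w}{\zopt_\alpha})^{1/2} = (\tfrac{2}{|\cU|}\breg{\cU}{\zopt_\alpha})^{1/2} \le \alpha\sqrt{2c/|\cU|}$. Hence $2\Delta/(\alpha|\cU|) \le 2\sqrt{2c}/|\cU|^{3/2} \le 2\sqrt{2c}$ since $|\cU| \ge 1$, so the coordinate ratios, and therefore the Hessian ratios on every simplex factor, lie within $\exp(2\sqrt{2c}) = \iota(c)$. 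Since $\iota(c) = \exp(2\sqrt{2c})$ is strictly increasing and maps $\R_{>1}$ into $\R_{>1}$, this is precisely $(\iota, 2)$-stability; the $\ellOneEllOne$ versus $\ellTwoEllOne$ distinctions (choice of norm, Pinsker versus exactness) are handled separately exactly where the two cases split.

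\textbf{Main obstacle.} The crux is the clipped-softmax stability sub-lemma: showing that entropy-regularized linear minimization over the \emph{truncated} simplex $\Delta_\nu$ depends multiplicatively-Lipschitzly, with constant $2$, on its linear coefficient, even when the active set of $\ge \nu$ constraints differs between the two instances. A clean route is to compare both clipped solutions to the \emph{common} unconstrained softmax $\propto (\mathrm{prior}_i)\exp(-[v_\bullet]\y_i/(\alpha|\cU|))$, note that each unclipped coordinate ratio lies in $[\exp(-\Delta/(\alpha|\cU|)), \exp(\Delta/(\alpha|\cU|))]$, and argue that truncation-and-renormalization distorts each ratio by at most a further $\exp(\Delta/(\alpha|\cU|))$ factor — in particular a coordinate pinned at $\nu$ in one instance but not the other still respects the bound because clipping only raises coordinates. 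Two routine loose ends also require attention: well-definedness and uniqueness of the prox over $\cZ_\nu$ for the affine operator (immediate from strong convexity together with the machinery already established), and checking that the argument uses only $\cU \subseteq \cZ_\nu$ rather than $\mean{\cU} \in \cZ_\nu$.
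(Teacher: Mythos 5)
Your proposal takes essentially the same route as the paper: reduce to coordinate-wise comparison on simplex factors, rewrite both $\zopt_\alpha$ and $\tilde z$ as entropy-regularized softmaxes with a \emph{common} geometric-mean prior (the collapsing in your Step~2 is exactly the content of Corollary~\ref{corr:reduce-sum-to-one-point}), and then bound the difference of the linear terms via $\inmaxnorm{A}\le 1$ (resp.\ $\norm{A}_{2\to\infty}\le 1$), Jensen/Cauchy--Schwarz over $\cU$, and strong convexity (Pinsker) to get $\norm{\gm f_A(\zopt_\alpha) - \gm f_A(\mean{\cU})}_\infty \le \alpha\sqrt{2c/\abs{\cU}}$. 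Your bookkeeping of the $\abs{\cU}$ factors is actually a bit cleaner than the paper's (the paper's displayed intermediate inequality is written slightly loosely, though the final bound $\exp(2\sqrt{2c})$ comes out the same).

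The one gap is the step you call ``the main obstacle'': the clipped-softmax multiplicative stability claim, namely that the argmin of $z \mapsto \inangle{\theta,z} + \alpha' \inKL{q}{z}$ over $\Delta_\nu^d$ is $\exp(2\norm{\theta-\xi}_\infty/\alpha')$-multiplicatively stable in $\theta$. This is not something you need to prove from scratch: it is exactly Lemma~6.6 of \citep{karmarkar2025solvingzerosumgames}, which the paper restates as Lemma~\ref{lem:general-stability-helper} and invokes directly. Your sketched route (compare each clipped solution to its own unconstrained softmax and then argue the truncation distorts ratios boundedly) is not obviously watertight as stated: the two instances do not share a common unconstrained softmax since their linear terms differ, the renormalization constant and the KKT multipliers on the active $\ge\nu$ constraints interact, and the ``truncation only raises coordinates'' heuristic does not on its own control the normalizer. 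It is plausible it can be made to work with a possibly worse constant in the exponent, which would still yield $(\iota,2)$-stability for some valid $\iota$, but the cleaner move is simply to cite Lemma~\ref{lem:general-stability-helper}. With that replacement, your proposal matches the paper's proof.
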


Recall from Definition~\ref{def:best-response-stability} that to prove this result, we need to show that for any $\alpha, c > 0$ and any finite nonempty set $\cU \subseteq \cZ_\nu$, letting $z^\star = \prox_{\cU}^\alpha(\nabla_\pm f_A; \cZ_\nu)$ and $\Tilde{z} = \bestresponse(\alpha, \cU)$, whenever $\breg{\cU}{z^\star} \leq c\alpha^2$, we must have $z^\star \in \cB^\cZ_{\exp(2\sqrt{2c}), \Tilde{z}}$. To prove this, we reduce to Lemma 6.6 of \citep{karmarkar2025solvingzerosumgames}. 

\begin{lemma}[Lemma 6.6 of \citep{karmarkar2025solvingzerosumgames}]
    \label{lem:general-stability-helper}
    For $\alpha > 0$, vectors $\theta, \xi \in \R^d$, and $q \in \simplex^d_\nu$, define
    \begin{align*}
        u_\theta \defeq \argmin_{z \in \simplex^d_\nu} \inangle*{\theta, z} + \alpha \cdot \inKL{q}{z} ~~~\text{and}~~~ u_\xi \defeq \argmin_{z \in \simplex^d_\nu} \inangle*{\xi, z} + \alpha \cdot \inKL{q}{z}.
    \end{align*}
    Then $u_\theta \approx_{\delta} u_\xi$ with $\delta \defeq \exp \inparen*{\frac{2 \norm{\theta - \xi}_\infty}{\alpha}}$.
\end{lemma}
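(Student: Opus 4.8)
The plan is to prove Lemma~\ref{lem:general-stability-helper} by an explicit KKT characterization of the two entropy-regularized minimizers, followed by a short monotonicity comparison of their normalization constants.

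\emph{Step 1 (closed form up to a positive scalar).} First I would write down the optimality conditions for $u_\theta = \argmin_{z\in\simplex^d_\nu}\inangle*{\theta,z} + \alpha\inKL{q}{z}$. The objective is continuous and strictly convex, so its infimum over $\simplex^d_\nu$ is realized at the unique minimizer over the compact closure $\{z\in\simplex^d : [z]_i\ge\nu\}$. Introducing a Lagrange multiplier $\mu\in\R$ for the constraint $\sum_i[z]_i = 1$ and multipliers $\lambda_i\ge 0$ for the constraints $[z]_i\ge\nu$, stationarity reads $\theta_i + \alpha(\log([z]_i/[q]_i)+1) + \mu - \lambda_i = 0$ with complementary slackness $\lambda_i([z]_i - \nu) = 0$. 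Solving for $[z]_i$ in the two cases $\lambda_i=0$ and $[z]_i=\nu$ yields the clean form
\[
    [u_\theta]_i = \max\inbraces{\nu,\ c_\theta\,[q]_i\, e^{-\theta_i/\alpha}},
\]
where $c_\theta>0$ is the unique scalar making the entries sum to $1$; uniqueness holds since $c\mapsto\sum_i\max\{\nu, c[q]_ie^{-\theta_i/\alpha}\}$ is continuous, non-decreasing, equals $d\nu<1$ at $c=0$ (using $\nu<1/d$), tends to $\infty$, and is strictly increasing on the range where its value is $1$. Likewise $[u_\xi]_i = \max\{\nu,\ c_\xi[q]_ie^{-\xi_i/\alpha}\}$.

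\emph{Step 2 (comparing the normalization constants).} Set $a_i\defeq[q]_ie^{-\theta_i/\alpha}$, $b_i\defeq[q]_ie^{-\xi_i/\alpha}$, and $\rho\defeq\exp(\norm{\theta-\xi}_\infty/\alpha)$, so that $b_i/\rho\le a_i\le\rho b_i$ for all $i$. Writing $F_a(c)\defeq\sum_i\max\{\nu,ca_i\}$ and $F_b$ analogously, monotonicity of $t\mapsto\max\{\nu,t\}$ gives $F_a(c_\theta)\le F_b(\rho c_\theta)$ and $F_a(c_\theta)\ge F_b(c_\theta/\rho)$. Since $F_a(c_\theta)=1=F_b(c_\xi)$ and $F_b$ is strictly increasing where its value equals $1$, this forces $c_\theta/\rho\le c_\xi\le\rho c_\theta$.

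\emph{Step 3 (combining entrywise).} Fix a coordinate $i$. Combining $c_\theta/\rho\le c_\xi\le\rho c_\theta$ with $b_i/\rho\le a_i\le\rho b_i$ shows that $c_\theta a_i$ and $c_\xi b_i$ agree up to a multiplicative factor $\rho^2$; and since two nonnegative scalars agreeing up to a factor $\kappa\ge 1$ remain within a factor $\kappa$ after applying $\max\{\nu,\cdot\}$, we get $[u_\theta]_i/\rho^2 \le [u_\xi]_i \le \rho^2 [u_\theta]_i$ for every $i$. As $\rho^2 = \exp(2\norm{\theta-\xi}_\infty/\alpha) = \delta$, this is exactly $u_\theta\approx_\delta u_\xi$.

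\emph{Main obstacle.} Steps 2--3 are short monotone-comparison algebra; the step requiring the most care is Step 1 — obtaining the $\max\{\nu,\cdot\}$ closed form rigorously, i.e., reducing the inequality-constrained program to the explicit formula via KKT, and dispatching the mild subtlety that the $\argmin$ over the open set $\simplex^d_\nu$ is taken as the minimizer over its closure (and checking this does not break the characterization even when some coordinates are pinned at $\nu$). Once the closed form is established, the remainder is immediate.
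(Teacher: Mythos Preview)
Your proposal is correct. Note, however, that this paper does not itself prove Lemma~\ref{lem:general-stability-helper}: the lemma is explicitly restated from \cite{karmarkar2025solvingzerosumgames} (their Lemma~6.6) and is invoked here as a black box. Your KKT-based argument---deriving the closed form $[u_\theta]_i = \max\{\nu,\, c_\theta[q]_i e^{-\theta_i/\alpha}\}$, comparing the normalization scalars $c_\theta,c_\xi$ via monotonicity of $c\mapsto\sum_i\max\{\nu,ca_i\}$, and then combining entrywise through the $\max\{\nu,\cdot\}$---is a clean, self-contained proof, so there is nothing in the present paper to compare it against.
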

  
\begin{proof}[Proof of Lemma~\ref{lemma:new-stability}] 
Let $z^\star = \prox_{\cU}^\alpha(\nabla_\pm f_A; \cZ_\nu)$ and $\Tilde{z} = \bestresponse(\alpha, \cU)$. Suppose that $\breg{\cU}{z^\star} \leq c \alpha^2$. By the optimality conditions, we have that 
\begin{align*}
    z^\star\x = \argmin_{x \in \cX_\nu} \inangle*{{z\y^\star}, A x} + \alpha V^{\rx}_{\cU\x}(x), ~~~\text{ and }~~~  z^\star\y = \argmax_{y \in \cY_\nu} \inangle*{y, A z\x^\star} - \alpha V^{\ry}_{\cU\y}(y). 
\end{align*}
Corollary~\ref{corr:reduce-sum-to-one-point} ensures the existence of a $\collapsed{\cU} \in \cZ_\nu$ such that 
\begin{align*}
    z^\star\x = \argmin_{x \in \cX_\nu} \inangle*{z\y^\star, A x} + \alpha \cdot |\cU| \cdot V^{\rx}_{{\collapsed{\cU}}\x}(x), &~\text{ and }~ z^\star\y = \argmax_{y \in \cY_\nu} \inangle*{y, A z\x^\star} - \alpha \cdot |\cU| \cdot V^{\ry}_{{\collapsed{\cU}}\y}(y), \\
    \tilde{z}\x = \argmin_{x \in \cX_\nu} \inangle*{{\mean{\cU}}\y, A x} + \alpha \cdot |\cU| \cdot V^{\rx}_{{\collapsed{\cU}}\x}(x), &~\text{ and }~ \Tilde{z}\y = \argmin_{y \in \cY_\nu} \inangle*{y, A{\mean{\cU}\x}} - \alpha \cdot |\cU| \cdot V^{\ry}_{{\collapsed{\cU}}\y}(y). 
\end{align*}
In the $\ell_1$-$\ell_1$ setup (Definition~\ref{def:matrix-games-setups}), by Lemma~\ref{lem:general-stability-helper}, we have that 
\begin{align*}
    {z}\x^\star &\approx_{\delta\x} \Tilde{z}\x \text{ for } \delta\x = \exp\paren{\frac{2\normInline{A^\top\mean{\cU}\y - A^\top z\y^\star}_\infty}{\alpha}}, \\
    {z}\y^\star &\approx_{\delta\y} \Tilde{z}\y \text{ for } \delta\y = \exp\paren{\frac{2\normInline{A\mean{\cU}\x - Az\x^\star}_\infty}{\alpha}}. 
\end{align*}
Now, 
\begin{align*}
    \normInline{A^\top\mean{\cU}\y - A^\top z\y^\star} &\leq \normInline{A}_{\max} \normInline{{\mean{\cU}}\y - z\y^\star}_1 \leq \sqrt{\frac{1}{\abs{\cU}} \sum_{u \in \cU} \normInline{ \mean{\cU}\y- z^\star\y}_1^2} \\
    &\leq \sqrt{\frac{2}{\abs{\cU}} V^{\ry}_{\cU\y}(z^\star\y)} \leq \alpha \sqrt{2c}
\end{align*}
where the second inequality used Jensen's inequality and the convexity of $\normInline{\cdot}^2$, as well as the property that $\normInline{A}_{\max} \leq 1$. Hence, $\delta\x \leq \exp(2\sqrt{2c})$. An identical argument shows $\delta\y \leq \exp(2\sqrt{2c})$. 

Now, consider the $\ell_2$-$\ell_1$ setup (Definition~\ref{def:matrix-games-setups}). Again, by Lemma~\ref{lem:general-stability-helper}, we have that 
\begin{align*}
    {z}\y^\star &\approx_{\delta} \Tilde{z}\y \text{ for } \delta = \exp\paren{\frac{2\normInline{A^\top\mean{\cU}\x - A z\x^\star}_\infty}{\alpha}}. 
\end{align*}
Similar to before,
\begin{align*}
    \normInline{A\mean{\cU}\x - A z\x^\star} &\leq \max_i \norm{
    A_{:, i}
    }_2 \normInline{{\mean{\cU}}\y - z\y^\star}_2 \leq \sqrt{\frac{1}{\abs{\cU}} \sum_{u \in \cU} \normInline{ u\y- z^\star\y}_2^2} \\
    &\leq \sqrt{\frac{2}{\abs{\cU}}  V^{\ry}_{\cU\y}(z^\star\y)} \leq \alpha \sqrt{2c}
\end{align*}
where the second inequality used Jensen's inequality and the convexity of norms, and the property that $\max_i \normInline{A_{:, i}}_2 \leq 1$. Hence, $\delta \leq \exp(2\sqrt{2c})$. 
\end{proof}

\paragraph{Compatibility.} Here, we show that the setup $(\cZ_\nu, \normInline{\cdot}, r)$ is $2$-compatible with respect to $A$ (Definition~\ref{def:zeta-compatible-mapping}). To aid in the proof, for any $z_1, z_2 \in \R^d$ we let $H^2(z_1, z_2) \defeq \sum_{i \in [d]} \paren{\sqrt{[z_1]_i} - \sqrt{[z_2]_i}}^2$ denote the squared \emph{Hellinger distance} between $z_1, z_2 \in \cZ$. 

\begin{lemma}[Compatibility]\label{lemma:compatibility} 
For any $A \in \R^{m \times n}$ satisfying the normalization assumptions of the setup (Definition~\ref{def:matrix-games-setups}), the setup $\cS_\nu$ is $2$-compatible (Definition~\ref{def:zeta-compatible-mapping}) with respect to $A$. 
\end{lemma}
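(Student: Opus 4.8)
Proof plan for Lemma~\ref{lemma:compatibility}.

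The plan is to compute $\ground{A}{z}$ explicitly using the diagonal structure of the Hessians of $\rx,\ry$ in both setups, which reduces the claim to comparing a (sum of) squared Hellinger distance(s) with the Bregman divergence. Since $\ry = e$ on the $y$-simplex with $\hess e(q) = \diag(1/[q]_i)$, and $\hess\rx$ equals $\diag(1/[p]_j)$ in the $\ellOneEllOne$ setup and $I$ in the $\ellTwoEllOne$ setup, Definition~\ref{def:product-dgf-setups} gives $(\ground{A}{z})_{ij} = \sqrt{[z\y]_i}\,A_{ij}$ in the $\ellTwoEllOne$ setup and $(\ground{A}{z})_{ij} = \sqrt{[z\y]_i[z\x]_j}\,A_{ij}$ in the $\ellOneEllOne$ setup. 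Hence $\normInline{\ground{A}{z'} - \ground{A}{z}}_F^2 = \sum_{i,j}\big(\sqrt{w'_{ij}} - \sqrt{w_{ij}}\big)^2 A_{ij}^2$, where $w,w'$ are the corresponding rank-one nonnegative weight matrices. Using the normalization assumptions---$\normATwoToInf{A}\le 1$, i.e. $\norm{A_{i,:}}_2 \le 1$ for all $i$ in the $\ellTwoEllOne$ setup, and $\inmaxnorm{A}\le 1$ in the $\ellOneEllOne$ setup---I would drop the $A$-weights to obtain $\normInline{\ground{A}{z'} - \ground{A}{z}}_F^2 \le \sum_i (\sqrt{[z'\y]_i}-\sqrt{[z\y]_i})^2 = H^2(z'\y, z\y)$ in the $\ellTwoEllOne$ case, and $\normInline{\ground{A}{z'} - \ground{A}{z}}_F^2 \le H^2(z'\x\otimes z'\y,\, z\x\otimes z\y)$ in the $\ellOneEllOne$ case, where $p\otimes q$ denotes the product distribution on $[n]\times[m]$.

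Next I would invoke two elementary probabilistic facts, each with a one-line proof included for completeness. First, squared Hellinger distance is subadditive over product distributions: $H^2(p'\otimes q',\, p\otimes q) \le H^2(p',p) + H^2(q',q)$, which follows from the identity $H^2(\mu,\nu) = 2 - 2\sum_i\sqrt{[\mu]_i[\nu]_i}$ together with $(1-c_1)(1-c_2)\ge 0$ for $c_1,c_2\in[0,1]$. Second, $H^2(p,q)\le \KL(p||q)$ for probability vectors $p,q$, via $\log t \le t-1$. Combining: in the $\ellOneEllOne$ setup, $\normInline{\ground{A}{z'} - \ground{A}{z}}_F^2 \le H^2(z'\x,z\x) + H^2(z'\y,z\y) \le \KL(z'\x||z\x) + \KL(z'\y||z\y) = \breg{z}{z'}$, where the last equality is exactly the block decomposition of the Bregman divergence recorded in Table~\ref{table:matrix-games-setups}. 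In the $\ellTwoEllOne$ setup, $\normInline{\ground{A}{z'} - \ground{A}{z}}_F^2 \le H^2(z'\y,z\y) \le \KL(z'\y||z\y) \le \breg{z}{z'}$, the last step discarding the nonnegative $\tfrac12\norm{z'\x - z\x}_2^2$ term. In both cases $\normInline{\ground{A}{z'} - \ground{A}{z}}_F^2 \le \breg{z}{z'} \le 2\,\breg{z}{z'}$ for all $z,z'\in\cZ_\nu$, which is $2$-compatibility in the sense of Definition~\ref{def:zeta-compatible-mapping} (indeed we even obtain $1$-compatibility).

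The argument is essentially a direct calculation, so there is no single hard step; the only points requiring care are (i) keeping the direction of the KL divergence consistent with the convention $\breg{z}{z'} = r(z') - r(z) - \inangle{\grad r(z), z'-z}$ of Definition~\ref{def:zeta-compatible-mapping}---one must use $H^2(z'\y,z\y) \le \KL(z'\y||z\y)$, not the reverse, which works because $H^2$ is symmetric while the inequality $H^2 \le \KL$ needs the base point of the Bregman divergence in the second KL argument---and (ii) the product-distribution bookkeeping in the $\ellOneEllOne$ case, namely recognizing the $nm$-term sum as a Hellinger distance on $[n]\times[m]$ and decomposing it into the $x$- and $y$-parts. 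Both are routine.
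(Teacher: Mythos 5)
Your proof is correct and, in the $\ellOneEllOne$ case, takes a genuinely different (and cleaner) route than the paper's. The paper bounds the entry-wise difference $\bigl(\sqrt{[z\y]_i[z\x]_j} - \sqrt{[z'\y]_i[z'\x]_j}\bigr)^2$ using the pointwise algebraic inequality $(ab-cd)^2 \le 2a^2(b-d)^2 + 2d^2(a-c)^2$, which splits the sum into $2H^2(z\x,z'\x) + 2H^2(z\y,z'\y)$ and then applies $H^2 \le \KL$; the factor of $2$ introduced by that algebraic step is exactly the source of the $\zeta = 2$ in the lemma statement. You instead recognize the weighting $w_{ij} = [z\y]_i[z\x]_j$ as a product distribution on $[m]\times[n]$, identify the $mn$-term sum as a squared Hellinger distance $H^2(z\x\otimes z\y,\, z'\x\otimes z'\y)$, and invoke tensorization (the subadditivity $H^2(p\otimes q, p'\otimes q') \le H^2(p,p') + H^2(q,q')$, which follows from $H^2(\mu,\nu) = 2 - 2\sum_i\sqrt{\mu_i\nu_i}$ and $(1-\alpha)(1-\beta)\ge 0$). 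This avoids the factor of $2$ entirely and gives the stronger conclusion of $1$-compatibility, which of course implies $2$-compatibility. Both proofs finish identically via $H^2 \le \KL$ (the paper's Fact~\ref{fact:hellinger-trick}), and your $\ellTwoEllOne$ argument matches the paper's. Your handling of the KL direction against the paper's Bregman convention $\breg{z}{z'} = \KL(z'\| z)$ is also correct. One small warning: the table of Definition~\ref{def:matrix-games-setups} contains an apparent typo listing $\rx, \ry$ as Euclidean in the $\ellOneEllOne$ column even though $V^r$ is listed as $\KL$; you (correctly) read through this and used $\rx = \ry = e$ there, matching the paper's own proof and the Bregman-divergence row.
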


\begin{proof} In the $\ell_1$-$\ell_1$ setup, 
\begin{align*}
    \normInline{\ground{A}{z} - \ground{A}{z'}}_F^2 &= \normInline{\diag(z\y)^{1/2} \cdot A \cdot \diag(z\x)^{1/2} - \diag(z'\y)^{1/2} \cdot A \cdot \diag(z'\x)^{1/2}}_F^2 \\
    &= \sum_{i \in [m]} \sum_{j \in [n]} \paren{\sqrt{[z\y]_i} A_{ij} \sqrt{[z\x]_j} - \sqrt{[z'\y]_i} A_{ij} \sqrt{[z'\x]_j}}^2 \\
    &\leq \normInline{A}_{\max}^2 \sum_{i \in [m]} \sum_{j \in [n]} \paren{\sqrt{[z\y]_i} \sqrt{[z\x]_j} - \sqrt{[z'\y]_i} \sqrt{[z'\x]_j}}^2
 \end{align*}
Now, using the property that for any real numbers $a, b, c, d$ we have
\begin{align*}
    (ab-cd)^2 = (a(b-d) + d(a-c))^2 \leq 2a^2(b-d)^2 + 2d^2(a-c)^2, 
\end{align*}
we have that (taking $a = \sqrt{[z\y]_i}, b = \sqrt{[z\x]_j}, c = \sqrt{[z'\y]_i}$ and $d = \sqrt{[z'\x]_j}$ above),  
\begin{align*}
    \normInline{\ground{A}{z} - \ground{A}{z'}}_F^2 
    &\leq 2\sum_{i \in [m]} \sum_{j \in [n]} [z\y]_i \paren{\sqrt{[z\x]_i} - \sqrt{[z'\x]_i}}^2 + 2\sum_{i \in [m]} \sum_{j \in [n]} [z'\x]_i \paren{\sqrt{[z\y]_i} - \sqrt{[z'\y]_i}}^2 \\
    &\leq 2H^2(z\x, z'\x) + 2H^2(z\y, z'\y) \\
    &\leq 2\KL(z || z'). 
 \end{align*}
where the second-to-last step uses that $z\y, z\x$ are in the probability simplex, and the last step is true by Fact~\ref{fact:hellinger-trick}. In the $\ell_2$-$\ell_1$ setup,
\begin{align*}
    \normInline{\ground{A}{z} - \ground{A}{z'}}_F^2 &= \normInline{(\diag({z\y})^{1/2} - \diag({z'\y})^{1/2}) A}_F^2 \\
    &= \sum_{i \in [m]} (\sqrt{[z\y]_i} - \sqrt{[z'\y]_i})^2 \sum_{j \in [n]} A_{ij}^2 \\
    &\leq \max_{i \in [n]} \normInline{A_{i, :}}_2^2 \cdot H^2(z\y, z'\y) \leq \KL(z\y || z'\y) \leq \breg{z'}{z}, 
 \end{align*}
 where the last step is true by Fact~\ref{fact:hellinger-trick}. 
\end{proof}

\paragraph{Local-boundedness.} Here we verify the local boundedness condition introduced in Definition~\ref{def:best-response}.

\begin{lemma}\label{lemma:local-boundedness} There exists an explicit function $\pi: \R_{>1} \mapsto \R_{>0}$ such that $\cS_\nu$ is $\pi$-locally bounded (Definition~\ref{def:best-response}). Moreover, $\pi$ is a \emph{universal} function independent of any problem parameters. 
\end{lemma}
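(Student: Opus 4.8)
The plan is to prove the bound with the explicit universal choice $\pi(c) \defeq \tfrac{1}{2c}$, using the integral form of the Bregman divergence together with the Hessian comparison built into the definition of a stable ball. Fix $z \in \cZ$, $c > 1$, and $z', z'' \in \cB^\cS_{c,z}$, and let $z_t \defeq z' + t(z'' - z')$ for $t \in [0,1]$. Since $\cB^\cS_{c,z}$ is convex, the segment $\{z_t : t \in [0,1]\}$ lies inside $\cB^\cS_{c,z} \subseteq \cZ$, and since $r$ is twice differentiable (Definition~\ref{def:dgf-setup}) the Taylor expansion with integral remainder of $t \mapsto r(z_t)$ gives
\begin{align*}
    \breg{z'}{z''} = \int_0^1 (1 - t)\, (z'' - z')^\top \hess r(z_t)\, (z'' - z')\, dt.
\end{align*}
By Definition~\ref{def:stable-region-hess}, $z_t \in \cB^\cS_{c,z}$ implies $\hess r(z_t) \succeq c^{-1}\hess r(z)$, so the integrand is at least $c^{-1}(z''-z')^\top \hess r(z)(z'' - z') = c^{-1}\normInline{z'' - z'}_z^2$ for every $t$. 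Using $\int_0^1 (1-t)\,dt = \tfrac12$ yields $\breg{z'}{z''} \ge \tfrac{1}{2c}\normInline{z'' - z'}_z^2$, which is exactly the $\pi$-local-boundedness condition (Definition~\ref{def:consistency}) with $\pi(c) = 1/(2c)$; this is manifestly independent of $m$, $n$, and $\nu$, giving the universality claim. (I note the inequality proved here is the squared form $\breg{z'}{z''} \ge \pi(c)\normInline{z'-z''}_z^2$, which is the version actually used in Lemma~\ref{lemma:standalone-guarantee}; the statement in Definition~\ref{def:consistency} should carry the square.)

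The only input requiring care is that the Hessian lower bound must hold along the \emph{entire} segment, not just at the endpoints, which is precisely where convexity of $\cB^\cS_{c,z}$ enters. For the $\ellOneEllOne$ and $\ellTwoEllOne$ setups this is easy to see directly: $\hess r$ is block-diagonal with $\hess \rx \equiv I_n$ on the Euclidean block (imposing no constraint) and, on the simplex/entropy block, the stability constraint $c^{-1}\hess r(z) \preceq \hess r(z') \preceq c\,\hess r(z)$ unpacks coordinatewise (the Hessian entries there being $1/[z'\y]_i$) to $c^{-1}[z\y]_i \le [z'\y]_i \le c[z\y]_i$, an intersection of halfspaces with the convex sets $\xtrunc, \ytrunc, \cZint$. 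I would either include this short computation or cite the place where closedness/convexity of the stable balls is verified for these setups (it is among the standing assumptions inherited from Section~\ref{sec:general-analysis}).

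I do not anticipate a real obstacle here; the main thing to be deliberate about is that the target bound is in the \emph{local} norm $\normInline{\cdot}_z$ at the anchor point, so one cannot simply invoke $1$-strong convexity of $r$ — that would only give a lower bound of $\tfrac12\normInline{z'-z''}^2$ in the fixed global norm, which can be arbitrarily smaller than $\tfrac{1}{2c}\normInline{z'-z''}_z^2$ near the simplex boundary. The Hessian-along-the-segment argument above is what supplies the correct local-norm constant, and the convexity of $\cB^\cS_{c,z}$ is the hypothesis that makes it go through.
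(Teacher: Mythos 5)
Your proof is correct, and it takes a genuinely different route from the paper's. The paper simply offloads the estimate to Lemma~5.3 of \cite{karmarkar2025solvingzerosumgames}, writes out the Bregman divergence and local norm in the $\ellTwoEllOne$ case, and declines to state an explicit formula for $\pi$ (``the specific formula is not important for our purposes''). Your argument is more self-contained and more general: the integral remainder form of $\breg{z'}{z''}$, combined with the spectral lower bound $\hess r(z_t) \succeq c^{-1}\hess r(z)$ along the segment (valid because $\cB^\cS_{c,z}$ is convex), gives $\breg{z'}{z''} \ge \tfrac{1}{2c}\normInline{z''-z'}_z^2$ uniformly over all setups satisfying the paper's standing assumptions, with the explicit universal $\pi(c) = 1/(2c)$. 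Your approach has the added benefit of making the role of convexity of the stable ball transparent, and of correctly noting that one cannot shortcut via global $1$-strong convexity, since that only controls the \emph{global} norm, which can be arbitrarily smaller than $\normInline{\cdot}_z$ near the simplex boundary.

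Two small remarks. First, you correctly spot the typo in Definition~\ref{def:consistency}: the norm on the right side should indeed be squared, as it is wherever the definition is invoked (e.g.\ Lemma~\ref{lemma:standalone-guarantee}). Second, your parenthetical describing $\hess \rx \equiv I_n$ ``on the Euclidean block'' applies to the $\ellTwoEllOne$ setup; in the $\ellOneEllOne$ setup both blocks carry the entropy dgf (the $\rx, \ry$ entries in Table~\ref{table:matrix-games-setups} appear to be typos, since the associated Bregman divergence row gives $\KL$), but this makes no difference to your argument---the coordinatewise reduction $c^{-1}[z]_i \le [z']_i \le c[z]_i$ applies to every entropy coordinate, yielding an intersection of halfspaces with the convex truncated simplex, so $\cB^\cS_{c,z}$ is convex in both setups. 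Since the paper never explicitly discharges its standing assumption that the stable balls are closed and convex for these setups, your short verification is worth keeping rather than merely citing.
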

\begin{proof} Let $z_1, z_2 \in \cB_{c, \tilde{z}}$ for some $\tilde{z} \in \cZ$ and $c > 1$. In the $\ell_2$-$\ell_1$ setup, 
\begin{align*}
    \breg{z_1}{z_2} &= \frac{1}{2}\normInline{{z_1}\x - {z_2}\x}_2^2 + \KL(z_1|| z_2)
\end{align*}
and
\begin{align*}
    \normInline{z_1 - z_2}_{z'}^2 &= \frac{1}{2}\normInline{{z_1}\x - {z_2}\x}_2^2 + \inangle*{(z_1 -z_2)\y, \diag(z'\y)^{-1} (z_1-z_2)\y}, 
\end{align*}
in which case the result is an immediate consequence of Lemma 5.3 of \citep{karmarkar2025solvingzerosumgames}. The argument for $\ell_1$-$\ell_2$ and $\ell_1$-$\ell_1$ setups is analogous. 
\end{proof}
\citep{karmarkar2025solvingzerosumgames} explicitly characterize $\pi$ and show that it is a \emph{universal} function independent of any problem parameters; however, the specific formula is not important for our purposes, and hence, we omit it for brevity.

\paragraph{Robustness.} Here, we show that for any $\epsilon, \delta > 0$, the setup $\cS_\nu$ is $(\epsilon, \delta, 2, \kappa(\epsilon,\delta))$-robust (Definition~\ref{def:robustness}) for an appropriately defined $\kappa: \R_{>0} \times \R_{>0} \to \R_{>0}$. 

\begin{lemma}\label{lem:reducetoinner} There exists an absolute constant $C > 0$ such that for any $\epsilon, \delta > 0$ and
\begin{align*}
    \kappa(\epsilon, \delta) \defeq C \min\left\{ \delta^2 \nu^2, \frac{\alpha^{4}}{|\uset|^2  ( 1 + \log^2(\nu^{-1}))}, \frac{\epsilon^2 }{1 + (\alpha\abs{\cU})^2 \nu^{-2}}\right\} \, , 
\end{align*}
the setup $\mathcal{S}_\nu$ is $(\epsilon, \delta, 2, \kappa(\epsilon, \delta))$-robust (Definition~\ref{def:robustness}).
\end{lemma}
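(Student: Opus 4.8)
The plan is to apply Lemma~\ref{lem:sufficient-cond-for-robustness}: it suffices to fix an arbitrary $(\cU, c, \alpha, z, \cS_\nu)$-constrained prox multi-point problem with solution $z^\star \defeq \prox_\cU^\alpha(\nabla_\pm f_A; \cB^{\cS_\nu}_{c,z})$ and an arbitrary $z' \in \cB^{\cS_\nu}_{c,z}$ with $\breg{z^\star}{z'} \le \kappa$ (with $\rho = 2$), and verify the three bullets of that lemma. Throughout I would first record an elementary closeness estimate. By the form of $r$ in Definition~\ref{def:matrix-games-setups}, $\breg{z^\star}{z'}$ dominates $\KL(z'_\ysub \| z^\star_\ysub)$ (and in the $\ellTwoEllOne$ case also $\tfrac12\|z'_\xsub - z^\star_\xsub\|_2^2$; in the $\ellOneEllOne$ case the $x$-block behaves exactly like the $y$-block). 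Pinsker's inequality then gives $\|z'_\ysub - z^\star_\ysub\|_1 \le \sqrt{2\kappa}$, and since all simplex coordinates of points in $\cB^{\cS_\nu}_{c,z} \subseteq \ztrunc$ are at least $\nu$, once $\kappa$ is small enough that $\sqrt{2\kappa} \le \nu/2$ we further get $\max_i |\log([z'_\ysub]_i / [z^\star_\ysub]_i)| \le 2\sqrt{2\kappa}/\nu$. (We may assume $\delta \le 1$, since the $(1+\delta)$-stable ball only grows with $\delta$, so the $\delta^2\nu^2$ term in $\kappa$ keeps $\kappa \le \nu^2/8$ once the absolute constant $C$ is small.) I expect the three bullets to be controlled one at a time by the three terms of the minimum defining $\kappa$.

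For the Hessian-stability bullet $z' \in \cB^{\cS_\nu}_{1+\delta, z^\star}$, the $\ellTwoEllOne$ $x$-block is automatic since $\hess \rx = I$, and on every simplex coordinate $\hess r$ is diagonal with entries $1/[\cdot]_i$, so the condition is equivalent to $z'_\ysub \approx_{1+\delta} z^\star_\ysub$; this follows from the estimate above as soon as $2\sqrt{2\kappa}/\nu \le \log(1+\delta)$, i.e.\ $\kappa \lesssim \delta^2\nu^2$. For the bullet $|\breg{\cU}{z'} - \breg{\cU}{z^\star}| < \alpha^2/10$, I would use convexity of $\breg{\cU}{\cdot}$ and the mean value theorem along the segment $[z^\star, z']$ (which stays in $\ztrunc$): $|\breg{\cU}{z'} - \breg{\cU}{z^\star}| = |\langle \nabla\breg{\cU}{\xi}, z' - z^\star\rangle|$ for some $\xi$ on the segment, where $\nabla\breg{\cU}{\xi} = \sum_{w\in\cU}(\nabla r(\xi) - \nabla r(w))$ has dual norm $O(|\cU|(1 + \log\nu^{-1}))$ (on $\ztrunc$ the entropy gradient satisfies $|\nabla r(\xi)_i - \nabla r(w)_i| = |\log\xi_i - \log w_i| \le \log\nu^{-1}$, and the Euclidean part is trivially bounded), while $\|z' - z^\star\| \le \sqrt{2\kappa}$. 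Hence $|\breg{\cU}{z'} - \breg{\cU}{z^\star}| = O(|\cU|(1+\log\nu^{-1})\sqrt\kappa)$, which is below $\alpha^2/10$ once $\kappa \lesssim \alpha^4/(|\cU|^2(1 + \log^2\nu^{-1}))$.

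The main work is the third bullet: assuming $\prox_\cU^\alpha(\nabla_\pm f_A; \ztrunc) \in \cB^{\cS_\nu}_{c,z}$, show $\langle \nabla_\pm f_A(z') + \alpha\nabla\breg{\cU}{z'}, z' - u\rangle \le \epsilon$ for all $u \in \ztrunc$. By uniqueness of proximal mappings the hypothesis forces $z^\star$ to equal the unconstrained prox, so by Definition~\ref{def:proximal-mappings} and the three-point identity \eqref{eq:Bregman-three-point-equality} the operator $g(\cdot) \defeq \nabla_\pm f_A(\cdot) + \alpha\nabla\breg{\cU}{\cdot}$ satisfies $\langle g(z^\star), z^\star - u\rangle \le 0$ for all $u\in\ztrunc$. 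Writing $\langle g(z'), z' - u\rangle = \langle g(z^\star), z^\star - u\rangle + \langle g(z') - g(z^\star), z' - u\rangle + \langle g(z^\star), z' - z^\star\rangle$, the first term is $\le 0$, and the last two are bounded using: $\mathrm{diam}(\ztrunc) = O(1)$; $\|z' - z^\star\| = O(\sqrt\kappa)$; $\|g(z^\star)\|_* = O(1 + \alpha|\cU|\log\nu^{-1})$ (the bilinear part is $O(1)$ by the normalization $\inmaxnorm{A}\le 1$ resp.\ $\|A\|_{2\to\infty}\le 1$, the entropy part as above); and $\|g(z') - g(z^\star)\|_* = O(\sqrt\kappa) + \alpha\|\nabla\breg{\cU}{z'} - \nabla\breg{\cU}{z^\star}\|_* = O(\sqrt\kappa(1 + \alpha|\cU|/\nu))$, where the factor $1/\nu$ comes from the coordinatewise log-ratio bound $2\sqrt{2\kappa}/\nu$ and dominates the $\log\nu^{-1}$ appearing in $\|g(z^\star)\|_*$. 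Altogether $\langle g(z'), z'-u\rangle = O(\sqrt\kappa(1 + \alpha|\cU|/\nu))$, which is $\le \epsilon$ once $\kappa \lesssim \epsilon^2/(1 + (\alpha|\cU|)^2\nu^{-2})$. Finally one takes $\kappa(\epsilon,\delta)$ to be $C$ times the minimum of these three requirements, with $C$ a small enough absolute constant (also ensuring $\sqrt{2\kappa}\le\nu/2$, using $\nu<1/d$ and $\delta\le 1$). The main obstacle I anticipate is exactly this third bullet: carefully controlling the regularization operator $z\mapsto \alpha\nabla\breg{\cU}{z}$, whose magnitude grows like $\log\nu^{-1}$ and whose variation over the $\kappa$-ball grows like $1/\nu$ on the truncated simplex, and doing so uniformly over both the $\ellOneEllOne$ and $\ellTwoEllOne$ setups (where the $x$-block is an entropy versus a Euclidean DGF); the first two bullets are comparatively routine once the Pinsker/log-ratio estimate is in hand.
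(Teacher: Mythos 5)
Your proposal mirrors the paper's proof in structure: both reduce to Lemma~\ref{lem:sufficient-cond-for-robustness} and verify its three bullets, with each bullet accounting for one term in the minimum defining $\kappa(\epsilon,\delta)$. Where you differ is in the degree of self-containment. The paper's proof is a thin wrapper: it first invokes Corollary~\ref{corr:reduce-sum-to-one-point} to collapse the multiset regularizer $\breg{\cU}{\cdot}$ to $|\cU| \cdot \breg{\collapsed{\cU}}{\cdot}$ (so the constrained prox problem is re-expressed with a single regularization center $\collapsed{\cU}$ and effective strength $\alpha |\cU|$) and then quotes the corresponding single-center lemmas from \citet{karmarkar2025solvingzerosumgames} (their Lemmas B.4, B.5, and 6.15). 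You instead prove everything in place, carrying the sum over $\cU$ through the estimates directly (Pinsker/strong-convexity to convert divergence to norm distance, the $\nu$-lower-bound to get a log-ratio bound on simplex coordinates, mean-value theorem and gradient-norm bounds for the first bullet, the three-term decomposition of $\langle g(z'), z'-u\rangle$ with $\|g(z^\star)\|_*$ and $\|g(z')-g(z^\star)\|_*$ estimates for the third). Your bookkeeping of which bullet drives which term of $\kappa$ is correct, and the resulting scalings match the stated $\kappa$ up to the absolute constant $C$. This buys a self-contained argument at the cost of a longer write-up; the collapse-and-cite route is more economical but relies on the cited lemmas matching after the $(\collapsed{\cU}, \alpha|\cU|)$ substitution.

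One caveat: the ``WLOG $\delta \le 1$'' step does not go through as stated. Lowering $\delta$ to $1$ does weaken bullet two, but it also \emph{shrinks} $\kappa(\epsilon,\delta)$ (via the $\delta^2\nu^2$ term), so a $\kappa(\epsilon,\delta)$-divergence-bounded point is not automatically a $\kappa(\epsilon,1)$-divergence-bounded point, and you cannot directly invoke the $\delta=1$ case. For $1 + \delta \ge \nu^{-1}$ bullet two holds vacuously on $\ztrunc$; the awkward regime is $\delta$ moderately larger than $1$ but smaller than $\nu^{-1}-1$, where the log-ratio estimate $2\sqrt{2\kappa}/\nu$ may both exceed the threshold $1/2$ needed for the estimate to be valid and exceed $\log(1+\delta)$. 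This is harmless for the paper's applications, which fix $\delta \approx 0.47$ (see Section~\ref{sec:applications}), and the same subtlety would be inherited by any proof of the lemma as stated, but you should either handle $\delta > 1$ explicitly (e.g., split on whether $1+\delta \ge \nu^{-1}$ and use $\log(1+\delta) \ge \min\{\delta,1\} \log 2$ otherwise) or note that the lemma is only used with $\delta$ bounded by an absolute constant.
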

\begin{proof}
    Per Lemma~\ref{lem:sufficient-cond-for-robustness}, it suffices to show that for any $(\uset, c, \alpha, z, \dgfsetup_\nu)$-constrained prox multi-point problem where $\uset, c, \alpha, z$ are arbitrary (up to the restrictions of Definition~\ref{def:subproblem}) and with solution $\zopt \defeq \prox^{\alpha}_{\uset}(\gm f_A ; \sball^{\dgfsetup_\nu}_{c, z})$, any $z' \in \sball^{\dgfsetup_\nu}_{c, z}$ with $\breg{\zopt}{z'} \le \kappa(\epsilon, \delta)$ satisfies
    \begin{itemize}
    \item $\abs{\breg{\cU}{z'} - \breg{\cU}{z^\star}} < \alpha^2 / 10$, 
    \item $z' \in \cB^{\cS_\nu}_{1+\delta, z^\star}$, and
    \item if $\prox_\cU^\alpha(\nabla_\pm f_A; \cZ_\nu) \in \cB_{c, z}^{\cS_\nu}$ then $\inangle{\nabla_{\pm}f_A(z') + \alpha \grad \breg{\cU}{z'}, z' -u} \leq \epsilon, \text{ for all } u \in \cZ_\nu$. 
    \end{itemize}
    The first property follows from Corollary~\ref{corr:reduce-sum-to-one-point} and \cite[Lemma B.5]{karmarkar2025solvingzerosumgames}, and the second property follows from \cite[Lemma B.4]{karmarkar2025solvingzerosumgames}. To prove the third property, we will follow the proof of \cite[Lemma 6.15]{karmarkar2025solvingzerosumgames} with minor modifications. Note that by Corollary~\ref{corr:reduce-sum-to-one-point}, there exists some $\qU \in \ztrunc$ such that $\grad \breg{\uset}{w} = |\uset| \grad \breg{\qU}{w}$ for all $w \in \ztrunc$. Then combining this with \eqref{eq:Bregman-three-point-equality}, we have for all $u \in \ztrunc$:
    \begin{align*}
       \inangle*{\nabla_{\pm}f_A(z') + \alpha \grad \breg{\cU}{z'}, z' -u} &= \inangle{\nabla_{\pm}f_A(z') + \alpha |\uset| \grad \breg{\qU}{z'}, z' -u} \\
       &= \inangle{\nabla_{\pm}f_A(z'), z' - u} - \alpha |\uset| \cdot \insquare{\breg{\qU}{u} - \breg{z'}{u} - \breg{\qU}{z'}} \, .
    \end{align*}
    Using analogous manipulations, we have for all $u \in \ztrunc$:
    \begin{align*}
        \inangle{\gm f_A(\zopt), \zopt - u} \le \alpha |\uset| \cdot \insquare{\breg{\qU}{u} - \breg{\zopt}{u} - \breg{\qU}{\zopt}} \, .
    \end{align*}
    The remainder of the proof follows the same steps as the proof of \cite[Lemma 6.15]{karmarkar2025solvingzerosumgames}, except we set $z_{\mathsf{c}}$, $\alpha$, $\gammav$, and $w$ in the proof of \cite[Lemma 6.15]{karmarkar2025solvingzerosumgames} to $\qU$, $\alpha |\uset|$, $\epsilon$, and $z'$ respectively. 
\end{proof}

\paragraph{Lipschitzness bound and binary search range.} Below, we provide a lemma to motivate our instantiation of the range upper bound $\theta_r$ in the implementation of $\MDMPImp$ (Algorithm~\ref{alg:DMP-implementation-matrix-games}). The proof of the following lemma is very similar to that of Lemma B.12 of \citep{karmarkar2025solvingzerosumgames}.

\begin{lemma}
    \label{lem:starting-value-b-search}
For any finite nonempty multiset $\cU \subseteq \zset_\nu$ and $\alpha \ge 1$, $w \defeq \prox_\cU^{\alpha}(\gm f_A; \zset_\nu)$ satisfies $\breg{\cU}{w} \le 12\abs{\cU} \log\paren{\frac{1}{\nu d}}$.
\end{lemma}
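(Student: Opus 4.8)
The plan is to exploit the defining variational inequality of the proximal mapping, together with the normalization of $A$ and the hypothesis $\alpha \ge 1$. Let $\bar u \in \zset_\nu$ be the natural ``center'' of the truncated domain: the uniform distribution on each simplex-constrained block and the origin on the $\ell_2$-ball block (this lies in $\zset_\nu$ since the uniform distribution has all coordinates equal to $1/n$ or $1/m$, each of which exceeds $\nu$ because $\nu < 1/d$). Since $w = \prox_\cU^\alpha(\gm f_A; \zset_\nu)$, instantiating \eqref{eq:set-prox-mapping} at $u = \bar u$ gives
\[
    \inangle{\gm f_A(w), w - \bar u} \le \alpha \sum_{z \in \cU} \insquare{\breg{z}{\bar u} - \breg{w}{\bar u} - \breg{z}{w}}.
\]
Rearranging, dividing by $\alpha$, and dropping the nonnegative term $|\cU| \breg{w}{\bar u}$ yields
\[
    \breg{\cU}{w} \le \breg{\cU}{\bar u} + \tfrac{1}{\alpha} \inangle{\gm f_A(w), \bar u - w}
    = \breg{\cU}{\bar u} + \tfrac{1}{\alpha}\paren{f_A(\bar u\x, w\y) - f_A(w\x, \bar u\y)},
\]
where the last equality uses $\gm f_A(w) = (A^\top w\y, -A w\x)$ together with a short bilinear expansion.

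Next I would bound the residual bilinear terms. By the normalization assumptions of Definition~\ref{def:matrix-games-setups} ($\inmaxnorm{A} \le 1$ in the $\ellOneEllOne$ setup, $\innorm{A}_{2\to\infty}\le 1$ in the $\ellTwoEllOne$ setup), and since every point of $\zset$ has its simplex block of $\ell_1$-norm $1$ and its ball block of $\ell_2$-norm at most $1$, one has $|y^\top A x| \le 1$ for all $(x,y)\in\zset$; hence $|f_A(\bar u\x, w\y)|, |f_A(w\x, \bar u\y)| \le 1$, and as $\alpha \ge 1$ the residual contributes at most $2$. It then remains to bound $\breg{\cU}{\bar u} = \sum_{z\in\cU}\breg{z}{\bar u}$, which is a direct computation: on a simplex block, $\breg{z}{\bar u}$ is the KL divergence from the uniform distribution to $z\x$ (resp.\ $z\y$), which is at most $\log(1/(n\nu))$ (resp.\ $\log(1/(m\nu))$) because every coordinate of $z$ exceeds $\nu$; on the ball block, $\breg{z}{\bar u} = \tfrac12\norm{z\x}_2^2 \le \tfrac12$. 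Summing over the (one or two) blocks gives $\breg{z}{\bar u} = O(\log(1/\nu))$ for each $z \in \cU$, so $\breg{\cU}{w} \le |\cU| \cdot O(\log(1/\nu)) + 2$.

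Finally I would package this into the stated form: using $n + m = d$ (so $\log n, \log m \le \log d$) and $\nu < 1/d$ (so $\log(1/(\nu d)) > 0$), the bound $|\cU|\cdot O(\log(1/\nu)) + 2$ is absorbed into $12|\cU|\log(1/(\nu d))$ after a careful accounting of constants. The argument closely parallels the proof of Lemma~B.12 of \citep{karmarkar2025solvingzerosumgames}; the only new ingredient is that the regularization is spread over the multiset $\cU$ rather than a single point, which is harmless here since every step above is linear in the sum over $\cU$ (one may alternatively first invoke Lemma~\ref{lem:monotonicity} to reduce to $\alpha = 1$). The main technical point — and essentially the only nontrivial one — is the explicit divergence estimate $\breg{\cU}{\bar u} = O(|\cU|\log(1/\nu))$, which is where the dependence on the truncation level $\nu$ enters; the rest is a routine application of the proximal inequality and the normalization of $A$.
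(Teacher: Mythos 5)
Your proof takes a genuinely different route from the paper's. The paper argues by contradiction: it splits $\breg{\cU}{w}$ into $\mathsf{x}$- and $\mathsf{y}$-block contributions, assumes one is large, uses the fact that $w\x$ is the argmin of $x \mapsto f_A(x, w\y) + \alpha\xbreg{\cU\x}{x}$, and invokes Lemmas~\ref{lemma:uncondition-simplex-bound}/\ref{lemma:unconditional-euclidean-bound} to produce a competitor $x'$ (the geometric mean / arithmetic mean of $\cU\x$, respectively) that beats $w\x$, a contradiction. You instead instantiate the variational inequality \eqref{eq:set-prox-mapping} at a single fixed center $\bar u$, drop the nonnegative term $|\cU|\breg{w}{\bar u}$, and bound $\breg{\cU}{\bar u}$ and the residual bilinear term directly. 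Both routes rest on the same two ingredients --- the uniform bound $|f_A| \le 1$ over $\zset_\nu$ and a divergence-from-a-good-test-point estimate --- but yours is direct, slightly tighter on the Euclidean block ($\tfrac12\|z\x\|^2 \le \tfrac12$ rather than the paper's $4k$), and avoids the separate appeal to the auxiliary lemmas by doing the divergence computation inline against the uniform/origin center. That is a perfectly reasonable trade, and arguably cleaner.

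There is, however, a genuine gap in your final step, where you assert the bound $|\cU|\cdot O(\log(1/\nu)) + 2$ ``is absorbed into $12|\cU|\log(1/(\nu d))$ after a careful accounting of constants.'' The relevant per-block estimates you derive are $\breg{z\x}{\bar u\x}\le\log(1/(n\nu))$ and $\breg{z\y}{\bar u\y}\le\log(1/(m\nu))$. Since $n, m < d$, these are \emph{larger} than $\log(1/(\nu d))$, not smaller: $\log(1/(n\nu)) = \log(1/(\nu d)) + \log(d/n)$. Your parenthetical ``$\log n, \log m \le \log d$'' points in the wrong direction. When $\nu$ is close to $1/d$ and $n \ll m$ (say $n = 1$), $\log(1/(\nu d))$ is $O(1)$ while $\log(1/(n\nu)) = \Theta(\log d)$, so no constant absorbs the gap. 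I would note, though, that the paper's own proof appears to contain the same imprecision --- it invokes Lemma~\ref{lemma:uncondition-simplex-bound} with simplex dimension $n$ or $m$ yet writes the resulting bound with $d = n + m$ in the logarithm --- so you have matched the paper's level of rigor here, but the step as written does not go through without further assumptions on $\nu$ relative to $n, m$.
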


\begin{proof} First, note that by the normalization assumptions on the matrix $A$ in Definition~\ref{def:matrix-games-setups}, $|f_A(z)| \le 1$ for all $z \in \zset_\nu$. For the sake of contradiction, suppose that $\breg{\cU}{w} > 12|\cU|\log\paren{\frac{1}{\nu d}}$. Then, we must have that either $\breg{\cU\x}{w\x} > 6|\cU|\log\paren{\frac{1}{\nu d}}$ or $\breg{\cU\y}{w\y} > 6|\cU|\log\paren{\frac{1}{\nu d}}$. If $\breg{\cU\x}{w\x} > 6|\cU|\log\paren{\frac{1}{\nu d}}$, then we have 
\begin{align*}
    w\x = 
    \argmin_{x \in \xset_\nu} f_A(x, w\y) + \alpha \xbreg{\cU\x}{x}.
\end{align*}
Lemmas~\ref{lemma:uncondition-simplex-bound} and \ref{lemma:unconditional-euclidean-bound} guarantee that there exists an $x' \in \cX_\nu$ such that 
\begin{align*}
    f_A(x', w\y) + \alpha \xbreg{\cU\x}{x'} &= f_A(x', w\y) \le 1 + 4\alpha \abs{\cU}\log\paren{\frac{1}{\nu d}} \\
    &\leq 5\alpha \abs{\cU}\log\paren{\frac{1}{\nu d}} < f_A(w\x, w\y) + \alpha \xbreg{\cU\x}{w\x}, 
\end{align*}
which is a contradiction. A similar argument holds if $\breg{\cU\y}{w\y} > 6\alpha|\cU|\log\paren{\frac{1}{\nu d}}$.
\end{proof}

\begin{corollary}
    \label{corr:starting-value-b-search}
For any finite nonempty multiset $\cU \subseteq \zset_\nu$ and $\alpha \geq \frac{1}{2} \sqrt{12 \abs{\cU} \log\paren{\frac{1}{\nu d}}}$, we have that $w \defeq \prox_\cU^{\alpha}(\gm f_A; \zset_\nu)$ satisfies $\breg{\cU}{w} < 2\alpha^2$.
\end{corollary}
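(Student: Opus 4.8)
The plan is to read Corollary~\ref{corr:starting-value-b-search} off of Lemma~\ref{lem:starting-value-b-search} plus a short computation. Write $B \defeq 12\abs{\cU}\log(1/(\nu d))$, so the hypothesis is exactly $\alpha \ge \tfrac12\sqrt{B}$ and Lemma~\ref{lem:starting-value-b-search} asserts $\breg{\cU}{w} \le B$ whenever $\alpha \ge 1$. The first step is to observe that the hypothesis already forces $\alpha \ge 1$: since $\abs{\cU} \ge 1$ and, in the truncated setups to which this corollary is applied, $\nu$ is polynomially small (under the truncation of Lemma~\ref{lem:truncation-for-ell2ell1-ell1ell1} one has $\nu d \le 1/4$, so $\log(1/(\nu d)) \ge \log 4 > 1/3$), we get $B > 4$ and hence $\alpha \ge \tfrac12\sqrt B > 1$. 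Thus Lemma~\ref{lem:starting-value-b-search} applies to $w = \prox_\cU^\alpha(\gm f_A; \zset_\nu)$ and yields $\breg{\cU}{w} \le B$.

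The remaining step is to upgrade $\breg{\cU}{w} \le B$ to the strict bound $\breg{\cU}{w} < 2\alpha^2$. Naively, $\alpha \ge \tfrac12\sqrt B$ only gives $\breg{\cU}{w} \le B \le 4\alpha^2$, which is a factor of $2$ too weak; the point is to instead use the finer, $\alpha$-dependent estimate produced inside the proof of Lemma~\ref{lem:starting-value-b-search}. Concretely, applying the optimality conditions for the saddle point $w = (w\x, w\y)$ of $f_A(x,y) + \alpha\sum_{u\in\cU}(\xbreg{u\x}{x} - \ybreg{u\y}{y})$ over $\xtrunc \times \ytrunc$ and using $\abs{f_A(z)} \le 1$ on $\zset_\nu$ (from the normalization of $A$ in Definition~\ref{def:matrix-games-setups}) against the best competitor in each block gives
\begin{align*}
\xbreg{\cU\x}{w\x} \le \tfrac{2}{\alpha} + \min_{x\in\xtrunc}\xbreg{\cU\x}{x}, \qquad
\ybreg{\cU\y}{w\y} \le \tfrac{2}{\alpha} + \min_{y\in\ytrunc}\ybreg{\cU\y}{y},
\end{align*}
and the two minima are each $O(\abs{\cU}\log(1/(\nu d)))$ by Lemmas~\ref{lemma:uncondition-simplex-bound} and~\ref{lemma:unconditional-euclidean-bound} (which cover, respectively, simplex- and ball-constrained blocks). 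Summing, using $\alpha \ge 1$ to absorb the $2/\alpha$ terms, and substituting $\alpha^2 \ge \tfrac14 B = 3\abs{\cU}\log(1/(\nu d))$ from the hypothesis, yields $\breg{\cU}{w} < 2\alpha^2$.

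The conceptual content is entirely contained in Lemma~\ref{lem:starting-value-b-search}; the only thing requiring care here is the constant bookkeeping — in particular, quoting the $\alpha$-dependent form of the bound rather than its uniform consequence $\breg{\cU}{w} \le B$, so that the factor $2$ in $2\alpha^2$ survives, and pinning down the regime in which $\alpha \ge 1$. I would also note that, if a fully self-contained derivation is preferred, one can invoke monotonicity of $\alpha \mapsto \breg{\cU}{\prox_\cU^\alpha(\gm f_A; \zset_\nu)}$ (Lemma~\ref{lem:monotonicity}) to reduce to a single reference value of $\alpha$, although this does not by itself sharpen the constant and so does not replace the argument above.
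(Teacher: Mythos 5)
Your high-level strategy—apply Lemma~\ref{lem:starting-value-b-search} and then do arithmetic—is exactly the paper's, and your observation that $\alpha \ge \tfrac12\sqrt{B}$ (with $B \defeq 12\abs{\cU}\log(1/(\nu d))$) only yields $\breg{\cU}{w} \le B \le 4\alpha^2$ is correct. What you have in fact found is a gap in the paper's own proof: the paper's argument is precisely the ``naive'' one you flag — it quotes Lemma~\ref{lem:starting-value-b-search} and then asserts ``Consequently, for $\alpha \geq \frac{1}{2}\sqrt{12 \abs{\cU} \log(1/(\nu d))}$, the bound holds,'' which only gives $\le 4\alpha^2$, not the stated $< 2\alpha^2$. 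As you note, $B < 2\alpha^2$ would require $\alpha > \tfrac{1}{\sqrt 2}\sqrt{B}$, so the stated coefficient $\tfrac12$ is off by a factor $\sqrt 2$. (The paper also silently assumes $\alpha \ge 1$ to apply Lemma~\ref{lem:starting-value-b-search}; your observation that the hypothesis forces this, because $B > 4$ when $\nu d \le 1/4$, is a genuine addition.)

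Regarding your proposed fix: re-opening the proof of Lemma~\ref{lem:starting-value-b-search} and keeping the $\alpha$-dependent terms is a reasonable idea, but as you acknowledge, it lives or dies on constant bookkeeping, and the intermediate bounds you would lean on are themselves loose and slightly inconsistent in the paper (for instance, the proof of Lemma~\ref{lem:starting-value-b-search} writes an equality $f_A(x', w\y) + \alpha \xbreg{\cU\x}{x'} = f_A(x', w\y)$ that cannot hold in general, and Lemmas~\ref{lemma:uncondition-simplex-bound}--\ref{lemma:unconditional-euclidean-bound} bound the minima by dimension-dependent quantities like $\abs{\cU}\log(1/(\nu n))$ rather than $\abs{\cU}\log(1/(\nu d))$, so the comparison to $2\alpha^2 \geq 6\abs{\cU}\log(1/(\nu d))$ does not close cleanly for all $n, m$). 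A simpler repair, and likely what was intended, is to replace the coefficient $\tfrac12$ by $\tfrac{1}{\sqrt 2}$ (or $1$) both in the corollary's hypothesis and in the choice of $\theta_r$, which makes the one-line derivation from Lemma~\ref{lem:starting-value-b-search} go through and only changes $\theta_r$ by an $O(1)$ factor, so the final $\tilde O(\epsilon^{-2/3})$ complexity is unaffected. The downstream use in Theorem~\ref{thm:dmp-basic} only needs $\breg{\cU}{z^\star_{\theta_r}} < 2.4\theta_r^2$, so $< 2\alpha^2$ is slightly stronger than necessary but $\le 4\alpha^2$ would not suffice, confirming this is a real (if constant-level) issue rather than harmless slack.
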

\begin{proof} By Lemma~\ref{lem:starting-value-b-search}, for any $\alpha \geq 1$, we have that 
\begin{align*}
    \breg{\cU}{w} \leq 12 \abs{\cU} \log\paren{\frac{1}{\nu d}}. 
\end{align*}
Consequently, for $\alpha \geq \frac{1}{2} \sqrt{12 \abs{\cU} \log\paren{\frac{1}{\nu d}}}$, the bound holds. 
\end{proof}

We prove the following guarantee regarding the Lipschitzness of the function $h$ defined in Theorem~\ref{thm:dmp-basic}. 

\begin{restatable}{lemma}{lip}\label{lem:Lipschitzness-of-h} For any $z \in \cZ_\nu$, finite nonempty multiset $\cU \subseteq \cZ_\nu$, and $\alpha > 0$, define $h: \R_{>0} \to \R$ by $h(\alpha) \defeq \breg{\cU}{\prox_{\cU}^\alpha(\nabla_\pm f_A; \cZ_\nu}$. Then, for any $0 < \theta_\ell < \theta_r$ and $\alpha, \alpha' \in [\theta_\ell, \theta_r]$ there exists an absolute constant $M > 0$ such that 
\begin{align*}
    \abs{h(\alpha) - h(\alpha')} \leq M \abs{\cU}^3 \paren{\frac{1 + \log(\nu^{-1})}{\nu \theta_\ell} + \abs{\cU}\theta_r} \cdot \abs{\alpha - \alpha'}. 
\end{align*}
\end{restatable}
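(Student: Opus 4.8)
The plan is to reduce the claim to a Lipschitz bound on the proximal solution $z^\star_\alpha \defeq \prox_\cU^\alpha(\gm f_A; \cZ_\nu)$ itself, and then transfer it to $h(\alpha) = \breg{\cU}{z^\star_\alpha}$. Throughout, let $\normInline{\cdot}$ denote the product norm with respect to which $r$ is $1$-strongly convex (Euclidean on the ball coordinates, $\ell_1$ on the simplex coordinates) and $\dualnorm{\cdot}$ its dual. By Definition~\ref{def:proximal-mappings} and the three-point identity \eqref{eq:Bregman-three-point-equality} (exactly as manipulated in the proof of Lemma~\ref{lem:multiprox-correctness}), $z^\star_\alpha$ is the unique point of $\cZ_\nu$ satisfying $\inangle{\gm f_A(z^\star_\alpha) + \alpha \grad \breg{\cU}{z^\star_\alpha}, z^\star_\alpha - u} \le 0$ for all $u \in \cZ_\nu$. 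I would set $T_\alpha \defeq \gm f_A + \alpha \grad \breg{\cU}{\cdot}$ and observe that, since $\grad \breg{\cU}{z} = \abs{\cU}\grad r(z) - \sum_{v \in \cU}\grad r(v)$, the monotonicity of $\gm f_A$ together with $\inangle{\grad r(z) - \grad r(z'), z - z'} = \breg{z}{z'} + \breg{z'}{z} \ge \normInline{z - z'}^2$ (from $1$-strong convexity) makes $T_\alpha$ an $\abs{\cU}\alpha$-strongly monotone operator with respect to $\normInline{\cdot}$. Plugging $u \gets z^\star_{\alpha'}$ into the variational inequality for $z^\star_\alpha$ and $u \gets z^\star_\alpha$ into the one for $z^\star_{\alpha'}$, adding, and using $T_\alpha - T_{\alpha'} = (\alpha - \alpha')\grad \breg{\cU}{\cdot}$ together with Cauchy-Schwarz yields
\begin{align*}
    \abs{\cU} \, \alpha \, \normInline{z^\star_\alpha - z^\star_{\alpha'}}^2 \le \abs{\alpha - \alpha'} \cdot \dualnorm{\grad \breg{\cU}{z^\star_{\alpha'}}} \cdot \normInline{z^\star_\alpha - z^\star_{\alpha'}} .
\end{align*}
Dividing through and bounding $\dualnorm{\grad \breg{\cU}{z^\star_{\alpha'}}} \le \sum_{v \in \cU}\dualnorm{\grad r(z^\star_{\alpha'}) - \grad r(v)}$, where in both the $\ellOneEllOne$ and $\ellTwoEllOne$ setups $\dualnorm{\grad r(z) - \grad r(z')} = O(1 + \log(\nu^{-1}))$ for $z, z' \in \cZ_\nu$ (the Euclidean components contribute at most the constant domain diameter, and each entropy component contributes $\abs{\log([z]_i/[z']_i)} \le \log(\nu^{-1})$ since $\cZ_\nu$-coordinates exceed $\nu$), the $\abs{\cU}$ factors cancel and $\alpha \ge \theta_\ell$ gives $\normInline{z^\star_\alpha - z^\star_{\alpha'}} \le \frac{O(1 + \log(\nu^{-1}))}{\theta_\ell}\,\abs{\alpha - \alpha'}$.

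Next, since $\cZ_\nu$ is convex the segment joining $z^\star_\alpha$ and $z^\star_{\alpha'}$ lies in $\cZ_\nu$, so applying the fundamental theorem of calculus to $z \mapsto \breg{v}{z}$ (whose gradient is $\grad r(z) - \grad r(v)$) and reusing the $\grad r$-difference bound gives $\abs{\breg{v}{z^\star_\alpha} - \breg{v}{z^\star_{\alpha'}}} \le O(1 + \log(\nu^{-1}))\,\normInline{z^\star_\alpha - z^\star_{\alpha'}}$ for each $v \in \cU$. Summing over $v \in \cU$ and inserting the displacement bound yields $\abs{h(\alpha) - h(\alpha')} \le \frac{O(\abs{\cU}\,(1 + \log(\nu^{-1}))^2)}{\theta_\ell}\,\abs{\alpha - \alpha'}$, which is already stronger than the stated inequality: using $\nu \in (0,1)$ so that $\nu(1 + \log(\nu^{-1})) = O(1)$ (hence $1 + \log(\nu^{-1}) = O(\nu^{-1})$) and $\abs{\cU} \le \abs{\cU}^3$, the right-hand side is at most $M\abs{\cU}^3(\frac{1 + \log(\nu^{-1})}{\nu\theta_\ell} + \abs{\cU}\theta_r)\abs{\alpha - \alpha'}$ for a suitable absolute constant $M$ (the extra nonnegative $\abs{\cU}^4\theta_r$ term only helps).

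The main obstacle I anticipate is the strong-monotonicity/perturbation step: one must route the $\gm f_A$ cross terms through monotonicity rather than bound them directly, and track the non-Euclidean product norm $\normInline{\cdot}$ and its dual consistently across the simplex and ball blocks in both setups — in particular noting that the $\abs{\cU}$ appearing in the strong-monotonicity constant of $T_\alpha$ is exactly what cancels the $\abs{\cU}$ in $\dualnorm{\grad \breg{\cU}{\cdot}}$. The remaining ingredients — the elementary $\grad r$-difference estimates over the truncated domain $\cZ_\nu$ and the final comparison of constants — are routine.
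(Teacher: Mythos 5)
Your proof is correct, and it proceeds by a genuinely different route than the paper's. The paper collapses the multi-point regularizer to a single-point one via Corollary~\ref{corr:reduce-sum-to-one-point}, writes $h(\alpha) = |\cU|\breg{\collapsed{\cU}}{w_\alpha} + C$ with $w_\alpha = \prox_{\collapsed{\cU}}^{|\cU|\alpha}(\gm f_A; \ztrunc)$, and then black-boxes the single-point Lipschitz estimate from \cite[Lemma B.11]{karmarkar2025solvingzerosumgames} (restated here as Lemma~\ref{lem:Lipschitzness-of-h-og}), managing the extra $\alpha^2$ correction terms that lemma introduces. You instead argue directly on the variational inequality characterizing $z^\star_\alpha$: the $|\cU|\alpha$-strong monotonicity of $\gm f_A + \alpha \grad \breg{\cU}{\cdot}$ with respect to $\normInline{\cdot}$, combined with the perturbation $(\alpha - \alpha')\grad \breg{\cU}{\cdot}$ and a dual-norm bound $\dualnorm{\grad r(z) - \grad r(v)} = O(1 + \log\nu^{-1})$ on $\cZ_\nu$, yields a Lipschitz bound on the prox solution itself, which then transfers to $h$ by the fundamental theorem of calculus. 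This avoids both the collapse lemma and the citation to [KOS]'s Lemma B.11, and your resulting bound $O\big(|\cU|(1+\log\nu^{-1})^2/\theta_\ell\big)|\alpha - \alpha'|$ is in fact tighter than the lemma's stated right-hand side---it has no $\theta_r$-dependence at all, that dependence in the paper's statement being an artifact of the $\vartheta(\alpha) = \breg{q}{w_\alpha} - 2\alpha^2$ formulation of Lemma B.11. Your observation that $\nu(1+\log\nu^{-1}) \le 1$ for $\nu \in (0,1)$ correctly lets you absorb the extra $(1+\log\nu^{-1})$ factor into $\nu^{-1}$, and the remaining comparison $|\cU| \le |\cU|^3$ is trivial, so your bound indeed implies the stated one. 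The one place to be careful if writing this up in full is the $\grad r$-difference estimate: it needs to hold uniformly along the segment joining $z^\star_\alpha$ and $z^\star_{\alpha'}$ in $\cZ_\nu$, which you correctly anticipate by invoking convexity of $\cZ_\nu$.
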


Our proof leverages the following lemma of \citep{karmarkar2025solvingzerosumgames}.

\begin{lemma}[Lemma B.11 of \citep{karmarkar2025solvingzerosumgames}, restated]
    \label{lem:Lipschitzness-of-h-og}
For a fixed $q \in \ztrunc$ and parameter $\alpha > 0$, let $w_\alpha \defeq \prox_q^{\alpha}(\gm f_A; \ztrunc)$ (namely, $w_\alpha$ is parameterized by $\alpha$) and define $\vartheta : \R_{>0} \to \R$ via $\vartheta(\alpha) \defeq \breg{q}{w_\alpha} - 2 \alpha^2$. Then for any $\alpha, \alpha' \in [b, c]$ for some $c > b > 0$, there exists an absolute constant $M' > 0$ such that
\begin{align*}
    |\vartheta(\alpha) - \vartheta(\alpha')| \le M' \inparen*{\frac{1 + \log \nu^{-1}}{\nu b} + c} \cdot |\alpha - \alpha'|.
\end{align*}
\end{lemma}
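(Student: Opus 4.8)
\textbf{Proof plan for Lemma~\ref{lem:Lipschitzness-of-h}.}

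The plan is to reduce the multi-point statement to the single-point result of Lemma~\ref{lem:Lipschitzness-of-h-og} via the collapsing trick already used elsewhere in this section (Corollary~\ref{corr:reduce-sum-to-one-point}). First I would invoke Corollary~\ref{corr:reduce-sum-to-one-point} to obtain a point $q = \collapsed{\cU} \in \ztrunc$ such that $\grad \breg{\uset}{w} = |\uset| \, \grad \breg{q}{w}$ for all $w \in \ztrunc$. This immediately implies that for any $\alpha > 0$, the proximal point $z_\alpha^\star \defeq \prox_{\uset}^\alpha(\gm f_A; \ztrunc)$ coincides with $\prox_q^{\alpha |\uset|}(\gm f_A; \ztrunc)$, since the defining variational inequality \eqref{eq:set-prox-mapping} becomes exactly the single-point inequality \eqref{eq:prox-single} with regularization level $\alpha |\uset|$ about $q$. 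Thus, writing $w_\beta \defeq \prox_q^{\beta}(\gm f_A; \ztrunc)$ parameterized by $\beta$, we have $z_\alpha^\star = w_{\alpha |\uset|}$.

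Next I would relate $h(\alpha) = \breg{\uset}{z_\alpha^\star}$ to the single-point divergence. We have $\breg{\uset}{z_\alpha^\star} = \sum_{u \in \uset} \breg{u}{z_\alpha^\star}$; this is not literally $|\uset| \breg{q}{z_\alpha^\star}$, but the difference $\sum_{u \in \uset} \breg{u}{w} - |\uset| \breg{q}{w}$ is an \emph{affine} function of $w$ (more precisely, since $\breg{u}{w} = r(w) - r(u) - \inangle{\grad r(u), w - u}$, the $r(w)$ terms contribute $|\uset| r(w)$ on both sides and cancel, and the remaining terms are affine in $w$ with the same linear part by the gradient identity, up to constants). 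So $h(\alpha) = |\uset| \breg{q}{w_{\alpha|\uset|}} + \langle a, w_{\alpha|\uset|}\rangle + c_0$ for a fixed vector $a$ and constant $c_0$ depending only on $\uset$, where $\|a\|_* = O(|\uset| \cdot \mathrm{poly}(\log \nu^{-1}))$ over $\ztrunc$ since gradients of $r$ on the truncated domain are bounded by $O(1 + \log \nu^{-1})$. Therefore $|h(\alpha) - h(\alpha')| \le |\uset| \cdot |\vartheta(\alpha|\uset|) + 2(\alpha|\uset|)^2 - \vartheta(\alpha'|\uset|) - 2(\alpha'|\uset|)^2| + \|a\|_* \cdot \|w_{\alpha|\uset|} - w_{\alpha'|\uset|}\|$, where $\vartheta$ is as in Lemma~\ref{lem:Lipschitzness-of-h-og}. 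The first group is controlled by Lemma~\ref{lem:Lipschitzness-of-h-og} applied with $b \gets \theta_\ell |\uset|$, $c \gets \theta_r |\uset|$ (giving the $\frac{1+\log\nu^{-1}}{\nu \theta_\ell}$ and $|\uset| \theta_r$ contributions after scaling the argument difference $|\alpha|\uset| - \alpha'|\uset|| = |\uset|\,|\alpha - \alpha'|$), plus the elementary bound $|4(\alpha|\uset|)^2 - 4(\alpha'|\uset|)^2| \le 8 |\uset|^2 \theta_r |\uset| |\alpha - \alpha'|$. For the second group I would need a Lipschitz bound on $\beta \mapsto w_\beta$ in the local norm, which follows from the monotonicity structure of the proximal map — or, more simply, I can absorb the $\langle a, w_{\alpha|\uset|} \rangle$ term by noting that the single-point proof of Lemma~\ref{lem:Lipschitzness-of-h-og} already implicitly controls iterate movement, so I would either reuse that movement bound or re-derive $\|w_\beta - w_{\beta'}\| \le O(\mathrm{poly}(\nu^{-1}, \log\nu^{-1})) |\beta - \beta'|$ via strong convexity of the regularized objective, which gives the claimed form after combining. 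Collecting all contributions and bounding crudely yields the stated bound with an absolute constant $M$ and the factor $|\uset|^3$ (one power from the explicit $|\uset|$ prefactor, one from rescaling $|\alpha - \alpha'| \to |\uset||\alpha-\alpha'|$ inside Lemma~\ref{lem:Lipschitzness-of-h-og}, and one more from the $|\uset| \theta_r$ and $\|a\|_*$-movement terms).

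\textbf{Main obstacle.} The delicate point is handling the discrepancy between $\sum_{u \in \uset} \breg{u}{\cdot}$ and $|\uset| \breg{q}{\cdot}$: these agree in \emph{gradient} (hence give the same proximal point) but differ by an affine function, so $h(\alpha)$ is not simply $|\uset|$ times a single-point divergence. I expect the cleanest route is to carry this affine correction explicitly and bound its contribution using (i) the uniform bound on $\|\grad r\|_*$ over $\ztrunc$ and (ii) a movement bound $\|w_\beta - w_{\beta'}\|$ that is essentially already established inside the proof of Lemma~\ref{lem:Lipschitzness-of-h-og}; alternatively, one can observe that Corollary~\ref{corr:reduce-sum-to-one-point} may be stated so that $\sum_{u}\breg{u}{w} = |\uset|\breg{q}{w} + \mathrm{const}$ exactly on the relevant domain (the $r(w)$ terms cancel and the residual is constant once the proximal point is fixed), in which case the affine term collapses to a constant and disappears from the difference $h(\alpha) - h(\alpha')$ entirely, reducing everything directly to Lemma~\ref{lem:Lipschitzness-of-h-og} with a rescaled argument. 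I would check which version of Corollary~\ref{corr:reduce-sum-to-one-point} is available and use the stronger constant-difference form if so, which makes the proof almost immediate.
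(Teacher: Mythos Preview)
There is a mismatch: the statement you were asked to prove is Lemma~\ref{lem:Lipschitzness-of-h-og}, which the paper does \emph{not} prove --- it is restated verbatim from \cite{karmarkar2025solvingzerosumgames} and used as a black box. Your proposal is instead a proof plan for Lemma~\ref{lem:Lipschitzness-of-h} (the multi-point version), which the paper does prove.

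Comparing your plan to the paper's actual proof of Lemma~\ref{lem:Lipschitzness-of-h}: the overall route is the same --- collapse $\uset$ to a single point $\collapsed{\cU}$ via Corollary~\ref{corr:reduce-sum-to-one-point}, identify $z_\alpha^\star = \prox_{\collapsed{\cU}}^{\alpha|\uset|}(\gm f_A; \ztrunc)$, and reduce to Lemma~\ref{lem:Lipschitzness-of-h-og} with a rescaled argument. The difference is that you spend most of your effort on a non-obstacle. Corollary~\ref{corr:reduce-sum-to-one-point} does not merely give gradient equality: it states $\breg{\uset}{w} = |\uset|\,\breg{\collapsed{\cU}}{w} + C$ with $C$ independent of $w$. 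So the ``affine correction'' you worry about is in fact a constant, and it cancels exactly in $h(\alpha) - h(\alpha')$. No separate movement bound on $\beta \mapsto w_\beta$ and no bound on $\|\grad r\|_*$ is needed. You do mention this possibility at the very end (``check which version of Corollary~\ref{corr:reduce-sum-to-one-point} is available''), and that is precisely the version the paper has and uses; with it the proof is three lines: write $h(\alpha) - h(\alpha') = |\uset|(\breg{\collapsed{\cU}}{w_{\alpha|\uset|}} - \breg{\collapsed{\cU}}{w_{\alpha'|\uset|}})$, add and subtract the $2(\alpha|\uset|)^2$ terms to get $\vartheta$, and apply Lemma~\ref{lem:Lipschitzness-of-h-og} on the interval $[|\uset|\theta_\ell, |\uset|\theta_r]$ together with the elementary Lipschitzness of $\alpha \mapsto \alpha^2$ on $[\theta_\ell,\theta_r]$.
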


\begin{proof}[Proof of Lemma~\ref{lem:Lipschitzness-of-h}] Let $w_\alpha \defeq \prox_{\cU}^\alpha(\nabla_\pm f_A; \cZ_\nu)$ and $w_{\alpha'} \defeq \prox_{\cU}^{\alpha'}(\nabla_\pm f_A; \cZ_\nu)$. By Corollary~\ref{corr:reduce-sum-to-one-point}, there is a point $\collapsed{\cU}$ such that $w_\alpha =\prox_{\collapsed{\cU}}^{\alpha\abs{\cU}}(\gm f_A; \ztrunc)$ 
and $w_{\alpha'} = \prox_{\collapsed{\cU}}^{\alpha\abs{\cU}}(\gm f_A; \ztrunc)$. Consequently, by Corollary~\ref{corr:reduce-sum-to-one-point}, we have that 
\begin{align*}
    \abs{h(\alpha) - h(\alpha')} &= \abs{\abs{\cU} \cdot \breg{\collapsed{\cU}}{w_\alpha} - \abs{\cU}\breg{\collapsed{\cU}}{w_{\alpha'}}} \\
    &= \abs{\abs{\cU} \cdot \breg{\collapsed{\cU}}{w_\alpha} - \alpha^2 - \abs{\cU}\breg{\collapsed{\cU}}{w_{\alpha'}} + {\alpha'}^2 + (\alpha^2 - {\alpha'}^2)} \\
    &= \abs{\cU}\abs{\vartheta(\abs{\cU}\alpha) - \vartheta(\abs{\cU}\alpha')} + 4R|\alpha - \alpha'| \\
    &\leq M' \abs{\cU}^2 \paren{\frac{1 + \log(\nu^{-1})}{\nu \abs{\cU}L} + \abs{\cU} R} \cdot \abs{\abs{\cU} (\alpha - \alpha')} + 4R (\alpha - \alpha') \\
    &\leq M' \abs{\cU}^4 \paren{\frac{1 + \log(\nu^{-1})}{\nu \abs{\cU}b} + 5R} \abs{\alpha - \alpha'}, 
\end{align*}
where in the third line we used the fact that the function $x \mapsto x^2$ is $4R$-Lipschitz on $[L, R]$ and in the fourth line we used Lemma~\ref{lem:Lipschitzness-of-h-og}. 
\end{proof}

\paragraph{Proof of main results.} Finally, we conclude by proving our main results. In the proof of the main results, we fix the following: 
\begin{itemize}
    \item Setup: $\cS_\nu$ as defined in Definition~\ref{def:matrix-games-setups}; 
    \item $\nu := \min\left\{\frac{\min\{\epsilon, 1\}}{8 \max\{m, n\}}, \frac{1}{d}\right\}$;
    \item Compatibility (Definition~\ref{def:zeta-compatible-mapping}): $\zeta = 2$; 
    \item Stability mapping (Definition~\ref{def:best-response-stability}): $\iota: c \mapsto \exp(2\sqrt{2c})$ as in Lemma~\ref{lemma:new-stability};
    \item Local boundedness mapping (Definition~\ref{def:consistency}): $\pi$ as guaranteed by Lemma~\ref{lemma:local-boundedness}; 
    \item $\MDMP$ parameters: $\beta = \epsilon^{1/3}, \rho = 2, \gamma = 2$; 
    \item Approximation parameters: $\delta = \frac{1}{2} \paren{\exp(2\sqrt{10} - 4\sqrt{2}) - 1} \approx .47$ (it is easy to verify that this satisfies the constraints of Line~\ref{line:delta2}) and $\kappa(\epsilon, \delta)$ as defined in Lemma~\ref{lem:reducetoinner};
    \item $\SUG$ parameter: $\tau = \beta$; 
    \item Bisection-search parameters: $\theta_\ell = \beta, \theta_r = 1/2\cdot\sqrt{12 \abs{\cU} \log(1/(\nu d)}$ as in Corollary~\ref{corr:starting-value-b-search}, and $M$ as in Lemma~\ref{lem:Lipschitzness-of-h}. 
\end{itemize}

We also use the following bound from \citep{karmarkar2025solvingzerosumgames}. 
\begin{lemma}[Remark 6.12 of \citep{karmarkar2025solvingzerosumgames}, restated]\label{lemma:uniform-bound-start} For any $A \in \R^{m \times n}$ satisfying the assumptions of the setup (Definition~\ref{def:matrix-games-setups}) and any $z \in \cZ_\nu$, we have that $\normInline{\ground{A}{z}}_F^2 \leq 1$. 
\end{lemma}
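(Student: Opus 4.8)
The statement follows from the explicit form of the change of basis in each setup (Definition~\ref{def:product-dgf-setups}) together with the normalization assumptions on $A$, so the plan is to reduce to a short direct computation in each of the two cases.

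First I would record the form of $\ground{A}{z}$ in each setup. Recall $\ground{A}{z} = (\hess \ry(z\y))^{-1/2} A (\hess \rx(z\x))^{-1/2}$. In the $\ellTwoEllOne$ setup, $\rx = \tfrac12\normInline{\cdot}_2^2$ has Hessian $I_n$ and $\ry$ is the negative entropy with $\hess \ry(y) = \diag(y)^{-1}$, so $\ground{A}{z} = \diag(z\y)^{1/2} A$; in the $\ellOneEllOne$ setup both blocks are the negative entropy (as also used in the proof of Lemma~\ref{lemma:compatibility} and reflected in the Bregman-divergence column of Table~\ref{table:matrix-games-setups}), so $\ground{A}{z} = \diag(z\y)^{1/2} A \diag(z\x)^{1/2}$. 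Note also that $z \in \cZ_\nu \subseteq \cZ$, hence $z\x \in \cX$ and $z\y \in \cY$.

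Next I would carry out the two one-line bounds. In the $\ellOneEllOne$ setup, expanding the squared Frobenius norm entrywise and using the normalization $\inmaxnorm{A}\le 1$,
\[
\normInline{\ground{A}{z}}_F^2 = \sum_{i\in[m]}\sum_{j\in[n]} [z\y]_i [z\x]_j A_{ij}^2 \;\le\; \inmaxnorm{A}^2 \Big(\sum_{i\in[m]}[z\y]_i\Big)\Big(\sum_{j\in[n]}[z\x]_j\Big) = \inmaxnorm{A}^2 \le 1,
\]
where the last equality uses $z\x \in \Delta^n$ and $z\y \in \Delta^m$. In the $\ellTwoEllOne$ setup, writing the squared Frobenius norm as a sum over rows and using $\normInline{A}_{2 \to \infty} = \max_{i\in[m]} \normInline{A_{i,:}}_2 \le 1$,
\[
\normInline{\ground{A}{z}}_F^2 = \sum_{i\in[m]} [z\y]_i \sum_{j\in[n]} A_{ij}^2 = \sum_{i\in[m]} [z\y]_i \,\normInline{A_{i,:}}_2^2 \;\le\; \sum_{i\in[m]} [z\y]_i = 1,
\]
using $z\y \in \Delta^m$ in the final step. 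Together these give $\normInline{\ground{A}{z}}_F^2 \le 1$ in both setups.

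There is no real obstacle here; the only point requiring any care is correctly identifying the Hessians $\hess \rx, \hess \ry$ in each setup---in particular that the $\ellOneEllOne$ distance-generating function is the negative entropy on both blocks, so that grounding introduces the $\diag(z\x)^{1/2}$ and $\diag(z\y)^{1/2}$ factors rather than leaving $A$ unchanged---after which both bounds are immediate from the normalization of $A$ and the fact that simplex vectors have unit $\ell_1$-mass.
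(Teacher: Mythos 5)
Your proof is correct and is the natural direct computation that the cited Remark~6.12 of \citep{karmarkar2025solvingzerosumgames} relies on; the paper itself does not reprove this statement but merely restates it. You also correctly resolve an inconsistency in Table~\ref{table:matrix-games-setups} (whose $\rx,\ry$ rows for the $\ellOneEllOne$ column do not match the KL Bregman divergence in the same column) by reading the intended distance-generating functions from the $V^r_{z'}(z)$ row and from the grounding formulas used in the proof of Lemma~\ref{lemma:compatibility}, which is exactly the right way to pin down $\hess\rx,\hess\ry$; with those identifications both one-line Frobenius bounds are immediate from $\inmaxnorm{A}\le 1$, $\normInline{A}_{2\to\infty}\le 1$, and the unit $\ell_1$-mass of simplex vectors.
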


\begin{proof}[Proof of Theorem~\ref{thm:final-result-l1-l1-aka-zero-sum} and Theorem~\ref{thm:final-result-l2-l1-aka-SVM}] The proof follows immediately from Theorem~\ref{thm:main-general-result} and Lemma~\ref{lem:truncation-for-ell2ell1-ell1ell1}. Indeed, note that the parameters $K, \Gamma_\cS = \tilde{O}(1)$, the parameters $\pi(\iota(5)), \rho, \cT_{\max}, \zeta$ are absolute constants, and $\kappa(\epsilon
, \delta), \epsilon', \beta$ are all inverse polynomial in the problem parameters. Consequently, Theorem~\ref{thm:main-general-result} guarantees a matvec complexity of 
\begin{align*}
    \tilde{O}\paren{ \beta\epsilon^{-1} \paren{1 + \frac{\tau}{\beta}} + \frac{1 + \beta^\rho}{\tau^2}}. 
\end{align*}
Substituting $\tau = \beta = \epsilon^{1/3}$ and $\rho = 2$ now yields the final complexity of $\tilde{O}(\epsilon^{-2/3})$ matvecs to $A$, as desired. 
\end{proof}

\section*{Acknowledgements}

Ishani Karmarkar was funded in part by NSF Grant CCF-1955039, and a PayPal research award.
Liam O'Carroll was funded in part by NSF Grant CCF-1955039.
Aaron Sidford was funded in part by a Microsoft Research Faculty Fellowship, NSF Grant CCF1955039, and a PayPal research award. We thank anonymous STOC reviewers and Jelena Diakonikolas for their helpful feedback on an earlier version of this paper. 

\newpage

\bibliographystyle{plainnat}

\begin{thebibliography}{46}
\providecommand{\natexlab}[1]{#1}
\providecommand{\url}[1]{\texttt{#1}}
\expandafter\ifx\csname urlstyle\endcsname\relax
  \providecommand{\doi}[1]{doi: #1}\else
  \providecommand{\doi}{doi: \begingroup \urlstyle{rm}\Url}\fi

\bibitem[Adler(2013)]{Adler2013}
Ilan Adler.
\newblock The equivalence of linear programs and zero-sum games.
\newblock \emph{International Journal of Game Theory}, 42\penalty0
  (1):\penalty0 165--177, feb 2013.

\bibitem[Alvarez et~al.(2004)Alvarez, Bolte, and Brahic]{alvarez2004hessian}
Felipe Alvarez, J{\'e}r{\^o}me Bolte, and Olivier Brahic.
\newblock Hessian riemannian gradient flows in convex programming.
\newblock \emph{SIAM journal on control and optimization}, 43\penalty0
  (2):\penalty0 477--501, 2004.

\bibitem[Axelrod et~al.(2019)Axelrod, Liu, and
  Sidford]{Axelrod2019NearoptimalAD}
Brian Axelrod, Yang~P. Liu, and Aaron Sidford.
\newblock Near-optimal approximate discrete and continuous submodular function
  minimization.
\newblock In \emph{ACM-SIAM Symposium on Discrete Algorithms}, 2019.

\bibitem[Bachoc et~al.(2022)Bachoc, Cesari, Colomboni, and
  Paudice]{bachoc2022nearoptimalalgorithmunivariatezerothorder}
François Bachoc, Tommaso Cesari, Roberto Colomboni, and Andrea Paudice.
\newblock A near-optimal algorithm for univariate zeroth-order budget convex
  optimization, 2022.

\bibitem[Beck and Teboulle(2003)]{beck2003mirrordescent}
Amir Beck and Marc Teboulle.
\newblock Mirror descent and nonlinear projected subgradient methods for convex
  optimization.
\newblock \emph{Operations Research Letters}, 31\penalty0 (3):\penalty0
  167--175, 2003.
\newblock ISSN 0167-6377.

\bibitem[Carmon et~al.(2019)Carmon, Jin, Sidford, and Tian]{carmon2019variance}
Yair Carmon, Yujia Jin, Aaron Sidford, and Kevin Tian.
\newblock Variance reduction for matrix games.
\newblock In \emph{\cNIPS{2019}}, 2019.

\bibitem[Carmon et~al.(2020{\natexlab{a}})Carmon, Jambulapati, Jiang, Jin, Lee,
  Sidford, and Tian]{carmon2020acceleration}
Yair Carmon, Arun Jambulapati, Qijia Jiang, Yujia Jin, Yin~Tat Lee, Aaron
  Sidford, and Kevin Tian.
\newblock Acceleration with a ball optimization oracle.
\newblock In \emph{\cNIPS{2020}}, 2020{\natexlab{a}}.

\bibitem[Carmon et~al.(2020{\natexlab{b}})Carmon, Jin, Sidford, and
  Tian]{carmon2020coordinate}
Yair Carmon, Yujia Jin, Aaron Sidford, and Kevin Tian.
\newblock Coordinate methods for matrix games.
\newblock In \emph{\cFOCS{2020}}, 2020{\natexlab{b}}.

\bibitem[Carmon et~al.(2021)Carmon, Jambulapati, Jin, and
  Sidford]{carmon2021thinking}
Yair Carmon, Arun Jambulapati, Yujia Jin, and Aaron Sidford.
\newblock Thinking inside the ball: Near-optimal minimization of the maximal
  loss.
\newblock In \emph{\cCOLT{2021}}, 2021.

\bibitem[Carmon et~al.(2024)Carmon, Jambulapati, Jin, and
  Sidford]{carmon2024whole}
Yair Carmon, Arun Jambulapati, Yujia Jin, and Aaron Sidford.
\newblock A whole new ball game: A primal accelerated method for matrix games
  and minimizing the maximum of smooth functions.
\newblock In \emph{\cSODA{2024}}, 2024.

\bibitem[Carmon et~al.(2025)Carmon, Jambulapati, O'Carroll, and
  Sidford]{carmon2025extractingdualsolutionsprimal}
Yair Carmon, Arun Jambulapati, Liam O'Carroll, and Aaron Sidford.
\newblock Extracting dual solutions via primal optimizers.
\newblock In \emph{16th Innovations in Theoretical Computer Science Conference
  (ITCS 2025)}, Leibniz International Proceedings in Informatics (LIPIcs).
  Schloss Dagstuhl -- Leibniz-Zentrum f{\"u}r Informatik, 2025.

\bibitem[Clarkson et~al.(2012)Clarkson, Hazan, and
  Woodruff]{clarkson2012sublinear}
Kenneth~L Clarkson, Elad Hazan, and David~P Woodruff.
\newblock Sublinear optimization for machine learning.
\newblock In \emph{Journal of the ACM (JACM)}, 2012.

\bibitem[Dantzig(1953)]{Dantzig1953}
G.~B. Dantzig.
\newblock \emph{Linear Programming and Extensions}.
\newblock Princeton University Press, Princeton, NJ, 1953.

\bibitem[Daskalakis et~al.(2011)Daskalakis, Deckelbaum, and
  Kim]{daskalakis2011near}
Constantinos Daskalakis, Alan Deckelbaum, and Anthony Kim.
\newblock Near-optimal no-regret algorithms for zero-sum games.
\newblock In \emph{\cSODA{2011}}, 2011.

\bibitem[Facchinei and Pang(2003)]{facchinei2003finitevariational}
Francisco Facchinei and Jong-Shi Pang.
\newblock \emph{Finite-Dimensional Variational Inequalities and Complementarity
  Problems}, volume~1 of \emph{Springer Series in Operations Research}.
\newblock Springer, New York, NY, 2003.

\bibitem[Freund and Schapire(1999)]{freund1999adaptive}
Yoav Freund and Robert~E Schapire.
\newblock Adaptive game playing using multiplicative weights.
\newblock In \emph{Games and Economic Behavior}, 1999.

\bibitem[Grigoriadis and Khachiyan(1995)]{grigoriadis1995sublinear}
Michael~D Grigoriadis and Leonid~G Khachiyan.
\newblock A sublinear-time randomized approximation algorithm for matrix games.
\newblock In \emph{Operations Research Letters}, 1995.

\bibitem[Hadiji et~al.(2024)Hadiji, Sachs, van Erven, and
  Koolen]{hadiji2024towards}
H{\'e}di Hadiji, Sarah Sachs, Tim van Erven, and Wouter~M Koolen.
\newblock Towards characterizing the first-order query complexity of learning
  (approximate) nash equilibria in zero-sum matrix games.
\newblock In \emph{\cNIPS{2024}}, 2024.

\bibitem[Joulani et~al.(2017)Joulani, Gy{\"o}rgy, and
  Szepesv{\'a}ri]{joulani2017modular}
Pooria Joulani, Andr{\'a}s Gy{\"o}rgy, and Csaba Szepesv{\'a}ri.
\newblock A modular analysis of adaptive (non-)convex optimization: Optimism,
  composite objectives, and variational bounds.
\newblock In \emph{Proceedings of the International Conference on Algorithmic
  Learning Theory}, volume~76 of \emph{Proceedings of Machine Learning
  Research}, pages 681--720. PMLR, 2017.

\bibitem[Karimireddy et~al.(2018)Karimireddy, Stich, and
  Jaggi]{karimireddy2018globallinearconvergencenewtons}
Sai~Praneeth Karimireddy, Sebastian~U. Stich, and Martin Jaggi.
\newblock Global linear convergence of newton's method without strong-convexity
  or lipschitz gradients.
\newblock In \emph{arXiv preprint arXiv:1806.00413}, 2018.

\bibitem[Karmarkar et~al.()Karmarkar, O'Carroll, and
  Sidford]{karmarkar2025solvingzerosumgames}
Ishani Karmarkar, Liam O'Carroll, and Aaron Sidford.
\newblock Solving zero-sum games with fewer matrix-vector products.
\newblock In \emph{\cFOCS{2025}}.

\bibitem[Kornowski and Shamir(2025{\natexlab{a}})]{kornowski2024oracle}
Guy Kornowski and Ohad Shamir.
\newblock The oracle complexity of simplex-based matrix games: Linear
  separability and nash equilibria.
\newblock In \emph{\cCOLT{2025}}, 2025{\natexlab{a}}.

\bibitem[Kornowski and Shamir(2025{\natexlab{b}})]{kornowski2024oracleupdated}
Guy Kornowski and Ohad Shamir.
\newblock The oracle complexity of simplex-based matrix games: Linear
  separability and nash equilibria.
\newblock In \emph{arXiv preprint 2412.06990 [v3]}, 2025{\natexlab{b}}.

\bibitem[Lin and Jordan(2025)]{lin2025perseus}
Tianyi Lin and Michael~I. Jordan.
\newblock Perseus: a simple and optimal high-order method for variational
  inequalities.
\newblock \emph{Mathematical Programming}, 209\penalty0 (1):\penalty0 609--650,
  Jan 2025.

\bibitem[Makur(2015)]{anuran2015studyoflocalapproximationsininfotheory}
Anuran Makur.
\newblock A study of local approximations in information theory.
\newblock Master's thesis, Massachusetts Institute of Technology, Cambridge,
  MA, June 2015.
\newblock Submitted to the Department of Electrical Engineering and Computer
  Science.

\bibitem[Martinet(1970)]{martinet1970regularisation}
B.~Martinet.
\newblock R{\'e}gularisation d'in{\'e}quations variationnelles par
  approximations successives.
\newblock \emph{Revue fran{\c c}aise d'informatique et de recherche
  op{\'e}rationnelle, S{\'e}rie rouge}, 4\penalty0 (R3):\penalty0 154--158,
  1970.
\newblock \doi{10.1051/m2an/197004R301541}.

\bibitem[McCulloch and Pitts(1943)]{mcculloch1943logical}
Warren~S McCulloch and Walter Pitts.
\newblock A logical calculus of the ideas immanent in nervous activity.
\newblock In \emph{The bulletin of mathematical biophysics}, 1943.

\bibitem[Mehta et~al.(2024)Mehta, Diakonikolas, and Harchaoui]{mehta2024drago}
Ronak Mehta, Jelena Diakonikolas, and Zaid Harchaoui.
\newblock Drago: Primal-dual coupled variance reduction for faster
  distributionally robust optimization.
\newblock \emph{Advances in Neural Information Processing Systems},
  37:\penalty0 134770--134825, 2024.

\bibitem[Mehta et~al.(2025)Mehta, Diakonikolas, and Harchaoui]{mehta2025min}
Ronak Mehta, Jelena Diakonikolas, and Zaid Harchaoui.
\newblock Min-max optimization with dual-linear coupling.
\newblock \emph{arXiv preprint arXiv:2507.06328}, 2025.

\bibitem[Monteiro and Svaiter(2010)]{monteiro2010complexity}
Renato D.~C. Monteiro and B.~F. Svaiter.
\newblock On the complexity of the hybrid proximal extragradient method for the
  iterates and the ergodic mean.
\newblock \emph{SIAM Journal on Optimization}, 20\penalty0 (6):\penalty0
  2755--2787, 2010.

\bibitem[Monteiro and Svaiter(2013)]{renato2013monteirosvaiteroriginalpaper}
Renato D.~C. Monteiro and B.~F. Svaiter.
\newblock An accelerated hybrid proximal extragradient method for convex
  optimization and its implications to second-order methods.
\newblock \emph{SIAM Journal on Optimization}, 23\penalty0 (2):\penalty0
  1092--1125, 2013.

\bibitem[Nemirovski(2004)]{nem04}
Arkadi Nemirovski.
\newblock Prox-method with rate of convergence o(1/t) for variational
  inequalities with lipschitz continuous monotone operators and smooth
  convex-concave saddle point problems.
\newblock \emph{SIAM Journal on Optimization}, 15\penalty0 (1):\penalty0
  229--251, 2004.

\bibitem[Nemirovskij and Yudin(1983)]{nemirovskij1983problem}
Arkadij~Semenovi{\v{c}} Nemirovskij and David~Borisovich Yudin.
\newblock Problem complexity and method efficiency in optimization.
\newblock In \emph{Wiley-Interscience}, 1983.

\bibitem[Nesterov(2005)]{nesterov2005smooth}
Yu~Nesterov.
\newblock Smooth minimization of non-smooth functions.
\newblock In \emph{Mathematical programming}, 2005.

\bibitem[Nesterov(2007)]{Nesterov2007dualextrapolation}
Yurii Nesterov.
\newblock Dual extrapolation and its applications to solving variational
  inequalities and related problems.
\newblock \emph{Mathematical Programming}, 109\penalty0 (2):\penalty0 319--344,
  Mar 2007.

\bibitem[Rakhlin and Sridharan(2013)]{Rakhlin2013online}
A.~Rakhlin and K.~Sridharan.
\newblock Online learning with predictable sequences.
\newblock In \emph{Proceedings of the 26th Annual Conference on Learning
  Theory}, volume~30 of \emph{Proceedings of Machine Learning Research}, pages
  993--1019. PMLR, 2013.

\bibitem[Rockafellar(1976)]{rockafellar1976monotone}
R.~Tyrrell Rockafellar.
\newblock Monotone operators and the proximal point algorithm.
\newblock \emph{SIAM Journal on Control and Optimization}, 14\penalty0
  (5):\penalty0 877--898, 1976.
\newblock \doi{10.1137/0314056}.

\bibitem[Rosenblatt(1958)]{rosenblatt1958perceptron}
Frank Rosenblatt.
\newblock The perceptron: a probabilistic model for information storage and
  organization in the brain.
\newblock In \emph{Psychological review}, 1958.

\bibitem[Shalev-Shwartz(2012)]{shwartz2012onlinelearning}
Shai Shalev-Shwartz.
\newblock Online learning and online convex optimization.
\newblock \emph{Found. Trends Mach. Learn.}, 4\penalty0 (2):\penalty0 107--194,
  February 2012.
\newblock ISSN 1935-8237.

\bibitem[Shalev-Shwartz and Ben-David(2014)]{shwartz2014understandingML}
Shai Shalev-Shwartz and Shai Ben-David.
\newblock \emph{Understanding Machine Learning: From Theory to Algorithms}.
\newblock Cambridge University Press, 2014.

\bibitem[Soheili and Pen{\~n}a(2012)]{soheili2012smoothperceptron}
Negar Soheili and Javier Pen{\~n}a.
\newblock A smooth perceptron algorithm.
\newblock \emph{SIAM Journal on Optimization}, 22\penalty0 (2):\penalty0
  728--737, 2012.

\bibitem[Steinhardt and Liang(2014)]{steinhardt2014adaptivity}
Jacob Steinhardt and Percy Liang.
\newblock Adaptivity and optimism: An improved exponentiated gradient
  algorithm.
\newblock In Eric~P. Xing and Tony Jebara, editors, \emph{Proceedings of the
  31st International Conference on Machine Learning (ICML)}, volume~32 of
  \emph{Proceedings of Machine Learning Research}, pages 1593--1601. PMLR,
  2014.

\bibitem[Tsybakov(2008)]{tsybakov2008nonparametric}
Alexandre~B Tsybakov.
\newblock Nonparametric estimators.
\newblock In \emph{Introduction to Nonparametric Estimation}. Springer, 2008.

\bibitem[von Neumann(1928)]{vonNeumann1928}
John von Neumann.
\newblock Zur theorie der gesellschaftsspiele.
\newblock \emph{Mathematische Annalen}, 100:\penalty0 295--320, 1928.

\bibitem[Wang et~al.(2023)Wang, Hanashiro, Guha, and
  Abernethy]{wang2023accelerated}
Guanghui Wang, Rafael Hanashiro, Etash~Kumar Guha, and Jacob Abernethy.
\newblock On accelerated perceptrons and beyond.
\newblock In \emph{The Eleventh International Conference on Learning
  Representations}, 2023.

\bibitem[Yu et~al.(2014)Yu, Kilinc-Karzan, and
  Carbonell]{yu2014saddlepointsacceleratedperceptron}
Adams~Wei Yu, Fatma Kilinc-Karzan, and Jaime Carbonell.
\newblock Saddle points and accelerated perceptron algorithms.
\newblock In Eric~P. Xing and Tony Jebara, editors, \emph{Proceedings of the
  31st International Conference on Machine Learning}, volume~32 of
  \emph{Proceedings of Machine Learning Research}, pages 1827--1835, Beijing,
  China, 22--24 Jun 2014. PMLR.

\end{thebibliography}

\newpage

\addtocontents{toc}{\protect\setcounter{tocdepth}{0}} %

\appendix

\section{Appendix}
\subsection{Properties of KL divergence and Hellinger distance}

\begin{restatable}[Equation 2.27 of \citep{tsybakov2008nonparametric}, restated]{fact}{hellinger}\label{fact:hellinger-trick} Let $z_1, z_2 \in \Delta^n$. Then $\KL(z_1 || z_2) \geq H(z_1,z_2)$. 
\end{restatable}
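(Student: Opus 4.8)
\textbf{Proof proposal for Fact~\ref{fact:hellinger-trick}.}
The plan is to reduce the claim to the elementary scalar inequality $\log s \le s - 1$ for all $s > 0$, equivalently $-\log s \ge 1 - s$, applied coordinatewise. Here and below I read $H(z_1, z_2)$ as the squared Hellinger distance $\sum_{i} \big(\sqrt{[z_1]_i} - \sqrt{[z_2]_i}\big)^2$ (this is the quantity for which the fact is invoked in Lemma~\ref{lemma:compatibility}), and $\log$ denotes the natural logarithm, consistent with the entropy dgf $e$ of Section~\ref{sec:prelims}. First I would dispose of the degenerate coordinates: if $[z_2]_i = 0$ and $[z_1]_i > 0$ for some $i$, then $\KL(z_1\|z_2) = +\infty$ and the inequality is trivial; if $[z_2]_i = [z_1]_i = 0$ the $i$-th terms of both sides vanish under the convention $0\log 0 = 0$. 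Hence we may assume $\supp(z_1) \subseteq \supp(z_2)$ and restrict all sums below to $\supp(z_1)$, where every ratio $[z_2]_i / [z_1]_i$ is well-defined and positive.

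On the support, apply $-\log s \ge 1 - s$ with $s = \sqrt{[z_2]_i / [z_1]_i}$ and use $-\log t = -2\log\sqrt t$ to get $-\log\!\big([z_2]_i/[z_1]_i\big) \ge 2\big(1 - \sqrt{[z_2]_i/[z_1]_i}\big)$. Multiplying by $[z_1]_i \ge 0$ and summing over $i$,
\begin{align*}
    \KL(z_1\|z_2) = -\sum_i [z_1]_i \log\!\frac{[z_2]_i}{[z_1]_i}
    \;\ge\; 2\sum_i\Big([z_1]_i - \sqrt{[z_1]_i\,[z_2]_i}\Big)
    \;=\; 2 - 2\sum_i \sqrt{[z_1]_i\,[z_2]_i},
\end{align*}
where the last equality uses $\sum_i [z_1]_i = 1$. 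On the other hand, expanding the square and using $\sum_i [z_1]_i = \sum_i [z_2]_i = 1$,
\begin{align*}
    H(z_1, z_2) = \sum_i\Big(\sqrt{[z_1]_i} - \sqrt{[z_2]_i}\Big)^2 = \sum_i [z_1]_i + \sum_i [z_2]_i - 2\sum_i \sqrt{[z_1]_i\,[z_2]_i} = 2 - 2\sum_i \sqrt{[z_1]_i\,[z_2]_i}.
\end{align*}
Combining the two displays gives $\KL(z_1\|z_2) \ge H(z_1, z_2)$, as desired.

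I do not anticipate a genuine obstacle here: the only place requiring care is the bookkeeping around zero coordinates and the $0\log 0 = 0$ convention, which is handled by passing to $\supp(z_1)$. The one "idea" in the argument — splitting $-\log t = -2\log\sqrt t$ before applying the tangent-line bound, rather than bounding $-\log t \ge 1 - t$ directly (which would only recover $\KL \ge 0$) — is standard, and everything else is a short manipulation using that $z_1, z_2$ are probability vectors.
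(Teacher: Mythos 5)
Your proof is correct and takes essentially the same route as the paper's: both establish $\KL(z_1\|z_2) \ge H^2(z_1,z_2)$ (squared Hellinger, which is how the fact is used in Lemma~\ref{lemma:compatibility}) via the tangent-line bound $\log s \le s-1$ applied at the square-root scale of the coordinate ratio, and indeed the paper's pointwise inequality $r\log r - (r-1) \ge (\sqrt r-1)^2$ is exactly your $\log r_i \ge 2 - 2/\sqrt{r_i}$ after dividing by $r$. Your version is a touch cleaner in that it sums directly against the normalizations $\sum_i[z_1]_i = \sum_i[z_2]_i = 1$ rather than carrying the $(r_i-1)$ correction term, and it explicitly handles the zero-coordinate cases, which the paper's proof elides.
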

\begin{proof} The function $g(s) = -\log(s)$ is convex, hence, for any $s > 0$, 
\begin{align*}
    0 = g(1) \geq g(s) + g'(s) (1-s) = -\log(s) + \frac{-1}{s} (1-s). 
\end{align*}
Thus, $\log(s) \geq 1 - \frac{1}{s}$ and consequently $s^2 \log(s^2) \geq 2s^2 - 2s.$ Taking $r = s^2$, we have that for any $r > 0$,
\begin{align*}
    r\log(r) \geq 2r - 2\sqrt{r}
\end{align*}
and 
\begin{align*}
    r\log(r) - (r-1) \geq r - 2\sqrt{r} + 1 = (1-\sqrt{r})^2. 
\end{align*} 
Taking $r_i = [z_1]_i/[z_2]_i$, we have 
\begin{align*}
    \breg{z_2}{z_1} &= \sum_{i} [z_1]_i \log(r_i) = \sum_i [z_2]_ir_i\log(r_i) = \sum_i [z_2]_i[r_i\log(r_i) - (r_i - 1)] \\&\geq \sum_i [z_2]_i (\sqrt{r}_i - 1)^2 
    = \sum_i (\sqrt{[z_2]_i} \sqrt{r}_i - \sqrt{[z_2]_i})^2 = H^2(z_1,z_2). 
\end{align*} 
where the third equality on the first line uses that $[z_2]_i r_i = [z_1]_i$ and hence $\sum_{i \in [d]} [z_2]_i r_i = 1$.  
\end{proof}

\subsection{Collapsing sums of divergences to the divergence from a single point}
\begin{lemma}\label{lemma:reduce-sum-to-one-point}
Let $\cV = \{v^1, \dots, v^k\} \subset \simplex^d$ be a multiset and $\geomean{\cV} \in \simplex^d$ be defined via 
\begin{align*}
    [\geomean{\cV}]_j \defeq \frac{g_j}{\sum_{j \in[d]} g_j}, \text{ where } g_j \defeq \prod_{i \in [k]} [v^i]_j^{1/k} \text{ for each } j \in [d]. 
\end{align*}
Then for all $w \in \simplex^d$, we have
\begin{align*}
    \sum_{i \in [k]} \inKL{v^i}{w} = k \cdot \inKL{\geomean{\cV}}{w} + C,
\end{align*}
where $C$ is a quantity which does not depend on $w$. Moreover, if for some $\nu > 0$, $\cV \subset \Delta_\nu^d$, then $\geomean{\cV} \in \Delta_\nu^d$.
\end{lemma}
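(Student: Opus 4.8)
\textbf{Proof proposal for Lemma~\ref{lemma:reduce-sum-to-one-point}.}

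The plan is to directly expand the sum of KL divergences and identify the $w$-dependent part. First I would write $\sum_{i \in [k]} \inKL{v^i}{w} = \sum_{i \in [k]} \sum_{j \in [d]} [v^i]_j \log [v^i]_j - \sum_{i \in [k]} \sum_{j \in [d]} [v^i]_j \log [w]_j$. The first double sum is a constant independent of $w$; call part of it into $C$. For the second term, swap the order of summation to get $\sum_{j \in [d]} \big( \sum_{i \in [k]} [v^i]_j \big) \log [w]_j$. The key algebraic observation is that $\sum_{i \in [k]} [v^i]_j \log [w]_j$ can be rewritten using $g_j = \prod_{i\in[k]} [v^i]_j^{1/k}$: indeed $k \log g_j = \sum_{i \in [k]} \log [v^i]_j$, but here we have $[v^i]_j$ as a coefficient, not $\log[v^i]_j$, so I instead use that $\sum_{i\in[k]}[v^i]_j = $ (the $j$-th coordinate of the arithmetic mean times $k$), which is \emph{not} directly $k g_j$. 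So the cleaner route is different: write $-\sum_{i}\sum_j [v^i]_j \log[w]_j$ and compare it against $-k\sum_j [\geomean{\cV}]_j \log [w]_j$.

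Actually the right approach is to compare the two expressions as functions of $w$ via their gradients, or more simply, to use that both $\sum_{i\in[k]}\inKL{v^i}{w}$ and $k\cdot\inKL{\geomean{\cV}}{w}$, viewed as functions of $w\in\simplex^d$, have the same $w$-dependent part up to an additive constant. Concretely, $\inKL{q}{w} = -e(w) - \sum_j [q]_j \log[w]_j + (\text{const in }w)$ where $e$ is negative entropy — wait, more precisely $\inKL{q}{w} = \sum_j [q]_j\log[q]_j - \sum_j[q]_j\log[w]_j$, and the first term is constant in $w$. So $\sum_{i\in[k]}\inKL{v^i}{w} = (\text{const}) - \sum_j\big(\sum_i[v^i]_j\big)\log[w]_j$ and $k\inKL{\geomean{\cV}}{w} = (\text{const}) - k\sum_j[\geomean{\cV}]_j\log[w]_j$. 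Thus it suffices to show $\sum_{i\in[k]}[v^i]_j = k[\geomean{\cV}]_j$ for all $j$ — but this is \emph{false} in general (arithmetic mean $\ne$ geometric mean).

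This means the lemma as I'm reading it must actually be using a \emph{different} identity: the $w$-dependent terms are $-\sum_j\big(\sum_i[v^i]_j\big)\log[w]_j$ on one side and $-k\sum_j[\geomean{\cV}]_j\log[w]_j$ on the other. For these to agree up to a constant \emph{for all} $w\in\simplex^d$, we'd need the coefficient vectors to be equal. Since they are not, I must have mis-identified which part is constant. Let me reconsider: perhaps the intended statement uses $\log g_j$ appearing with coefficient $[v^i]_j$ summed correctly — recompute $-\sum_{i\in[k]}\sum_j [v^i]_j\log[w]_j$; there's no way $g_j$ enters. \textbf{Therefore the main obstacle — and the thing I would scrutinize first — is pinning down the exact constant $C$ and confirming the identity holds.} My best reconstruction: the correct reading is that $\geomean{\cV}$ should be defined so that $[\geomean{\cV}]_j \propto g_j$ with $g_j = \prod_i [v^i]_j^{1/k}$, and the claim is about $\sum_i \inKL{v^i}{w}$ where the relevant manipulation uses $\sum_i \log[v^i]_j = k\log g_j$ — which appears if we instead expand $\sum_i \inKL{w}{v^i}$ (divergence in the other argument) or if there's a typo and the sum is $\sum_i [w]_j$-weighted. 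Given the instruction to assume earlier results, I would: (1) expand $k\inKL{\geomean{\cV}}{w} = k\sum_j[\geomean{\cV}]_j\log[\geomean{\cV}]_j - k\sum_j[\geomean{\cV}]_j\log[w]_j$; (2) substitute $[\geomean{\cV}]_j = g_j/Z$ with $Z = \sum_j g_j$, so $\log[\geomean{\cV}]_j = \frac1k\sum_i\log[v^i]_j - \log Z$; (3) conclude $k\sum_j[\geomean{\cV}]_j\log[\geomean{\cV}]_j = \sum_j g_j/Z\cdot\sum_i\log[v^i]_j - k\log Z$, all constant in $w$; (4) hence $k\inKL{\geomean{\cV}}{w} = C_1 - k\sum_j[\geomean{\cV}]_j\log[w]_j$ and $\sum_i\inKL{v^i}{w} = C_2 - \sum_j\big(\sum_i[v^i]_j\big)\log[w]_j$; (5) set $C = C_2 - C_1$ and verify the $\log[w]_j$ coefficients match — \emph{this is precisely where the proof either works via an identity I'm not seeing or requires the statement to be interpreted with $[w]_j$-weights}. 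Finally, for the membership claim: if each $[v^i]_j > \nu$ then $g_j = \prod_i [v^i]_j^{1/k} > \nu$, and since $\sum_j g_j \le \sum_j \max_i [v^i]_j$... I'd instead argue $Z = \sum_j g_j \le 1$ is false too; rather use $g_j > \nu$ and $[\geomean{\cV}]_j = g_j/Z > \nu/Z$, then note $Z \le 1$ by AM-GM ($g_j \le \frac1k\sum_i[v^i]_j$ so $Z \le \frac1k\sum_i\sum_j[v^i]_j = 1$), giving $[\geomean{\cV}]_j > \nu$, hence $\geomean{\cV}\in\Delta_\nu^d$. The entropy/constant bookkeeping in steps (3)–(5) is the routine-but-delicate core; the AM-GM bound $Z\le 1$ for the truncation claim is clean.
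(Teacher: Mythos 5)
The central difficulty you ran into is a misreading of the paper's macro: $\inKL{q}{w}$ expands to $\KL(w\,\|\,q)$ (the second macro argument appears \emph{first} in $\KL$), not $\KL(q\,\|\,w)$. So $\inKL{v^i}{w}=\KL(w\,\|\,v^i)=\sum_j [w]_j\log\bigl([w]_j/[v^i]_j\bigr)$. Under the reading you used, $\KL(v^i\,\|\,w)$, the identity would indeed require arithmetic mean $=$ geometric mean, which is why you hit a wall; this is a false obstruction, not an obstruction in the lemma. You actually sensed the fix yourself --- ``requires the statement to be interpreted with $[w]_j$-weights'' --- but stopped short of committing to it and verifying.

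With the correct reading, your direct-expansion route goes through cleanly and is arguably more self-contained than the paper's, which simply cites Lemmas from \citep{carmon2025extractingdualsolutionsprimal} for the first claim. Concretely:
\begin{align*}
\sum_{i\in[k]}\KL(w\,\|\,v^i)
&= k\sum_j [w]_j\log[w]_j \;-\; \sum_j [w]_j\sum_{i\in[k]}\log[v^i]_j
= k\sum_j [w]_j\log[w]_j \;-\; k\sum_j [w]_j\log g_j,
\end{align*}
since $\sum_i\log[v^i]_j = k\log g_j$. Writing $\log g_j = \log[\geomean{\cV}]_j + \log Z$ with $Z\defeq\sum_j g_j$, and using $\sum_j[w]_j=1$, the right side becomes $k\,\KL(w\,\|\,\geomean{\cV}) - k\log Z$, so $C=-k\log Z$, constant in $w$. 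Your truncation argument (AM--GM gives $Z\le 1$, and $g_j\ge\nu$, hence $[\geomean{\cV}]_j = g_j/Z \ge \nu$) is correct and matches the paper's proof exactly. In short: the proposal contains a genuine gap stemming entirely from the notational misreading; once fixed, the approach is sound and your intermediate observation about $[w]_j$-weights was exactly the missing piece.
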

\begin{proof}
Note that using standard properties of the negative entropy function (e.g., Lemmas 10 and 21 in \citet{carmon2025extractingdualsolutionsprimal}), we can equivalently express $\geomean{\cV} = \argmin_{q' \in \simplex^d} \frac{1}{k} \sum_{i \in [k]} \inKL{u^i}{q'}$. The remainder of the proof of the first claim uses identical reasoning to the proof of Lemma 2 in \citep{carmon2025extractingdualsolutionsprimal}. For the additional claim that $\cV \in \Delta^d_\nu$ implies $\geomean{\cV} \in \Delta^d_\nu$, note first that $\cV \in \Delta^d_\nu$ implies 
\begin{align*}
    \nu \leq g_j \leq \frac{1}{k} \sum_{i \in [k]} [v^i]_j, 
\end{align*}
where the left inequality holds because the geometric mean is larger than the minimum, and the right inequality holds because of the AM-GM inequality. Consequently, 
\begin{align*}
    \sum_{j \in [d]} g_j \leq \sum_{j \in [d]} \frac{1}{k} \sum_{i \in [k]} [v^i]_j = \frac{1}{k} \sum_{i \in [k]} \sum_{j \in [d]} [v^i]_j = 1, 
\end{align*}
where the last inequality used that each $v^i \in \Delta^d_\nu \subset \Delta^d$. Thus, $[\geomean{\cV}]_j \geq \nu$. 
\end{proof}

\begin{lemma}\label{lemma:reduce-sum-to-one-point-euclidean}
Let $\cV = \{v^1, \dots, v^k\} \subset \B^d$ be a multiset and $\mean{\cV} \defeq \frac{1}{\abs{\cV}} \sum_{v \in \cV} v$. Then for all $w \in \B^d$, we have
\begin{align*}
    \sum_{i \in [k]} \normInline{v^i - w}_2^2 = |\cV| \cdot \normInline{\mean{\cV} -w}_2^2 + C,
\end{align*}
where $C$ is a quantity which does not depend on $w$.
\end{lemma}
\begin{proof} Note that 
\begin{align*}
     \sum_{i \in [k]} \normInline{v^i - w}_2^2 &=  \sum_{i \in [k]} \normInline{v^i}_2^2 - 2\inangle*{v^i, w} + \normInline{w}_2^2 \\
     &= k \normInline{w}_2^2 - 2 \inangle*{ \sum_{i \in [k]} v^i, w} + \sum_{i \in [k]} \normInline{v^i}^2_2 \\
     &= k \normInline{w}_2^2 - 2n \inangle*{\mean{\cV}, w} + \sum_{i \in [k]} \normInline{v^i}_2^2 \\
    &= k \normInline{w}_2^2 - 2k \inangle*{\mean{\cV}, w} + k\normInline{\mean{\cV}}_2^2 + \sum_{i \in [k]} \normInline{v^i}_2^2 - k\normInline{\mean{\cV}}_2^2 \\
    &= k \normInline{w - \mean{\cV}}_2^2 + \sum_{i \in [k]} \normInline{v^i}_2^2 - n\normInline{\mean{\cV}}_2^2, 
\end{align*}
where the second two terms do not depend on $w$. 
\end{proof}

In the following corollary, we let $(\xset, \yset, \xtrunc, \ytrunc, \cZint, \rx : \xset \to \R, \ry : \yset \to \R, \Gamma_{\dgfsetup})$ be any of the setups defined Definition~\ref{def:matrix-games-setups}. 

\begin{corollary}\label{corr:reduce-sum-to-one-point} Consider a finite nonempty multiset $\cU \subset \cZ_\nu$. There exists a point $\collapsed{\cU}$ such that for all $w \in \cZ_\nu$, $\breg{\cU}{w} = |\cU| \cdot \breg{\collapsed{\cU}}{w} + C$ where $C$ is a quantity that does not depend on $w$. 
\end{corollary}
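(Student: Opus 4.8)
\textbf{Proof proposal for Corollary~\ref{corr:reduce-sum-to-one-point}.}

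The plan is to reduce the claim to Lemmas~\ref{lemma:reduce-sum-to-one-point} and~\ref{lemma:reduce-sum-to-one-point-euclidean}, handling the $\xset$- and $\yset$-components separately according to which of the two setups from Definition~\ref{def:matrix-games-setups} is in play. Recall that in both setups $r(z) = \rx(z\x) + \ry(z\y)$, so $\breg{u}{w} = \xbreg{u\x}{w\x} + \ybreg{u\y}{w\y}$ for every $u, w$, and hence
\begin{align*}
    \breg{\cU}{w} = \sum_{u \in \cU} \xbreg{u\x}{w\x} + \sum_{u \in \cU} \ybreg{u\y}{w\y}.
\end{align*}
Thus it suffices to collapse each of these two sums into $|\cU|$ times a divergence from a single point plus a $w$-independent constant, and then combine.

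\emph{First}, I would treat the $\ellOneEllOne$ setup. Here $\rx = \ry$ is half the squared Euclidean norm restricted to the simplex, so $\xbreg{u\x}{w\x} = \inKL{u\x}{w\x}$ and $\ybreg{u\y}{w\y} = \inKL{u\y}{w\y}$ (per the last row of Table~\ref{table:matrix-games-setups}). Applying Lemma~\ref{lemma:reduce-sum-to-one-point} to the multiset $\cU\x \subset \Delta^n_\nu$ produces a point $\geomean{\cU\x} \in \Delta^n_\nu$ (the ``$\Delta_\nu$ stays inside $\Delta_\nu$'' clause of that lemma is what guarantees it lands in $\xtrunc$) with $\sum_{u \in \cU} \inKL{u\x}{w\x} = |\cU| \cdot \inKL{\geomean{\cU\x}}{w\x} + C\x$ for a constant $C\x$ independent of $w\x$; similarly for $\cU\y$. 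Setting $\collapsed{\cU} \defeq (\geomean{\cU\x}, \geomean{\cU\y}) \in \xtrunc \times \ytrunc = \cZ_\nu$ and $C \defeq C\x + C\y$ gives the claim. \emph{Second}, for the $\ellTwoEllOne$ setup, the $\xset$-component uses $\rx(x) = \tfrac12\|x\|_2^2$ on $\ball^n$, so I would apply Lemma~\ref{lemma:reduce-sum-to-one-point-euclidean} to $\cU\x \subset \ball^n$ to obtain the arithmetic mean $\mean{\cU\x} \in \ball^n$ (convexity of the ball keeps it in $\xtrunc = \ball^n$), while the $\yset$-component is again KL on $\Delta^m_\nu$ and handled by Lemma~\ref{lemma:reduce-sum-to-one-point} as before; then $\collapsed{\cU} \defeq (\mean{\cU\x}, \geomean{\cU\y})$ works.

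The only genuinely delicate point — and the one I would be most careful about — is bookkeeping the multiplicities: $\cU$ may be a multiset, and the notation $\cU\x, \cU\y$ from Section~\ref{sec:prelims} is defined to preserve multiplicity, so $|\cU| = |\cU\x| = |\cU\y|$ and the two invoked lemmas (which are stated for multisets) apply with the correct count $k = |\cU|$. Everything else is routine: the decomposition of $r$ over the product structure, the fact that the constants produced by each lemma do not depend on $w$, and the observation that the collapsed point lies in the (truncated) domain. Once these are in place the proof is a two-line combination, which is presumably why the excerpt states it without further comment.
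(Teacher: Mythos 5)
Your proof is correct and is the same approach as the paper's (the paper's own proof is a one-line citation of Lemmas~\ref{lemma:reduce-sum-to-one-point} and~\ref{lemma:reduce-sum-to-one-point-euclidean}, which you have fleshed out): decompose $\breg{\cU}{w}$ over the product structure $r = \rx + \ry$, apply Lemma~\ref{lemma:reduce-sum-to-one-point} on each simplex factor and Lemma~\ref{lemma:reduce-sum-to-one-point-euclidean} on the ball factor, check that the collapsed point lies in $\ztrunc$, and add the constants. One small slip worth flagging: in the $\ellOneEllOne$ case you write ``$\rx = \ry$ is half the squared Euclidean norm \ldots, so $\xbreg{u\x}{w\x} = \inKL{u\x}{w\x}$,'' but the Bregman divergence of $\tfrac12\|\cdot\|_2^2$ is $\tfrac12\|\cdot-\cdot\|_2^2$, not KL, so that ``so'' does not follow as stated. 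This traces to what appears to be a typo in Table~\ref{table:matrix-games-setups} (the $\rx, \ry$ row for $\ellOneEllOne$ is inconsistent with its $V^r_{z'}(z)$ row); since the intended dgf in the $\ellOneEllOne$ setup is negative entropy on each simplex, your subsequent use of the KL identity and Lemma~\ref{lemma:reduce-sum-to-one-point} is exactly what is needed and the argument is sound.
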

\begin{proof} This follows from Lemma~\ref{lemma:reduce-sum-to-one-point-euclidean} and Lemma~\ref{lemma:reduce-sum-to-one-point}. 
\end{proof}

\subsection{Range bound for bisection search analysis}\label{sec:range}

Throughout Appendix~\ref{sec:range}, we let $(\xset, \yset, \xtrunc, \ytrunc, \cZint, \rx : \xset \to \R, \ry : \yset \to \R, \Gamma_{\dgfsetup})$ be any of the setups defined Definition~\ref{def:matrix-games-setups}. 

\begin{lemma}\label{lemma:uncondition-simplex-bound} Let $\{u^1, ..., u^k\} \subset \Delta^d_\nu$ be a multiset. Then, 
\begin{align*}
    \min_{x \in \Delta^d_\nu} \sum_{i \in [k]} \KL(x || u^i) \leq k \cdot \log\paren{\frac{1}{\nu d}}
\end{align*}
\end{lemma}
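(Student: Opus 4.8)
The statement asserts that the optimal value of $\min_{x \in \Delta^d_\nu} \sum_{i \in [k]} \KL(x \| u^i)$ is at most $k \log(1/(\nu d))$. The natural approach is to exhibit a single feasible point $x \in \Delta^d_\nu$ and bound the objective at that point; the obvious candidate is the uniform distribution $\bar{x}$ with $[\bar{x}]_j = 1/d$ for all $j \in [d]$. First I would verify feasibility: since $\nu \in (0, 1/d)$, we have $[\bar{x}]_j = 1/d > \nu$, so indeed $\bar{x} \in \Delta^d_\nu$.

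Next I would plug $\bar{x}$ into the objective and compute. For each $i \in [k]$,
\begin{align*}
    \KL(\bar{x} \| u^i) = \sum_{j \in [d]} [\bar{x}]_j \log\frac{[\bar{x}]_j}{[u^i]_j} = \sum_{j \in [d]} \frac{1}{d} \log\frac{1/d}{[u^i]_j} = \frac{1}{d} \sum_{j \in [d]} \log\frac{1}{d [u^i]_j}.
\end{align*}
Since $u^i \in \Delta^d_\nu$, each coordinate satisfies $[u^i]_j > \nu$, hence $d [u^i]_j > d\nu$, so $\log\frac{1}{d[u^i]_j} < \log\frac{1}{d\nu}$. Summing over $j \in [d]$ gives $\KL(\bar{x} \| u^i) < \log\frac{1}{\nu d}$, and summing over $i \in [k]$ yields $\sum_{i \in [k]} \KL(\bar{x} \| u^i) < k \log\frac{1}{\nu d}$. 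Since the minimum over the feasible set is at most the value at $\bar{x}$, the claimed bound follows (in fact with strict inequality, which certainly implies the non-strict bound in the statement).

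There is essentially no obstacle here: the only things to be careful about are (i) confirming that the uniform point lies in the truncated simplex $\Delta^d_\nu$, which uses $\nu < 1/d$, and (ii) the direction of the $\KL$ — the definition in the preliminaries has $\KL(x \| x') = \sum_i [x]_i \log([x]_i / [x']_i)$, so here the first argument is $\bar{x}$ and the coordinates of $\bar{x}$ are what appears as the multiplicative weight, which is exactly what makes the computation collapse to $\frac{1}{d}\sum_j \log\frac{1}{d[u^i]_j}$. One could alternatively bound $\log\frac{1}{d[u^i]_j} \le \log\frac{1}{d\nu}$ non-strictly if preferred. The lemma is a ``range bound'' warm-up used to control the bisection search, so a clean one-paragraph argument via the uniform distribution suffices.
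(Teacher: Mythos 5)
Your proof is correct, but it takes a genuinely different route from the paper's. The paper's proof evaluates the objective at the geometric-mean point $\geomean{\cU}$ (defined in Lemma~\ref{lemma:reduce-sum-to-one-point}, and shown there to lie in $\Delta^d_\nu$), for which the sum of KL divergences collapses exactly to $k\log(1/G)$ with $G = \sum_j \prod_i [u^i]_j^{1/k} \geq \nu d$; this is natural because $\geomean{\cU}$ is in fact the exact minimizer of $\sum_i \KL(\cdot \| u^i)$ over the simplex. You instead evaluate at the uniform distribution $\bar x = (1/d,\dots,1/d)$, which is a strictly simpler, self-contained calculation: feasibility follows from $\nu < 1/d$ (forced anyway by nonemptiness of $\Delta^d_\nu$), and each KL term is bounded directly using $[u^i]_j > \nu$. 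Both test points yield the same final bound $k\log(1/(\nu d))$. The paper's choice has the minor advantage of reusing machinery needed elsewhere (the collapse of a sum of KL divergences to a single divergence) and being "tight" in the sense of hitting the true minimum over the simplex; your choice has the advantage of being elementary and not depending on Lemma~\ref{lemma:reduce-sum-to-one-point}. Either is a perfectly acceptable proof of the stated inequality.
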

\begin{proof} Consider $x = [\geomean{\cV}]$ where, we recall from Lemma~\ref{lemma:reduce-sum-to-one-point} that
\begin{align*}
        [\geomean{\cU}]_j \defeq \frac{g_j}{\sum_{j \in[d]} g_j}, \text{ where } g_j \defeq \prod_{i \in [k]} [u^i]_j^{1/k} \text{ for each } j \in [d]. 
\end{align*}
For notational convenience, let $G \defeq \sum_{j \in[d]} g_j$. Then, 
\begin{align*}
    \sum_{i \in [k]} \KL(x || u) &= \sum_{i \in [k]} \sum_{j \in [d]} [x]_j \log\paren{\frac{[x]_j}{[u^i]_j}} = \sum_{j \in [d]} [x]_j \paren{k \log([x]_j) - \sum_{i \in [k]} \log([u^i]_j)} \\
    &= \sum_{j \in [d]} [x]_j \paren{k \log([x]_j) - k \log([g]_j)} \\
    &= \sum_{j \in [d]} [x]_j k \paren{\log\paren{\frac{[g]_j}{G}} - \log([g]_j)} \\
    &= \sum_{j \in [d]} [x]_j k \log\paren{\frac{1}{G}} \\
    &= k \log\paren{\frac{1}{G}} 
    \leq k \log\paren{\frac{1}{\nu d}}
\end{align*} 
where the second line used the fact that 
\begin{align*}
    g_j \defeq \prod_{i \in [k]} [u^i]_j^{1/k}~~\text{ if and only if }~~\log(g_j) = \frac{1}{k} \sum_{i \in [k]} \log([u^i]_j), 
\end{align*}
the second-to-last line used that $x \in \Delta^d$, and the last line used that $g_j \geq \nu$ (since the geometric mean is lower bounded by the minimum) implies that $G \geq \nu d$. 
\end{proof}

\begin{lemma}\label{lemma:unconditional-euclidean-bound} Let $\{u^1, ..., u^k\} \subset \B^d_\nu$ be a multiset. Then, 
\begin{align*}
    \min_{x \in \B^n_{\nu}} \sum_{i \in [k]} \normInline{x - u^i}_2^2 \leq k. 
\end{align*}
\end{lemma}
\begin{proof} The minimizer is achieved by the mean, in which case the minimum value is given by 
\begin{align*}
    \sum_{i \in [k]} \normInline{u^i - \mean{\cU}}_2^2 \leq \sum_{i \in [k]} (\normInline{u^i}_2 + \normInline{\mean{\cU}}_2)^2 \leq \sum_{i \in [k]} 4 = 4k, 
\end{align*}
where the first inequality uses triangle inequality, and the second uses that $u^i, \mean{\cU} \in \B^d$. 
\end{proof} 

\end{document}